\def\textmatrix#1&#2\\#3&#4\\{\bigl({#1 \atop #3}\ {#2 \atop #4}\bigr)}
\def\dispmatrix#1&#2\\#3&#4\\{\left({#1 \atop #3}\ {#2 \atop #4}\right)}
\newcommand{\beg}{\begin{equation}}
	\newcommand{\eeg}{\end{equation}}
\newcommand{\ben}{\begin{eqnarray*}}
	\newcommand{\een}{\end{eqnarray*}}
\newlength{\bibitemsep}\setlength{\bibitemsep}{.6\baselineskip}
\newlength{\bibparskip}\setlength{\bibparskip}{0pt}
\let\oldthebibliography\thebibliography
\renewcommand\thebibliography[1]{%
	\oldthebibliography{#1}%
	\setlength{\parskip}{\bibitemsep}%
	\setlength{\itemsep}{\bibparskip}%
}
\newtheorem{thm}{Theorem}[section]
\newtheorem{lem}[thm]{Lemma}
\newtheorem{prop}[thm]{Proposition}
\numberwithin{equation}{section} 
\theoremstyle{definition}
\newtheorem{defn}[thm]{Definition}
\newtheorem{note}[thm]{Note}
\newtheorem{eg}[thm]{Example}
\newcommand{\HS}{\mathcal H}
\newcommand{\A}{\rm{Aut}}
\begin{document}
	\title[Lifting $Q$-commuting and $Q$-intertwining operators]
	{Lifting $Q$-commuting and $Q$-intertwining operators}
	
	\author[Pal and Sahasrabuddhe]{Sourav Pal and Prajakta Sahasrabuddhe}
	
	\address[Sourav Pal]{Mathematics Department, Indian Institute of Technology Bombay,
		Powai, Mumbai - 400076, India.} \email{sourav@math.iitb.ac.in , souravmaths@gmail.com}
	
	\address[Prajakta Sahasrabuddhe]{Mathematics Department, Indian Institute of Technology Bombay,
		Powai, Mumbai - 400076, India.} \email{prajakta@math.iitb.ac.in , praju1093@gmail.com}

	\keywords{ $Q$-commuting operators, $Q$-commutant lifting, $Q$-commuting Ando dilation, $Q$-intertwining lifting, Acyclic graph}
	
	\subjclass[2010]{47A13, 47A20, 47A25, 47A45}
	
	\thanks{The first named author is supported by the Seed Grant of IIT Bombay, the CPDA and the MATRICS Award (Award No. MTR/2019/001010) of Science and Engineering Research Board (SERB), India. The second named author has been supported by the Ph.D Fellowship of Council of Scientific and Industrial Research (CSIR), India.}

	\begin{abstract}
	   For bounded operators $Q,T_1,T_2$ acting on a Hilbert space $\mathcal{H}$, a pair $(T_1,T_2)$ is said to be $Q$-\textit{commuting} if $T_1T_2=QT_2T_1$ or $T_1T_2=T_2QT_1$ or $T_1T_2=T_2T_1Q$. An operator $B$ on a Hilbert space $\mathcal{K}$ is said to be a \textit{lifting} of $A$ on $\mathcal{H}\subseteq \mathcal{K}$ if $\mathcal H$ is an invariant subspace for $B^*$ and $B^*|_{\mathcal{H}}=A^*$. We obtain the following main results in this article.\\ 
	   
	   \begin{enumerate}
	   	\item The classical commutant lifting theorem states that if $T$ is a contraction on a Hilbert space $\mathcal{H}$ and if $V$ on $\mathcal{K}$ is a minimal isometric dilation of $T$, then for any bounded operator $X \in \mathcal B(\mathcal H)$ satisfying $TX=XT$, there is a norm preserving lifting $Y$ of $X$ on a bigger Hilbert space $\mathcal{K}$ such that $VY=YV$. We generalize this to $Q$-commuting operators. We show that if $T$ is any contraction on $\mathcal{H}$ and $V$ is any isometric lift of $T$ on a space $\mathcal{K} \supseteq \mathcal H$ and if $Q\in \mathcal B(\mathcal H)$ is any bounded operator with $QT$ being a contraction satisfying $XT=QTX$ (or $TQ$ is a contraction with $XT=TQX$) then for any given lifting $\overline{Q}$ on $\mathcal{K}$ of $Q$ satisfying $\|\overline{Q}V\|\leq 1$, there is a norm preserving lifting $Y$ of $X$ such that $YV=\overline{Q}VY$ (or $YV=V\overline{Q}Y$). We provide several proofs to this result and its variants. We also prove that if in particular $Q$ is a contraction such that $XT=TXQ$ then $X$ possesses a lifting $Y$ such that $YV=VY\overline{Q}$ for some lifting $\overline{Q}$ of $Q$. We find generalizations of some of these results for $Q$-intertwining operators.\\
	   	
	   	\item  We find generalized Ando's isometric dilation in the $Q$-commuting setting. Indeed, for a given pair of $Q$-commuting contractions $(T_1,T_2)$, we show different explicit constructions of an isometric lift $\overline{Q}$ of $Q$ and a pair of $\overline{Q}$-commuting isometries $(V_1,V_2)$ dilating $(T_1, T_2)$.\\
	   		   	
	   	\item We find a few new lifting results for intertwining operators and also give new proofs to some existing results in this direction.\\
	   	
	\item It is well known that for $n\geq 3$, a commuting $n$-tuple of contractions not necessarily possesses an isometric dilation. Op$\check{e}$la proved that any $n$-tuple of contractions that commute according to a graph without a cycle dilates to an $n$-tuple of unitaries that commute according to the same graph. We generalize this result to the $Q$-commuting contractions.
	  
	   \end{enumerate}
	     
	      	\end{abstract}
	
	\maketitle
	
	\tableofcontents
	
	\section{Introduction}
	
	\vspace{0.4cm}
	
\noindent Throughout the paper, all operators are bounded linear operators acting on complex Hilbert spaces. For a Hilbert space $\mathcal H$, $\mathcal{B}(\mathcal{H})$ denotes the algebra of operators on $\mathcal{H}$. For two Hilbert spaces $\mathcal H$ and $\mathcal K$, we write $\mathcal H \subseteq \mathcal K$ to mean $\mathcal H$ is a closed linear subspace of $\mathcal K$. A contraction is an operator with norm not greater than one. An operator $B$ on a Hilbert space $\mathcal{K}$ is said to be a \textit{lifting} or \textit{lift} of $A$ on $\mathcal{H}\subseteq \mathcal{K}$ if $\mathcal H$ is an invariant subspace for $B^*$ and $B^*|_{\mathcal{H}}=A^*$.\\
		
	 In this paper, we study dilation and lifting of $Q$-commuting and $Q$-intertwining pairs of operators.
	 
	\begin{defn}
	Suppose $T_1,T_2,X,Q$ are operators acting on a Hilbert space $\mathcal H$.	
		\begin{itemize}
		\item[(a)] We say that the pair $(T_1,T_2)$ is $Q$-\textit{commuting} if $T_1T_2=QT_2T_1$ or $T_1T_2=T_2QT_1$ or $T_1T_2=T_2T_1Q$. More precisely, $(T_1,T_2)$ is called $Q_L$-\textit{commuting} if $T_1T_2=QT_2T_1$, $Q_M$-\textit{commuting} if $T_1T_2=T_2QT_1$ and $Q_R$-\textit{commuting} if $T_1T_2=T_2T_1Q$.
		
		\item[(b)] Similarly, we say that $X$ is $Q$-\textit{intertwining} with the pair $(T_1,T_2)$ if $XT_1=QT_2X$ or $XT_1=T_2QX$ or $XT_1=T_2XQ$. In an analogous manner as in part-(a) one can define $Q_L$-\textit{intertwining}, $Q_M$-\textit{intertwining} and $Q_R$-\textit{intertwining} pair $(T_1,T_2)$ with respect to the operator $X$.		
		\end{itemize}		 
	\end{defn}
The classical commutant lifting theorem due to Sz.-Nagy and Foias \cite{foias:Nagy-2} states that if $T$ is a contraction on $\mathcal{H}$ and if $V$ on $\mathcal{K} \supseteq \mathcal H$ is the minimal isometric dilation of $T$, then any commutant $X$ of $T$ has a norm preserving lifting $Y$ on $\mathcal{K}$ such that $VY=YV$. This result was first proved by Sarason in a special case, see \cite{sarason1}. Indeed, Sarason showed that if the minimal co-isometric extension $V$ of $T$ is such that $V^*$ is a unilateral shift of multiplicity one, then any commutant $X$ of $T$ has a norm preserving extension $Y$ such that $YV =VY$. In \cite{foias:Nagy-1}, Sz.-Nagy and Foias also proved a variant of the commutant lifting theorem by replacing the minimal isometric dilation by the minimal co-isometric extension. Later, Douglas, Muhly and Pearcy \cite{Dou:Muh:Pea} gave a simple and elegant proof to the commutant lifting theorem. Interestingly there are several independent proofs of the commutant lifting theorem in the literature, see \cite{foias1990commutant}. In \cite{Sebestyen1}, Sebestyen took an important next step by producing an anti-commutant counterpart of the commutant lifting theorem and Ando's dilation theorem for commuting contractions. Recently, Keshari and Mallick found generalization of the commutant lifting theorem, Ando's dilation theorem and intertwining lifting theorem in the $q$-commuting setting with $|q|=1$. Also, Keshari, Mallick and Sumesh \cite{K.M., sumesh} studied dilation and lifting for $Q$-commuting contractions.\\

The aims of this article are to provide several proofs to the commutnt lifting theorems in $Q$-commuting and $Q$-intertwining setting and to find some applications to graph theory. The proofs that we present here are analogous to the different proofs (as in \cite{foias1990commutant}) of the commutant lifting theorem. The following are two main results of this article and their different proofs are given in Section \ref{QSection1}.
\begin{thm}
	Let $T$ on a Hilbert space $\mathcal{H}$ be a contraction and let $Q\in\mathcal{B}(\mathcal{H})$ be arbitrary. Suppose $V$ on $\mathcal{K} \supseteq \mathcal H$ is an isometric lift of $T$ and $\overline{Q}\in \mathcal{B}(\mathcal{K})$ is any lift of $Q$.
	\begin{enumerate}
		\item If $XT=QTX$ and if $QT$, $\overline{Q}V$ are contractions, then there is a norm preserving lifting $Y\in \mathcal{B}(\mathcal{K})$ of $X$ such that $YV=\overline{Q}VY$.
		\item If $XT=TQX$ and if $TQ$, $V\overline{Q}$ are contractions, then there is a norm preserving lifting $Y\in \mathcal{B}(\mathcal{K})$ of $X$ such that $YV=V\overline{Q}Y$.
		\item If $TX=XTQ$ and if $Q$ is a contraction and $\overline{Q}$ is an isometry, then there is a norm preserving lifting $Y\in \mathcal{B}(\mathcal{K})$ of $X$ such that $VY=YV\overline{Q}$.
	\end{enumerate} 
\end{thm}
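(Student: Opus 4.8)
The plan is to treat the three relations in a unified way: first recast each as an \emph{intertwining} relation between a genuine pair of operators, then construct the required $Y$ by adapting (rather than quoting) a classical proof of the commutant lifting theorem, with the norm control coming for free at the end. The starting observation is that $V$, $\overline{Q}$, and the sought $Y$ are all lifts, so $\mathcal{H}$ is co-invariant for each of them and $\mathcal{K}\ominus\mathcal{H}$ is invariant; writing everything in block form with respect to $\mathcal{K}=\mathcal{H}\oplus(\mathcal{K}\ominus\mathcal{H})$ makes $V$, $\overline{Q}$, $Y$ block lower-triangular with $(1,1)$ entries $T$, $Q$, $X$ respectively. Since the top-left block of a product of lower-triangular operators is the product of the top-left blocks, compressing the desired identity $YV=\overline{Q}VY$ to $\mathcal{H}$ returns exactly the hypothesis $XT=QTX$; likewise $YV=V\overline{Q}Y$ compresses to $XT=TQX$ and $VY=YV\overline{Q}$ to $TX=XTQ$. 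This consistency check shows the problem is well posed and pinpoints what must be built, namely the strictly-lower blocks of $Y$.

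Next I would reinterpret each relation as intertwining. In case (1), $XT=QTX$ says $X$ intertwines $T$ with the contraction $QT$; from the lift hypotheses one checks directly that $\overline{Q}V$ is a (contractive) lift of $QT$ while $V$ is an isometric lift of $T$, and the target identity $YV=(\overline{Q}V)Y$ is precisely the assertion that $Y$ intertwines the isometric lift $V$ into the lift $\overline{Q}V$. Case (2) is identical with $V\overline{Q}$ in place of $\overline{Q}V$ as a contractive lift of $TQ$. Case (3) is the same in form but, crucially, the hypothesis that $\overline{Q}$ is an isometry makes $V\overline{Q}$ an \emph{isometric} (not merely contractive) lift of $TQ$, so case (3) becomes a bona fide intertwining-lifting problem between two isometric lifts.

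To produce $Y$, normalize $\|X\|\le 1$ (the relation and the compression are linear in $Y$, so a scaling argument reduces to this case). I would then construct a \emph{contraction} $Y$ whose compression to $\mathcal{H}$ equals $X$ and which satisfies the twisted intertwining relation, by the standard geometric mechanism: a one-step Parrott/defect extension filling in the lower blocks, propagated across $\mathcal{K}$, or equivalently by exhibiting a $V$-versus-$\overline{Q}V$ invariant contraction graph $\{(k,Yk)\}\subseteq\mathcal{K}\oplus\mathcal{K}$, invariant under $V\oplus\overline{Q}V$. Once such a contraction $Y$ is obtained, norm preservation is automatic: $Y$ lifts $X$, so $\|X\|=\|Y^{*}|_{\mathcal{H}}\|\le\|Y\|\le 1=\|X\|$, giving $\|Y\|=\|X\|$ after undoing the normalization.

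The main obstacle is exactly the feature that separates cases (1)--(2) from the classical theorem: the twisted operator $\overline{Q}V$ (resp.\ $V\overline{Q}$) is only assumed to be a contraction, not an isometry, so its role is not that of an isometric dilation and one cannot invoke the Sz.-Nagy--Foias intertwining lifting theorem as a black box. The delicate point is to verify that contractivity of $\overline{Q}V$ together with the compressed relation $XT=QTX$ supplies enough positivity to solve the relevant defect (Parrott) inequality at each step, so that the extension stays contractive while genuinely respecting the $\overline{Q}$-twist rather than collapsing to ordinary commutation; keeping the twist consistent through the propagation is the real bookkeeping. One way to sidestep this is to dilate the contraction $\overline{Q}V$ to its minimal isometric dilation $W$ on some $\widetilde{\mathcal{K}}\supseteq\mathcal{K}$, apply the isometric intertwining lifting theorem to get an intertwiner into $W$, and compress back to $\mathcal{K}$ using that $\mathcal{K}$ is co-invariant for $W$; I expect this reduction to recover cases (1) and (2), while case (3) follows immediately from the intertwining-lifting theorem applied to the isometric lifts $V\overline{Q}$ and $V$.
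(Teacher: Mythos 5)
Your proposal is correct, and its core reformulation is exactly the paper's: in each case the $Q$-twisted relation is read as an intertwining $(QT)X=XT$, $(TQ)X=XT$, or $TX=X(TQ)$; one checks that $\overline{Q}V$ (resp.\ $V\overline{Q}$) is a contractive --- in case (3) isometric --- lift of the composite contraction, and a lifting theorem is invoked. Your treatment of case (3) coincides exactly with the paper's proof of Theorem~\ref{Q_Riso}. Where you differ is in cases (1)--(2): the paper simply quotes the contractive-lift form of the commutant lifting theorem (Theorem~\ref{contractive lift}, from Foias--Frazho) and then scales, whereas you in effect re-derive that lemma: dilate the contraction $\overline{Q}V$ to its minimal isometric dilation $W$ on $\widetilde{\mathcal{K}}\supseteq\mathcal{K}$ (which is then an isometric lift of $QT$), apply the isometric intertwining lifting theorem to get $\widetilde{Y}$ with $W\widetilde{Y}=\widetilde{Y}V$, and compress: writing $P:\widetilde{\mathcal{K}}\to\mathcal{K}$ for the orthogonal projection, co-invariance of $\mathcal{K}$ under $W$ gives $PW=\overline{Q}VP$, so $Y=P\widetilde{Y}$ satisfies $YV=\overline{Q}VY$, lifts $X$, and $\|X\|\le\|Y\|\le\|\widetilde{Y}\|=\|X\|$. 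This buys self-containedness (only the classical Sz.-Nagy--Foias theorem is needed, and your normalization step is actually superfluous, since Theorem~\ref{intertwining} is norm-preserving for arbitrary $X$), at the cost of one point you should make explicit: the intertwining theorem must be applied to \emph{non-minimal} isometric lifts, since $V$ is arbitrary and $W$, though minimal over $\mathcal{K}$, need not be minimal over $\mathcal{H}$. That strengthening is standard --- the minimal part $\bigvee_{n\ge 0} V^n\mathcal{H}$ of any isometric lift is reducing, so one lifts over the minimal parts and extends by zero --- and the paper itself uses it tacitly (Theorem~\ref{intertwining} is stated for minimal lifts but applied to arbitrary ones in Theorem~\ref{Q_Riso}). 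Finally, note that your middle paragraphs' worry about propagating a Parrott-type positivity argument through the $\overline{Q}$-twist is entirely sidestepped by your own closing reduction, so that tentative material can simply be dropped.
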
 
\begin{thm}
	Let $T$ be any contraction on a Hilbert space $\mathcal{H}$ with $(V,\mathcal{K})$ being an isometric lift of $(T, \mathcal H)$. Also, let $Q\in\mathcal{B}(\mathcal{H})$ be arbitrary.
	\begin{enumerate}
		\item If $TX=QXT$ and if $Q$ is a contraction, then there is an isometric lift $\overline{Q}\in \mathcal{B}(\mathcal{K})$ of $Q$ and a norm preserving lift $Y\in \mathcal{B}(\mathcal{K})$ of $X$ such that $VY=\overline{Q}YV$.
		\item If $XT=TXQ$, then there is a lifting $\overline{Q}\in \mathcal{B}(\mathcal{K})$ of $Q$, a norm preserving lifting $Y\in \mathcal{B}(\mathcal{K})$ of $X$ such that $YV=VY\overline{Q}$.
	\end{enumerate} 
\end{thm}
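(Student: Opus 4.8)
The plan is to derive both parts from the preceding theorem, in which $\overline{Q}$ is supplied; the only genuinely new work is to manufacture a lift $\overline{Q}$ of $Q$ with the right features and then invoke that theorem. Throughout I decompose $\mathcal{K}=\mathcal{H}\oplus\mathcal{L}$ with $\mathcal{L}=\mathcal{K}\ominus\mathcal{H}$, noting that $\mathcal{L}$ is invariant for $V$ because $\mathcal{H}$ is invariant for $V^*$. Relative to this splitting all the operators in sight are lower triangular: $V=\left[\begin{smallmatrix}T&0\\ C&D\end{smallmatrix}\right]$, while a lift $Y$ of $X$ and a lift $\overline{Q}$ of $Q$ have the shapes $\left[\begin{smallmatrix}X&0\\ \ast&\ast\end{smallmatrix}\right]$ and $\left[\begin{smallmatrix}Q&0\\ \ast&\ast\end{smallmatrix}\right]$. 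Consequently the $(1,1)$-corner of any twisted relation among $V,Y,\overline{Q}$ collapses to the corresponding hypothesis on $\mathcal{H}$, so that hypothesis is automatically the compression of the identity we must build, and the task is to fill in the lower blocks.

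For part (1) I first construct an isometric lift $\overline{Q}=\left[\begin{smallmatrix}Q&0\\ B&D\end{smallmatrix}\right]$ of the contraction $Q$. The condition $\overline{Q}^*\overline{Q}=I$ is equivalent to $B^*B=I-Q^*Q$, $B^*D=0$ and $D^*D=I$, so I take $B=J(I-Q^*Q)^{1/2}$ for an isometry $J$ of $\overline{\mathrm{ran}}\,(I-Q^*Q)^{1/2}$ into $\mathcal{L}$ and let $D$ be an isometry of $\mathcal{L}$ into $\mathcal{L}\ominus\mathrm{ran}\,B$. Once such $J$ and $D$ exist, $\overline{Q}$ is an isometric lift and $Q$ is a contraction, so the hypothesis $TX=QXT$ is the left-hand mirror of the hypothesis of part (3) of the preceding theorem and the target $VY=\overline{Q}YV$ is the mirror of its conclusion $VY=YV\overline{Q}$; repeating the proof of part (3) with $Q$ and $\overline{Q}$ multiplied on the left instead of the right, a routine symmetric modification, yields the norm-preserving lift $Y$.

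For part (2) nothing is demanded of $\overline{Q}$ beyond being a bounded lift, so I have two-sided freedom: the lower blocks of both $Y$ and $\overline{Q}$ are at my disposal. I would solve the remaining block equations coming from $YV=VY\overline{Q}$ on $\mathcal{H}\oplus\mathcal{L}$, the $(1,1)$-block being exactly $XT=TXQ$, and use the extra entries of $\overline{Q}$ to absorb the obstruction while choosing the entries of $Y$ by the same intertwining-lifting scheme as in the preceding theorem; the upper bound $\|Y\|\le\|X\|$ comes from the commutant-lifting (Parrott) estimate and the lower bound $\|Y\|\ge\|X\|$ is automatic since $Y^*|_{\mathcal{H}}=X^*$, giving norm preservation. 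As a consistency check, when $Q=I$ the hypothesis reads $XT=TX$, one may take $\overline{Q}=I_{\mathcal{K}}$, and the statement reduces to the classical commutant lifting theorem.

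The step I expect to be the main obstacle is the isometric realization of $\overline{Q}$ in part (1): the relation $B^*B=I-Q^*Q$ forces $\mathcal{L}$ to absorb the entire defect of $Q$ along with a shift-type summand for the isometry $D$, and an arbitrary isometric lift $V$ need not provide this room (if $T$ is itself an isometry then $\mathcal{K}=\mathcal{H}$, and no isometric lift of a non-isometric $Q$ exists on $\mathcal{K}$). I would resolve this by allowing $V$ to be replaced by a sufficiently large isometric lift, the minimal isometric dilation of $T$ being the canonical choice since its added summand $\ell^{2}(\mathcal{D}_T)$ is infinite dimensional, and then checking that the norm-preservation argument is unaffected because $Y$ is built to act as a fixed isometry on the extra summand. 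The remaining uniform difficulty in both parts is precisely this norm control, which is exactly where the commutant-lifting machinery of the preceding theorem does the real work.
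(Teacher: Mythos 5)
Your proof of part (1) collapses at the step you call a ``routine symmetric modification.'' The proof of part (3) of the preceding theorem (the paper's Theorem \ref{Q_Riso}) works because the hypothesis $TX=XTQ$ groups as $TX=X(TQ)$: an honest intertwining relation $T_1X=XT_2$ with $T_1=T$, $T_2=TQ$, so the Sz.-Nagy--Foias intertwining lifting theorem (Theorem \ref{intertwining}) applies to the isometric lifts $V$ and $V\overline{Q}$ and hands you $Y$ with $VY=YV\overline{Q}$. The left-hand mirror of that argument proves a \emph{different} statement: hypothesis $XT=QTX=(QT)X$ with conclusion $YV=\overline{Q}VY$, which is the paper's Theorem \ref{QLT commutant lift}, with $\overline{Q}V$ lifting $QT$. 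Part (1) is not that statement: its hypothesis is $TX=Q(XT)$, equivalently $TX=(QX)T$, where $Q$ stands \emph{outside} the product, so the relation couples the two different operators $X$ and $QX$ and cannot be rewritten as $T_1X=XT_2$ unless $Q$ is invertible (an isometry or contraction need not be; multiplying through by $Q^*$ loses information); likewise the target $VY=\overline{Q}YV$ is not of the form $V_1Y=YV_2$. Hence the intertwining lifting theorem is inapplicable, mirrored or not, and no choice of your blocks $B,D,J$ repairs this. The only genuine symmetry available --- taking adjoints --- turns part (1) into an \emph{extension} problem with $Q^*$ on the right, and that is exactly how the paper proceeds: Theorems \ref{Q_Lcommminext}, \ref{Q_Lcomext}, \ref{QminiisoliftDouglass} (stated for $Q$ a co-isometry/isometry) prove it by a Douglas--Muhly--Pearcy block-completion lemma (Theorem \ref{partialisointertwine}, built precisely for relations $XT=TY$ with $X\neq Y$) applied inductively along the filtration $\mathcal{K}_n=\mathcal{H}\oplus\mathcal{D}_{T^*}\oplus\cdots\oplus\mathcal{D}_{T^*}$ of the minimal co-isometric extension, choosing the free blocks at each stage ($A_n=q^{-1}C_n$, $B_n=q^{-1}D_n$) so the coupling $Y_n=X_nQ_n$ propagates, using the co-isometry of $Q$ for $\|X_n\|=\|X\|$, then taking strong limits, removing minimality, and dualizing. (Your observation that a non-isometric $Q$ has no isometric lift on a fixed $\mathcal{K}$ is correct --- it is why the body version assumes $Q$ isometric and takes $\overline{Q}=Q\oplus qI$ --- but it is peripheral to this gap.)

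Part (2) has the same hole. ``Use the extra entries of $\overline{Q}$ to absorb the obstruction while choosing the entries of $Y$ by the same intertwining-lifting scheme'' names no mechanism: $XT=TXQ$ groups as $XT=T(XQ)$, again a relation between $X$ and the different operator $XQ$, so no classical intertwining lifting produces $Y$; and the freedom in $\overline{Q}$ does not decouple the problem, since for a fixed lift $Y$ of $X$ there is in general no bounded $\overline{Q}$ at all satisfying $YV=VY\overline{Q}$ --- the operators $Y$ and $\overline{Q}$ must be built \emph{simultaneously}. That simultaneous construction is the entire content of the paper's proof (Theorems \ref{QminicoisoextDouglass} and \ref{QcoisoextDouglass}, dualized in Theorem \ref{QisoliftDouglass}): the same induction as above, now with blocks $A_n=Q_{n-1}C_n+R_{n1}D_n$ and $B_n=R_{n2}D_n$ chosen so that $S_n=Q_nX_n$ holds at every stage before passing to strong limits. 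Your triangular setup and the remark that the $(1,1)$-corner of the desired identity reduces to the hypothesis are correct, but they only record consistency; the substance of both parts is the infinite sequence of coupled completions, which your proposal leaves unaddressed.
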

In Section \ref{QSectioninter}, we find generalizations of some lifting and extension results for $Q$-intertwining operators.
 
	For a pair of commuting contractions $(T_1,T_2)$ acting on a Hilbert space $\mathcal H$, Ando's famous dilation theorem \cite{Ando} tells us that there is a pair of commuting isometries $(V_1,V_2)$ acting on a bigger Hilbert space $\mathcal K \supseteq \mathcal H$ such that $(V_1,V_2)$ dilates $(T_1,T_2)$, that is
	\[
	f(T_1,T_2)=P_{\mathcal H} f(V_1,V_2)|_{\mathcal H}
	\]
	for any polynomial $f\in \mathbb c[z_1,z_2]$, $P_{\mathcal H}$ being the orthogonal projection of $\mathcal K$ onto $\mathcal H$. In \cite{Ando}, Ando showed an explicit construction of such isometries $V_1,V_2$. In Section \ref{QSectionAndo}, we generalize Ando's theorem to $Q$-commuting contractions. Following Ando's method, we present several explicit constructions of such dilation. Indeed, for any contraction $Q$ on $\mathcal{H}$ if $(T_1,T_2)$ is a $Q$-commuting pair of contractions on $\mathcal{H}$, then there are isometric lifts $\overline{Q},V_1,V_2$ of $Q,T_1,T_2$ respectively such that the pair $(V_1,V_2)$ is $\overline{Q}$-commuting. Also, we obtain a variant of this result when $Q$ is any bounded operator when $\|QT_1\|\leq 1$ or $\|T_1Q\|\leq 1$.  
		
		 The well-known intertwining lifting theorem states that given any contractions $T_1,T_2,X$ on $\mathcal{H}$ satisfying $T_1X=XT_2$, $X$ has a norm preserving lift $Y$ that intertwines isometric lifts $V_1,V_2$ of $T_1,T_2$ respectively. We establish a slightly different version of this result. We show that if any isometric lift $(W,\mathcal{K})$ of $X$ is given, then there is a contractive lift $S_1$ of $T_1$ and a norm preserving lift $S_2$ of $T_2$ such that $S_1W=WS_2$. As an application of this, we obtain in Section \ref{Sectioninter} a new proof of Ando-type dilation for intertwining contractions. In addition to this, we give alternative proofs to a few existing intertwining lifting theorems.
		
		In general for $n\geq 3$, a commuting $n$-tuple of contractions does not possess an isometric dilation. Op$\check{e}$la \cite{Ope} proved that any $n$-tuple of contractions that commute according to a graph without a cycle dilates to a $n$-tuple of unitaries that commute according to the same graph. In \cite{K.M.}, Keshari and Mallick obtained a few generalized results in this direction. Let $G=(V,E)$ be any simple graph on $n$ vertices and let $\textbf{Q}$ denote a function that identifies an edge of the graph with some unitary on $\mathcal{H}$. In Section \ref{QSectiongraph}, we extend Op$\check{e}$la's notion of `commutativity with respect to a graph' and define $(G,\textbf{Q},L)$-commuting, $(G,\textbf{Q},M)$-commuting and $(G,\textbf{Q},R)$-commuting system of contractions. We prove that if $G$ is a simple connected graph on $n$ vertices without any cycles and if $(T_1,T_2,\dots ,T_n)$ is a $(G,\textbf{Q},L)$-commuting, $(G,\textbf{Q},M)$-commuting or $(G,\textbf{Q},R)$-commuting system of contractions, then there is a $(G,\overline{\textbf{Q}},L)$-commuting, $(G,\overline{\textbf{Q}},M)$-commuting or $(G,\overline{\textbf{Q}},R)$-commuting system of isometries, $(V_1,V_2,\dots ,V_n)$ on some Hilbert space $\mathcal{K}$ containing $\mathcal{H}$ such that each $V_i$ is a lift of $T_i$, where $\overline{\textbf{Q}}$ is a function that maps an edge $(i,j)$ to the unitary $\textbf{Q}(i,j)\oplus I $ on $\mathcal{K}=\mathcal{H}\oplus (\mathcal{K}\ominus \mathcal H)$. In the same Section, we present a few more results which connect our lifting results with graph theory.
		
We present a few examples of $Q$-commuting operators along with a few relevant associated results in Section \ref{Sec:Examples}. Section \ref{Sec:background} deals with some background material and necessary preparatory results.
	
	\vspace{0.2cm}  
	
	\section{Preparatory results and background material} \label{Sec:background}
	
	\vspace{0.4cm}
	
\noindent In this Section, we recall a few theorems from the literature which will be used in sequel. We start with a dual version of Parrot's theorem proved by Sebestyen.

	\begin{thm}[\cite{Sebestyen}, Theorem 1]\label{dualparrot}
		Let ${K}$ and ${K}'$ be Hilbert spaces, $H \subseteq K$ and $H' \subseteq K'$ be subspaces, and $X : H \rightarrow K'$ and $X': H' \rightarrow K$ be given bounded linear transformations. Then there is a map $Y:K \rightarrow K'$ extending $X$ so that $Y^*$ extends $X'$ if and only if the following identity holds true:
		\[
		\langle Xh,h' \rangle = \langle h, X'h' \rangle \quad \text{for all }h \in H \text{ and } h' \in H'.
		\]
		Moreover $Y$ can be of norm max$\{\|X\|, \|X'\|\}$ possible at most.
	\end{thm}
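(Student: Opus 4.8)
The plan is to recognize this as a disguised Parrott completion problem. The necessity of the orthogonality relation costs nothing: if $Y:K\to K'$ extends $X$ and $Y^{*}$ extends $X'$, then for $h\in H$ and $h'\in H'$ we have $\langle Xh,h'\rangle=\langle Yh,h'\rangle=\langle h,Y^{*}h'\rangle=\langle h,X'h'\rangle$, so the identity holds automatically. The substance is the converse implication together with the norm estimate, and for this I would pass to block form.

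I would decompose $K=H\oplus H^{\perp}$ and $K'=H'\oplus (H')^{\perp}$ and look for $Y$ as a $2\times 2$ operator matrix
\[
Y=\begin{pmatrix} A & B\\ C & D\end{pmatrix},\qquad A:H\to H',\quad B:H^{\perp}\to H',\quad C:H\to (H')^{\perp},\quad D:H^{\perp}\to(H')^{\perp}.
\]
The demand $Y|_{H}=X$ forces the first column to be $X$, that is $A=P_{H'}X$ and $C=P_{(H')^{\perp}}X$; the demand $Y^{*}|_{H'}=X'$ forces the first column of $Y^{*}$ to be $X'$, that is $A^{*}=P_{H}X'$ and $B^{*}=P_{H^{\perp}}X'$. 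Hence three of the four blocks are completely pinned down by the extension requirements, and the corner $D$ remains entirely free.

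The single place where the hypothesis enters is a compatibility check: the block $A$ is prescribed to be simultaneously $P_{H'}X$ (from $Y|_{H}=X$) and $(P_{H}X')^{*}$ (from $Y^{*}|_{H'}=X'$). A direct computation shows that the equality $(P_{H'}X)^{*}=P_{H}X'$ is equivalent, after taking conjugates, to $\langle Xh,h'\rangle=\langle h,X'h'\rangle$ for all $h\in H$ and $h'\in H'$, which is precisely our assumption; so under the hypothesis the determined blocks are consistent and a genuine partial matrix is specified. Choosing $D$ to minimize $\|Y\|$ is then exactly a Parrott completion. Here I would note that the fixed column $\begin{pmatrix}A\\ C\end{pmatrix}:H\to K'$ is literally $X$, while the fixed row $\begin{pmatrix}A & B\end{pmatrix}:K\to H'$ has adjoint $\begin{pmatrix}A^{*}\\ B^{*}\end{pmatrix}=X':H'\to K$; thus the two ``edge'' norms appearing in Parrott's theorem are $\|X\|$ and $\|X'\|$. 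The classical Parrott completion theorem then produces $D$ with $\|Y\|=\max\{\|X\|,\|X'\|\}$, yielding the asserted bound (and this is optimal, since any such $Y$ satisfies $\|Y\|\ge\|X\|$ and $\|Y\|=\|Y^{*}\|\ge\|X'\|$).

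The hard part will be the compatibility verification rather than the completion itself: one must confirm that the orthogonality identity is exactly the condition making the $(1,1)$ block determined by $Y|_{H}=X$ agree with the one determined by $Y^{*}|_{H'}=X'$, being careful with the conjugation in passing between $(P_{H'}X)^{*}=P_{H}X'$ and $\langle Xh,h'\rangle=\langle h,X'h'\rangle$. Once that equivalence is in hand, the remainder is a bookkeeping identification of the relevant row and column norms with $\|X'\|$ and $\|X\|$ and a direct appeal to Parrott's theorem.
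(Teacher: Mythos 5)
Your proof is correct, but there is nothing in the paper to compare it against: Theorem \ref{dualparrot} appears only in the background section, quoted from Sebesty\'en's article and stated without proof, so the comparison would really be with Sebesty\'en's original argument rather than anything in this paper. Judged on its own merits, your reduction to Parrott's completion theorem is sound. Necessity is the trivial chain $\langle Xh,h'\rangle=\langle Yh,h'\rangle=\langle h,Y^{*}h'\rangle=\langle h,X'h'\rangle$. For sufficiency, relative to $K=H\oplus H^{\perp}$ and $K'=H'\oplus (H')^{\perp}$, the requirement $Y|_{H}=X$ fixes the left column of the block matrix to be $X$, the requirement $Y^{*}|_{H'}=X'$ fixes the top row (through its adjoint) to be $X'$, and the two prescriptions overlap only in the $(1,1)$ corner, where they demand $P_{H'}X=(P_{H}X')^{*}$; since $\langle (P_{H'}X)^{*}h',h\rangle=\langle h',Xh\rangle$ and $\langle P_{H}X'h',h\rangle=\langle X'h',h\rangle$ for $h\in H$, $h'\in H'$, this consistency condition is precisely the hypothesis $\langle Xh,h'\rangle=\langle h,X'h'\rangle$, as you claim. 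The identification of the Parrott data is also right: the prescribed column is literally $X$ (norm $\|X\|$) and the prescribed row has adjoint literally $X'$ (norm $\|X'\|$), so Parrott's theorem (valid for arbitrary Hilbert spaces, with the infimum attained) supplies $D$ making $\|Y\|=\max\{\|X\|,\|X'\|\}$, and this is optimal because any admissible $Y$ satisfies $\|Y\|\geq\|X\|$ and $\|Y\|=\|Y^{*}\|\geq\|X'\|$. Two minor remarks: the orthogonal decompositions require $H$ and $H'$ to be closed, which is consistent with the paper's standing convention that subspaces are closed; and your argument makes transparent why this statement is regarded as a dual form of Parrott's theorem, which is a useful conceptual point the bare citation in the paper does not convey.
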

	
	The following result from \cite{Dou:Muh:Pea}, known as Douglas's lemma is omnipresent throughout the operator theory results.
	
	\begin{lem} [\cite{Dou:Muh:Pea}, Lemma 2.1] \label{Dlemma}
		Suppose that $\mathcal{S}$, $\mathcal{H}$, $\mathcal{K}$ are Hilbert spaces and $A\in \mathcal{B}(\mathcal{S}, \mathcal{K})$ and $B\in \mathcal{B}(\mathcal{H}, \mathcal{K})$. Then there is a contraction $Z\in \mathcal{B}(\mathcal{S},\mathcal{H})$ satisfying $A=BZ$ if and only if $AA^*\leq BB^*$.
	\end{lem}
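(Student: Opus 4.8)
The plan is to prove the two implications separately, with the forward direction being immediate and the reverse direction requiring an explicit construction of $Z$ through its adjoint. For the easy direction, I would assume $A=BZ$ for some contraction $Z$. Then $AA^*=BZZ^*B^*$, and since $\|Z\|\le 1$ gives $ZZ^*\le I_{\mathcal H}$, conjugating this inequality by $B$ yields $AA^*=B(ZZ^*)B^*\le BB^*$. This settles the ``only if'' part.

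For the nontrivial direction, suppose $AA^*\le BB^*$. Rather than construct $Z$ directly, I would construct its adjoint $W:=Z^*\in\mathcal{B}(\mathcal{H},\mathcal{S})$ and recover $Z=W^*$ at the end; note that $A=BZ$ is equivalent, upon taking adjoints, to $A^*=WB^*$. The crucial observation is that the operator inequality translates, for every $k\in\mathcal{K}$, into the scalar estimate
\[
\|A^*k\|^2=\langle AA^*k,k\rangle\le\langle BB^*k,k\rangle=\|B^*k\|^2.
\]
This one inequality does all the work: it shows at once that $B^*k=0$ forces $A^*k=0$, and that the intended assignment is norm-nonincreasing.

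With this estimate available, I would define $W$ first on the (generally non-closed) subspace $\operatorname{ran}B^*\subseteq\mathcal{H}$ by the rule $W(B^*k):=A^*k$. The scalar inequality guarantees both that this rule is well defined (independent of the choice of representative $k$) and that $\|W(B^*k)\|\le\|B^*k\|$, so $W$ is a contraction on $\operatorname{ran}B^*$. I would then extend $W$ by uniform continuity to the closure $\overline{\operatorname{ran}B^*}$, and finally extend it to all of $\mathcal{H}=\overline{\operatorname{ran}B^*}\oplus\ker B$ by declaring $W\equiv 0$ on the orthogonal complement $\ker B=(\overline{\operatorname{ran}B^*})^\perp$. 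The resulting $W\in\mathcal{B}(\mathcal{H},\mathcal{S})$ is a contraction with $WB^*=A^*$, so setting $Z=W^*$ produces the desired contraction satisfying $A=BZ$.

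The main (and essentially only) obstacle is the well-definedness and boundedness of $W$ on $\operatorname{ran}B^*$, but as noted this is handed to us directly by the hypothesis via the displayed scalar inequality; the remaining steps are routine bookkeeping with the orthogonal decomposition of $\mathcal{H}$ and continuous extension to the closure.
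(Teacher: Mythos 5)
Your proof is correct, and it is essentially the classical argument for Douglas's factorization lemma: the paper itself states this result only as background material, citing Douglas--Muhly--Pearcy without reproducing a proof, so there is no internal proof to compare against. Your construction of $W=Z^*$ on $\operatorname{ran}B^*$ via $W(B^*k)=A^*k$, with well-definedness and contractivity both following from the inequality $\|A^*k\|\le\|B^*k\|$, followed by continuous extension to $\overline{\operatorname{ran}B^*}$ and extension by zero on $\ker B=(\overline{\operatorname{ran}B^*})^\perp$, is exactly the standard route taken in the cited reference.
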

	Now we recall a generalization of the above lemma.	
	
	\begin{thm}[\cite{Dou:Muh:Pea}, Theorem 1]\label{Dthm1}
		Let $\mathcal{H}_0, \mathcal{H}_1,\mathcal{H}_2$ and $\mathcal{K}$ be Hilbert spaces, and for $0\leq i \leq 2$ let $A_i$ be an operator mapping $\mathcal{H}_i$ into $\mathcal{K}$. Then there are operators $Z_1$ and $Z_2$ that map $\mathcal{H}_0$ into $\mathcal{H}_1$ and $\mathcal{H}_2$, respectively, and that satisfy the two conditions 
		\begin{enumerate}
			\item  $A_1Z_1+A_2Z_2=A_0,$
			\item  $Z_1^*Z_1+Z_2^*Z_2\leq I_{\mathcal{H}_0}$
		\end{enumerate} 
		if and only if 
		$ A_1A_1^*+A_2A_2^*\geq A_0A_0^*. $
	\end{thm}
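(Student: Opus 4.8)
The plan is to reduce both implications to Douglas's lemma (Lemma \ref{Dlemma}) by packaging the pair $A_1,A_2$ into a single row operator and the pair $Z_1,Z_2$ into a single column operator on the direct sum $\mathcal{H}_1\oplus\mathcal{H}_2$. Concretely, set $B=\begin{bmatrix} A_1 & A_2\end{bmatrix}:\mathcal{H}_1\oplus\mathcal{H}_2\to\mathcal{K}$, so that $BB^*=A_1A_1^*+A_2A_2^*$, and for a candidate pair $(Z_1,Z_2)$ assemble the column $Z=\begin{bmatrix} Z_1 \\ Z_2\end{bmatrix}:\mathcal{H}_0\to\mathcal{H}_1\oplus\mathcal{H}_2$. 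Under this dictionary, condition (1) reads exactly $BZ=A_0$, while condition (2) reads exactly $Z^*Z=Z_1^*Z_1+Z_2^*Z_2\le I_{\mathcal{H}_0}$, i.e. that $Z$ is a contraction. Thus the theorem becomes the assertion that a contraction $Z$ with $BZ=A_0$ exists if and only if $BB^*\ge A_0A_0^*$, which is precisely the content of Lemma \ref{Dlemma} applied with $A_0$ in the role of ``$A$'' and $B$ in the role of ``$B$''.

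For sufficiency I would assume $A_1A_1^*+A_2A_2^*\ge A_0A_0^*$, rewrite it as $BB^*\ge A_0A_0^*$, and invoke Lemma \ref{Dlemma} to obtain a contraction $Z:\mathcal{H}_0\to\mathcal{H}_1\oplus\mathcal{H}_2$ with $A_0=BZ$. Splitting $Z$ into its two block rows $Z_1,Z_2$ then returns condition (1) from $A_0=A_1Z_1+A_2Z_2$ and condition (2) from $Z^*Z=Z_1^*Z_1+Z_2^*Z_2\le I_{\mathcal{H}_0}$.

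For necessity I would start from a pair $Z_1,Z_2$ satisfying (1) and (2), form the column $Z$ as above, observe that (2) forces $\|Z\|\le 1$ and hence $ZZ^*\le I$ on $\mathcal{H}_1\oplus\mathcal{H}_2$, and use (1) in the form $A_0=BZ$. Then $A_0A_0^*=BZZ^*B^*\le BB^*=A_1A_1^*+A_2A_2^*$, which is exactly the claimed operator inequality.

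I do not expect a genuine obstacle here: once the block decomposition is set up, Douglas's lemma carries all the analytic weight and the rest is bookkeeping. The only point that deserves a word of care is the passage between the two equivalent descriptions of contractivity, since condition (2) is literally the inequality $Z^*Z\le I$ whereas the compression step in the necessity direction needs $ZZ^*\le I$; these coincide because each is equivalent to $\|Z\|\le 1$. I would also remark that the same row/column packaging applies verbatim to any finite family $A_1,\dots,A_k$, so the two-term statement is merely the convenient special case used later in the paper.
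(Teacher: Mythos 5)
Your proof is correct: packaging $A_1,A_2$ into the row operator $B=\begin{bmatrix} A_1 & A_2\end{bmatrix}$ and $Z_1,Z_2$ into a column turns conditions (1) and (2) exactly into ``$BZ=A_0$ with $Z$ a contraction,'' so Lemma \ref{Dlemma} gives both directions, and you handle the only delicate point (that (2) is $Z^*Z\le I$ while the necessity computation uses $ZZ^*\le I$, both equivalent to $\|Z\|\le 1$) correctly. The paper states Theorem \ref{Dthm1} without proof, citing \cite{Dou:Muh:Pea}, and your block-operator reduction to Douglas's lemma is precisely the standard argument of that reference, so the two approaches coincide.
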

	Using Lemma \ref{Dlemma}, Douglas, Muhly and Pearcy gave an Alternative proof of the following proposition due to Sz.-Nagy and Foias (\cite{foias-2}).   
	\begin{prop}[\cite{Dou:Muh:Pea}, Proposition 2.2]\label{prop:21}
		For $i=1,2$, let $T_i$ be a contraction on a Hilbert space $\mathcal{H}_i$, and let $X$ be an operator mapping $\mathcal{H}_2 $ into $\mathcal{H}_1$. A necessary and sufficient condition that the operator on $\mathcal{H}_1 \oplus \mathcal{H}_2$ defined by the matrix 
		$
		\begin{pmatrix}
			T_1 & X\\
			0 & T_2
		\end{pmatrix}
		$ be a contraction is that there is a contraction $C$ mapping $\mathcal{H}_2 $ into $\mathcal{H}_1$ such that 
		\[
		X = D_{T_1^*}CD_{T_2}.
		\]
	\end{prop}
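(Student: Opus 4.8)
The plan is to prove both implications by reducing ``$M := \begin{pmatrix} T_1 & X \\ 0 & T_2\end{pmatrix}$ is a contraction'' to the scalar inequality coming from $\|Mh\|^2\le\|h\|^2$, and to exploit the defect-operator intertwining identity $D_{T_1^*}T_1 = T_1 D_{T_1}$ throughout. Writing $h = h_1\oplus h_2$ and using $\|T_i h_i\|^2 = \|h_i\|^2 - \|D_{T_i}h_i\|^2$, expanding $\|T_1 h_1 + Xh_2\|^2 + \|T_2h_2\|^2\le\|h_1\|^2+\|h_2\|^2$ turns the contraction condition into
\begin{equation*}
2\,\mathrm{Re}\,\langle T_1 h_1, Xh_2\rangle + \|Xh_2\|^2 \le \|D_{T_1}h_1\|^2 + \|D_{T_2}h_2\|^2 \qquad (h_1\in\mathcal H_1,\ h_2\in\mathcal H_2).
\end{equation*}

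For sufficiency I would substitute $X = D_{T_1^*}CD_{T_2}$ with $\|C\|\le 1$, rewrite the cross term as $\langle D_{T_1}h_1, T_1^* CD_{T_2}h_2\rangle$ via the intertwining identity, and use $\|D_{T_1^*}CD_{T_2}h_2\|^2 = \|CD_{T_2}h_2\|^2 - \|T_1^* CD_{T_2}h_2\|^2$. Completing the square then shows that the left-hand side minus the right-hand side equals $-\|D_{T_1}h_1 - T_1^* CD_{T_2}h_2\|^2 + \|CD_{T_2}h_2\|^2 - \|D_{T_2}h_2\|^2$, which is $\le 0$ because $\|C\|\le 1$; hence $M$ is a contraction.

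For necessity I would argue in two stages. Since $M$ is a contraction, $I - MM^*\ge 0$, and its $(1,1)$-block yields $XX^*\le D_{T_1^*}^2$; Douglas's Lemma (Lemma \ref{Dlemma}) then gives a contraction $R$ with $X = D_{T_1^*}R$. Substituting this back and again using the intertwining identity reduces the displayed contraction inequality to
\begin{equation*}
\|D_{T_1}h_1 - T_1^* Rh_2\|^2 + \|D_{T_2}h_2\|^2 \ge \|Rh_2\|^2 \qquad (h_1\in\mathcal H_1,\ h_2\in\mathcal H_2).
\end{equation*}
Minimizing the first term over $h_1$ (its infimum being $\mathrm{dist}(T_1^* Rh_2,\overline{\mathrm{ran}}\,D_{T_1})^2$) bounds the component of $Rh_2$ in $\overline{\mathrm{ran}}\,D_{T_1^*}$ by $\|D_{T_2}h_2\|$, which lets me define $C$ on $\mathrm{ran}\,D_{T_2}$ by $C(D_{T_2}h_2) = P_{\overline{\mathrm{ran}}\,D_{T_1^*}}Rh_2$ (and $0$ on the orthocomplement), so that $D_{T_1^*}CD_{T_2}h_2 = D_{T_1^*}Rh_2 = Xh_2$.

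The hard part will be this final stage: verifying that $C$ is well-defined and contractive. Both rely on the geometry of the defect spaces — that $T_1$ carries $\ker D_{T_1}$ isometrically onto $\ker D_{T_1^*}$ and $T_1^*$ does the reverse — combined with the sharp inequality coming from the minimization. That the full cross-term inequality is unavoidable here is shown by the example $T_1 = T_2 = \tfrac{1}{\sqrt 2}I$, for which the two diagonal conditions $XX^*\le D_{T_1^*}^2$ and $X^*X\le D_{T_2}^2$ can hold for an $X$ with $M$ not a contraction; thus the passage from the one-sided factorization $X = D_{T_1^*}R$ to the two-sided factorization $X = D_{T_1^*}CD_{T_2}$ genuinely needs the minimization over $h_1$ rather than just the two block inequalities.
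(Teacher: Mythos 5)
Your proposal is correct, but it is not the argument this paper points to: the paper never proves Proposition \ref{prop:21} itself — it imports it from Douglas--Muhly--Pearcy, whose proof (as the paper remarks just above the statement) runs through Lemma \ref{Dlemma}. That classical route applies Douglas's Lemma twice and needs no defect-space geometry: writing $M=\begin{pmatrix} C_1 & C_2\end{pmatrix}$ with columns $C_1=\begin{pmatrix} T_1\\ 0\end{pmatrix}$, $C_2=\begin{pmatrix} X\\ T_2\end{pmatrix}$, contractivity of $M$ is exactly $C_2C_2^*\le I-C_1C_1^*=\begin{pmatrix} D_{T_1^*}^2 & 0\\ 0 & I\end{pmatrix}$, so Douglas gives $C_2=\begin{pmatrix} D_{T_1^*}\Gamma_1\\ T_2\end{pmatrix}$ with $\Gamma_1^*\Gamma_1+T_2^*T_2\le I$; then $\Gamma_1^*\Gamma_1\le D_{T_2}^2$, and Douglas applied to adjoints yields $\Gamma_1=CD_{T_2}$, hence $X=D_{T_1^*}CD_{T_2}$, with all steps reversible for sufficiency. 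Your route replaces the second application of Douglas by a hands-on construction: sufficiency by completing the square (that part is complete and correct as written), necessity by one use of Douglas's Lemma plus the minimization-and-projection definition of $C$. The step you flagged as hard does go through, and one identity closes it: since $T_1^*D_{T_1^*}=D_{T_1}T_1^*$ pushes $\overline{\mathrm{ran}}\,D_{T_1^*}$ into $\overline{\mathrm{ran}}\,D_{T_1}$ while $T_1^*$ is isometric from $\ker D_{T_1^*}$ onto $\ker D_{T_1}$, the infimum over $h_1$ in your displayed inequality equals $\|P_{\ker D_{T_1^*}}Rh_2\|^2$, so the inequality collapses to $\|P_{\overline{\mathrm{ran}}\,D_{T_1^*}}Rh_2\|^2\le\|D_{T_2}h_2\|^2$; applied to $h_2-h_2'$ this gives well-definedness of $C$, applied to $h_2$ it gives $\|C\|\le 1$, and $D_{T_1^*}CD_{T_2}h_2=D_{T_1^*}Rh_2=Xh_2$ because $D_{T_1^*}$ annihilates $\ker D_{T_1^*}$. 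As for what each approach buys: the Douglas-twice proof is shorter and purely algebraic, while yours is more self-contained (a single appeal to Douglas) and, via your $T_1=T_2=\tfrac{1}{\sqrt 2}I$ example, makes explicit why the two one-sided block conditions $XX^*\le D_{T_1^*}^2$ and $X^*X\le D_{T_2}^2$ are genuinely weaker than contractivity — a point the algebraic proof leaves invisible.
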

	The following theorem will be used quite often in the last Chapter of the thesis.
	\begin{thm}[\cite{foias1990commutant}, Chapter VII, Corollary 1.3]\label{contractive lift}
		Let $T\in \mathcal{B}(\mathcal{H})$, $T'\in \mathcal{B}(\mathcal{H}')$ and $A\in \mathcal{B}(\mathcal{H},\mathcal{H}')$ be any contractions such that $T'A=AT$. Let $V\in \mathcal{B}(\mathcal{K})$ be an isometric lift of $T$ and $W\in \mathcal{B}$ be a contractive lift of $T'$. Then there is a contractive lifting $B\in \mathcal{B}(\mathcal{K},\mathcal{K}')$ of $A$ such that $WB=BV $. 
	\end{thm}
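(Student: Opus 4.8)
The plan is to follow the Douglas--Muhly--Pearcy circle of ideas, with Theorem \ref{Dthm1} as the main engine. First I would record the convenient reformulation of the hypotheses: writing $\iota:\mathcal{H}'\hookrightarrow\mathcal{K}'$ for the inclusion, the requirement that $B$ lift $A$ is equivalent to the compression identity $P_{\mathcal{H}'}B=AP_{\mathcal{H}}$; meanwhile $V$ being an isometric lift of $T$ means $\mathcal{H}^{\perp}$ is $V$-invariant (so $V$ is block lower-triangular with $(1,1)$-entry $T$), and $W$ being a contractive lift of $T'$ means $(\mathcal{H}')^{\perp}$ is $W$-invariant with $(1,1)$-entry $T'$. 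Since $\|A\|\le 1$ is assumed, the goal is to produce a contraction $B:\mathcal{K}\to\mathcal{K}'$ with $P_{\mathcal{H}'}B=AP_{\mathcal{H}}$ and $WB=BV$. I would reach this through two reductions followed by a core construction.

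For the first reduction I would replace the merely contractive $W$ by an isometry. Choose an isometric lift $W'\in\mathcal{B}(\mathcal{K}'')$ of $W$ on some $\mathcal{K}''\supseteq\mathcal{K}'$; since a lift of a lift is a lift, $W'$ is then an isometric lift of $T'$. Suppose the problem has been solved with isometric target, giving a contractive lift $B'\in\mathcal{B}(\mathcal{K},\mathcal{K}'')$ of $A$ with $W'B'=B'V$. Because $\mathcal{K}'$ is co-invariant for $W'$ one has $P_{\mathcal{K}'}W'=WP_{\mathcal{K}'}$, so $B:=P_{\mathcal{K}'}B'$ is again a contractive lift of $A$ (the compression does not disturb $P_{\mathcal{H}'}B=AP_{\mathcal{H}}$, as $\mathcal{H}'\subseteq\mathcal{K}'$) and satisfies $BV=P_{\mathcal{K}'}B'V=P_{\mathcal{K}'}W'B'=WP_{\mathcal{K}'}B'=WB$. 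Hence it suffices to treat an isometric target. For the second reduction I would pass to the minimal isometric dilation: the subspace $\mathcal{K}_{0}:=\bigvee_{n\ge 0}V^{n}\mathcal{H}$ reduces $V$, since invariance is clear and $V^{*}\mathcal{K}_{0}\subseteq\mathcal{K}_{0}$ follows from $V^{*}V^{n}\mathcal{H}=V^{n-1}\mathcal{H}$ ($n\ge 1$) together with $V^{*}|_{\mathcal{H}}=T^{*}$, so $V|_{\mathcal{K}_{0}}$ is the minimal isometric dilation of $T$. If $B_{0}$ solves the intertwining problem on $\mathcal{K}_{0}$, then $B:=B_{0}P_{\mathcal{K}_{0}}$ solves it on $\mathcal{K}$: the intertwining passes through because $\mathcal{K}_{0}$ reduces $V$, and the lifting condition survives because $\mathcal{H}\subseteq\mathcal{K}_{0}$.

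This leaves the core statement: $T$ with its minimal isometric dilation $V$, an isometric lift $V'$ of $T'$, with $T'A=AT$ and $\|A\|\le 1$, to be solved by a contractive $B$ with $V'B=BV$ and $P_{\mathcal{H}'}B=AP_{\mathcal{H}}$. Here I would build $B$ by the one-step Douglas--Muhly--Pearcy procedure along the filtration $\mathcal{K}_{n}:=\mathcal{H}\vee V\mathcal{H}\vee\cdots\vee V^{n}\mathcal{H}$, which increases to $\mathcal{K}_{0}$ and satisfies $V\mathcal{K}_{n}\subseteq\mathcal{K}_{n+1}$ and $\mathcal{K}_{n+1}=\mathcal{H}+V\mathcal{K}_{n}$. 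Inductively one extends a contraction $B_{n}$ on $\mathcal{K}_{n}$ to a contraction $B_{n+1}$ on $\mathcal{K}_{n+1}$ subject to the two demands that $\|B_{n+1}\|\le 1$ and that $B_{n+1}V=V'B_{n}$ on $\mathcal{K}_{n}$. Reading these as a linear constraint on the unknown restriction of $B_{n+1}$ to the new directions $\mathcal{K}_{n+1}\ominus\mathcal{K}_{n}$, together with a norm bound, puts the step into exactly the form solved by Theorem \ref{Dthm1}, the operators $A_{0},A_{1},A_{2}$ encoding the data of $B_{n}$, $V$ and $V'$. Taking the strong limit of the consistent contractions $B_{n}$ then yields $B$.

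The main obstacle is the inductive step itself. Two things must be checked at each stage: that the two demands are mutually consistent on the overlap $\mathcal{K}_{n}\cap V\mathcal{K}_{n}$, which forces a compatibility relation on the already-constructed part (for $h\in V\mathcal{K}$ one needs $Bh=V'BV^{*}h$, so one cannot simply take the naive seed $\iota A$ but must carry along the components orthogonal to $\mathcal{H}'$); and, more seriously, that the positivity hypothesis $A_{1}A_{1}^{*}+A_{2}A_{2}^{*}\ge A_{0}A_{0}^{*}$ of Theorem \ref{Dthm1} holds at every step. I expect this positivity estimate to be the crux, since it is precisely where the isometry of $V'$, the contractivity of $A$, and the defect relation $I-T^{*}T=D_{T}^{2}$ for $T$ all enter; once it is established, the limit operator inherits $\|B\|\le 1$, $V'B=BV$, and the compression identity automatically, and undoing the two reductions completes the proof.
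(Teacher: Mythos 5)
First, a point of reference: the paper never proves Theorem \ref{contractive lift} at all. It is quoted verbatim as background from \cite{foias1990commutant} (Chapter VII, Corollary 1.3), so there is no internal proof to compare yours against; your proposal has to be judged on its own terms and against the machinery the paper does develop elsewhere. On that score, your two reductions are correct and cleanly justified: passing from the contractive lift $W$ to an isometric lift $W'$ of $W$ (a lift of a lift is indeed a lift) and compressing back via the identity $P_{\mathcal{K}'}W'=WP_{\mathcal{K}'}$, and cutting $V$ down to the reducing subspace $\mathcal{K}_0=\bigvee_{n\geq 0}V^n\mathcal{H}$, are exactly the standard moves, and your verifications that the intertwining and lifting identities survive both compressions are right.

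The genuine gap is in the core. After your reductions (plus one more of the same kind: restricting $W'$ to $\bigvee_{n\geq 0}W'^n\mathcal{H}'$, which reduces $W'$), the remaining statement is precisely the Sz.-Nagy--Foias intertwining lifting theorem, recorded in the paper as Theorem \ref{intertwining}; citing it would finish the proof in one line. Instead you elect to re-prove it by a Douglas--Muhly--Pearcy induction, and you stop exactly where all the work is: you never specify the operators $A_0,A_1,A_2$ in the intended application of Theorem \ref{Dthm1}, never formulate the one-step extension precisely, and never verify the positivity hypothesis $A_1A_1^*+A_2A_2^*\geq A_0A_0^*$, saying only that you ``expect this positivity estimate to be the crux.'' That estimate is not a deferrable routine check --- it is the entire content of the inductive step, and a proposal whose self-identified crux is left unproven is not yet a proof. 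For the record, the step can be carried out, and the paper itself does so in Section \ref{Sectioninter}: Theorem \ref{qpart} is exactly the required one-step extension, whose proof establishes the inequality
\[
(T_2^*D_{X})(T_2^*D_{X})^*+S_2^*S_2\;\geq\;X^*S_1^*S_1X
\]
from $T_2^*T_2+S_2^*S_2=I$, $\|X\|\leq 1$ and $T_1X=XT_2$, and Theorem \ref{maininter} then runs precisely the filtration induction you sketch through to the strong limit. So your architecture is sound and completable, but to stand as a proof it must either carry out that computation or replace the entire core by an appeal to Theorem \ref{intertwining}.
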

	
	Sarason in  (\cite{sarason}), showed that if minimal co-isometric extension $V$ of $T$ is such that $V^*$ is a unilateral shift of multiplicity one, then any commutator $X$ of $T$ has a norm preserving extension $ Y $ such that $YV=VY$. Sz.-Nagy and Foias generalized the seminal work of Sarason for an arbitrary contraction $T$ which is known as the classical commutant lifting theorem.
	\begin{thm} \label{Classic:Comm}
		Let $T$ be a contraction on a Hilbert space $\HS$ and let $(V, \mathcal K)$ be the minimal isometric (or minimal unitary) dilation of $T$. If $R$ commutes with $T$, then there is an operator $S$ commuting with $V$ such that $\|R\|=\|S\|$ and $RT^n=P_{\HS}SV^n|_{\HS}$ for all $n \geq 0$.
	\end{thm}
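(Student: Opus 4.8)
The plan is to reduce the statement to the contractive intertwining lifting result already recorded as Theorem \ref{contractive lift} and then recover the exact norm by a soft argument. First I would dispose of the trivial case $R=0$ and otherwise normalize: replacing $R$ by $R/\|R\|$, I may assume $\|R\|\le 1$, so that $R$ is a contraction commuting with the contraction $T$.

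Next I would set up the substitution $T'=T$, $A=R$, and take $W=V$, where $V$ on $\mathcal K$ is the minimal isometric dilation of $T$. Since $V$ is an isometry it is in particular a contractive lift of $T'=T$, and $T'A=TR=RT=AT$, so all hypotheses of Theorem \ref{contractive lift} are met. That theorem produces a contractive lift $B\in\mathcal B(\mathcal K)$ of $R$ satisfying $VB=BV$, which already delivers the operator commuting with $V$.

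Then the compression identity is essentially automatic from the two lifting properties. For $h,h'\in\HS$ one has $V^{*n}h'=T^{*n}h'\in\HS$ (because $\HS$ is $V^*$-invariant with $V^*|_\HS=T^*$) and $B^*h'=R^*h'\in\HS$ (because $B$ lifts $R$); expanding $\langle BV^n h,h'\rangle=\langle V^n h,B^*h'\rangle=\langle V^n h,R^*h'\rangle=\langle T^n h,R^*h'\rangle=\langle RT^n h,h'\rangle$ gives $P_{\HS}BV^n|_{\HS}=RT^n$ for every $n\ge 0$. Finally, for norm preservation: any lift $B$ of $R$ satisfies $\|B\|\ge\|B^*|_{\HS}\|=\|R^*\|=\|R\|=1$, while Theorem \ref{contractive lift} gives $\|B\|\le 1$; hence $\|B\|=1$, and undoing the normalization the operator $S:=\|R\|\,B$ is the desired lift, with $\|S\|=\|R\|$, $SV=VS$, and $RT^n=P_{\HS}SV^n|_{\HS}$. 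The minimal unitary dilation case is identical, since a unitary dilation is again an isometric lift of $T$ and serves equally well as $W$.

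The main obstacle is not in this deduction but is concentrated entirely inside Theorem \ref{contractive lift}: if one wanted a self-contained argument one would have to build $B$ directly. The natural route there is the Douglas--Muhly--Pearcy scheme, constructing the lift one step at a time along the dense increasing family $\bigvee_{k=0}^{n}V^k\HS$ and invoking the dual Parrott extension (Theorem \ref{dualparrot}) together with Douglas's lemma (Lemma \ref{Dlemma}) and its two-operator refinement (Theorem \ref{Dthm1}) to maintain both contractivity and the intertwining relation at each stage, followed by a weak-limit argument. The delicate point in that approach is ensuring the norm bound survives the limit, which is precisely what the normalization above lets me sidestep once Theorem \ref{contractive lift} is granted.
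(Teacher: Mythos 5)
Your handling of the isometric case is correct: the normalization, the application of Theorem \ref{contractive lift} with $T'=T$, $A=R$, $W=V$, the computation $P_{\HS}BV^n|_{\HS}=RT^n$ from the two lifting properties, and the recovery of $\|S\|=\|R\|$ all go through. For the record, the paper never proves Theorem \ref{Classic:Comm} itself --- it is quoted as classical background, with \cite{foias1990commutant} cited for its proofs --- but your argument is essentially the $Q=I$ specialization of the paper's own proof of Theorem \ref{QLT commutant lift}, which uses exactly this reduction to Theorem \ref{contractive lift} plus the same rescaling trick, so that part is both correct and consonant with the paper's methods.

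The unitary case, however, contains a genuine error. The minimal unitary dilation $U$ of $T$ is \emph{not} an isometric lift of $T$ in the paper's sense: a lift requires $\HS$ to be invariant under $U^*$ with $U^*|_{\HS}=T^*$, and since $U^*$ is unitary this would force $\|T^*h\|=\|U^*h\|=\|h\|$ for every $h\in\HS$, i.e.\ $T$ would have to be a co-isometry. Concretely, for $T=0$ on $\mathbb{C}$ the minimal unitary dilation is the bilateral shift $U$ on $\ell^2(\mathbb{Z})$ with $\HS=\mathbb{C}e_0$, and $U^*e_0=e_{-1}\notin\HS$. So $U$ can serve neither as the $V$ nor as the $W$ of Theorem \ref{contractive lift}, and the sentence ``a unitary dilation is again an isometric lift of $T$'' does not dispose of the parenthetical ``(or minimal unitary)'' part of the statement. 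The standard repair is a two-step argument: the subspace $\mathcal{K}_+=\bigvee_{n\geq 0}U^n\HS$ is $U$-invariant and $V=U|_{\mathcal{K}_+}$ is the minimal isometric dilation of $T$, so your argument produces $S_+$ on $\mathcal{K}_+$ with $S_+V=VS_+$, $\|S_+\|=\|R\|$, and the compression identity; one then defines $S$ on the dense subspace $\bigcup_{n\geq 0}U^{-n}\mathcal{K}_+$ by $S(U^{-n}k):=U^{-n}S_+k$ for $k\in\mathcal{K}_+$, and checks that this is well defined (using $S_+V=VS_+$), satisfies $\|S\|=\|S_+\|$ (using that $U$ is unitary), commutes with $U$, and still compresses correctly since $U^nh=V^nh$ for $h\in\HS$ and $n\geq 0$. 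Without some such extension step the theorem as stated is not proved.
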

	Chapter VII of \cite{foias1990commutant} contains four different proofs of the commutant lifting theorem due to Foias and Frazho. Further, Sz.-Nagy and Foias \cite{Nagy} proved a variant of this theorem replacing the minimal isometric dilation by the minimal co-isometric extension of $T$. A simple but elegant proof of this variant of commutant lifting theorem was given by R. G. Douglas, P. S. Muhly, and Carl Pearcy in 1968 (\cite{Dou:Muh:Pea}). 
	\begin{thm}[\cite{Dou:Muh:Pea}, Theorem 4]
		Let $T$ be a contraction on Hilbert space $\mathcal{H}$, and $Z$ be the unique minimal co-isometric extension of $T$, acting on Hilbert space $\mathcal{K}$ containing $\mathcal{H}$. For every operator $X\in \mathcal{B}(\mathcal{H})$ that commutes with $T$, there is an operator $Y\in \mathcal{B}(\mathcal{K})$ such that 
		\[ ZY=YZ,\; Y(\mathcal{H})\subseteq\mathcal{H},\; Y|_{\mathcal{H}}=X, \;\|Y\|=\|X\|.\]
	\end{thm}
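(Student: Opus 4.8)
The plan is to derive this co-isometric commutant lifting theorem from the isometric intertwining lifting theorem already recorded as Theorem~\ref{contractive lift}, by passing to adjoints. The governing observation is a duality: the hypothesis that $Z$ is a co-isometric extension of $T$ is equivalent to the statement that $Z^{*}$ is an isometric lift of $T^{*}$ in the sense of the paper, and an extension $Y$ of $X$ commuting with $Z$ is exactly the adjoint of a lift of $X^{*}$ commuting with $Z^{*}$. So I would move the whole problem across the adjoint, apply an available lifting theorem for isometric lifts, and move back.

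First I would normalise, assuming $\|X\|\le 1$ (replace $X$ by $X/\|X\|$ if $X\neq 0$, the case $X=0$ being trivial). Writing $\mathcal{K}=\mathcal{H}\oplus\mathcal{L}$ with $\mathcal{L}=\mathcal{K}\ominus\mathcal{H}$, the invariance of $\mathcal{H}$ under $Z$ together with $Z|_{\mathcal{H}}=T$ gives $Z=\begin{pmatrix} T & * \\ 0 & *\end{pmatrix}$, hence $Z^{*}=\begin{pmatrix} T^{*} & 0 \\ * & *\end{pmatrix}$ has $\mathcal{H}$ invariant under $(Z^{*})^{*}=Z$ with $(Z^{*})^{*}|_{\mathcal{H}}=T=(T^{*})^{*}$; since $ZZ^{*}=I$ forces $Z^{*}$ to be an isometry, $Z^{*}$ is an isometric lift of $T^{*}$ (minimality of $Z$ is not needed for this). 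Next, $XT=TX$ yields $T^{*}X^{*}=X^{*}T^{*}$, so $X^{*}$ is a contraction intertwining $T^{*}$ with itself. Applying Theorem~\ref{contractive lift} with the contraction $T^{*}$, the intertwiner $X^{*}$, and the isometric (hence contractive) lift $Z^{*}$ in both slots produces a contractive lift $B$ of $X^{*}$ with $Z^{*}B=BZ^{*}$. I would then set $Y:=B^{*}$. Taking adjoints in $Z^{*}B=BZ^{*}$ gives $ZY=YZ$; and because $B$ is a lift of $X^{*}$, the subspace $\mathcal{H}$ is invariant under $B^{*}=Y$ with $Y|_{\mathcal{H}}=(X^{*})^{*}=X$, which is precisely $Y(\mathcal{H})\subseteq\mathcal{H}$ and $Y|_{\mathcal{H}}=X$.

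The one point needing care is the norm equality $\|Y\|=\|X\|$: Theorem~\ref{contractive lift} delivers only a contraction $B$, so $\|Y\|=\|B\|\le 1=\|X\|$, while $\|Y\|\ge\|Y|_{\mathcal{H}}\|=\|X\|$ gives the reverse inequality, and undoing the normalisation restores the general case. Otherwise the argument is adjoint bookkeeping, and the main thing to get right is the direction of the lift, namely that one obtains a genuine extension $Y$ with $Y(\mathcal{H})\subseteq\mathcal{H}$ rather than merely the compression identity $X=P_{\mathcal{H}}Y|_{\mathcal{H}}$; this is guaranteed precisely because Theorem~\ref{contractive lift} produces a lift in the strong invariant-subspace sense. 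As an alternative staying closer to the original Douglas--Muhly--Pearcy method, one could instead look for $Y=\begin{pmatrix} X & E \\ 0 & G\end{pmatrix}$ directly: commutation with $Z=\begin{pmatrix} T & B_{0} \\ 0 & D\end{pmatrix}$ reduces to the equations $DG=GD$ and $TE-ED=XB_{0}-B_{0}G$, and one would construct $G$ and $E$ with the required norm control by feeding the associated positivity condition into Douglas's lemma (Lemma~\ref{Dlemma}) or its refinement Theorem~\ref{Dthm1}; there the genuine obstacle is producing a simultaneous solution with $\|Y\|\le 1$, which is exactly what the Douglas--Muhly--Pearcy inequality is built to handle.
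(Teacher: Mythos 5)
Your proof is correct, but note that the paper never proves this statement itself: it is quoted in Section \ref{Sec:background} as background from Douglas--Muhly--Pearcy. What your argument actually reproduces is the paper's own method for its $Q$-generalizations: Theorem \ref{QLT commutant lift} is proved by exactly your normalization-plus-Theorem \ref{contractive lift} device, and Theorem \ref{QRT co-iso ext} is deduced from it by exactly your adjoint bookkeeping ($Z$ is a co-isometric extension of $T$ if and only if $Z^{*}$ is an isometric lift of $T^{*}$, then pass back via $Y=B^{*}$). Setting $Q=\overline{Q}=I$ in those two results yields precisely the statement at hand, including your observation that minimality of $Z$ is never used. By contrast, the original Douglas--Muhly--Pearcy proof --- which the paper does emulate when it needs finer control, in Theorems \ref{Q_Lcommminext} and \ref{QminicoisoextDouglass} --- is the inductive block-matrix argument on the finite sections $\mathcal{K}_n=\mathcal{H}\oplus \mathcal{D}_{T^*}\oplus\dots\oplus \mathcal{D}_{T^*}$ of the minimal co-isometric extension, using the one-step lemma (Theorem \ref{partialisointertwine}, resting on Lemma \ref{Dlemma} and Theorem \ref{Dthm1}) followed by a strong-limit passage; that is the route you only gesture at in your closing alternative, and your two-by-two sketch there would still need the induction and the limiting argument to become a proof. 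One cosmetic point: to conclude $\|Y\|=\|X\|$ rather than just $\|Y\|\leq 1$, you must normalize to $\|X\|=1$ (not merely $\|X\|\leq 1$); your scaling step does this implicitly, but the phrase ``assuming $\|X\|\le 1$'' should be tightened.
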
 
	We will make numerous applications of the following intertwining lifting theorem due to Sz.-Nagy and Foias. 
	\begin{thm}[Sz.-Nagy \& Foias, \cite{Nagy}]\label{intertwining}
		Suppose that for $i=1,2$, $T_i$ is a contraction acting on a Hilbert space $\mathcal{H}_i$ and $V_i$ is the unique minimal isometric lift of $T_i$ acting on the Hilbert space $\mathcal{K}_i\supseteq \mathcal{H}_i$. Let $X$ be an operator that maps $\mathcal{H}_2$ into $\mathcal{H}_1$ and satisfies the equation $T_1X=XT_2$. Then there is an operator $Y$ mapping $\mathcal{K}_2$ into $\mathcal{K}_1$ such that 
		\[
		V_1Y = YV_2, \quad Y^*\mathcal{H}_1\subseteq \mathcal{H}_2, \quad Y^*|_{\mathcal{H}_1}=X^*, \quad \|Y\|=\|X\|.	\] 
	\end{thm}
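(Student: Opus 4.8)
The statement to prove is the classical intertwining lifting theorem of Sz.-Nagy and Foias (Theorem~\ref{intertwining}). The plan is to deduce it directly from the commutant lifting theorem (Theorem~\ref{Classic:Comm}) via the standard ``$2\times 2$ operator matrix'' trick, which realizes an intertwining relation as a genuine commuting relation on a direct-sum space.

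First I would form the Hilbert space $\mathcal{H}=\mathcal{H}_1\oplus\mathcal{H}_2$ together with the contraction $T=T_1\oplus T_2$ on $\mathcal{H}$; this is a contraction since each $T_i$ is. I would then observe that the minimal isometric dilation of $T_1\oplus T_2$ is $V=V_1\oplus V_2$ on $\mathcal{K}=\mathcal{K}_1\oplus\mathcal{K}_2$, where $V_i$ is the minimal isometric lift of $T_i$; minimality of each $V_i$ guarantees minimality of the direct sum. Next, without loss of generality normalize so that $\|X\|\le 1$ (rescale $X$, prove the result, then scale back, which preserves all the lifting identities). The key algebraic step is to encode the intertwining relation $T_1X=XT_2$ as a commutation relation: on $\mathcal{H}_1\oplus\mathcal{H}_2$ define the operator
\[
R=\begin{pmatrix} 0 & X\\ 0 & 0\end{pmatrix},
\]
and verify by a direct block computation that $RT=TR$ holds precisely because $XT_2=T_1X$. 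With $\|X\|\le 1$ we have $\|R\|\le 1$.

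I would then apply the commutant lifting theorem to the contraction $T=T_1\oplus T_2$, its minimal isometric dilation $V=V_1\oplus V_2$, and the commutant $R$. This yields an operator $S$ on $\mathcal{K}$ with $SV=VS$, $\|S\|=\|R\|=\|X\|$, and $RT^n=P_{\mathcal{H}}SV^n|_{\mathcal{H}}$ for all $n\ge 0$; taking $n=0$ gives $P_{\mathcal{H}}S|_{\mathcal{H}}=R$. The remaining work is to extract from $S$ the desired single intertwining operator $Y:\mathcal{K}_2\to\mathcal{K}_1$. Writing $S$ as a block operator with respect to $\mathcal{K}=\mathcal{K}_1\oplus\mathcal{K}_2$, I would set $Y=P_{\mathcal{K}_1}S|_{\mathcal{K}_2}$, i.e. the $(1,2)$-block of $S$, and check that the commutation $SV=VS$ forces the block relation $V_1Y=YV_2$ together with the correct compression identities.

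The main obstacle, and the step deserving the most care, is verifying that the single off-diagonal block $Y$ carries all the required properties rather than merely that $S$ commutes with $V$; in particular one must confirm that $Y^*$ leaves $\mathcal{H}_1$ invariant with $Y^*|_{\mathcal{H}_1}=X^*$ and that $\|Y\|=\|X\|$. The norm equality needs attention: the commutant lifting theorem gives $\|S\|=\|R\|=\|X\|$, and since $Y$ is a compression (restriction) of $S$ one gets $\|Y\|\le\|S\|=\|X\|$, while the compression identity $P_{\mathcal{H}_1}Y|_{\mathcal{H}_2}=X$ (read off from $P_{\mathcal{H}}S|_{\mathcal{H}}=R$) forces $\|Y\|\ge\|X\|$; together these yield the stated norm preservation. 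Finally, translating the adjoint form of the intertwining identity $V_1Y=YV_2$ and the compression data into the precise conclusions $Y^*\mathcal{H}_1\subseteq\mathcal{H}_2$, $Y^*|_{\mathcal{H}_1}=X^*$ is a routine but necessary bookkeeping of adjoints, using that $\mathcal{H}_i$ is coinvariant for $V_i$ because $V_i$ is a lift of $T_i$.
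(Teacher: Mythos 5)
Your proposal is correct, but it follows a genuinely different route from the paper's own proof. The paper proves this statement (restated as Theorem \ref{maininter} in Section \ref{Sectioninter}) by the Douglas--Muhly--Pearcy method: it writes the two minimal isometric dilations in Sch\"affer form, truncates to the finite stages $\mathcal{K}_n$ and $\mathcal{K}_n'$, applies a one-step completion lemma (Theorem \ref{qpart}, itself proved from Lemma \ref{Dlemma} and Theorem \ref{Dthm1}) inductively to build operators $Y_n$ with $V_nY_n=Y_nV_n'$ and $\|Y_n\|=\|X\|$, and passes to a strong limit; it also records a second proof of the extension version via Sebesty\'en's dual-Parrott argument and a third via Ando's construction. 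You instead reduce the theorem to the commutant lifting theorem (Theorem \ref{Classic:Comm}) by the classical $2\times 2$ trick: $T=T_1\oplus T_2$, $R=\left(\begin{smallmatrix}0&X\\0&0\end{smallmatrix}\right)$, $V=V_1\oplus V_2$, then read off the $(1,2)$-block of the lift $S$. Your steps check out: $V_1\oplus V_2$ is indeed the minimal isometric lift of $T_1\oplus T_2$ (its iterates applied to $\mathcal{H}_1\oplus\{0\}$ and $\{0\}\oplus\mathcal{H}_2$ already span $\mathcal{K}_1\oplus\mathcal{K}_2$), $SV=VS$ forces $V_1S_{12}=S_{12}V_2$ blockwise, and the two norm inequalities squeeze to $\|Y\|=\|X\|$. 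The one place needing more care than ``routine bookkeeping'': Theorem \ref{Classic:Comm} as stated in the paper asserts only the dilation identities $RT^n=P_{\mathcal{H}}SV^n|_{\mathcal{H}}$, not the lifting property $S^*|_{\mathcal{H}}=R^*$, so to obtain $Y^*\mathcal{H}_1\subseteq\mathcal{H}_2$ and $Y^*|_{\mathcal{H}_1}=X^*$ you must first recover co-invariance of $\mathcal{H}$ under $S^*$ from commutation, the dilation identities for all $n$, and minimality (pair $S^*h$ against vectors $V^nh'$ with $h,h'\in\mathcal{H}$). As for what each approach buys: yours is short and transparent, exhibiting the intertwining lifting theorem as a formal corollary of the commutant lifting theorem; the paper's is self-contained and constructive, with no appeal to CLT as a black box, which fits its stated aim of giving several independent proofs in the style of the known proofs of CLT, and its explicit block structure is reused afterwards (for instance in Theorem \ref{minunidil} to pass to minimal unitary dilations).
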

	Similarly, the intertwining extension theorem is as follows.
	\begin{thm}[Sz.-Nagy \& Foias, \cite{Nagy}]\label{douglasintertwining}
		Suppose that for $i=1,2$, $T_i$ is a contraction acting on a Hilbert space $\mathcal{H}_i$ and $Z_i$ is the unique minimal co-isometric extension of $T_i$ acting on the Hilbert space $\mathcal{K}_i\supseteq \mathcal{H}_i$. Let $X$ be an operator that maps $\mathcal{H}_2$ into $\mathcal{H}_1$ and satisfies the equation $T_1X=XT_2$. Then there is an operator $Y$ mapping $\mathcal{K}_2$ into $\mathcal{K}_1$ such that 
		\[
		Z_1Y = YZ_2, \quad Y\mathcal{H}_2\subseteq \mathcal{H}_1, \quad Y|_{\mathcal{H}_2}=X, \quad \|Y\|=\|X\|.	\] 
	\end{thm}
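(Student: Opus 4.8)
The plan is to deduce this co-isometric extension statement from the intertwining \emph{lifting} theorem (Theorem~\ref{intertwining}) by passing to adjoints, so that no new dilation-theoretic construction is needed. The guiding principle is the standard duality between the two notions of dilation: an operator $Z$ on $\mathcal{K}\supseteq\mathcal{H}$ is a co-isometric extension of a contraction $T$ precisely when $Z^*$ is an isometric lift of $T^*$. Indeed, saying that $Z$ extends $T$ means that $\mathcal{H}$ is invariant for $Z$ with $Z|_{\mathcal{H}}=T$, which is exactly the requirement that $\mathcal{H}$ be invariant for $(Z^*)^*$ with $(Z^*)^*|_{\mathcal{H}}=(T^*)^*$; and $Z$ is co-isometric if and only if $Z^*$ is isometric. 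One also checks that the minimality condition $\mathcal{K}_i=\bigvee_{n\geq 0}Z_i^{*n}\mathcal{H}_i$ for the co-isometric extension of $T_i$ is the same as the minimality condition $\mathcal{K}_i=\bigvee_{n\geq 0}(Z_i^*)^n\mathcal{H}_i$ for the isometric lift of $T_i^*$, so that $Z_i^*$ is precisely the unique minimal isometric lift of $T_i^*$ on $\mathcal{K}_i$.

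With this dictionary in hand, first I would take adjoints in the hypothesis $T_1X=XT_2$ to obtain $T_2^*X^*=X^*T_1^*$, so that the operator $X^*:\mathcal{H}_1\to\mathcal{H}_2$ plays the role of the intertwiner between the contractions $T_1^*$ and $T_2^*$. Applying Theorem~\ref{intertwining} to the contractions $T_2^*$ and $T_1^*$, their minimal isometric lifts $Z_2^*$ and $Z_1^*$, and the intertwiner $X^*$, I obtain an operator $\widetilde{Y}:\mathcal{K}_1\to\mathcal{K}_2$ satisfying $Z_2^*\widetilde{Y}=\widetilde{Y}Z_1^*$, together with $\widetilde{Y}^*\mathcal{H}_2\subseteq\mathcal{H}_1$, $\widetilde{Y}^*|_{\mathcal{H}_2}=X$ and $\|\widetilde{Y}\|=\|X^*\|=\|X\|$.

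Setting $Y:=\widetilde{Y}^*:\mathcal{K}_2\to\mathcal{K}_1$ and taking adjoints throughout, the intertwining relation $Z_2^*\widetilde{Y}=\widetilde{Y}Z_1^*$ becomes $Z_1Y=YZ_2$, while the three remaining conditions read off directly as $Y\mathcal{H}_2\subseteq\mathcal{H}_1$, $Y|_{\mathcal{H}_2}=X$ and $\|Y\|=\|X\|$, which is exactly the assertion. The only point requiring genuine care, and hence the main obstacle, is the verification of the duality dictionary in the first paragraph, in particular that the adjoint of the minimal co-isometric extension is the minimal isometric lift of the adjoint together with the matching of the minimality bookkeeping; once this correspondence is pinned down, the remainder is a mechanical transcription through adjoints. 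An alternative, more self-contained route would bypass Theorem~\ref{intertwining} and instead construct $Y$ directly by a Parrott-type one-step extension argument as in the proof of the commutant lifting theorem, but the adjoint reduction above is by far the shortest path given the tools already recorded in this section.
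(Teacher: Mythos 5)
Your proposal is correct and follows essentially the same route as the paper: the paper's own proof (recorded as Theorem \ref{mainintercoiso}) consists precisely of the observation that $(Z,\mathcal{K})$ is the minimal co-isometric extension of $T$ if and only if $Z^*$ is the minimal isometric lift of $T^*$, followed by taking adjoints in the intertwining lifting theorem. Your closing remark about a more self-contained construction also matches the paper, which separately records a direct inductive proof in the style of Sebesty\'{e}n based on the dual Parrott theorem (Theorem \ref{dualparrot}).
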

	   Let $S_1, S_2$ be contractions acting on Hilbert spaces $\mathcal{H}_1$ and $\mathcal{H}_2$ respectively. Let $R_1$ and $R_2$ be the unique minimal co-isometric extensions of $S_1$ and $S_2$ acting on the Hilbert spaces $\mathcal{K}_1$ and $\mathcal{K}_2$ respectively. Let $P_n$ and $P'_n$ be the orthogonal projections of the spaces $\mathcal{K}_1$ and $\mathcal{K}_2$ to the corresponding subspaces 
	   $ \bigvee\limits_{k=0}^n R_1^{*k}(\mathcal{H}_1)$ and $\bigvee\limits_{k=0}^n R_2^{*k}(\mathcal{H}_2)$ respectively, for $n = 0, 1, 2,\dots$ . The following theorem is a generalization of the commutant lifting theorem due to Sz. Nagy and Foias.
	   
	   \begin{thm}[\cite{Sebestyen1}, Theorem 3]\label{sebintlift}
	   	Let $S_1,S_2$ and $R_1,R_2$ be as above and let $Q : \mathcal{K}_1 \to \mathcal{K}_1$ be any contraction that commutes with all of the projections $\{P_n\}_{n=0}^{\infty}$. Let $X : \mathcal{H}_1\to\mathcal{H}_2$ be an operator satisfying 
	   	\[S_2X=XS_1QP_0.\] 
	   	Then there is an operator $Y:\mathcal{K}_1\to \mathcal{K}_2$ which extends $X$ has the same norm as $X$ and intertwines $R_2$ and $R_1Q$: 
	   	\[ R_2Y=YR_1Q. \]	
	   \end{thm}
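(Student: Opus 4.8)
The plan is to realize $Y$ as the strong limit of a tower of norm-preserving one-step extensions, each of which is an instance of the dual Parrott theorem (Theorem \ref{dualparrot}); this is the $Q$-twisted analogue of the iterative proof of the intertwining extension theorem (Theorem \ref{douglasintertwining}), to which it reduces when $Q=I$. Write $\mathcal{M}_n=\mathrm{ran}\,P_n=\bigvee_{k=0}^n R_1^{*k}(\mathcal{H}_1)$, so that $\mathcal{M}_0=\mathcal{H}_1$, the $\mathcal{M}_n$ increase to a dense subspace of $\mathcal{K}_1$ by minimality of the co-isometric extension, and $R_1\mathcal{M}_{n+1}\subseteq\mathcal{M}_n$ because $R_1R_1^*=I$ and $R_1|_{\mathcal{H}_1}=S_1$ leaves $\mathcal{H}_1$ invariant. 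The hypothesis that $Q$ commutes with every $P_n$ enters in one decisive way: $Q\mathcal{M}_n\subseteq\mathcal{M}_n$ for all $n$, and combined with the previous inclusion this yields $R_1Q\,\mathcal{M}_{n+1}\subseteq\mathcal{M}_n$.

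I would then construct contractions $Y_n:\mathcal{M}_n\to\mathcal{K}_2$ with $Y_0=X$, $Y_{n+1}|_{\mathcal{M}_n}=Y_n$, $\|Y_n\|\le\|X\|$, maintaining the invariant $R_2Y_n=Y_nR_1Q$ on $\mathcal{M}_n$. The base case is immediate: on $\mathcal{M}_0=\mathcal{H}_1$ one has $R_2X=S_2X$ and $XR_1Q=XS_1Q$ (using $R_i|_{\mathcal{H}_i}=S_i$ and $Q\mathcal{H}_1\subseteq\mathcal{H}_1$), so the level-$0$ invariant is exactly the hypothesis $S_2X=XS_1QP_0$ restricted to $\mathcal{H}_1$.

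The heart of the argument is the one-step extension. Assuming $Y_n$ is built, set $\Psi_{n+1}:=Y_nR_1Q|_{\mathcal{M}_{n+1}}$, which is well defined into $\mathcal{K}_2$ by the inclusion above and satisfies $\|\Psi_{n+1}\|\le\|Y_n\|\,\|R_1\|\,\|Q\|\le\|X\|$ (this is where $\|Q\|\le 1$ is used). I seek $Y_{n+1}:\mathcal{M}_{n+1}\to\mathcal{K}_2$ extending $Y_n$ with $R_2Y_{n+1}=\Psi_{n+1}$; passing to adjoints, this asks that $Y_{n+1}^*$ extend the map $X'$ defined on $\mathrm{ran}\,R_2^*$ by $X'(R_2^*w)=\Psi_{n+1}^*w$. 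This is precisely the data of Theorem \ref{dualparrot} with $H=\mathcal{M}_n\subseteq\mathcal{M}_{n+1}=K$, $K'=\mathcal{K}_2$, $H'=\mathrm{ran}\,R_2^*$, $X=Y_n$, and $X'$ as above, and the required compatibility $\langle Y_nh,R_2^*w\rangle=\langle h,\Psi_{n+1}^*w\rangle$ unwinds to $R_2Y_n=\Psi_{n+1}|_{\mathcal{M}_n}$, which is exactly the level-$n$ invariant. Since $R_2^*$ is an isometry, $\|X'\|=\|\Psi_{n+1}\|\le\|X\|$, so Theorem \ref{dualparrot} produces $Y_{n+1}$ with $\|Y_{n+1}\|\le\max\{\|Y_n\|,\|X'\|\}\le\|X\|$ and $R_2Y_{n+1}=\Psi_{n+1}=Y_{n+1}R_1Q$ on $\mathcal{M}_{n+1}$, restoring the invariant.

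Finally the $Y_n$ agree on overlaps and are uniformly bounded by $\|X\|$, hence assemble to a single contraction on the dense subspace $\bigcup_n\mathcal{M}_n$, which extends to $Y\in\mathcal{B}(\mathcal{K}_1,\mathcal{K}_2)$ with $\|Y\|\le\|X\|$, $Y|_{\mathcal{H}_1}=X$ (whence $\|Y\|=\|X\|$), and $R_2Y=YR_1Q$ by continuity. I expect the main obstacle to be the one-step extension itself: the point is to recognize the constrained completion $R_2Y_{n+1}=\Psi_{n+1}$ as a dual Parrott problem and to verify that its compatibility hypothesis coincides with the intertwining identity already in hand at the previous level. It is the commutation $QP_n=P_nQ$ that keeps the whole construction triangular with respect to the filtration $\{\mathcal{M}_n\}$ and guarantees that $\Psi_{n+1}$ lands in the domain of $Y_n$, so that the induction closes.
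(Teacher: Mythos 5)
Your proposal is correct and follows essentially the same route as the paper: the paper states this theorem as background (citing Sebesty\'en) without reproducing a proof, but the method it uses for its closely related results --- the alternative ``Sebesty\'en approach'' proof of the $TX=QXT$ extension theorem following Theorem \ref{QminicoisoextDouglass}, and the proof of Theorem \ref{douglasintertwining} in Section \ref{Sectioninter} --- is exactly your scheme of iterated one-step extensions via the dual Parrott theorem (Theorem \ref{dualparrot}) along the filtration $\mathcal{M}_n=\mathrm{ran}\,P_n$. Your key verifications are sound: the compatibility pairing in Theorem \ref{dualparrot} (with $H'=\mathrm{ran}\,R_2^*$, using that $R_2^*$ is isometric) reduces to the level-$n$ intertwining identity, and the hypothesis $QP_n=P_nQ$ enters precisely to give $R_1Q\,\mathcal{M}_{n+1}\subseteq\mathcal{M}_n$, which makes $\Psi_{n+1}$ well defined and allows $Y_n$ to be replaced by $Y_{n+1}$ on the right-hand side after the extension.
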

Mallick and Sumesh have established a more general operator theoretic version of the same results in \cite{sumesh}. For $Q\in \mathcal{B}(\mathcal{H})$ and for any Hilbert space $\mathcal{K}$ containing $\mathcal{H}$, let $\overline{Q}\in \mathcal{B}(\mathcal{K})$ be such that $\overline{Q} =Q \oplus Q'$ with respect to the decomposition $\mathcal{K}=\mathcal{H}\oplus \mathcal{H}^{\perp}$. 
\begin{thm}[\cite{sumesh}, Theorem 2.6]\label{Qcomliftsumesh}
	Let $T\in \mathcal{B}(\mathcal{H})$ be contraction with isometric lifting $V$ on $\mathcal{K}$ and let $X\in \mathcal{B}(\mathcal{H})$. Suppose $Q\in \mathcal{B}(\mathcal{H})$, $\overline{Q}\in \mathcal{B}(\mathcal{K})$ be contractions.  
	\begin{itemize}
		\item[(1)] If $XT=QTX$, then there is a lifting $Y\in \mathcal{B}(\mathcal{K})$ of $X$ such that $YV = \overline{Q}VY$.
		\item[(2)] If $XT=TQX$, then there is a lifting $Y\in \mathcal{B}(\mathcal{K})$ of $X$ such that $YV = V\overline{Q}Y$.
	\end{itemize}
	Further assume that $Q$, $\overline{Q}$ are unitary.
	\begin{itemize}	
		\item[(3)] If $XT=TXQ$, then there is a lifting $Y\in \mathcal{B}(\mathcal{K})$ of $X$ such that $YV = VY\overline{Q}$.
	\end{itemize}
	In all cases $\mathcal{H}$ is invariant under $S$,
	$S|_{\mathcal{H}}=T_2$. Further $T^nX^m=P_{\mathcal{H}}V^nY^m|_{\mathcal{H}}$ and $X^nT^m=P_{\mathcal{H}}Y^nV^m|_{\mathcal{H}}$ for all $n,m\geq 0$. Moreover $Y$ can be chosen such that $\|Y\|=\|X\|$.
\end{thm}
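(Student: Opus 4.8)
The plan is to deduce all three assertions from the contractive intertwining lifting result Theorem~\ref{contractive lift}, by recognizing each $Q$-commuting identity as an ordinary intertwining relation between two contractions on $\mathcal{H}$ and by exhibiting the relevant twisted product ($\overline{Q}V$, $V\overline{Q}$ or $V\overline{Q}^*$) as a lift of the corresponding contraction. First I would reduce to the case $\|X\|\le 1$ by rescaling: replace $X$ by $X/\|X\|$ and multiply the resulting lift back by $\|X\|$. Since any lift $Y$ of $X$ satisfies $\|X\|=\|X^*\|=\|Y^*|_{\mathcal{H}}\|\le\|Y\|$, a contractive lift of $X/\|X\|$ produced by Theorem~\ref{contractive lift} yields $\|Y\|=\|X\|$ automatically.

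For part (1), rewrite $XT=QTX$ as $(QT)X=XT$, so that $X$ intertwines the contraction $T$ with the contraction $QT$. The key observation is that $\overline{Q}V$ is a \emph{contractive lift} of $QT$: since $\overline{Q}=Q\oplus Q'$ is block-diagonal, $\overline{Q}^*$ preserves $\mathcal{H}$ with $\overline{Q}^*|_{\mathcal{H}}=Q^*$, whence for $h\in\mathcal{H}$ we get $(\overline{Q}V)^*h=V^*\overline{Q}^*h=T^*Q^*h=(QT)^*h\in\mathcal{H}$; contractivity follows from $\|\overline{Q}\|\le 1$ and $\|V\|=1$. Applying Theorem~\ref{contractive lift} with isometric lift $V$ of $T$ and contractive lift $\overline{Q}V$ of $QT$ produces a contractive lift $Y$ of $X$ with $(\overline{Q}V)Y=YV$, i.e.\ $YV=\overline{Q}VY$. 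Part (2) is entirely parallel: $XT=TQX$ reads $(TQ)X=XT$, and $V\overline{Q}$ is checked to be a contractive lift of $TQ$ in the same way, so Theorem~\ref{contractive lift} gives $Y$ with $YV=V\overline{Q}Y$.

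Part (3) requires the unitarity of $Q,\overline{Q}$ and an extra inversion. Using $\overline{Q}$ unitary, rewrite $XT=TXQ$ as $TX=X(TQ^*)$, so that $X$ intertwines $TQ^*$ on the domain side with $T$ on the codomain side. Now $V\overline{Q}^*$ is an \emph{isometric} lift of $TQ^*$: it is an isometry because $\overline{Q}^*$ is unitary and $V$ is an isometry, and for $h\in\mathcal{H}$ one has $(V\overline{Q}^*)^*h=\overline{Q}V^*h=QT^*h=(TQ^*)^*h\in\mathcal{H}$, while $V$ itself is a (contractive) lift of $T$. Theorem~\ref{contractive lift}, applied with domain isometric lift $V\overline{Q}^*$ and codomain lift $V$, yields a contractive lift $Y$ of $X$ with $VY=Y(V\overline{Q}^*)$; right-multiplying by the unitary $\overline{Q}$ gives $YV=VY\overline{Q}$, as required.

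Finally, the intertwining relations for powers, $T^nX^m=P_{\mathcal{H}}V^nY^m|_{\mathcal{H}}$ and $X^nT^m=P_{\mathcal{H}}Y^nV^m|_{\mathcal{H}}$, are routine consequences of $Y$ being a lift: since $\mathcal{H}$ is invariant under both $V^*$ and $Y^*$ with $V^*|_{\mathcal{H}}=T^*$ and $Y^*|_{\mathcal{H}}=X^*$, one computes $(V^nY^m)^*|_{\mathcal{H}}=X^{*m}T^{*n}=(T^nX^m)^*$ by peeling off the adjoints one factor at a time. The only genuinely delicate point is the bookkeeping that identifies each twisted product as a lift of the correct contraction, which is exactly where the block-diagonal hypothesis $\overline{Q}=Q\oplus Q'$ is essential, together with the recognition in part (3) that unitarity of $\overline{Q}$ is what upgrades $V\overline{Q}^*$ to an isometry and permits the final inversion; for a contractive but non-unitary $\overline{Q}$ this route breaks down.
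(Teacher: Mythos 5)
Your proposal is correct. For parts (1) and (2) it coincides with the paper's own route: the paper obtains these as special cases of its Theorems \ref{QLT commutant lift} and \ref{QMT commutant lift}, whose proofs consist of exactly your observation that $\overline{Q}V$ (resp. $V\overline{Q}$) is a contractive lift of $QT$ (resp. $TQ$) — using the block-diagonal form $\overline{Q}=Q\oplus Q'$ to see that $\overline{Q}^*$ restricts to $Q^*$ on $\mathcal{H}$ — followed by an appeal to Theorem \ref{contractive lift} and the same rescaling argument for norm preservation. Part (3) is where you genuinely diverge. The paper handles the relation $XT=TXQ$ with much heavier machinery: a partial-isometry intertwining step (Theorem \ref{partialisointertwine}) feeding a Douglas--Muhly--Pearcy-style induction along the minimal co-isometric extension (Theorems \ref{QminicoisoextDouglass}, \ref{QcoisoextDouglass}, \ref{QisoliftDouglass}), which \emph{constructs} a lift $\overline{Q}$ of $Q$ in the process and requires only that $Q$ be a contraction. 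You instead exploit unitarity to rewrite $XT=TXQ$ as $TX=X(TQ^*)$, check that $V\overline{Q}^*$ is an \emph{isometric} lift of $TQ^*$ (both block-diagonality and unitarity of $\overline{Q}$ enter here), apply the same Theorem \ref{contractive lift}, and recover $YV=VY\overline{Q}$ by right multiplication with $\overline{Q}$; this is in effect a reduction of the $Q_R$-case to the already-settled intertwining picture via the substitution $Q\mapsto Q^*$, which the paper never performs. Your route is much shorter and, notably, works with the \emph{prescribed} $\overline{Q}$ — which is what the statement actually demands, since $\overline{Q}$ is given in advance — whereas the paper's induction manufactures its own $\overline{Q}$. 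What the paper's approach buys in exchange is generality: the inductive construction survives when $Q$ is merely a contraction, exactly where your inversion trick breaks down, as you correctly note at the end. The verification of the power-dilation identities from the lifting property is routine and done correctly.
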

\begin{thm}[\cite{sumesh}, Theorem 2.10]\label{Qcomextsumesh}
	Let $T\in \mathcal{B}(\mathcal{H})$ be contraction with co-isometric extension $Z$ on $\mathcal{K}$ and let $X\in \mathcal{B}(\mathcal{H})$. Suppose $Q\in \mathcal{B}(\mathcal{H})$, $\overline{Q}\in \mathcal{B}(\mathcal{K})$ be contractions.  
	\begin{itemize}
		\item[(1)] If $TX=XTQ$, then there is an extension $Y\in \mathcal{B}(\mathcal{K})$ of $X$ such that $ZY=YZ\overline{Q}$.
		\item[(2)] If $TX=XQT$, then there is an extension $Y\in \mathcal{B}(\mathcal{K})$ of $X$ such that $ZY=Y\overline{Q}Z$.
	\end{itemize}
	Further assume that $Q$, $\overline{Q}$ are unitary.
	\begin{itemize}	
		\item[(3)] If $TX=QXT$, then there is an extension $Y\in \mathcal{B}(\mathcal{K})$ of $X$ such that $ZY=\overline{Q}YZ $.
	\end{itemize}
	In all cases $\mathcal{H}$ is invariant under $S$,
	$S|_{\mathcal{H}}=T_2$. Further $T^nX^m=P_{\mathcal{H}}V^nY^m|_{\mathcal{H}}$ and $X^nT^m=P_{\mathcal{H}}Y^nV^m|_{\mathcal{H}}$ for all $n,m\geq 0$. Moreover $Y$ can be chosen such that $\|Y\|=\|X\|$.
\end{thm}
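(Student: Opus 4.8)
The plan is to deduce the entire theorem from the isometric lifting version, Theorem \ref{Qcomliftsumesh}, by a duality (adjoint) argument; no fresh dilation machinery is needed. The three structural facts that make this work are the following. First, since $Z$ is a co-isometric extension of $T$, its adjoint $Z^*$ is an isometric lift of $T^*$ on the same space $\mathcal{K}$. Second, producing an extension $Y$ of $X$ (that is, $\mathcal{H}$ invariant under $Y$ with $Y|_{\mathcal{H}}=X$) is exactly the same as producing a lift $Y^*$ of $X^*$. Third, because $\overline{Q}=Q\oplus Q'$ is block diagonal for $\mathcal{K}=\mathcal{H}\oplus\mathcal{H}^{\perp}$, its adjoint $\overline{Q}^*=Q^*\oplus (Q')^*$ is again block diagonal with $\mathcal{H}$-corner $Q^*$, and $\overline{Q}^*$ is a contraction (a unitary, in case (3)) whenever $\overline{Q}$ is. Thus adjunction converts the hypotheses of the present theorem into hypotheses of Theorem \ref{Qcomliftsumesh} applied to the data $(T^*,X^*,Q^*,Z^*,\overline{Q}^*)$.

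First I would run case (1). Taking adjoints in $TX=XTQ$ gives $X^*T^*=Q^*T^*X^*$, which is precisely the relation $\widetilde{X}\,\widetilde{T}=\widetilde{Q}\,\widetilde{T}\,\widetilde{X}$ of part (1) of Theorem \ref{Qcomliftsumesh} for $\widetilde{T}=T^*$, $\widetilde{X}=X^*$, $\widetilde{Q}=Q^*$, isometric lift $\widetilde{V}=Z^*$ and $\widetilde{\overline{Q}}=\overline{Q}^*$. Theorem \ref{Qcomliftsumesh}(1) then supplies a norm-preserving lift $Y_0$ of $X^*$ with $Y_0Z^*=\overline{Q}^*Z^*Y_0$. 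Taking adjoints of this identity yields $ZY_0^*=Y_0^*Z\overline{Q}$, so $Y:=Y_0^*$ is an extension of $X$ satisfying $ZY=YZ\overline{Q}$, as required. Cases (2) and (3) are identical in spirit: $TX=XQT$ becomes $X^*T^*=T^*Q^*X^*$, matching part (2) and giving $Y_0Z^*=Z^*\overline{Q}^*Y_0$, whose adjoint is $ZY=Y\overline{Q}Z$; and $TX=QXT$ becomes $X^*T^*=T^*X^*Q^*$, matching part (3) (where unitarity of $Q,\overline{Q}$ passes to $Q^*,\overline{Q}^*$) and giving $Y_0Z^*=Z^*Y_0\overline{Q}^*$, whose adjoint is $ZY=\overline{Q}YZ$.

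The auxiliary claims are then immediate. Norm preservation transfers directly, $\|Y\|=\|Y_0^*\|=\|Y_0\|=\|X^*\|=\|X\|$. For the moment relations, note that both $Z$ and $Y$ leave $\mathcal{H}$ invariant (they are extensions of $T$ and $X$), so $Z^nY^m|_{\mathcal{H}}=T^nX^m$ and $Y^nZ^m|_{\mathcal{H}}=X^nT^m$ already land in $\mathcal{H}$; compressing by $P_{\mathcal{H}}$ changes nothing, which is in fact why the extension formulation is cleaner than the lifting one. The only point demanding genuine care is the bookkeeping in the middle paragraph: because the relations are $Q$-twisted and adjunction reverses the order of products, one must check, case by case, that the reversed products land on exactly the correct side of the corresponding lifting identity. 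Once the dictionary $(T,X,Q,Z,\overline{Q})\leftrightarrow(T^*,X^*,Q^*,Z^*,\overline{Q}^*)$ is fixed, each of the three verifications is a one-line manipulation.
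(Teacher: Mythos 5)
Your proof is correct: all three adjoint computations check out, the block-diagonal hypothesis passes to adjoints as $\overline{Q}^{*}=Q^{*}\oplus (Q')^{*}$, the paper's definition of lift makes ``$Y^{*}$ lifts $X^{*}$'' literally equivalent to ``$Y$ extends $X$'', and the norm and moment claims transfer as you say. The interesting point of comparison is that the paper never proves Theorem \ref{Qcomextsumesh} directly at all --- it is quoted as background from \cite{sumesh} --- and instead recovers it later as a special case of the paper's own, more general results: part (1) from Theorem \ref{QRT co-iso ext}, part (2) from Theorem \ref{QMT co-iso ext}, and part (3) from Theorem \ref{QcoisoextDouglass}. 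Those theorems are established with genuinely new machinery (the contractive intertwining lifting result, Theorem \ref{contractive lift}, respectively a Douglas--Muhly--Pearcy-style inductive construction), and only then converted from lifting form to extension form by exactly the adjoint flip you perform; indeed your argument reproduces, almost word for word, the paper's proof of Theorem \ref{QRT co-iso ext} from Theorem \ref{QLT commutant lift}. So the trade-off is this: your route is the most economical possible --- no dilation machinery, just dualizing the already-quoted lifting theorem, Theorem \ref{Qcomliftsumesh} --- but it treats that theorem as a black box, so as a self-contained proof it only relocates the work into \cite{sumesh}. The paper's route costs more but buys strictly stronger statements: the lifts $\overline{Q}$ need not be block diagonal, the contractivity hypotheses on $Q$ are weakened to contractivity of $QT$ or $TQ$, and in case (3) ``unitary'' is relaxed to ``contraction''. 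Within its intended scope, though, your derivation is complete and has no gaps.
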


\begin{thm}[\cite{sumesh}, Theorem 2.16]\label{dil-1}
	Let $T_1,T_2\in \mathcal{B}(\mathcal{H})$ be contractions and $Q\in\mathcal{B}(\mathcal{H})$ be a unitary such that $T_2T_1=QT_1T_2$. Then there exists a Hilbert space $\mathcal{K}$ containing $\mathcal{H}$, isometries $V_1,V_2$ and $\overline{Q}\in \mathcal{B}(\mathcal{K})$ unitary such that 
	\begin{enumerate}
		\item[(i)] $V_2V_1=\overline{Q}V_1V_2$; and
		\item[(ii)] $V_i$ is a lifting (and hence dilation) of $T_i$ so that $T_1^nT_2^m=P_{\mathcal{H}}V_1^nV_2^m|_{\mathcal{H}}$ and $T_2^nT_1^m=P_{\mathcal{H}}V_2^nV_1^m|_{\mathcal{H}}$ for all $n,m\geq 0$.
	\end{enumerate}
\end{thm}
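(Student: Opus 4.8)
The plan is to reduce the statement to the classical Ando-type argument in two stages: first produce a $\overline{Q}$-commuting \emph{contractive} lift of $T_2$, and then dilate it to an isometry while carrying $V_1$ and $Q$ along. First I would fix an isometric lift $V_1$ of $T_1$ on a space $\mathcal{K}_1 \supseteq \mathcal{H}$ (for instance the minimal isometric dilation). Rewriting the hypothesis $T_2 T_1 = Q T_1 T_2$ as $X T_1 = Q T_1 X$ with $X = T_2$, I am exactly in the situation of the $Q$-commutant lifting Theorem \ref{Qcomliftsumesh}(1): taking the unitary lift $\overline{Q}_1 := Q \oplus I$ of $Q$ on $\mathcal{K}_1 = \mathcal{H} \oplus \mathcal{H}^{\perp}$ (a contraction, as required, since $Q$ is unitary), it furnishes a norm-preserving lift $B \in \mathcal{B}(\mathcal{K}_1)$ of $T_2$, hence a contraction, satisfying
\[
B V_1 = \overline{Q}_1 V_1 B .
\]

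Next I would take $V_2$ to be the minimal (Schäffer) isometric dilation of the contraction $B$, realized on $\mathcal{K} := \mathcal{K}_1 \oplus \ell^2(\mathcal{D}_B)$ with $D_B = (I - B^*B)^{1/2}$, $\mathcal{D}_B = \overline{\operatorname{ran}}\, D_B$, and $V_2(h,(d_n)_n) = (Bh, (D_B h, d_0, d_1, \dots))$; this $V_2$ is a lift of $B$ and hence of $T_2$. The heart of the matter is to extend $V_1$ and $\overline{Q}_1$ to the new summand $\ell^2(\mathcal{D}_B)$ so that the $Q$-commuting relation survives. The key is the defect identity: for $h \in \mathcal{K}_1$,
\[
\|D_B V_1 h\|^2 = \|V_1 h\|^2 - \|B V_1 h\|^2 = \|h\|^2 - \|\overline{Q}_1 V_1 B h\|^2 = \|h\|^2 - \|Bh\|^2 = \|D_B h\|^2,
\]
where I use that $V_1$ is an isometry and $\overline{Q}_1$ is unitary. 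Hence $D_B h \mapsto D_B V_1 h$ extends to an isometry $\omega$ on $\mathcal{D}_B$. I would then set $\overline{Q} := \overline{Q}_1 \oplus I$ (unitary, and a lift of $Q$) and define the extension $\widetilde{V}_1 := V_1 \oplus \big(\bigoplus_n \omega\big)$, an isometry lifting $V_1$ and therefore $T_1$. A direct coordinate-wise check, using $B V_1 = \overline{Q}_1 V_1 B$ on the $\mathcal{K}_1$-component and $D_B V_1 = \omega D_B$ with $\omega$ acting coordinate-wise on $\ell^2(\mathcal{D}_B)$, yields
\[
V_2 \widetilde{V}_1 = \overline{Q}\, \widetilde{V}_1 V_2 ,
\]
which is (i) after renaming $\widetilde{V}_1$ as $V_1$. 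Assertion (ii) is then routine: since $\mathcal{H}$ is co-invariant (invariant under the adjoints) for both $V_1$ and $V_2$ through the chain $\mathcal{H}\subseteq \mathcal{K}_1 \subseteq \mathcal{K}$, peeling off the factors one at a time gives $P_{\mathcal{H}} V_1^n V_2^m|_{\mathcal{H}} = T_1^n T_2^m$ and likewise for $V_2^n V_1^m$.

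I expect the main obstacle to be the extension step, namely choosing $\overline{Q}$ and the action of $\widetilde{V}_1$ on the added summand so that the isometry of $\widetilde V_1$, the unitarity of $\overline Q$, and the $\overline{Q}$-commutation with $V_2$ all hold at once; the defect identity above is what lets a single isometry $\omega$ discharge all three requirements, and it is precisely here that the unitarity of $Q$ (inherited by $\overline{Q}_1$) is used in an essential way. An alternative I would keep in mind is Ando's original simultaneous construction on $\mathcal{H}\oplus \ell^2(\mathcal{D})$ via a unitary coupling of the two defect spaces, which produces explicit formulas for $V_1, V_2, \overline{Q}$ in a single step and fits the ``explicit construction'' viewpoint announced in the introduction.
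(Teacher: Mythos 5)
Your proof is correct, and it takes a genuinely different route from the paper's treatment. The paper does not prove Theorem \ref{dil-1} from scratch (it is quoted from Mallick--Sumesh as background); it instead derives the generalization Theorem \ref{Andodil1} via Theorem \ref{QLcoro}, whose scheme is: lift $Q$ and $T_2$ to isometries $\overline{Q}_0,V_2'$ first, commutant-lift $T_1$ to a contraction $Y_1$ satisfying $Y_1V_2'=\overline{Q}_0V_2'Y_1$ (Theorem \ref{QLT commutant lift}), take the minimal isometric dilation $V_1$ of $Y_1$, re-lift $V_2'$ to a contraction $V_2$ by a \emph{second} application of a lifting theorem (Theorem \ref{QminiisoliftDouglass}), and finally force $V_2$ to be an isometry through a norm computation that exploits the minimality of the dilation space. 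You reverse the roles (commutant-lifting $T_2$ against a fixed isometric lift $V_1$ of $T_1$, which is legitimate via Theorem \ref{Qcomliftsumesh}(1) or equally Theorem \ref{QLT commutant lift}) and, more importantly, you replace the paper's entire second half by an explicit Sch\"affer construction: your defect identity $\|D_BV_1h\|=\|D_Bh\|$ --- which is exactly where the unitarity of $Q$, inherited by $\overline{Q}_1$, enters --- produces the single isometry $\omega$ that extends $V_1$ across the defect spaces, so no second lifting theorem and no minimality argument are needed; the coordinate check of $V_2\widetilde{V}_1=\overline{Q}\widetilde{V}_1V_2$ and the co-invariance argument for (ii) are both sound. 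As to what each approach buys: yours is shorter, fully explicit, and closer in spirit to Ando's original argument (and to the paper's Theorem \ref{andoisodilL}, which uses a unitary coupling of defect spaces for the non-contractive-$Q$ variant); the paper's two-lifting-theorem scheme is engineered for extra generality --- it handles arbitrary contractions $Q$ (where one must use a genuine isometric lift of $Q$ rather than $Q\oplus I$; note that for non-isometric $Q$ your defect identity degrades to an inequality, so $\omega$ would fail to be isometric), it accepts arbitrary prescribed isometric lifts $V_2',\overline{Q}_0$ rather than specially constructed ones, and it yields the reducing-subspace property of $\mathcal{K}'$ that drives the inductive graph-theoretic applications in Section \ref{QSectiongraph}. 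Your construction does retain the analogous reducing property ($\mathcal{K}_1$ reduces $\widetilde{V}_1$ since it is block diagonal), so it could serve those applications as well, but only in the isometric-$Q$ regime.
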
	   

\begin{thm}[\cite{sumesh}, Corollary 2.17]\label{dil-2}
	Let $T_1,T_2\in \mathcal{B}(\mathcal{H})$ be contractions and $Q\in\mathcal{B}(\mathcal{H})$ be a unitary such that $T_2T_1=T_1T_2Q$. Then there exists a Hilbert space $\mathcal{K}$ containing $\mathcal{H}$, co-isometries $Z_1,Z_2$ and $\overline{Q}\in \mathcal{B}(\mathcal{K})$ unitary such that 
	\begin{enumerate}
		\item[(i)] $Z_2Z_1=Z_1Z_2\overline{Q}$; and
		\item[(ii)] $Z_i$ is an extension (and hence dilation) of $T_i$ so that $T_1^nT_2^m=P_{\mathcal{H}}Z_1^nZ_2^m|_{\mathcal{H}}$ and $T_2^nT_1^m=P_{\mathcal{H}}Z_2^nZ_1^m|_{\mathcal{H}}$ for all $n,m\geq 0$.
	\end{enumerate}
\end{thm}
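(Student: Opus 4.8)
The plan is to obtain this result as the adjoint (dual) counterpart of the isometric dilation in Theorem \ref{dil-1}, exploiting that the $Q_R$-commuting relation $T_2T_1 = T_1T_2Q$ turns into a $Q_L$-type relation upon taking adjoints. First I would take adjoints of the hypothesis: from $T_2T_1 = T_1T_2Q$ with $Q$ unitary one gets $T_1^*T_2^* = Q^*T_2^*T_1^*$. Setting $A_1 := T_2^*$ and $A_2 := T_1^*$ (both contractions) and $R := Q^*$ (unitary), this reads $A_2A_1 = RA_1A_2$, which is precisely the hypothesis of Theorem \ref{dil-1} for the pair $(A_1,A_2)$ and the unitary $R$.

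Next I would invoke Theorem \ref{dil-1}. It produces a Hilbert space $\mathcal{K}\supseteq\mathcal{H}$, isometries $W_1,W_2$ lifting $A_1,A_2$ respectively, and a unitary $\overline{R}\in\mathcal{B}(\mathcal{K})$ lifting $R$, satisfying $W_2W_1 = \overline{R}W_1W_2$ together with the dilation identities $A_1^nA_2^m = P_{\mathcal{H}}W_1^nW_2^m|_{\mathcal{H}}$ and $A_2^nA_1^m = P_{\mathcal{H}}W_2^nW_1^m|_{\mathcal{H}}$ for all $n,m\geq 0$. I would then define the candidate operators by reverting the adjoint: $Z_1 := W_2^*$, $Z_2 := W_1^*$ and $\overline{Q} := \overline{R}^*$. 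Being adjoints of isometries, $Z_1,Z_2$ are co-isometries, and $\overline{Q}$ is unitary.

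The verification then splits into three routine checks. For the extension property, since $W_2$ is a lift of $A_2$ the subspace $\mathcal{H}$ is invariant under $W_2^* = Z_1$ with $Z_1|_{\mathcal{H}} = A_2^* = T_1$; similarly $Z_2 = W_1^*$ is a co-isometric extension of $A_1^* = T_2$. For the commutation relation (i), taking adjoints of $W_2W_1 = \overline{R}W_1W_2$ gives $W_1^*W_2^* = W_2^*W_1^*\overline{R}^*$, that is $Z_2Z_1 = Z_1Z_2\overline{Q}$. For the dilation identities (ii), I would use the elementary fact that compressing to $\mathcal{H}$ commutes with taking adjoints, namely $P_{\mathcal{H}}B^*|_{\mathcal{H}} = (P_{\mathcal{H}}B|_{\mathcal{H}})^*$; applying this to $Z_1^nZ_2^m = (W_1^mW_2^n)^*$ and using the dilation identity from Theorem \ref{dil-1} yields $P_{\mathcal{H}}Z_1^nZ_2^m|_{\mathcal{H}} = (A_1^mA_2^n)^* = T_1^nT_2^m$, and symmetrically $P_{\mathcal{H}}Z_2^nZ_1^m|_{\mathcal{H}} = T_2^nT_1^m$.

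There is no genuine analytic obstacle here, since all the hard work, namely the explicit Ando-type construction of the commuting isometric dilation, is already carried by Theorem \ref{dil-1}. The only point requiring care is the bookkeeping: the adjoint operation simultaneously interchanges the two indices, swaps lifts with extensions and isometries with co-isometries, and replaces $Q$ by $Q^*$, so one must track these swaps consistently in order to land on exactly the relation $Z_2Z_1 = Z_1Z_2\overline{Q}$ rather than a mirror image of it.
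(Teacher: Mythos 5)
Your proposal is correct and is essentially the paper's own argument: the paper proves its generalized form of Theorem \ref{dil-2} (the unnamed theorem following Theorem \ref{Andodil1}, valid for arbitrary contractions $Q$) by precisely this duality, taking adjoints to convert the $Q_R$-type relation into a $Q_L$-type one, invoking the $Q_L$-commuting isometric dilation theorem, and passing back via adjoints. The only cosmetic difference is that you invoke the cited Theorem \ref{dil-1} directly, whereas the paper invokes its own strengthening Theorem \ref{Andodil1}; since $Q$ is unitary here both apply, and your bookkeeping (swapping the indices, swapping isometric lifts with co-isometric extensions, and replacing $Q$ by $Q^*$) matches the paper's verification.
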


\vspace{0.2cm}
	   
\section{Examples} \label{Sec:Examples}

\vspace{0.4cm}

\noindent In this Section, we construct a few examples of $Q$-commuting operators with $Q$ being in three different positions (on the left, in the middle and on the right respectively). Also, we provide a couple of results concerning the positions of $Q$ while considering $Q$-commuting operators.
  
	   \begin{eg}
	   	Let $a_1,a_2,b_1,b_2,b_3$ be any non zero complex numbers and let $T_1=\begin{pmatrix}
	   		0&a_1\\
	   		0&a_2
	   	\end{pmatrix}$, $T_2= \begin{pmatrix}
	   	b_1&b_2\\
	   	0&b_3
   	\end{pmatrix}$ on $\mathbb{C}^2$. Clearly,
   \[  T_1T_2=\begin{pmatrix}
   	0&a_1b_3\\
   	0&a_2b_3
   \end{pmatrix} \; \& \; T_2T_1=\begin{pmatrix}
   0&b_1a_1+b_2a_2\\
   0&b_3a_2
\end{pmatrix}. \] Then one can easily observe that for any scalar $q$, one need not have $T_1T_2=qT_2T_1$ but for
\[Q=\begin{pmatrix}
	0&a_1/a_2\\
	0&1
\end{pmatrix}, \; T_1T_2=QT_2T_1.  \]
Again simple calculation shows that for $Q_1=\begin{pmatrix}
	(a_1b_3-a_2b_2)/a_1b_1&0\\
	0&1
\end{pmatrix}$, $T_1T_2=T_2Q_1T_1$. 

Now suppose there is a bounded operator $Q'=\begin{pmatrix}
	x_1&x_2\\
	x_3&x_4
\end{pmatrix}$ such that $T_1T_2=T_2T_1Q'$ then 
\[ \begin{pmatrix}
	0&a_1b_3\\
	0&a_2b_3
\end{pmatrix}= \begin{pmatrix}
(b_1a_1+b_2a_2)x_3&(b_1a_1+b_2a_2)x_4\\
b_3a_2x_3&b_3a_2x_4
\end{pmatrix}.\] Thus as $b_3a_2\neq  0$, $x_3=0$ and $x_4=1$. But this forces the equation $a_1b_3=b_1a_1+b_2a_2$ in $(1,2)$ position. Hence, there exists some bounded operator $Q'$ such that $T_1T_2=T_2T_1Q$ if and only if $a_1b_1+b_2a_2=a_1b_3$. 
	   \end{eg}
  
As observed in Example 2.4 in \cite{K.M.}, given two bounded operators $T_1,T_2$, it is not necessary that there is a $Q$ such that $(T_1,T_2)$ are $Q$-commuting or $(T_2,T_1)$-are $Q$-commuting. Here we provide one more such example on infinite dimensional Hilbert space.     
   
\begin{eg}
	Let $\{ e_1,e_2,\dots \}$ be the standard orthonormal basis of $l^2$. Let $T_1, T_2$ on $l^2$ be such that for even $j\in \mathbb{N}$, 
	\[ T_1e_j=\dfrac{1}{3}(e_j+e_{j+1})\; , \; T_2e_j=0 ,\] and for odd $j$,
	\[ T_1e_j=0, T_2e_j=\dfrac{1}{2}e_{j+1} .\] So clearly $T_1,T_2\in \mathcal{B}(\mathcal{H})$ are contractions. Then we prove that there is no bounded operator $Q$ on $l^2 $ for which $(T_1,T_2)$ or $(T_2,T_1)$ are $Q_L, Q_M$ or $Q_R$ commuting. (Recall that $(A,B)$ will be $Q_L$ or $Q_M$ or $Q_R$ commuting if $AB=QBA$ or $AB=BQA$ or $AB=BAQ$ respectively.) Let $Q\in \mathcal{B}(\mathcal{H})$ be arbitrary. Since $T_1e_1=0,$ we obtain $QT_2T_1e_1=T_2QT_1e_1=0$. But $T_1T_2e_1=T_1(\dfrac{1}{2}e_2)=\dfrac{1}{6}(e_2+e_3)\neq 0$. Therefore, $T_1T_2\neq QT_2T_1$ and $T_1T_2\neq T_2QT_1$. Again $T_2e_2=0$ implies that $QT_1T_2e_2=T_1QT_2e_2=0$ but $T_2T_1e_2=T_2(\dfrac{1}{3}(e_2+e_3))=\dfrac{1}{6}e_4\neq  0$.  Hence, $T_2T_1\neq QT_1T_2$ and $T_2T_1\neq T_1QT_2$.
	
	Now to prove that $T_1T_2\neq T_2T_1Q$, observe that $T_1T_2e_1=\dfrac{1}{6}(e_2+e_3) $. Now if $Qe_1=0$ then the result is obvious. So let $Qe_1\neq 0$. Let $Qe_1=\sum_{j=1}^{\infty} q_{j1}e_j$. Then 
	\begin{align*}
		T_2T_1Qe_1&=T_2(\sum_{j=1}^{\infty} q_{(2j)1}(\dfrac{1}{3}(e_{2j}+e_{2j+1})))\\
		&=\dfrac{1}{6}\sum_{j=1}^{\infty} q_{(2j)1}e_{2j+2}\in \overline{Span}\{e_2,e_4,\dots \}
	\end{align*}
Hence now due to orthonormality of $e_i$, it is clear that $T_1T_2e_1\neq T_2T_1Qe_1 $. Hence $T_1T_2\neq T_2T_1Q$. Now, $T_2T_1e_2=\dfrac{1}{6}e_4$ and for $Qe_2=\sum_{j=1}^{\infty} q_{j2}e_j$,	
\begin{align*}
	T_1T_2Qe_2&=T_1(\sum_{j=1}^{\infty} q_{(2j-1)2}(\dfrac{1}{2}e_{2j}))\\
	&=\dfrac{1}{6}\sum_{j=1}^{\infty} q_{(2j-1)2}(e_{2j}+e_{2j+1})
\end{align*}
  Hence, again by orthonormality of $\{e_j:j\in \mathbb{N}\}$, for $T_2T_1e_2=T_1T_2Qe_2$, $q_{22}=0$ being a coefficient of $e_{5}$ but comparing the coefficients of $e_{4}$, $q_{22}=1$ which is not possible. Hence, $T_2T_1\neq T_1T_2Q$. 
  This proves that neither $(T_1,T_2)$ nor $(T_2,T_1)$ are $Q$-commuting for any bounded operator $Q$.
\end{eg}
The following example shows that $(T_1,T_2)$ is $Q_L$-commuting but not $Q_R$ or $Q_M$-commuting, whereas $(T_2,T_1)$ is $Q_R$, $Q_M$-commuting but not $Q_L$-commuting. 
\begin{eg}
	Let $\{ e_1,e_2,\dots \}$ be the standard orthonormal basis of $l^2$ and let $e_0=0$. Let $T_1, T_2, Q, Q'$ on $l^2=\mathcal{H}$ be such that for even $j\in \mathbb{N}$, 
	\[ T_1e_j=0\; , \; T_2e_j=\dfrac{1}{2}e_{j+2}\; , \; Qe_j=e_{j-2}\; , \; Q'e_j=e_{j+2}\] and for odd $j$,
	\[ T_1e_j=e_{j+1}\; , \; T_2e_j=\dfrac{1}{2}(e_j+e_{j+1})\; , \; Qe_j=e_{j+2} \; , \; Q'e_j=e_{j+2}.\] So clearly $T_1,T_2\in \mathcal{B}(\mathcal{H})$ are contractions. Then for odd $j$, $T_1T_2e_j=\dfrac{1}{2}e_{j+1}, $ $T_2T_1e_j=\dfrac{1}{2}e_{j+3}, $ hence $QT_2T_1e_j=\dfrac{1}{2}e_{j+1}=T_1T_2e_j$. We also have,
		 \[T_1T_2Qe_j=T_1T_2e_{j+2}=T_1(\dfrac{1}{2}(e_{j+2}+e_{j+3}=\dfrac{1}{2}e_{j+3})=T_2(e_{j+1})=T_2T_1e_j\] for $j$ odd. Again for even $j$, $T_1T_2e_j=0=T_2T_1e_j$. Hence, $T_1T_2=QT_2T_1$ and $T_2T_1=T_1T_2Q$.
		Observe that for odd $j$,
		\[ T_1Q'T_2e_j=T_1Q'(\dfrac{1}{2}(e_j+e_{j+1}))=T_1\dfrac{1}{2}(e_{j+2}+e_{j+3})=\dfrac{1}{2}e_{j+3}=T_2T_1e_j \] and for even $j$,
		\[  T_1Q'T_2e_j=T_1Q'(\dfrac{1}{2}e_{j+2})=T_1\dfrac{1}{2}e_{j+4}=0=T_2T_1e_j.\]
		Therefore, $T_2T_1=T_1Q'T_2$. Now we prove that for any $\widetilde{Q}\in \mathcal{B}(\mathcal{H}), $ $T_1T_2\neq T_2T_1\widetilde{Q}$, $T_1T_2\neq T_2\widetilde{Q}T_1 $ and $T_2T_1\neq \widetilde{Q}T_1T_2 $. Note that $T_1T_2e_1=\dfrac{1}{2}e_2$ but for any $i$ if $\widetilde{Q}e_i=\sum_{j=1}^{\infty} q_{ji}e_j$ then, \[T_2T_1\widetilde{Q}e_1=T_2(\sum_{j=1}^{\infty} q_{(2j-1)1}e_{2j})=\dfrac{1}{3}\sum_{j=1}^{\infty} q_{(2j-1)1}e_{2j+2}\in \overline{Span}\{e_4,e_6,\dots\}.\] 
		Hence, $T_1T_2e_1\neq T_2T_1\widetilde{Q}e_1.$ Let $q_{0i}=0$ for all $i$. We can show that \[T_2\widetilde{Q}T_1e_1=\sum_{j=1}^{\infty}(q_{(2j-1)2}e_{2j-1}+(q_{(2j-1)2}+q_{(2j-2)2})e_{2j}).\] So clearly for $ T_1T_2e_1=T_2\widetilde{Q}T_1e_1$ by comparing the coefficient of $e_{2j-1}$, we must have $q_{(2j-1)i}= 0 $ and hence by comparing the coefficient of $e_{2j}$ for $ j\geq 2$, we must have $q_{(2j-2)}=0$. But since $q_{02}=0$, the coefficient of $e_2$ in $T_2\widetilde{Q}T_1e_1$ which is $q_{12}=0$ now must be equal to that in $T_1T_2e_1$, which is $ \dfrac{1}{2}$. This is clearly not possible. Hence the result.    
\end{eg}
Next example shows that $(T_2,T_1)$ are $Q_L,Q_{1M}$ and $Q_{2R}$-commuting with respect to some $Q$, $Q_1$, $Q_2$ but $(T_1,T_2)$ are not $Q$-commuting for any $Q\in \mathcal{B}(\mathcal{H})$.
\begin{eg}
	Consider $\mathcal{H}=l^2$ with $B=\{e_1,e_2,\dots\}$ being the standard orthonormal basis and let $a,b$ be any non zero complex numbers. Now define, 
	\[T_1(x_1,x_2,\dots )=(ax_2,ax_3,\dots ),\; T_2(x_1,x_2,\dots )=(0,bx_3,bx_4,\dots). \] Then, clearly
	$T_1T_2(x_1,x_2,\dots)= (abx_3,abx_4,\dots )$ and $T_2T_1(x_1,x_2,\dots )=(0,abx_4,abx_5,\dots)$. Then clearly for $Q(x_1,x_2,x_3,\dots )=(0,x_2,x_3,\dots)$, $T_2T_1=QT_1T_2$. Further if we define 
	\[ Q_1(x_1,x_2,x_3,\dots)=(x_1,0,x_3,x_4,\dots)\;,\;Q_2(x_1,x_2,x_3,\dots)=(x_1,x_2,0,x_4,\dots) \] then we obtain 
	\[ T_1Q_1T_2(x_1,x_2,\dots)=T_1(0,0,bx_4,bx_5,\dots)=(0,abx_4,abx_5,\dots)=T_2T_1(x_1,x_2,\dots ) \]  and 
	\[  T_1T_2Q_2(x_1,x_2,\dots )=T_1(0,0,bx_4,bx_5,\dots)=(0,abx_4,abx_5,\dots)=T_2T_1(x_1,x_2,\dots ) .\]
	Hence, $T_2T_1=QT_1T_2=T_1Q_1T_2=T_1T_2Q_2$. 
	So, $(T_2,T_1)$ are $Q_L,Q_M$ and $Q_R$-commuting. We will now prove that there is no $\widetilde{Q}$ for which $(T_1,T_2)$ are $\widetilde{Q}$ commuting. So let $\widetilde{Q}\in \mathcal{B}(\mathcal{H})$ be arbitrary, then $T_1T_2\neq \widetilde{Q}T_2T_1,$ $T_1T_2\neq T_2\widetilde{Q}T_1$ and $T_1T_2\neq T_2T_1\widetilde{Q}$.
	This can be proved as below. Observe that $T_1T_2e_3=abe_1$ but $T_2T_1e_3=0$ and hence $\widetilde{Q}T_2T_1e_3=0$, so $T_1T_2\neq \widetilde{Q}T_2T_1$. Note that $\langle T_2T_1\widetilde{Q}e_3, e_1\rangle=0$ whereas $\langle T_1T_2e_3, e_1\rangle=ab\neq 0$. Hence clearly, $T_1T_2\neq T_2T_1\widetilde{Q}$. Again let $y=\widetilde{Q}T_1e_3$, then $\langle T_2\widetilde{Q}T_1e_3, e_1\rangle=\langle T_2y, e_1\rangle=0$ whereas $\langle T_1T_2e_3, e_1\rangle=ab\neq 0$. Hence we have, $T_1T_2\neq T_2\widetilde{Q}T_1$.
\end{eg}
As an application of Douglas, Muhly and Pearcy's lemma, Lemma  \ref{Dlemma}, we note the following as an observation.
\begin{thm}
	Let $T_1,T_2\in \mathcal{B}(\mathcal{H})$ be arbitrary. Then the following hold.
	\begin{enumerate}
		\item There is a bounded operator $Q\in \mathcal{B}(\mathcal{H})$ such that $T_1T_2=QT_2T_1$ if and only if $Ran T_2^*T_1^*\subseteq Ran T_1^*T_2^*$. 
		\item There is a bounded operator $Q\in \mathcal{B}(\mathcal{H})$ such that $T_1T_2=T_2T_1Q$ if and only if $Ran T_1T_2\subseteq Ran T_2T_1$.
	\end{enumerate}
\end{thm}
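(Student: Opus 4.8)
The plan is to reduce both equivalences to the range-inclusion form of Douglas's factorization theorem. Note that Lemma \ref{Dlemma} is recorded here in its contractive incarnation: a contraction $Z$ with $A=BZ$ exists if and only if $AA^*\leq BB^*$. Since the present theorem asks only for a \emph{bounded} operator $Q$ (with no norm constraint), the relevant companion statement is the range form: for $A,B\in\mathcal{B}(\mathcal{H})$ there is a bounded $C$ with $A=BC$ if and only if $\operatorname{Ran}A\subseteq\operatorname{Ran}B$. This is the version I would invoke. It is bridged to Lemma \ref{Dlemma} by the standard scaling argument: from $\operatorname{Ran}A\subseteq\operatorname{Ran}B$ one produces, via a closed-graph argument, a scalar $\lambda$ with $AA^*\leq\lambda^2BB^*$, and then Lemma \ref{Dlemma} applied to $\lambda^{-1}A$ and $B$ yields a contraction $Z$ with $\lambda^{-1}A=BZ$, so $C=\lambda Z$ does the job; the converse is trivial.

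For part (2) the reduction is immediate. I would set $A=T_1T_2$ and $B=T_2T_1$, so that the existence of a bounded $Q$ with $T_1T_2=T_2T_1Q$ is exactly the existence of a bounded $C$ with $A=BC$. By the range form of Douglas's theorem this holds if and only if $\operatorname{Ran}(T_1T_2)\subseteq\operatorname{Ran}(T_2T_1)$, which is precisely the asserted condition.

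For part (1) the unknown operator sits on the \emph{left}, as $T_1T_2=QT_2T_1$, so I would first pass to adjoints. Taking adjoints turns this into $T_2^*T_1^*=T_1^*T_2^*Q^*$. Since $R\mapsto R^*$ is a bijection of $\mathcal{B}(\mathcal{H})$, a bounded $Q$ solving the original equation exists if and only if a bounded $R\,(=Q^*)$ solves $T_2^*T_1^*=T_1^*T_2^*R$. Applying the range form of Douglas's theorem with $A=T_2^*T_1^*$ and $B=T_1^*T_2^*$, such an $R$ exists if and only if $\operatorname{Ran}(T_2^*T_1^*)\subseteq\operatorname{Ran}(T_1^*T_2^*)$, which is the stated condition.

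The only genuine subtlety to flag is the distinction between the contractive factorization of Lemma \ref{Dlemma} and the range-inclusion factorization used here; because the theorem seeks only a bounded $Q$, the norm is irrelevant and the two are reconciled by the routine scaling step indicated above. Everything else is bookkeeping: tracking which side the unknown multiplies, and, in part (1), the single passage to adjoints that converts a left factorization into a right one.
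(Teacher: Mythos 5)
Your proposal is correct and takes essentially the same route as the paper, whose entire proof is the one-line citation ``Follows trivially from Lemma \ref{Dlemma}'': both arguments rest on Douglas factorization, with part (2) a direct application and part (1) handled by passing to adjoints. In fact you are more careful than the paper, since you correctly flag that Lemma \ref{Dlemma} as recorded there is only the contractive factorization ($A=BZ$ with $Z$ a contraction iff $AA^*\leq BB^*$) and supply the closed-graph/scaling bridge to the range-inclusion form that the statement actually requires.
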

\begin{proof}
	Follows trivially from Lemma \ref{Dlemma}.
\end{proof}
Note that if $T_1T_2$ and $T_2T_1$ have closed ranges then $Ran T_2^*T_1^*\subseteq Ran T_1^*T_2^*$ is equivalent to $Ker T_2T_1\subseteq Ker T_1T_2$ and $Ran T_1T_2\subseteq Ran T_2T_1$ is equivalent to $Ker T_1^*T_2^*\subseteq Ker T_2^*T_1^*$. Hence in particular we have the following result.
\begin{thm}
    	Let $\mathcal{H}$ be any finite dimensional Hilbert space and $T_1,T_2\in \mathcal{B}(\mathcal{H})$ be arbitrary. Then the following hold.
    \begin{enumerate}
    	\item There is a bounded operator $Q\in \mathcal{B}(\mathcal{H})$ such that $T_1T_2=QT_2T_1$ if and only if $Ker T_2T_1\subseteq Ker T_1T_2$. 
    	\item There is a bounded operator $Q\in \mathcal{B}(\mathcal{H})$ such that $T_1T_2=T_2T_1Q$ if and only if $Ker T_1^*T_2^*\subseteq Ker T_2^*T_1^*$.
\end{enumerate}
\end{thm}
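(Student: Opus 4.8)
The plan is to deduce this result directly from the preceding theorem by rewriting the range inclusions appearing there as the kernel inclusions asserted here. The only ingredient needed is the elementary fact that, on a finite-dimensional space, every operator has closed range, so that for any $A\in\mathcal{B}(\mathcal{H})$ one has $Ran\,A=(Ker\,A^*)^{\perp}$, or equivalently $(Ran\,A)^{\perp}=Ker\,A^*$. This is precisely the observation recorded in the remark preceding the statement.

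For part~(1) I would begin with the criterion supplied by the previous theorem, namely that such a $Q$ exists if and only if $Ran\,T_2^*T_1^*\subseteq Ran\,T_1^*T_2^*$. Passing to orthogonal complements reverses the inclusion, so this is equivalent to $(Ran\,T_1^*T_2^*)^{\perp}\subseteq(Ran\,T_2^*T_1^*)^{\perp}$. Applying $(Ran\,A)^{\perp}=Ker\,A^*$ with $A=T_1^*T_2^*$ and with $A=T_2^*T_1^*$, and using $(T_1^*T_2^*)^*=T_2T_1$ together with $(T_2^*T_1^*)^*=T_1T_2$, this inclusion reads exactly $Ker\,T_2T_1\subseteq Ker\,T_1T_2$. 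Since orthogonal complementation is an inclusion-reversing involution, the two inclusions are genuinely equivalent, and the stated equivalence with the existence of $Q$ is then immediate.

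Part~(2) is handled identically, now starting from $Ran\,T_1T_2\subseteq Ran\,T_2T_1$. Taking complements gives $(Ran\,T_2T_1)^{\perp}\subseteq(Ran\,T_1T_2)^{\perp}$, which by $(T_2T_1)^*=T_1^*T_2^*$ and $(T_1T_2)^*=T_2^*T_1^*$ becomes $Ker\,T_1^*T_2^*\subseteq Ker\,T_2^*T_1^*$, as required.

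I do not anticipate any real obstacle; the argument is purely a complementation of the previous theorem's conclusion. The single point deserving attention is that the translation between range and kernel inclusions rests on the identity $Ran\,A=(Ker\,A^*)^{\perp}$, which requires $Ran\,A$ to be closed. This holds automatically in finite dimensions but fails for general bounded operators, so finite-dimensionality (or, more generally, closedness of the ranges of $T_1T_2$ and $T_2T_1$, as flagged in the remark) is exactly what licenses the passage and cannot be omitted.
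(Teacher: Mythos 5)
Your proof is correct, and it is exactly the route the paper itself flags in the first line of its proof (the result ``follows using Lemma \ref{Dlemma} and the explanation in the paragraph before this theorem'') but then declines to write out; what the paper actually presents in detail is a different, self-contained argument. There, one chooses $e_1,\dots,e_k$ so that $\{T_2T_1e_j\}$ is an orthonormal basis of $Ran\,T_2T_1$, defines $Q(T_2T_1e_j)=T_1T_2e_j$, extends linearly to $\mathcal{M}=Ran\,T_2T_1$ and (arbitrarily) to $\mathcal{M}^{\perp}$, and uses the hypothesis $Ker\,T_2T_1\subseteq Ker\,T_1T_2$ to verify $T_1T_2h=QT_2T_1h$: if $T_2T_1h=\sum_j a_jT_2T_1e_j$, then $h-\sum_j a_je_j\in Ker\,T_2T_1\subseteq Ker\,T_1T_2$. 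The two approaches buy different things. Yours is shorter, makes the logical dependencies transparent (everything reduces to the range criterion of the preceding theorem plus the identity $(Ran\,A)^{\perp}=Ker\,A^*$, valid because ranges are closed in finite dimensions), and isolates precisely where finite-dimensionality enters; its cost is that it inherits the reliance on Douglas's lemma. The paper's construction is independent of Douglas's lemma and exhibits $Q$ concretely, which has some expository value in a section devoted to examples. It is also worth noting that your route avoids a slip in the paper's written construction: the paper asserts $\mathcal{M}^{\perp}=Ker\,T_2T_1$, whereas in fact $\mathcal{M}^{\perp}=(Ran\,T_2T_1)^{\perp}=Ker\,T_1^*T_2^*$, which generally differs from $Ker\,T_2T_1$ (they merely have the same dimension); the construction is easily repaired, since no decomposition of $h$ is actually needed, but your complementation argument uses the correct kernel--range duality throughout.
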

\begin{proof}
	This follows using Lemma \ref{Dlemma} and the explanation in paragraph before this theorem. But we also prove this independently via an elementary construction. 
	
	First suppose there is a $Q\in \mathcal{B}(\mathcal{H})$ such that $T_1T_2=QT_2T_1$. Then clearly, $T_2T_1x=0$ implies that $T_1T_2x=QT_2T_1x=0$. Hence $Ker T_2T_1\subseteq Ker T_1T_2$. 
	
	Conversely, suppose $Ker T_2T_1\subseteq Ker T_1T_2$. Since, $\mathcal{H}$ is a finite dimensional Hilbert space with dimension $n$ say, let $\mathcal{B}_1=\{ T_2T_1e_1,T_2T_1e_2,\dots , T_2T_1e_k  \}$ be an orthonormal basis of $Ran T_2T_1 $ where, $k\leq n$. Let $\mathcal{M}=Span \mathcal{B}_1$, define $Q$ on $\mathcal{B}_1$ by $Q(T_2T_1e_j)=T_1T_2e_j$ for all $1\leq j \leq k$ and extend it to $\mathcal{M}$ by linearity. Now for any $m\in \mathcal{M}$, \[QT_2T_1m=\sum_{j=1}^{k}a_jT_1T_2e_j=T_1T_2\sum_{j=1}^{k}a_je_j.\] 
	Now clearly, $T_2T_1m\in \mathcal{M}$, therefore, $T_2T_1m=\sum_{j=1}^{k}a_jT_2T_1e_j=T_2T_1\sum_{j=1}^{k}a_je_j$. Hence, $m-\sum_{j=1}^{k}a_je_j\in Ker T_2T_1$. Now as $Ker T_2T_1\subseteq Ker T_1T_2$, we obtain, $T_1T_2(m-\sum_{j=1}^{k}a_je_j)=0$, so 
	\[T_1T_2m=T_1T_2(\sum_{j=1}^{k}a_je_j)=QT_2T_1m.\]
	 This proves that $T_1T_2=QT_2T_1$ on $\mathcal{M}$. Clearly, $Q$ is a bounded linear operator from $\mathcal{M}$ to $\mathcal{H}$. Now on $\mathcal{M}^{\perp}$ define $Q=T_1T_2|_{\mathcal{M}^{\perp}}$. Since $\mathcal{H}$ is finite dimensional, $\mathcal{M}^{\perp}=Ker T_2T_1$. So for $h\in \mathcal{H}$, $h=m+m'$ for some $m\in \overline{\mathcal{M}}$ and $m'\in \mathcal{M}^{\perp}=Ker T_2T_1$. Hence,  
	 \[QT_2T_1h=QT_2T_1m+QT_2T_1m'=QT_2T_1m=T_1T_2m.\] 
	 Since $Ker T_2T_1\subseteq Ker T_1T_2$ and $m'\in Ker T_2T_1$, we obtain 
	 \[ QT_2T_1h=T_1T_2m=T_1T_2m+T_1T_2m'=T_1T_2h.\]
	 
	 This proves $(1)$. We can prove $(2)$ similarly.
\end{proof}

\vspace{0.4cm}

	\section{The Q-commutant liftings and Q-commuting co-isometric extensions}\label{QSection1}
	
	\vspace{0.4cm}
	
\noindent In this Section, we generalize the commutant lifting theorem and obtain an analogue of it for $Q$-commuting operators. We look at the $Q_L$, $Q_M$ and $Q_R$-commuting generalizations separately.  Recall that if $\mathcal{H}_1\subseteq\mathcal{K}_1$ and $\mathcal{H}_2\subseteq\mathcal{K}_2$ are any Hilbert spaces then $Y\in\mathcal{B}(\mathcal{K}_1,\mathcal{K}_2)$ is said to be a lift of $X\in \mathcal{B}(\mathcal{H}_1,\mathcal{H}_2)$ if $\mathcal{H}_2$ is invariant for $Y^*$ and $ Y^*(h)=X^*(h)$ for all $h\in \mathcal{H}_2$. Given $Q\in \mathcal{B}(\mathcal{H})$, any two operators $S_1,S_2$ on a Hilbert space $\mathcal{H}$ are said to be $Q$ commuting if either $S_1S_2=QS_2S_1$ ($Q_L$-commuting) or $S_1S_2=S_2QS_1$ ($Q_M$-commuting) or $S_1S_2=S_2S_1Q$ ($Q_R$-commuting). 
	
	\subsection{The $Q_L$-commutant lifting theorems}
	Let $Q\in \mathcal{B}(\mathcal{H})$ be arbitrary. Then the commutant lifting analogue for $Q_L$-commuting contractions can be obtained in the following two ways. 
	\begin{enumerate}
		\item[I.] If $T\in \mathcal{B}(\mathcal{H})$ is a contraction with $V\in \mathcal{B}(\mathcal{K})$ being an isometric lift of $T$ and $X\in \mathcal{B}(\mathcal{H})$ is any operator such that $XT=QTX $ then we want to obtain a norm preserving lift $Y\in \mathcal{B}(\mathcal{K})$ of $X$ such that $YV=\overline{Q}VY$, for some $\overline{Q}\in \mathcal{B}(\mathcal{K})$. 
		\item[II.] If $T\in \mathcal{B}(\mathcal{H})$ is a contraction with $V\in \mathcal{B}(\mathcal{K})$ being an isometric lift of $T$ and $X\in \mathcal{B}(\mathcal{H})$ is any operator such that $TX=QXT$ then we want to obtain a lift $Y\in \mathcal{B}(\mathcal{K})$ of $X$ such that $VY=\overline{Q}YV$, for some $\overline{Q}\in \mathcal{B}(\mathcal{K})$.
	\end{enumerate} We work via the first approach to obtain (I) when $Q$ is any bounded operator for which $QT$ is a contraction. Indeed, this is one of the main results of this section.      
	\begin{thm}\label{QLT commutant lift}
		Let $Q,T,X\in \mathcal{B}(\mathcal{H})$ be such that $T$ and $QT$ are contractions and $XT=QTX$. Let $(V,\mathcal{K})$ be an isometric lift of $T$. Let $(\overline{Q},\mathcal{K})$ be any lift of $Q$ such that $\overline{Q}V$ is a contraction. Then there is a norm preserving lift $Y$ of $X$ such that $YV=\overline{Q}VY$. 
	\end{thm}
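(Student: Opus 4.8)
The plan is to reduce the statement to the contractive intertwining lifting theorem of Foias and Frazho, Theorem \ref{contractive lift}. The observation driving this is that the twisted identity $YV = \overline{Q}VY$ is an ordinary intertwining $WY = YV$ once we put $W := \overline{Q}V$, while the hypothesis $XT = QTX$ reads as the matching intertwining $(QT)X = XT$ on $\mathcal{H}$. Before invoking the theorem I would normalize: the case $X = 0$ is trivial with $Y = 0$, so I assume $X \neq 0$ and replace $X$ by $X/\|X\|$. Since $XT = QTX$ is homogeneous in $X$ it survives this scaling and $X$ becomes a contraction; at the very end I rescale $Y$ by $\|X\|$, which is harmless because $YV = \overline{Q}VY$ is itself homogeneous in $Y$.

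The key preliminary step is to check that $W := \overline{Q}V$ is a \emph{contractive lift} of $QT$. Contractivity is exactly the hypothesis $\|\overline{Q}V\| \leq 1$, so only the lifting property must be verified. Because $\overline{Q}$ lifts $Q$, the space $\mathcal{H}$ is invariant for $\overline{Q}^{\,*}$ with $\overline{Q}^{\,*}|_{\mathcal{H}} = Q^*$; because $V$ lifts $T$, likewise $\mathcal{H}$ is invariant for $V^*$ with $V^*|_{\mathcal{H}} = T^*$. Hence for every $h \in \mathcal{H}$,
\[
(\overline{Q}V)^* h = V^*\overline{Q}^{\,*} h = V^* Q^* h = T^* Q^* h = (QT)^* h \in \mathcal{H},
\]
so $\mathcal{H}$ is invariant for $W^*$ and $W^*|_{\mathcal{H}} = (QT)^*$, i.e. $W$ is a contractive lift of the contraction $QT$.

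With this in hand I would apply Theorem \ref{contractive lift} to the contractions $T$, $T' := QT$ and $A := X$, which satisfy $T'A = (QT)X = XT = AT$, using the isometric lift $V$ of $T$ and the contractive lift $W = \overline{Q}V$ of $QT$. The theorem yields a contractive lift $Y$ of $X$ with $WY = YV$, that is, $YV = \overline{Q}VY$, which is precisely the asserted relation. For the norm, since $Y$ lifts $X$ we have $\|Y\| = \|Y^*\| \geq \|Y^*|_{\mathcal{H}}\| = \|X^*\| = \|X\| = 1$, while $Y$ being contractive gives $\|Y\| \leq 1$; thus $\|Y\| = \|X\|$, and rescaling by the original norm of $X$ delivers the required norm-preserving lift.

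The only genuine subtlety I anticipate is the identification of $\overline{Q}V$ as a lift of $QT$: the cancellation $(\overline{Q}V)^*|_{\mathcal{H}} = (QT)^*$ hinges on $\overline{Q}^{\,*}$ mapping $\mathcal{H}$ into $\mathcal{H}$, i.e. on $\overline{Q}$ being a genuine lift of $Q$ and not merely some extension, together with the hypothesis that $QT$ is a contraction so that Theorem \ref{contractive lift} is applicable. A self-contained alternative in the spirit of Douglas, Muhly and Pearcy would instead construct $Y$ directly from the generalized Douglas lemma, Theorem \ref{Dthm1}, by solving the operator equation governing $Y$ on $\mathcal{H}$ and its $V$-orbit; but the reduction above is shorter and pinpoints exactly where each hypothesis ($QT$ contractive, $\overline{Q}V$ contractive, $\overline{Q}$ a lift) is consumed.
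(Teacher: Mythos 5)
Your proposal is correct and follows essentially the same route as the paper: normalize $\|X\|=1$, observe that $\overline{Q}V$ is a contractive lift of $QT$, apply Theorem \ref{contractive lift} to the intertwining $(QT)X = XT$ with the isometric lift $V$ and the contractive lift $\overline{Q}V$, and rescale at the end. Your explicit verification that $(\overline{Q}V)^*|_{\mathcal{H}} = (QT)^*$ is a detail the paper leaves implicit, but the argument is the same.
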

\begin{proof}
		First assume that $\|X\|=1$. Given the isometric lift $(V,\mathcal{K})$ of $(T,\mathcal{H})$ and a lift $\overline{Q}$ of $Q$ such that $\overline{Q}V$ is a contraction, we obtain that $\overline{Q}V$ is a contractive lift of $QT$. Therefore, by Theorem \ref{contractive lift}, there is a contractive lift $Y$ of $X$ such that $YV=\overline{Q}VY$. Since $\|X\|=1$ and $Y$ is a contractive lift of $X$, we have $1\geq \|Y\|\geq \|X\|=1$. Hence $\|Y\|=\|X\|=1$. Now if $0<\|X\|\neq 1$, then $XT=QTX$ implies that $\dfrac{X}{\|X\|}T=QT\dfrac{X}{\|X\|}$. Hence there is a norm preserving lift $\hat{Y}$ of $\dfrac{X}{\|X\|}$ such that $\hat{Y}V=\overline{Q}V\hat{Y}$. Therefore, $\|X\|\hat{Y}V=\overline{Q}V\|X\|\hat{Y}$. Hence $Y=\|X\|\hat{Y}$ is a lift of $X$ such that $YV=\overline{Q}VY$ and $\|Y\|=\|X\|$.      
	\end{proof}
Note that in previous theorem we did not assume $Q$ to be a contraction. Rather the assumption that $QT$ is a contraction is weaker than that. Clearly, $(V,\mathcal{K})$ is an isometric lift of contraction $(T,\mathcal{H})$ if and only if $V^*$ is a co-isometric extension of $T^*$. Hence Theorem \ref{QLT commutant lift} can be restated as follows.
\begin{thm}\label{QRT co-iso ext}
	Let $Q,T,X\in \mathcal{B}(\mathcal{H})$ be such that $T$ and $TQ$ are contractions and $TX=XTQ$. Let $(Z,\mathcal{K})$ be a co-isometric extension of $T$ and $(\overline{Q},\mathcal{K})$ be any extension of $Q$ such that $Z\overline{Q}$ is a contraction. Then there is a norm preserving extension $Y$ of $X$ such that $ZY=YZ\overline{Q}$. 
\end{thm}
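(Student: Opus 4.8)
The plan is to recognize that this statement is nothing but the adjoint reformulation of Theorem \ref{QLT commutant lift}, so I would obtain it by applying that theorem to the adjoint operators and then taking adjoints of the resulting intertwining relation. The remark just preceding the statement---that $(V,\mathcal{K})$ is an isometric lift of $(T,\mathcal{H})$ if and only if $V^*$ is a co-isometric extension of $T^*$, and more generally that $B$ is an extension of $A$ precisely when $B^*$ is a lift of $A^*$---is exactly the bridge that makes this work.

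First I would set up the dictionary by passing everything to adjoints: put $\hat{T}=T^*$, $\hat{Q}=Q^*$, $\hat{X}=X^*$, $\hat{V}=Z^*$ and $\hat{\overline{Q}}=\overline{Q}^*$. Then I would verify, one by one, that the hypotheses of Theorem \ref{QLT commutant lift} hold for the hatted data. Indeed $\hat{T}=T^*$ is a contraction since $T$ is; $\hat{Q}\hat{T}=Q^*T^*=(TQ)^*$ is a contraction since $TQ$ is; the relation $\hat{X}\hat{T}=\hat{Q}\hat{T}\hat{X}$ is precisely the adjoint of the hypothesis $TX=XTQ$; the operator $\hat{V}=Z^*$ is an isometric lift of $\hat{T}=T^*$ because $Z$ is a co-isometric extension of $T$; and $\hat{\overline{Q}}=\overline{Q}^*$ is a lift of $\hat{Q}=Q^*$ because $\overline{Q}$ is an extension of $Q$, while $\hat{\overline{Q}}\hat{V}=\overline{Q}^*Z^*=(Z\overline{Q})^*$ is a contraction since $Z\overline{Q}$ is.

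With all hypotheses met, Theorem \ref{QLT commutant lift} supplies a norm preserving lift $\hat{Y}$ of $\hat{X}$ satisfying $\hat{Y}\hat{V}=\hat{\overline{Q}}\hat{V}\hat{Y}$. Finally I would transfer back by setting $Y=\hat{Y}^*$: since $\hat{Y}$ is a lift of $X^*$, the adjoint $Y$ is an extension of $X$; the norm is preserved because $\|Y\|=\|\hat{Y}^*\|=\|\hat{Y}\|=\|X^*\|=\|X\|$; and taking adjoints of $\hat{Y}\hat{V}=\hat{\overline{Q}}\hat{V}\hat{Y}$ yields exactly $ZY=YZ\overline{Q}$, as required.

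I do not expect any genuine analytic obstacle here, since the entire content is a careful bookkeeping of the lift/extension duality. The one point that needs attention is that adjunction reverses the order of products, so I must check that $XT=QTX$ turns into $TX=XTQ$ (and $QT$ into $TQ$, $\overline{Q}V$ into $Z\overline{Q}$) with the operators landing on the correct sides; getting this matching right is the only place where an error could creep in.
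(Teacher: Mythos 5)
Your proposal is correct and is essentially identical to the paper's own proof: both pass to adjoints, apply Theorem \ref{QLT commutant lift} to $X^*T^*=Q^*T^*X^*$ with the isometric lift $Z^*$ of $T^*$ and the lift $\overline{Q}^*$ of $Q^*$, and then take adjoints of the resulting relation to obtain $ZY=YZ\overline{Q}$. The bookkeeping you flag (order reversal under adjunction and the lift/extension duality) is handled exactly as the paper does it, just spelled out in more detail.
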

	\begin{proof}
		Clearly $TX=XTQ$ implies that $X^*T^*=Q^*T^*X^*$. Since $(Z, \mathcal{K})$ is a co-isometric extension of $T$, we know that $Z^*$ will be an isometric lift of $T^*$. Also $Z\overline{Q}$ is a contraction implies that $\overline{Q}^*Z^*$ is so. Hence by Theorem \ref{QLT commutant lift}, there is a norm preserving lift $S$ of $X^*$ such that $SZ^*=\overline{Q}^*Z^*S $. Taking $Y=S^*$ proves the desired result.
	\end{proof}

Note that in Theorem \ref{QLT commutant lift} and Theorem \ref{QRT co-iso ext}, if in particular $Q$, $\overline{Q}$ are considered to be contractions such that $\overline{Q}=Q\oplus Q'$ with respect to the decomposition $\mathcal{K}=\mathcal{H}\oplus \mathcal{H}^{\perp}$ then as a special case of our result we obtain Theorem \ref{Qcomliftsumesh}-(i) and Theorem \ref{Qcomextsumesh}-(i).\\

	Now we look at the second way, i.e. point (II) mentioned at the beginning of this section and in particular consider $TX=QXT$ for when $Q\in \mathcal{B}(\mathcal{H})$ is a co-isometry, $T\in \mathcal{B}(\mathcal{H})$ is a contraction and $X\in \mathcal{B}(\mathcal{H})$. We use techniques similar to that of Douglas, Muhly and Pearcy as in \cite{Dou:Muh:Pea} to obtain a $Q$-commutant lifting result (Theorem \ref{QminiisoliftDouglass}). For that we need the following theorem which is proved using a similar idea as that of the proof of Theorem 3 in \cite{Dou:Muh:Pea}.  
	\begin{thm}\label{partialisointertwine}
	Suppose $T$ is a contraction and $X, Y$ are bounded linear operators on a Hilbert space $\mathcal{H}$ such that $Y\neq 0, \|X\|\leq \|Y\|$ and $XT=TY$. Let $S_0\in \mathcal{B}(\mathcal{L},\mathcal{H})$ be a contraction such that $Z=\begin{bmatrix}
		T & S_0\\
		0 & 0
	\end{bmatrix}$ on $\mathcal{H}\oplus \mathcal{L}$ satisfies $TT^*+S_0S_0^*=I_{\mathcal{H}}$. Then there is an operator $Y_1=\begin{bmatrix}
		Y & C \\
		0 & W
	\end{bmatrix}$ on $\mathcal{H}\oplus \mathcal{L}$ such that for any $X_1=\begin{bmatrix}
		X & A\\
		0 & B
	\end{bmatrix}$ on $\mathcal{H}\oplus \mathcal{L}$, we have $X_1Z=ZY_1$ and $\|Y_1\|=\|Y\|$.
\end{thm}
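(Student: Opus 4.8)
The plan is to reduce the entire statement to a single application of the dual Parrott theorem (Theorem \ref{dualparrot}), after rewriting the relation $X_1 Z = Z Y_1$ in a structure-free form on the space $\widetilde{\mathcal{H}} := \mathcal{H}\oplus\mathcal{L}$.

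First I would multiply out both sides. Since $Z$ has zero second block-row, for every $X_1 = \begin{bmatrix} X & A \\ 0 & B\end{bmatrix}$ one gets
\[
X_1 Z = \begin{bmatrix} XT & X S_0 \\ 0 & 0 \end{bmatrix}, \qquad Z Y_1 = \begin{bmatrix} TY & TC + S_0 W \\ 0 & 0 \end{bmatrix};
\]
note that $X_1 Z$ does not involve $A$ or $B$, which is exactly what legitimizes the phrase ``for any $X_1$''. Hence $X_1 Z = Z Y_1$ holds for all admissible $X_1$ iff $XT = TY$ (the hypothesis) and $TC + S_0 W = X S_0$. Writing $R = \begin{bmatrix} T & S_0 \end{bmatrix}$, these are precisely the two block-columns of $R Y_1 = X R$; taking adjoints this becomes $Y_1^* V_0 = V_0 X^*$, where $V_0 = R^* = \begin{bmatrix} T^* \\ S_0^* \end{bmatrix}$. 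The hypothesis $TT^* + S_0 S_0^* = I_{\mathcal{H}}$ says exactly that $V_0^* V_0 = I_{\mathcal{H}}$, i.e. $V_0$ is an isometry.

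Next I would set up the dual Parrott data. On $\mathcal{H}\subseteq\widetilde{\mathcal{H}}$ define $\widehat{X} h = (Yh, 0)$; requiring $Y_1$ to extend $\widehat{X}$ forces the first block-column of $Y_1$ to be $(Y,0)$, i.e. the corner shape $Y_1 = \begin{bmatrix} Y & C \\ 0 & W \end{bmatrix}$. On $\mathrm{Ran}\,V_0 \subseteq \widetilde{\mathcal{H}}$ define $\widehat{X}'(V_0 g) = V_0 X^* g$, well defined since $V_0$ is injective; requiring $Y_1^*$ to extend $\widehat{X}'$ is exactly $Y_1^* V_0 = V_0 X^*$, i.e. the equation $TC + S_0 W = X S_0$. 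Because $V_0$ is isometric one reads off $\|\widehat{X}\| = \|Y\|$ and $\|\widehat{X}'\| = \|X^*\| = \|X\|$. The crux is then to verify the compatibility hypothesis of Theorem \ref{dualparrot}: for $h,g\in\mathcal{H}$,
\[
\langle \widehat{X} h, V_0 g\rangle = \langle Y h, T^* g\rangle = \langle TY h, g\rangle, \qquad \langle h, \widehat{X}' V_0 g\rangle = \langle h, T^* X^* g\rangle = \langle XT h, g\rangle,
\]
so the pairing condition $\langle \widehat{X} h, m\rangle = \langle h, \widehat{X}' m\rangle$ collapses to $TY = XT$, which is precisely the standing hypothesis.

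Theorem \ref{dualparrot} then yields $Y_1$ on $\widetilde{\mathcal{H}}$ extending $\widehat{X}$ with $Y_1^*$ extending $\widehat{X}'$, and $\|Y_1\| \le \max\{\|\widehat{X}\|, \|\widehat{X}'\|\} = \max\{\|Y\|, \|X\|\} = \|Y\|$, the last step using $\|X\|\le\|Y\|$; since $Y_1$ extends $\widehat{X}$ we also have $\|Y_1\|\ge\|Y\|$, so $\|Y_1\| = \|Y\|$, and the two extension properties give both the corner form and $X_1 Z = Z Y_1$ for every admissible $X_1$. I expect the genuine subtlety — the ``hard part'' in spirit — to be the norm bookkeeping: it is not clear a priori that the linear equation $TC + S_0 W = X S_0$ can be solved without inflating the norm past $\|Y\|$, and it is exactly the two-sided (operator and adjoint) nature of Parrott's theorem, together with the compatibility identity reducing to $XT=TY$, that delivers $\|Y_1\|=\|Y\|$ automatically. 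An alternative route closer to the cited proof of Theorem 3 in \cite{Dou:Muh:Pea} would instead build $C,W$ directly via Douglas's lemma (Theorem \ref{Dthm1}) applied to $R$ and $X S_0$; there one obtains $C^*C + W^*W \le I$ easily, but the norm of $Y_1$ still requires controlling the cross term $\langle Yh, C\ell\rangle$, an annoyance the Parrott route sidesteps entirely.
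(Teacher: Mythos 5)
Your proof is correct, and it takes a genuinely different route from the paper's. The paper argues constructively in the Douglas--Muhly--Pearcy style: after normalizing $\|Y\|=1$, it looks for $C$ of the special form $C=D_{Y^*}FD_W$ (so that Proposition \ref{prop:21} will make the triangular matrix $Y_1$ a contraction), verifies the operator inequality $(TD_{Y^*})(TD_{Y^*})^*+S_0S_0^*\geq (XS_0)(XS_0)^*$ from $TY=XT$ and $TT^*+S_0S_0^*=I_{\mathcal{H}}$, applies Theorem \ref{Dthm1} to get $R,W$ with $XS_0=TD_{Y^*}R+S_0W$ and $R^*R+W^*W\leq I$, and then factors $R=FD_W$ via Lemma \ref{Dlemma}. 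You instead compress everything into one application of Sebesty\'en's dual Parrott theorem (Theorem \ref{dualparrot}): writing $R=\begin{bmatrix}T&S_0\end{bmatrix}$ and $V_0=R^*$, the hypothesis $TT^*+S_0S_0^*=I_{\mathcal{H}}$ says precisely that $V_0$ is an isometry (so $\mathrm{Ran}\,V_0$ is closed and $\widehat{X}'$ is well defined); extension of $\widehat{X}h=(Yh,0)$ forces the corner shape, extension of $\widehat{X}'(V_0g)=V_0X^*g$ by $Y_1^*$ is equivalent to $TC+S_0W=XS_0$, the compatibility pairing collapses exactly to $XT=TY$, and the norm bound $\max\{\|\widehat{X}\|,\|\widehat{X}'\|\}=\max\{\|Y\|,\|X\|\}=\|Y\|$ uses $\|X\|\leq\|Y\|$; each of these verifications is correct. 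The trade-off: the paper's argument is explicit, producing $C$ and $W$ in terms of defect operators, and deliberately mirrors Theorem 3 of \cite{Dou:Muh:Pea}, which is the engine reused throughout Section \ref{QSection1}; your argument is shorter, gets $\|Y_1\|=\|Y\|$ for free from the two-sided nature of the extension theorem, and makes transparent exactly where each hypothesis enters, at the cost of being non-constructive --- though since Theorem \ref{dualparrot} is already quoted in the paper and used in its Sebesty\'en-style proofs, your route imports no machinery the paper does not already have.
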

\begin{proof}
	Without loss of generality assume that $\|Y\|=1$ and $\|X\|\leq 1$. Otherwise, for any non zero bounded operator $Y$, we divide both the sides of equation $ XT=TY$ by $\|Y\|$. We want to find bounded operators $C$ and $W$ such that \[Y_1=\begin{bmatrix}
		Y & C\\
		0 & W
	\end{bmatrix}:\mathcal{H}\oplus \mathcal{L}\to \mathcal{H}\oplus \mathcal{L}\] is a contraction and for any $X_1=\begin{bmatrix}
		X & A\\
		0 & B
	\end{bmatrix}$ on $\mathcal{H}\oplus \mathcal{L}$, $X_1Z=ZY_1$. Therefore, we want to find bounded operators $C$ and $W$ such that $XS_0=TC+S_0W$ and $\|Y_1\|\leq 1$. Due to the Proposition \ref{prop:21}, it is equivalent to find a contraction $F$ from $\mathcal{L}$ to $\mathcal{H}$ such that $C=D_{Y^*}FD_{W}$ and $XS_0=TC+S_{0}W$, that is to find a contraction $F:\mathcal{L}\to \mathcal{H}$ such that,
	\begin{equation}\label{eqn1}
		XS_0 = TD_{Y^*}FD_W+S_0W.
	\end{equation}  
	We observe that,  
	\begingroup
	\allowdisplaybreaks
	\begin{alignat*}{2}
		&(TD_{Y^*})(TD_{Y^*})^*+S_0S_0^*\\
		=&T (I_{\mathcal{H}}-YY^*)T^* + S_0S_0^*\\
		= & I_{\mathcal{H}}-TYY^*T^*&[\text{As } TT^*+S_0S_0^*=I_{\mathcal{H}} ] \\
		= & I_{\mathcal{H}}-XTT^*X^* &[\text{As } TY=XT]\\
		\geq &I_{\mathcal{H}}-XTT^*X^* - (I-XX^*)\\
		=& XX^*-XTT^*X^*=(XS_0)(XS_0)^*. &\quad[\text{As } TT^*+S_0S_0^*=I_{\mathcal{H}} ]
	\end{alignat*}
	\endgroup
	Hence  
	\[
	(TD_{Y^*})(TD_{Y^*})^*+S_0S_0^* \geq (XS_0)(XS_0)^*.
	\]
	Therefore, by Theorem \ref{Dthm1}, there are contractions $R$ and $W$ satisfying \begin{equation}\label{eqn2}
		XS_0= TD_{Y^*}R + S_0W
	\end{equation} and $R^*R+W^*W\leq I_{\mathcal{H}}$. Since $R^*R\leq I_{\mathcal{H}}-W^*W$, applying Lemma \ref{Dlemma}, we obtain a contraction $F^*$ such that $R^* = D_WF^*$. Let $C=D_{Y^*}FD_W$. Hence we have contractions $F, W$ for which \eqref{eqn1} holds. Hence $X_1Z=ZY_1$. Further since $Y,W$ are contractions and $C=D_{Y^*}FD_W$ for some contraction $F$, it follows from Proposition \ref{prop:21} that $Y_1=\begin{bmatrix}
		Y & C\\
		0 & W
	\end{bmatrix}$ is a contraction. Hence $1=\|Y\|\leq \|Y_1\|\leq 1$, that is, $\|Y_1\|=\|Y\|=1$. 
\end{proof}
	We will prove the desired lifting result for $Q_L$-commuting operators (Theorem \ref{QminiisoliftDouglass}) by first proving the following $Q_R$-commuting extension result.
	\begin{thm}\label{Q_Lcommminext}
		Let $T\in \mathcal{B}(\mathcal{H})$ be a contraction, $Q$ be any co-isometry on $\mathcal{H}$ and $X\in \mathcal{B}(\mathcal{H})$ be such that $XT=TXQ$. Let $(Z, \mathcal{K})$ be the minimal co-isometric extension of $T$. Then there is a co-isometric extension $\overline{Q}\in \mathcal{B}(\mathcal{K})$ of $Q$ and a norm preserving extension $\widetilde{{X}}\in \mathcal{B}(\mathcal{K})$ of $X$ such that $\widetilde{{X}}Z=Z\widetilde{{X}}\overline{Q}$. 
		           
	\end{thm}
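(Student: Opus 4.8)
The plan is to build the minimal co-isometric extension of $T$ concretely as a one-step iterated construction à la Douglas--Muhly--Pearcy, and at each stage extend both $Q$ and $X$ compatibly using Theorem \ref{partialisointertwine}. First I would recall the standard model of the minimal co-isometric extension: writing $D_{T^*}=(I-TT^*)^{1/2}$ with defect space $\mathcal{L}=\overline{\mathrm{Ran}}\,D_{T^*}$, one forms $\mathcal{K}=\mathcal{H}\oplus\mathcal{L}\oplus\mathcal{L}\oplus\cdots$ and realizes $Z$ as the operator with the block shape $\begin{bmatrix} T & S_0 \\ 0 & 0 \end{bmatrix}$ in the first two coordinates (with $S_0$ the inclusion of the defect space), followed by a backward shift on the tail, so that $ZZ^*=I_\mathcal{K}$ and $TT^*+S_0S_0^*=I_\mathcal{H}$. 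This is exactly the hypothesis configuration of Theorem \ref{partialisointertwine}.

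The core step is to apply Theorem \ref{partialisointertwine} twice in parallel. From $XT=TXQ$, I would first rewrite this as an intertwining of the form handled by the theorem: setting $Y:=XQ$, the relation becomes $XT=TY$ with $\|Y\|=\|XQ\|\le\|X\|$ since $Q$ is a co-isometry (hence $\|Q\|\le 1$). Theorem \ref{partialisointertwine} then produces the one-step dilation $Y_1=\begin{bmatrix} Y & C \\ 0 & W\end{bmatrix}$ on $\mathcal{H}\oplus\mathcal{L}$ together with a corresponding $X_1=\begin{bmatrix} X & A \\ 0 & B\end{bmatrix}$ satisfying $X_1Z_{\text{loc}}=Z_{\text{loc}}Y_1$ and $\|Y_1\|=\|Y\|$, where $Z_{\text{loc}}=\begin{bmatrix} T & S_0\\ 0 & 0\end{bmatrix}$. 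Separately, since $Q$ is a co-isometry on $\mathcal{H}$, I would first extend $Q$ to a co-isometry $\overline{Q}$ on all of $\mathcal{K}$ (the easy direction: co-isometric extensions of a co-isometry exist, e.g.\ $\overline{Q}=Q\oplus(\text{co-isometry on }\mathcal{K}\ominus\mathcal{H})$, respecting the block decomposition). The delicate point is compatibility: I must choose the extensions of $X$ and $\overline{Q}$ so that the produced $\widetilde{X}$ satisfies $\widetilde{X}Z=Z\widetilde{X}\,\overline{Q}$ rather than merely $\widetilde{X}Z=Z\widetilde{X}$. The trick is that the intertwiner $Y$ already absorbs the $Q$: because $Y=XQ$, the relation $X_1 Z_{\text{loc}} = Z_{\text{loc}} Y_1$ should, after identifying $Y_1$ with $\widetilde{X}\,\overline{Q}$ restricted to the first two coordinates, yield the desired twisted intertwining. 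I would verify block-by-block that $Y_1=X_1\,\overline{Q}_1$ for the appropriately chosen one-step block $\overline{Q}_1$ of $\overline{Q}$, which fixes the free parameters $A,B$ in $X_1$.

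I would then iterate: applying the same lemma successively across each copy of $\mathcal{L}$ in the tail produces operators on the finite truncations $\mathcal{H}\oplus\mathcal{L}^{\oplus n}$, and the norm-preservation $\|Y_1\|=\|Y\|$ at each stage (built into Theorem \ref{partialisointertwine}) guarantees a uniform bound, so the direct limit defines a bounded $\widetilde{X}$ on $\mathcal{K}$ with $\|\widetilde{X}\|=\|X\|$ (using $\|X\|\le\|\widetilde{X}\|$ trivially from $\widetilde{X}$ being an extension, and $\le$ from the construction). By construction $\widetilde{X}$ extends $X$, $\overline{Q}$ extends $Q$ as a co-isometry, and the twisted intertwining $\widetilde{X}Z=Z\widetilde{X}\,\overline{Q}$ holds on the dense union of truncations, hence on $\mathcal{K}$. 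The main obstacle I anticipate is the bookkeeping in the compatibility step: ensuring that the free off-diagonal and diagonal blocks $C,W$ coming out of Theorem \ref{partialisointertwine} align exactly with the blocks of $\widetilde{X}\,\overline{Q}$, rather than $\widetilde{X}$ alone, so that the co-isometry $\overline{Q}$ threads correctly through every stage of the iteration. Managing this twisting consistently across the infinite tower — rather than the single-step estimate, which Theorem \ref{partialisointertwine} already provides — is where the real care is needed.
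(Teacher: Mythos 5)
Your plan is essentially the paper's own proof: the same concrete model of the minimal co-isometric extension, the same iterated use of Theorem \ref{partialisointertwine} on the truncations $\mathcal{K}_n=\mathcal{H}\oplus\mathcal{D}_{T^*}\oplus\cdots\oplus\mathcal{D}_{T^*}\oplus\{0\}\oplus\cdots$ with the intertwiner $Y=XQ$, the same compatibility device of fixing the free blocks of $X_n$ so that $Y_n=X_n(\overline{Q}|_{\mathcal{K}_n})$, and the same strong-limit argument. However, two points in your sketch need repair, and in the paper both are resolved by one explicit choice of $\overline{Q}$.

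First, the norm hypothesis of Theorem \ref{partialisointertwine} is $\|X\|\leq\|Y\|$, whereas you verified the reverse inequality $\|Y\|=\|XQ\|\leq\|X\|$ from $\|Q\|\leq 1$. What saves the application is that $Q$ is a co-isometry, so $X=XQQ^*$ and $\|X\|\leq\|XQ\|\,\|Q^*\|\leq\|X\|$, i.e. $\|Y\|=\|X\|$ exactly. This identity is not cosmetic: it is also what converts the stagewise conclusion $\|Y_n\|=\|Y\|$ into $\|X_n\|=\|X\|$ (via $\|X_n\|=\|X_nQ_nQ_n^*\|\leq\|X_nQ_n\|=\|XQ\|\leq\|X\|\leq\|X_n\|$), and without it your final claim $\|\widetilde{X}\|=\|X\|$ does not follow from the construction.

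Second, you take $\overline{Q}=Q\oplus R$ with $R$ an unspecified co-isometry on $\mathcal{K}\ominus\mathcal{H}$ and defer the ``block-by-block'' compatibility; that is precisely where an arbitrary $R$ fails. For the induction to make sense you need $\overline{Q}\mathcal{K}_n\subseteq\mathcal{K}_n$ for every $n$, so that $Q_n=\overline{Q}|_{\mathcal{K}_n}$ exists, and to realize $Y_n=X_nQ_n$ with $Y_n=\begin{bmatrix} Y_{n-1} & C_n\\ 0 & D_n\end{bmatrix}$ and $X_n=\begin{bmatrix} X_{n-1} & A_n\\ 0 & B_n\end{bmatrix}$ you must solve $A_nR_n=C_n$ and $B_nR_n=D_n$, where $R_n$ is the block of $\overline{Q}$ on $\mathcal{L}_n$; this requires $R_n$ to be invertible, which a general co-isometry is not. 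The paper's choice is $\overline{Q}=Q\oplus qI$ for a fixed scalar $q$ of modulus one: every $\mathcal{K}_n$ is then invariant, each $Q_n=Q\oplus qI_{\mathcal{K}_n\ominus\mathcal{H}}$ is a co-isometry, and the compatibility equations are solved explicitly by $A_n=q^{-1}C_n$, $B_n=q^{-1}D_n$. With these two repairs your outline becomes the paper's proof.
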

	\begin{proof}
		As the minimal co-isometric extension of a contraction is unique, we consider 
		\[
		Z=\begin{bmatrix}
			T & D_{T^*} & 0 & 0 & \dots\\
			0 & 0 & I_{\mathcal{D}_{T^*}} & 0 & \dots\\
			0 & 0 & 0 & I_{\mathcal{D}_{T^*}} & \dots\\
			\vdots & \vdots & \vdots & \vdots & \ddots
		\end{bmatrix} \text{ on } \mathcal{K}= \mathcal{H}\oplus \mathcal{D}_{T^*}\oplus \mathcal{D}_{T^*}\oplus \mathcal{D}_{T^*}\oplus \cdots
		,\]
		to be the minimal co-isometric extension of $T$. Suppose 
		\[
		\mathcal{K}_n=\mathcal{H} \oplus \underbrace{\mathcal{D}_{T^*}\oplus\dots\oplus \mathcal{D}_{T^*}}_{n \text{ copies}} \oplus \{0\}\oplus \{0\}\oplus\cdots, \quad \text{ for }n \geq 1
		\]
		and 
		\[
		\mathcal{L}_n=\{0\}\oplus \dots\oplus\{0\}\oplus \underbrace{\mathcal{D}_{T^*}}_{(n+1)\text{th position}} \oplus \{0\}\oplus \cdots.
		\]
		Let $q$ be any complex number of modulus one and let $\overline{Q}=Q\oplus qI$ with respect to the decomposition $\mathcal{K}=\mathcal{H}\oplus \mathcal{H}^{\perp}$. Clearly, for any $n\in \mathbb{N},$ $\mathcal{K}_n$ is invariant subspace for $Z$. Therefore, define $Z_n=Z|_{\mathcal{K}_n}$ for all $n\in\mathbb{N}$. Observe that for $h=(h_0,h_1,\ldots ,h_n,0,\ldots)\in \mathcal{K}_n$, 
		\[Z_{n+1}h=Z(h_0,h_1,\ldots ,h_n,0,\ldots)=(Th_0+D_{T^*}h_1,h_2,h_3,\ldots,h_{n},0,\ldots)=Z_nh\in \mathcal{K}_n.\] Hence clearly $\mathcal{K}_n$ is invariant under $Z_{n+1}$ and with respect to the decomposition $\mathcal{K}_{n+1}=\mathcal{K}_n\oplus \mathcal{L}_{n+1}$, $Z_{n+1}$ can be written as $Z_{n+1}=\begin{bmatrix}
			Z_n & S_n \\
			0 & 0
		\end{bmatrix}$ for an operator $S_n\in \mathcal{B}(\mathcal{L}_{n+1},\mathcal{K}_n)$. Further, as $Z$ is a co-isometry, a simple computation shows that for all $n\in \mathbb{N}$, $Z_nZ_n^* + S_nS_n^*=I_{\mathcal{K}_n}$. If we consider $Y=XQ$, then from the given condition we have $TY=XT$. Since $Q$ is a co-isometry, $\|X\|=\|XQ\|$. Hence for $Z_1=\begin{bmatrix}
			Z_0 & S_0\\
			0 & 0
		\end{bmatrix}=\begin{bmatrix}
			T & D_{T^*}\\
			0 & 0
		\end{bmatrix}$ by Theorem \ref{partialisointertwine}, there is an operator  $Y_1=\begin{bmatrix}
			Y & C_1\\
			0 & D_1
		\end{bmatrix}$ such that for any $X_1=\begin{bmatrix}
			X & A_1\\
			0 & B_1
		\end{bmatrix}$,
		\begin{equation}\label{eqn01'}
			\|Y_1\|=\|Y\| \text{ and }	X_1Z_1=Z_1Y_1.
		\end{equation} 
		Observe that $\mathcal{K}_m $ is invariant subspace for $\overline{Q}$, thus let  $Q_m=\overline{Q}|_{\mathcal{K}_m}=Q\oplus qI_{\mathcal{K}_n\ominus\mathcal{H}}$ for all $m\in \mathbb{N}$. Hence with respect to the decomposition $\mathcal{K}_m=\mathcal{K}_{m-1}\oplus \mathcal{L}_m$, $Q_m=\begin{bmatrix}
			Q_{m-1}&0\\0&qI
		\end{bmatrix}$. In particular, with respect to the decomposition $\mathcal{K}_1=\mathcal{H}\oplus \mathcal{D}_{T^*}$, $Q_1=
		\begin{bmatrix}
			Q&0\\0&qI
		\end{bmatrix}$. Let $A_1=q^{-1}C_1$ and $B_1=q^{-1}D_1$ then $X_1=\begin{bmatrix}
			X&q^{-1}C_1\\0&q^{-1}D_1
		\end{bmatrix}$, $Y_1=X_1Q_1$ and hence from \eqref{eqn01'} we have $X_1Z_1=Z_1X_1Q_1$. Now $\|Y_1\|=\|Y\|$ implies that $\|X_1Q_1\|=\|XQ\|$. Further, as $Q$ is a co-isometry, so is $Q_1$. Hence we have, 
		\begin{equation}\label{norm1} 
			\|X_1\|=\|X_1Q_1Q_1^*\|\leq \|X_1Q_1\|\; \|Q_1^*\|= \|XQ\|\leq \|X\|\leq \|X_1\|,
		\end{equation} where the last inequality follows as $X_1$ is an extension of $X$. Thus $\|X_1\|=\|X\|$. Assume that for $1\leq k \leq n-1$, there is an operator $X_k=\begin{bmatrix}
			X_{k-1} & q^{-1}C_{k}\\
			0 & q^{-1}D_k
		\end{bmatrix}$ such that, 
		\begin{equation}\label{eqn02'}
			X_kZ_k=Z_kX_kQ_k, \quad \|X_k\|=\|X\|.
		\end{equation}
		Let $Y_{n-1}=X_{n-1}Q_{n-1}$. Then from \eqref{eqn02'} we have $Z_{n-1}Y_{n-1}=X_{n-1}Z_{n-1}$. Applying Theorem \ref{partialisointertwine}, there is an operator $Y_n=\begin{bmatrix}
			Y_{n-1} & C_n\\
			0 & D_{n}
		\end{bmatrix}$ such that for any $X_n=\begin{bmatrix}
			X_{n-1} & A_n\\
			0 & B_n
		\end{bmatrix}$ we have $Z_nY_n=X_nZ_n$ and $\|Y_n\|=\|Y_{n-1}\|=\|Y\|$. Let $A_n = q^{-1}C_n$ and $B_n=q^{-1}D_n$. Thus, we have $Y_n=X_nQ_n$ and $X_nZ_n=Z_nX_nQ_n$. A similar argument as in \eqref{norm1} gives us that $\|X_n\|=\|X\|$. Hence by induction, we obtain the sequences $\{Q_n\}_{n=1}^{\infty}$ and $\{X_n\}_{n=1}^{\infty}$ such that $X_nZ_n=Z_nX_nQ_n$ and $\|X_n\|=\|X\|$. We can consider $Q_n, X_n$ to be operators on $\mathcal{K}$ by defining $Q_n=0$ and $ X_n=0$ on $\mathcal{K}\ominus \mathcal{K}_n$. It is clear that $\bigcup\limits_{n=1}^{\infty} \mathcal{K}_n$ is dense in $\mathcal{K}$. Since $X_{n+1}|_{\mathcal{K}_n}=X_n$, for any $x \in \bigcup\limits_{n=1}^{\infty} \mathcal{K}_n$, the sequence $\{X_nx\}$ is cauchy and say $\widetilde{X}$ is the pointwise limit of the sequence $\{X_n\}$. Since, $\|X_n\|=\|X\|$ for all $n\in \mathbb{N}$ so $\widetilde{{X}}\in \mathcal{B}(\mathcal{K})$ and $\|\widetilde{X}\|=\|X\|$. Again by the construction of $Q_n$ it is clear that $\{Q_n\}$ converges strongly to $\overline{Q}$ in $\mathcal{B}(\mathcal{K})$. Therefore, we have a norm preserving extension $\widetilde{{X}}\in \mathcal{B}(\mathcal{K})$ of $X$ such that $$\widetilde{{X}}Z=Z\widetilde{{X}}\overline{Q}.$$
	\end{proof}
	Further removing the minimality condition, we obtain the following.
	\begin{thm}\label{Q_Lcomext}
		Let $T\in \mathcal{B}(\mathcal{H})$ be contractions, $Q$ be any co-isometry on $\mathcal{H}$ and $X\in \mathcal{B}(\mathcal{H})$ be such that $XT=TXQ$. Let $(Z, \mathcal{K})$ be a co-isometric extension of $T$. Then there is a co-isometric extension $\overline{Q}\in \mathcal{B}(\mathcal{H})$ of $Q$ and a norm preserving extension $\widetilde{{X}}\in \mathcal{B}(\mathcal{K})$ of $X$ such that $\widetilde{{X}}Z=Z\widetilde{{X}}\overline{Q}$. 
		Moreover, $\overline{Q}$ can be chosen to be $\overline{Q}=Q\oplus qI$ with respect to the decomposition $\mathcal{K}=\mathcal{H}\oplus \mathcal{H}^{\perp} $, for any complex number $q$ of modulus one.      
	\end{thm}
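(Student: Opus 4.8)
The plan is to reduce the general case to the minimal co-isometric extension already treated in Theorem \ref{Q_Lcommminext}. The key observation is that any co-isometric extension $(Z,\mathcal{K})$ of $T$ splits as an orthogonal direct sum of a minimal co-isometric extension and a co-isometric remainder acting on a subspace orthogonal to $\mathcal{H}$; once this splitting is in place, the desired operators $\overline{Q}$ and $\widetilde{X}$ can be assembled blockwise.

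First I would isolate the minimal part. Set $\mathcal{K}_0=\bigvee_{n\geq 0}Z^{*n}\mathcal{H}$. This space is clearly invariant for $Z^*$, and it is also invariant for $Z$: since $\mathcal{H}$ is invariant for $Z$ with $Z|_{\mathcal{H}}=T$ we have $Z\mathcal{H}\subseteq\mathcal{H}\subseteq\mathcal{K}_0$, while for $n\geq 1$ the co-isometry relation $ZZ^*=I_{\mathcal{K}}$ gives $ZZ^{*n}\mathcal{H}=Z^{*(n-1)}\mathcal{H}\subseteq\mathcal{K}_0$. Hence $\mathcal{K}_0$ reduces $Z$, and writing $\mathcal{K}_1=\mathcal{K}\ominus\mathcal{K}_0$ we obtain $Z=Z_0\oplus Z_1$ with $Z_0=Z|_{\mathcal{K}_0}$ and $Z_1=Z|_{\mathcal{K}_1}$ both co-isometries. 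By construction $\mathcal{H}\subseteq\mathcal{K}_0$, $Z_0|_{\mathcal{H}}=T$ and $\mathcal{K}_0=\bigvee_{n\geq 0}Z_0^{*n}\mathcal{H}$, so $(Z_0,\mathcal{K}_0)$ is a minimal co-isometric extension of $T$; by the uniqueness of the minimal co-isometric extension (up to a unitary fixing $\mathcal{H}$) Theorem \ref{Q_Lcommminext} applies to it verbatim.

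Next I would apply Theorem \ref{Q_Lcommminext} to $(Z_0,\mathcal{K}_0)$ with the same $q$ of modulus one, producing a co-isometric extension $\overline{Q}_0=Q\oplus qI$ of $Q$ on $\mathcal{K}_0=\mathcal{H}\oplus(\mathcal{K}_0\ominus\mathcal{H})$ and a norm preserving extension $\widetilde{X}_0$ of $X$ on $\mathcal{K}_0$ satisfying $\widetilde{X}_0Z_0=Z_0\widetilde{X}_0\overline{Q}_0$. I then set $\overline{Q}=\overline{Q}_0\oplus qI_{\mathcal{K}_1}$ and $\widetilde{X}=\widetilde{X}_0\oplus 0$ with respect to $\mathcal{K}=\mathcal{K}_0\oplus\mathcal{K}_1$. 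Since $\mathcal{H}\subseteq\mathcal{K}_0$ and both operators are block diagonal, $\mathcal{H}$ is invariant for $\overline{Q}$ and for $\widetilde{X}$ with $\overline{Q}|_{\mathcal{H}}=Q$ and $\widetilde{X}|_{\mathcal{H}}=X$, so these are genuine extensions; $\overline{Q}$ is a co-isometry because each block is, and $\|\widetilde{X}\|=\max\{\|\widetilde{X}_0\|,0\}=\|X\|$. Finally, because $Z$, $\widetilde{X}$ and $\overline{Q}$ are all block diagonal with respect to the reducing decomposition $\mathcal{K}_0\oplus\mathcal{K}_1$, the identity $\widetilde{X}Z=Z\widetilde{X}\overline{Q}$ need only be checked on each block: on $\mathcal{K}_0$ it is exactly the relation furnished by Theorem \ref{Q_Lcommminext}, and on $\mathcal{K}_1$ both sides vanish since the $\mathcal{K}_1$-block of $\widetilde{X}$ is zero. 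The asserted normal form $\overline{Q}=Q\oplus qI$ on $\mathcal{H}\oplus\mathcal{H}^{\perp}$ then follows at once, as $\overline{Q}$ acts as $qI$ on $\mathcal{H}^{\perp}=(\mathcal{K}_0\ominus\mathcal{H})\oplus\mathcal{K}_1$.

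The only genuinely delicate point is the first step: verifying that $\mathcal{K}_0$ reduces $Z$ (which rests essentially on the co-isometry identity $ZZ^*=I$) and that $(Z_0,\mathcal{K}_0)$ may legitimately be identified with the explicit minimal model used in the proof of Theorem \ref{Q_Lcommminext}. Everything after the splitting is routine blockwise bookkeeping, with the trivial choices $\widetilde{X}_1=0$ and $\overline{Q}_1=qI$ on the remainder carrying the argument through.
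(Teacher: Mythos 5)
Your proposal is correct and follows essentially the same route as the paper: your $\mathcal{K}_0=\bigvee_{n\geq 0}Z^{*n}\mathcal{H}$ is exactly the paper's ``smallest reducing subspace for $Z$ containing $\mathcal{H}$,'' after which both arguments apply Theorem \ref{Q_Lcommminext} to the resulting minimal co-isometric extension and extend $\widetilde{X}$ by zero on the orthogonal complement. Your write-up is merely more explicit about why this subspace reduces $Z$ (via $ZZ^*=I$) and about the blockwise verification of $\widetilde{X}Z=Z\widetilde{X}\overline{Q}$, details the paper leaves implicit.
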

	\begin{proof}
		The idea of the proof is same as that of Corollary 4.1 of \cite{Dou:Muh:Pea}. Let $\overline{Q}=Q\oplus qI$ for any complex number $|q|=1$. Consider the smallest reducing subspace $\mathcal{K}_1\subseteq \mathcal{K}$ for $Z$ containing $\mathcal{H}$ and let $Z=Z|_{\mathcal{K}_1}$. Therefore, $(Z,\mathcal{K}_1)$ is a  minimal co-isometric extension of $(T,\mathcal{H})$. By Theorem \ref{Q_Lcommminext}, there is a norm preserving extension $(S,\mathcal{K}_1)$ of $X$ such that $SZ=ZS\overline{Q}|_{\mathcal{K}_1}$. Now define $S$ on $\mathcal{K}=\mathcal{K}_1\oplus (\mathcal{K}\ominus \mathcal{K}_1)$ by considering $\widetilde{X}|_{\mathcal{K}_1}=S$ and $\widetilde{X}\equiv 0$ on $\mathcal{K}\ominus \mathcal{K}_1$. Then $\widetilde{X}$ has the desired properties. This completes the proof.      
	\end{proof}
	Again we can restate it as the following lifting theorem. 
	\begin{thm}\label{QminiisoliftDouglass}
		Let $T\in \mathcal{B}(\mathcal{H})$ be contractions, $Q$ be an isometry on $\mathcal{H}$ and $X\in \mathcal{B}(\mathcal{H})$ be such that $TX=QXT$. Let $(V, \mathcal{K})$ be any isometric lift of $T$. Then there is an isometric lift $\overline{Q}\in \mathcal{B}(\mathcal{H})$ of $Q$ and a norm preserving lift $\widetilde{{X}}\in \mathcal{B}(\mathcal{K})$ of $X$ such that $V\widetilde{{X}}=\overline{Q}\widetilde{{X}}V$.
		Moreover, $\overline{Q}$ can be chosen to be $\overline{Q}=Q\oplus qI$ with respect to the decomposition $\mathcal{K}=\mathcal{H}\oplus \mathcal{H}^{\perp} $, for any complex number $q$ of modulus one.            
	\end{thm}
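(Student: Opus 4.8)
The plan is to deduce this lifting statement directly from the co-isometric extension result Theorem \ref{Q_Lcomext} by passing to adjoints, exploiting the standard duality that $(V,\mathcal{K})$ is an isometric lift of $T$ precisely when $V^*$ is a co-isometric extension of $T^*$. Since the theorem is explicitly billed as a restatement, the whole proof should be a translation through adjoints with careful bookkeeping.

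First I would set $T'=T^*$, $Q'=Q^*$ and $X'=X^*$. As $T$ is a contraction and $Q$ is an isometry, $T'$ is a contraction and $Q'$ is a co-isometry. Taking adjoints of the hypothesis $TX=QXT$ gives $X^*T^*=T^*X^*Q^*$, i.e. $X'T'=T'X'Q'$, which is exactly the relation demanded by Theorem \ref{Q_Lcomext}. Moreover, since $(V,\mathcal{K})$ is an isometric lift of $T$, the operator $V^*$ is a co-isometric extension of $T'=T^*$ on $\mathcal{K}$.

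Next I would invoke Theorem \ref{Q_Lcomext} with this data. It produces a co-isometric extension $\overline{Q'}=Q'\oplus qI=Q^*\oplus qI$ of $Q'$ (for any $q$ with $|q|=1$), together with a norm preserving extension $\widetilde{X'}$ of $X'=X^*$ satisfying $\widetilde{X'}V^*=V^*\widetilde{X'}\,\overline{Q'}$. Finally I would transport everything back by conjugating. Setting $\widetilde{X}=(\widetilde{X'})^*$ and $\overline{Q}=(\overline{Q'})^*=Q\oplus\overline{q}I$, the displayed identity becomes $V\widetilde{X}=\overline{Q}\,\widetilde{X}V$ upon taking adjoints of both sides. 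Because $\widetilde{X'}$ is an extension of $X^*$, its adjoint $\widetilde{X}$ is a lift of $X$ with $\|\widetilde{X}\|=\|\widetilde{X'}\|=\|X^*\|=\|X\|$, so it is norm preserving; likewise the co-isometric extension $\overline{Q'}$ of $Q^*$ dualizes to an isometric lift $\overline{Q}$ of $Q$, and $\overline{q}$ again ranges over all complex numbers of modulus one as $q$ does, giving the asserted form $\overline{Q}=Q\oplus qI$ after relabeling.

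I do not expect a genuine obstacle here: the substantive content lies entirely in Theorem \ref{Q_Lcomext} (and ultimately in the minimal case Theorem \ref{Q_Lcommminext}), so the only care required is the routine but error-prone tracking of how the adjoint interchanges the roles of lift and extension, isometry and co-isometry, and the placement of $Q$ in the $Q$-commutation relation. The one point worth stating explicitly is that passing between $V\widetilde{X}=\overline{Q}\widetilde{X}V$ and $\widetilde{X'}V^*=V^*\widetilde{X'}\overline{Q'}$ reverses the operator order, which is precisely why the \emph{left} $Q$-commutation on the lifting side corresponds to the \emph{right} $Q$-commutation on the extension side.
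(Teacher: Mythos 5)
Your proposal is correct and follows essentially the same route as the paper: the paper's proof also passes to adjoints, applies Theorem \ref{Q_Lcomext} to $T^*$, $Q^*$, $X^*$ with the co-isometric extension $V^*$, and then dualizes back by taking $\widetilde{X}=Y^*$. Your extra remark about the conjugate $\overline{q}$ and the reversal of operator order is a careful bookkeeping point the paper leaves implicit, but it does not change the argument.
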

	\begin{proof}
		Clearly $TX=QXT$ implies that $X^*T^*=T^*X^*Q^*$. Since $(V, \mathcal{K})$ is an isometric lift of $T$, we know that $V^*$ will be an co-isometric extension of $T^*$. Also $Q$ is an isometry implies that $Q^*$ is a co-isometry. Let $q$ be any complex number of modulus one. Hence for $\overline{Q}=Q\oplus qI$ on $\mathcal{K}=\mathcal{H}\oplus \mathcal{H}^{\perp}$, by Theorem \ref{Q_Lcomext}, there is a norm preserving extension $Y$ of $X^*$ such that $YV^*=V^*Y\overline{Q}^* $. Taking $\widetilde{X}=Y^*$ proves the desired result.
	\end{proof}
	
	\subsection{The $Q_M$-commutant lifting theorems}
	Suppose $Q,X,T\in \mathcal{B}(\mathcal{H})$ are such that $XT=TQX$ and let $(V,\mathcal{K})$ be an isometric lift of $T$. Then can we find a norm preserving lift $Y\in \mathcal{B}(\mathcal{K})$ of $X$ such that for some lift $\overline{Q}\in \mathcal{B}(\mathcal{K})$ of $Q$, $YV=V\overline{Q}Y$? We address this question bellow. Note that in Theorem \ref{QMT commutant lift}, if in particular $Q$ is assumed to be a contraction and $\overline{Q}$ to be a contraction of form $Q\oplus Q'$ then the hypotheses of $TQ$ and $V\overline{Q}$ being contractions are automatically satisfied. Hence we obtain Theorem \ref{Qcomextsumesh}-(ii) as a corollary of Theorem \ref{QMT commutant lift} bellow.
	\begin{thm}\label{QMT commutant lift}
		Let $Q,X,T\in \mathcal{B}(\mathcal{H})$ be such that $T$ and $TQ$ are contractions and $XT=TQX$. Let $(V,\mathcal{K})$ be an isometric lift of $T$. Let $(\overline{Q},\mathcal{K})$ be any lift of $Q$ such that $V\overline{Q}$ is a contraction. Then there is a norm preserving lift $Y$ of $X$ such that $YV=V\overline{Q}Y$. 
	\end{thm}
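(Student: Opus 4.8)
The plan is to mirror the proof of Theorem~\ref{QLT commutant lift} by recasting the $Q_M$-commuting relation $XT=TQX$ as an ordinary intertwining relation and then invoking the contractive intertwining lifting theorem (Theorem~\ref{contractive lift}). Writing $XT=(TQ)X$ and setting $T'=TQ$, the hypothesis becomes $T'X=XT$, so $X$ intertwines the two contractions $T$ and $T'=TQ$. Both $T$ and $T'$ are contractions by assumption, and after normalizing we may take $\|X\|=1$ so that $X$ is a contraction as well.

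The crucial step is to identify $V\overline{Q}$ as a contractive lift of $T'=TQ$. That $V\overline{Q}$ is a contraction is given by hypothesis, so it remains to check the lifting condition $(V\overline{Q})^*|_{\mathcal{H}}=(TQ)^*$ together with invariance of $\mathcal{H}$ under $(V\overline{Q})^*$. Since $(V\overline{Q})^*=\overline{Q}^*V^*$ and since $\mathcal{H}$ is invariant for both $V^*$ and $\overline{Q}^*$ (because $V$ lifts $T$ and $\overline{Q}$ lifts $Q$), the subspace $\mathcal{H}$ is invariant for $\overline{Q}^*V^*$. For $h\in\mathcal{H}$ one computes $\overline{Q}^*V^*h=\overline{Q}^*(T^*h)=Q^*(T^*h)=(TQ)^*h$, where the middle equality uses $T^*h\in\mathcal{H}$ together with $\overline{Q}^*|_{\mathcal{H}}=Q^*$. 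Thus $V\overline{Q}$ is indeed a contractive lift of $TQ$.

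With these pieces in place I would apply Theorem~\ref{contractive lift} to the intertwining $T'X=XT$, using the isometric lift $V$ of $T$ and the contractive lift $W=V\overline{Q}$ of $T'=TQ$. This yields a contractive lift $Y$ of $X$ satisfying $WY=YV$, that is, $V\overline{Q}Y=YV$, which is precisely $YV=V\overline{Q}Y$. The norm-preserving claim then follows exactly as in Theorem~\ref{QLT commutant lift}: when $\|X\|=1$, the contractive lift $Y$ satisfies $1=\|X\|\le\|Y\|\le 1$, forcing $\|Y\|=\|X\|$; and for an arbitrary nonzero $X$ one applies the normalized version to $X/\|X\|$, obtaining a norm-preserving lift $\widehat{Y}$ with $\widehat{Y}V=V\overline{Q}\widehat{Y}$, and rescales by setting $Y=\|X\|\widehat{Y}$.

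I expect the only point requiring genuine care to be the verification that $V\overline{Q}$ really lifts $TQ$, since this is where the product structure and the separate invariance properties of $V^*$ and $\overline{Q}^*$ must be combined correctly, and where the hypothesis that $V\overline{Q}$ be a contraction is used in an essential way. Once that identification is settled, the remainder is a direct transcription of the $Q_L$-argument, with $QT$ and $\overline{Q}V$ replaced throughout by $TQ$ and $V\overline{Q}$.
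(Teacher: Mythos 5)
Your proposal is correct and follows essentially the same route as the paper: identify $V\overline{Q}$ as a contractive lift of $TQ$, apply Theorem~\ref{contractive lift} to the intertwining relation $(TQ)X=XT$ with the isometric lift $V$ of $T$, and then handle the norm-preservation by the standard normalization/rescaling argument. The only difference is that you explicitly verify that $V\overline{Q}$ lifts $TQ$ (via $\overline{Q}^*V^*h=Q^*T^*h$ for $h\in\mathcal{H}$), a step the paper asserts without proof, so your write-up is if anything slightly more complete.
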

	\begin{proof}
		First assume that $\|X\|=1$. As $(V,\mathcal{K})$ is an isometric lift of $(T,\mathcal{H})$ and $\overline{Q}$ is any lift of $Q$ such that $V\overline{Q}$ is a contraction, we obtain that $V\overline{Q}$ is a contractive lift of $TQ$. Therefore, by Theorem \ref{contractive lift},
		there is a contractive lift $Y$ of $X$ such that $YV=V\overline{Q}Y$. Since $\|X\|=1$ and $Y$ is a contractive lift of $X$, so $1\geq \|Y\|\geq \|X\|=1$. Hence $\|Y\|=\|X\|=1$. Now if $0<\|X\|\neq 1$, then $XT=TQX$ implies that $\dfrac{X}{\|X\|}T=TQ\dfrac{X}{\|X\|}$. Thus there is a norm preserving lift $\hat{Y}$ of $\dfrac{X}{\|X\|}$ such that $\hat{Y}V=V\overline{Q}\hat{Y}$. Consequently, $\|X\|\hat{Y}V=V\overline{Q}\|X\|\hat{Y}$. Hence $Y=\|X\|\hat{Y}$ is a lift of $X$ such that $YV=V\overline{Q}Y$ and $\|Y\|=\|X\|$. 
	\end{proof}
	
	Since $(V,\mathcal{K})$ is an isometric lift of $T$ if and only if $(V^*,\mathcal{K})$ is an co-isometric extension of $T^*$, Theorem \ref{QMT commutant lift} can be restated as follows. Note that Theorem \ref{QMT co-iso ext} generalizes Theorem \ref{Qcomextsumesh}-(ii).
	\begin{thm}\label{QMT co-iso ext}
		Let $Q,X,T\in \mathcal{B}(\mathcal{H})$ be such that $T$ and $QT$ are contractions and $TX=XQT$. Let $(Z,\mathcal{K})$ be a co-isometric extension of $T$ and $(\overline{Q},\mathcal{K})$ be any extension of $Q$ such that $\overline{Q}Z$ is a contraction. Then there is a norm preserving extension $Y$ of $X$ such that $ZY=Y\overline{Q}Z$. 
	\end{thm}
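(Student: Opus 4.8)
The plan is to deduce this result from Theorem \ref{QMT commutant lift} by passing to adjoints, exactly as Theorem \ref{QRT co-iso ext} was obtained from Theorem \ref{QLT commutant lift}. The key observation is that the hypothesis $TX=XQT$ is equivalent, upon taking adjoints, to $X^*T^*=T^*Q^*X^*$, which is precisely the $Q_M$-commuting intertwining relation required in Theorem \ref{QMT commutant lift} with the roles of $T,X,Q$ now played by $T^*,X^*,Q^*$.

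Next I would translate each piece of the hypothesis into the lifting language. Since $(Z,\mathcal{K})$ is a co-isometric extension of $T$, the adjoint $V:=Z^*$ is an isometric lift of $T^*$. The operator $T^*$ is a contraction because $T$ is, and $T^*Q^*=(QT)^*$ is a contraction because $QT$ is. Because $\overline{Q}$ is an extension of $Q$, i.e. $\mathcal{H}$ is invariant for $\overline{Q}$ with $\overline{Q}|_{\mathcal{H}}=Q$, its adjoint $\overline{Q}^*$ is a lift of $Q^*$; moreover $V\overline{Q}^*=Z^*\overline{Q}^*=(\overline{Q}Z)^*$ is a contraction since $\overline{Q}Z$ is. Thus all the hypotheses of Theorem \ref{QMT commutant lift} are met by the triple $(T^*,X^*,Q^*)$, the isometric lift $V=Z^*$, and the lift $\overline{Q}^*$ of $Q^*$.

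Applying Theorem \ref{QMT commutant lift} then produces a norm preserving lift $S$ of $X^*$ satisfying $SV=V\overline{Q}^*S$, that is, $SZ^*=Z^*\overline{Q}^*S$. Setting $Y:=S^*$, the fact that $S$ is a lift of $X^*$ gives that $\mathcal{H}$ is invariant for $Y$ with $Y|_{\mathcal{H}}=X$, so $Y$ is an extension of $X$ with $\|Y\|=\|S\|=\|X^*\|=\|X\|$. Finally, taking adjoints of $SZ^*=Z^*\overline{Q}^*S$ yields $ZY=Y\overline{Q}Z$, which is the desired intertwining.

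I do not anticipate any genuine obstacle here, since the content is entirely contained in Theorem \ref{QMT commutant lift}. The only point requiring care is the bookkeeping of the lift/extension duality together with the order of the factors, so that $V\overline{Q}$ in the lifting statement corresponds correctly to $\overline{Q}Z$ in the extension statement, and the contraction hypothesis $\|\overline{Q}Z\|\le 1$ matches $\|V\overline{Q}^*\|\le 1$.
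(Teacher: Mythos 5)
Your proposal is correct and is essentially identical to the paper's own argument: the paper proves this theorem by dualizing Theorem \ref{QMT commutant lift} (taking adjoints, using that $Z^*$ is an isometric lift of $T^*$, $\overline{Q}^*$ is a lift of $Q^*$, and $(\overline{Q}Z)^*=Z^*\overline{Q}^*$ is a contraction), exactly as Theorem \ref{QRT co-iso ext} was deduced from Theorem \ref{QLT commutant lift}. Your bookkeeping of the lift/extension duality and of the factor order matches the paper's intended proof.
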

\begin{proof}
	Follows by a similar argument as that of the proof of Theorem \ref{QRT co-iso ext}.
\end{proof}
	\subsection{The $Q_R$-commutant lifting theorems}
	Let $Q\in \mathcal{B}(\mathcal{H})$ be arbitrary. Then the commutant lifting analogue for $Q_R$-commuting contractions can be obtained in the following two ways. 
\begin{enumerate}
	\item[I.] If $T\in \mathcal{B}(\mathcal{H})$ is a contraction with $V\in \mathcal{B}(\mathcal{K})$ being an isometric lift of $T$ and $X\in \mathcal{B}(\mathcal{H})$ is any operator such that $TX=XTQ $ then we want to obtain a norm preserving lift $Y\in \mathcal{B}(\mathcal{K})$ of $X$ such that $VY=YV\overline{Q}$, for some lift $\overline{Q}\in \mathcal{B}(\mathcal{K})$ of $Q$. 
	\item[II.] If $T\in \mathcal{B}(\mathcal{H})$ is a contraction with $V\in \mathcal{B}(\mathcal{K})$ being an isometric lift of $T$ and $X\in \mathcal{B}(\mathcal{H})$ is any operator such that $XT=TXQ$ then we want to obtain a lift $Y\in \mathcal{B}(\mathcal{K})$ of $X$ such that $YV=VY\overline{Q}$, for some lift $\overline{Q}\in \mathcal{B}(\mathcal{K})$ of $Q$.
\end{enumerate} We address both these cases separately and obtain the following results. 
	\begin{thm}\label{Q_Riso}
		Let $Q,T\in \mathcal{B}(\mathcal{H})$ be any contractions and $X\in \mathcal{B}(\mathcal{H})$ be such that $TX=XTQ$. Let $V,\overline{Q}$ on $\mathcal{K}$ be any isometric lifts of $T,Q$ respectively. Then there is a norm preserving lift $Y$ of $X$ such that $VY=YV\overline{Q}$.
	\end{thm}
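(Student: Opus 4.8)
The plan is to reduce this to the intertwining lifting theorem, Theorem \ref{contractive lift}, in exactly the spirit of the $Q_L$-case treated in Theorem \ref{QLT commutant lift}. The crucial observation is that the product $V\overline{Q}$ of the two given isometric lifts is itself an isometric lift of the contraction $TQ$; once this is recognized, the hypothesis $TX=XTQ$ becomes a genuine intertwining relation that the classical machinery can lift directly.

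First I would verify that observation. Since $V$ and $\overline{Q}$ are isometries, so is $V\overline{Q}$, because $(V\overline{Q})^*(V\overline{Q})=\overline{Q}^*V^*V\overline{Q}=\overline{Q}^*\overline{Q}=I$. To see that $V\overline{Q}$ lifts $TQ$, take $h\in\mathcal{H}$; since $V^*|_{\mathcal{H}}=T^*$ we have $V^*h=T^*h\in\mathcal{H}$, and then since $\overline{Q}^*|_{\mathcal{H}}=Q^*$ we get $(V\overline{Q})^*h=\overline{Q}^*V^*h=\overline{Q}^*T^*h=Q^*T^*h=(TQ)^*h$. In particular $(V\overline{Q})^*\mathcal{H}\subseteq\mathcal{H}$ and $(V\overline{Q})^*|_{\mathcal{H}}=(TQ)^*$, so $V\overline{Q}$ is an isometric lift of $TQ$. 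Note $TQ$ is a contraction, being a product of contractions.

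Next I would rewrite the hypothesis as the intertwining identity $TX=X(TQ)$ and first treat the case $\|X\|=1$, the general nonzero case following by scaling $X$ to $X/\|X\|$ exactly as in the proof of Theorem \ref{QLT commutant lift}. I then apply Theorem \ref{contractive lift} with the \emph{domain} contraction taken to be $TQ$, carrying the isometric lift $V\overline{Q}$, and the \emph{codomain} contraction taken to be $T$, carrying the (isometric, hence contractive) lift $V$; the intertwiner is $X$, and the required relation $T\,X=X\,(TQ)$ is precisely our hypothesis. The theorem then yields a contractive lift $Y$ of $X$ satisfying $VY=Y(V\overline{Q})$, that is, $VY=YV\overline{Q}$. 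Norm preservation is then automatic: being a lift forces $\|Y\|\geq\|X\|=1\geq\|Y\|$, hence $\|Y\|=\|X\|$.

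I do not expect any genuine obstacle here: the entire content is the opening remark that $V\overline{Q}$ lifts $TQ$, after which the conclusion is a direct transcription of the intertwining lifting theorem. The one point that demands care is bookkeeping the roles in Theorem \ref{contractive lift}, namely that $V$ must lift the codomain operator $T$ while it is the composite $V\overline{Q}$ that lifts the domain operator $TQ$; reversing these would produce the wrong side for $\overline{Q}$ in the final identity.
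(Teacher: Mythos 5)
Your proof is correct, and its opening step --- recognizing that $V\overline{Q}$ is an isometric lift of the contraction $TQ$, so that the hypothesis becomes the genuine intertwining relation $TX=X(TQ)$ --- is exactly the paper's. Where you diverge is in which lifting theorem you then invoke. The paper finishes in one line by citing the Sz.-Nagy--Foias intertwining lifting theorem (Theorem \ref{intertwining}), which needs no normalization because it places no norm restriction on the intertwiner $X$; you instead route through Theorem \ref{contractive lift}, which requires the intertwiner to be a contraction, hence your scaling step $X/\|X\|$ borrowed from the proof of Theorem \ref{QLT commutant lift}. Your route has a genuine advantage worth noting: Theorem \ref{intertwining}, as stated in the paper, concerns the \emph{minimal} isometric lifts of the two contractions, while the present statement allows \emph{any} isometric lifts $V,\overline{Q}$, so $V\overline{Q}$ is in general not a minimal lift of $TQ$; the paper's citation therefore silently relies on the (standard, but unstated) extension of the intertwining theorem to arbitrary lifts, whereas Theorem \ref{contractive lift} is stated precisely for an arbitrary isometric lift on the domain side and a merely contractive lift on the codomain side, so your application matches its hypotheses on the nose, at the modest cost of the normalization. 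Your bookkeeping of the roles --- domain operator $TQ$ carrying the isometric lift $V\overline{Q}$, codomain operator $T$ carrying the contractive lift $V$, which yields $VY=Y(V\overline{Q})$ --- is also the correct one.
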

	\begin{proof}
		Since $V,\overline{Q}$ are isometric lifts of $T, Q$ respectively, $V\overline{Q} $ is an isometric lift of $TQ$. Hence by the intertwining lifting theorem, Theorem \ref{intertwining}, there is a norm preserving lift $Y$ of $X$ such that $ VY=YV\overline{Q}$.
	\end{proof}
	Again since $V$ is an  isometric lift of $T$ on $\mathcal{K}$ if and only $V^*$ is a co-isometric extension of $T^*$, we can restate the above theorem in terms of co-isometric extension as follows.
	\begin{thm}\label{Q_Rcoiso}
		Let $Q,T\in \mathcal{B}(\mathcal{H})$ be any contractions and $X\in \mathcal{B}(\mathcal{H})$ be such that $XT=QTX$. Let $\overline{Q},Z$ on $\mathcal{K}$ be any co-isometric extensions of $Q,T$ respectively. Then there is a norm preserving extension $Y$ of $X$ such that $YZ=\overline{Q}ZY$.
	\end{thm}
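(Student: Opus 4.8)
The plan is to derive this statement as the adjoint form of Theorem~\ref{Q_Riso}, exactly as suggested by the preceding remark and mirroring the isometric-lift/co-isometric-extension transfer already used above (for instance in passing from Theorem~\ref{QLT commutant lift} to Theorem~\ref{QRT co-iso ext}). First I would take adjoints in the hypothesis $XT = QTX$, obtaining $T^*X^* = X^*T^*Q^*$. This is precisely the intertwining relation $TX = XTQ$ of Theorem~\ref{Q_Riso} with $T, X, Q$ replaced by $T^*, X^*, Q^*$; moreover $T^*$ and $Q^*$ are contractions because $T$ and $Q$ are.

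Next I would reinterpret the dilation data. Since $Z$ and $\overline{Q}$ are co-isometric extensions of $T$ and $Q$ on $\mathcal{K}$, their adjoints $Z^*$ and $\overline{Q}^*$ are isometric lifts of $T^*$ and $Q^*$ respectively. Hence the full hypothesis of Theorem~\ref{Q_Riso} is satisfied for the triple $(T^*, Q^*, X^*)$ together with the isometric lifts $Z^*$ and $\overline{Q}^*$. That theorem then produces a norm-preserving lift $S \in \mathcal{B}(\mathcal{K})$ of $X^*$ satisfying
\[
Z^* S = S Z^* \overline{Q}^*.
\]

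Finally I would set $Y = S^*$ and take adjoints of the displayed identity, which yields $S^*Z = \overline{Q}ZS^*$, that is $YZ = \overline{Q}ZY$. Because $S$ is a lift of $X^*$---so $\mathcal{H}$ is invariant under $S^*$ and $S^*|_{\mathcal{H}} = X$---the operator $Y = S^*$ is exactly a norm-preserving extension of $X$, with $\|Y\| = \|S\| = \|X^*\| = \|X\|$. I expect no genuine obstacle in this argument, as it is a routine duality transfer; the only delicate point is the bookkeeping---checking that taking adjoints really sends the relation $XT = QTX$ to the relation $TX = XTQ$ in the correct variables, and keeping straight which adjointed operator plays the role of the isometric lift $V$ and which plays the role of $\overline{Q}$ in the statement of Theorem~\ref{Q_Riso}.
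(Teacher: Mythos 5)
Your proposal is correct and is exactly the paper's approach: the paper presents Theorem~\ref{Q_Rcoiso} as a mere restatement of Theorem~\ref{Q_Riso}, justified by the observation that $V$ is an isometric lift of $T$ if and only if $V^*$ is a co-isometric extension of $T^*$, which is precisely the adjoint transfer you carry out. Your write-up simply makes explicit the bookkeeping (the substitution $(T,X,Q)\mapsto(T^*,X^*,Q^*)$, the identification of $Z^*,\overline{Q}^*$ as the isometric lifts, and the final passage $Y=S^*$) that the paper leaves to the reader.
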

	
	Now we address the point II. The proof of the following result is based on the techniques similar to that of Douglas, Muhly and Pearcy in \cite{Dou:Muh:Pea}. 
	\begin{thm}\label{QminicoisoextDouglass}
		Let $Q, T\in \mathcal{B}(\mathcal{H})$ be contractions and $X\in \mathcal{B}(\mathcal{H})$ be such that $TX=QXT$. Let $(Z, \mathcal{K})$ be a minimal co-isometric extension of $T$. If for each $n\in \mathbb{N}$, $$\mathcal{K}_n=\mathcal{H} \oplus \underbrace{\mathcal{D}_{T^*}\oplus\dots\oplus \mathcal{D}_{T^*}}_{n \text{ copies}} \oplus \{0\}\oplus \{0\}\oplus\cdots$$ and $\overline{Q}\in \mathcal{B}(\mathcal{H})$ is an extension of $Q$ such that for all $n\in \mathbb{N}$, $\mathcal{K}_n$ is invariant for $\overline{Q}$ . Then there is a norm preserving extension $\widetilde{{X}}\in \mathcal{B}(\mathcal{K})$ of $X$ such that $Z\widetilde{{X}}=\overline{Q}\widetilde{{X}}Z$.      
	\end{thm}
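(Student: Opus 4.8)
The plan is to follow the proof of Theorem~\ref{Q_Lcommminext} almost verbatim, constructing $\widetilde{X}$ as a strong limit of approximants built level by level along the explicit minimal co-isometric extension, with Theorem~\ref{partialisointertwine} as the engine at each stage. The only genuinely new feature is that here $\overline{Q}$ is \emph{prescribed} rather than constructed, so the freedom available in Theorem~\ref{partialisointertwine} must be spent on reproducing the given $\overline{Q}$ exactly.

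First I would fix the standard model
\[
Z=\begin{bmatrix} T & D_{T^*} & 0 & \cdots\\ 0 & 0 & I & \cdots\\ \vdots & & & \ddots \end{bmatrix}\text{ on }\mathcal{K}=\mathcal{H}\oplus \mathcal{D}_{T^*}\oplus \mathcal{D}_{T^*}\oplus\cdots,
\]
put $Z_n=Z|_{\mathcal{K}_n}$, and record, exactly as in Theorem~\ref{Q_Lcommminext}, that $\mathcal{K}_n$ is invariant for $Z$, that $Z_{n+1}=\begin{bmatrix} Z_n & S_n\\ 0 & 0\end{bmatrix}$ relative to $\mathcal{K}_{n+1}=\mathcal{K}_n\oplus \mathcal{L}_{n+1}$, and that $Z_nZ_n^*+S_nS_n^*=I_{\mathcal{K}_n}$. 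Because $\overline{Q}$ is a contraction extending $Q$ and each $\mathcal{K}_n$ is $\overline{Q}$-invariant, the restrictions $\overline{Q}_n:=\overline{Q}|_{\mathcal{K}_n}$ are contractions of block form $\overline{Q}_n=\begin{bmatrix} \overline{Q}_{n-1} & \ast\\ 0 & \ast\end{bmatrix}$ with respect to $\mathcal{K}_n=\mathcal{K}_{n-1}\oplus \mathcal{L}_n$, with $\overline{Q}_1=\begin{bmatrix} Q & \ast\\ 0 & \ast\end{bmatrix}$.

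Assuming $X\neq 0$ (otherwise take $\widetilde{X}=0$), I would run the induction as follows. Rewriting the hypothesis as $(QX)T=TX$ and noting $\|QX\|\leq\|X\|$, Theorem~\ref{partialisointertwine}, applied in the roles $(T,S_0,X,Y)=(T,D_{T^*},QX,X)$, produces $\widetilde{X}_1=\begin{bmatrix} X & C_1\\ 0 & W_1\end{bmatrix}$ with $\|\widetilde{X}_1\|=\|X\|$ and $X_1Z_1=Z_1\widetilde{X}_1$ for \emph{every} operator $X_1$ whose $(1,1)$ block is $QX$. The decisive observation is that $\overline{Q}_1\widetilde{X}_1$ is such an operator: its $(1,1)$ block equals $QX$ since $\overline{Q}_1$ has corner $Q$ and vanishing lower-left block. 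Hence $Z_1\widetilde{X}_1=\overline{Q}_1\widetilde{X}_1Z_1$. For the step, given $\widetilde{X}_n$ on $\mathcal{K}_n$ with $Z_n\widetilde{X}_n=\overline{Q}_n\widetilde{X}_nZ_n$ and $\|\widetilde{X}_n\|=\|X\|$, I apply Theorem~\ref{partialisointertwine} in the roles $(T,S_0,X,Y)=(Z_n,S_n,\overline{Q}_n\widetilde{X}_n,\widetilde{X}_n)$; the required intertwining $(\overline{Q}_n\widetilde{X}_n)Z_n=Z_n\widetilde{X}_n$ is the inductive relation, and $\|\overline{Q}_n\widetilde{X}_n\|\leq\|\widetilde{X}_n\|$ because $\overline{Q}_n$ is a contraction. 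This yields $\widetilde{X}_{n+1}=\begin{bmatrix} \widetilde{X}_n & C_{n+1}\\ 0 & W_{n+1}\end{bmatrix}$ with $\|\widetilde{X}_{n+1}\|=\|\widetilde{X}_n\|=\|X\|$, and since $\overline{Q}_{n+1}$ has corner $\overline{Q}_n$, the product $\overline{Q}_{n+1}\widetilde{X}_{n+1}$ again has $(1,1)$ block $\overline{Q}_n\widetilde{X}_n$ and is an admissible $X_{n+1}$, giving $Z_{n+1}\widetilde{X}_{n+1}=\overline{Q}_{n+1}\widetilde{X}_{n+1}Z_{n+1}$.

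Finally I would pass to the limit. The approximants are consistent, $\widetilde{X}_{n+1}|_{\mathcal{K}_n}=\widetilde{X}_n$, and uniformly bounded by $\|X\|$, so $\widetilde{X}x:=\lim_n\widetilde{X}_nx$ defines a bounded operator on the dense subspace $\bigcup_n\mathcal{K}_n$, extending to $\widetilde{X}\in\mathcal{B}(\mathcal{K})$ with $\|\widetilde{X}\|=\|X\|$ and $\widetilde{X}|_{\mathcal{H}}=X$. Since $Z$ and $\overline{Q}$ both preserve each $\mathcal{K}_n$, the level-$n$ identities $Z_n\widetilde{X}_n=\overline{Q}_n\widetilde{X}_nZ_n$ pass to $Z\widetilde{X}=\overline{Q}\widetilde{X}Z$ on the dense set and hence on all of $\mathcal{K}$. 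I expect the main obstacle to be precisely the bookkeeping that makes $\overline{Q}_n\widetilde{X}_n$ a legitimate choice of the free operator in Theorem~\ref{partialisointertwine} at every stage: this is exactly where the hypothesis that $\overline{Q}$ leaves each $\mathcal{K}_n$ invariant is used, to force the block-triangular form that fixes the corner, and the contractivity of $\overline{Q}$ is what propagates the equality of norms through the induction.
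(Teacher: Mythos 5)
Your proposal is correct and follows essentially the same route as the paper's own proof: the same explicit model of the minimal co-isometric extension, the same induction driven by Theorem \ref{partialisointertwine} with the role assignments $(QX,X)$ at the base and $(\overline{Q}_n\widetilde{X}_n,\widetilde{X}_n)$ at each step, and the same passage to a strong limit over the dense union of the $\mathcal{K}_n$. The only difference is cosmetic: where the paper explicitly solves for the free blocks $A_n=Q_{n-1}C_n+R_{n1}D_n$, $B_n=R_{n2}D_n$ so that the admissible operator equals $Q_nX_n$, you simply note that the product $\overline{Q}_{n+1}\widetilde{X}_{n+1}$ automatically has the required upper-triangular form with corner $\overline{Q}_n\widetilde{X}_n$.
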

	\begin{proof}
		As the minimal co-isometric extension of a contraction is unique, we consider 
		\[
		Z=\begin{bmatrix}
			T & D_{T^*} & 0 & 0 & \dots\\
			0 & 0 & I_{\mathcal{D}_{T^*}} & 0 & \dots\\
			0 & 0 & 0 & I_{\mathcal{D}_{T^*}} & \dots\\
			\vdots & \vdots & \vdots & \vdots & \ddots
		\end{bmatrix} \text{ on } \mathcal{K}= \mathcal{H}\oplus \mathcal{D}_{T^*}\oplus \mathcal{D}_{T^*}\oplus \mathcal{D}_{T^*}\oplus \cdots,
		\]
		to be the minimal co-isometric extension of $T$. Suppose 
		\[
		\mathcal{K}_n=\mathcal{H} \oplus \underbrace{\mathcal{D}_{T^*}\oplus\dots\oplus \mathcal{D}_{T^*}}_{n \text{ copies}} \oplus \{0\}\oplus \{0\}\oplus\cdots \quad, \text{ for }n \geq 1
		\]
		and 
		\[
		\mathcal{L}_n=\{0\}\oplus \dots\oplus\{0\}\oplus \underbrace{\mathcal{D}_{T^*}}_{(n+1)\text{th position}} \oplus \{0\}\oplus \cdots.
		\]
		Let $\overline{Q} \in \mathcal{B}(\mathcal{K})$ be any contractive extension of $Q$ such that $ \mathcal{K}_n$ is invariant for $\overline{Q}$ for each $n\in \mathbb{N} $. Clearly, for all $n\in \mathbb{N}$, $ \mathcal{K}_n$ is an invariant subspace for $Z$. Define $Z_n=Z|_{\mathcal{K}_n}$. Observe that for $h=(h_0,h_1,\ldots ,h_n,0,\dots)\in \mathcal{K}_n$, 
		\[Z_{n+1}h=Z(h_0,h_1,\ldots ,h_n,0,\dots)=(Th_0+D_{T^*}h_1,h_2,h_3,\ldots,h_{n},0,\ldots)=Z_nh\in \mathcal{K}_n.\] Hence it is clear that $\mathcal{K}_n$ is invariant under $Z_{n+1}$ and with respect to the decomposition $\mathcal{K}_{n+1}=\mathcal{K}_n\oplus \mathcal{L}_{n+1}$, $Z_{n+1}$ can be written as $Z_{n+1}=\begin{bmatrix}
			Z_n & S_n \\
			0 & 0
		\end{bmatrix}$, where $S_n\in \mathcal{B}(\mathcal{L}_{n+1},\mathcal{K}_n)$. Further, as $Z$ is a co-isometry, a simple computation shows that for all $n\in \mathbb{N}$, $Z_nZ_n^* + S_nS_n^*=I_{\mathcal{K}_n}$. If we consider $S=QX$, then clearly $\|S\|= \|QX\|\leq \|X\|$ and $QXT=TX$ implies that $ST=TX$. Hence for $Z_1=\begin{bmatrix}
			Z_0 & S_0\\
			0 & 0
		\end{bmatrix}=\begin{bmatrix}
			T & D_{T^*}\\
			0 & 0
		\end{bmatrix}$ by Theorem \ref{partialisointertwine}, there is a norm preserving lift  $X_1=\begin{bmatrix}
			X & C_1\\
			0 & D_1
		\end{bmatrix}$ of $X$ such that for any $S_1=\begin{bmatrix}
			S & A_1\\
			0 & B_1
		\end{bmatrix}$, 
		\begin{equation}\label{eqn01}
			S_1Z_1=Z_1X_1.
		\end{equation} 
		Since $\mathcal{K}_n$ is an invariant subspace for $\overline{Q}$, define $Q_m=\overline{Q}|_{\mathcal{K}_m}$ for all $m\in \mathbb{N}$. Hence with respect to the decomposition $\mathcal{K}_m=\mathcal{K}_{m-1}\oplus \mathcal{L}_m$, $Q_m=\begin{bmatrix}
			Q_{m-1}&R_{m1}\\0&R_{m2}
		\end{bmatrix}$. In particular, since $\mathcal{K}_1$ and $\mathcal{H}$ are invariant for $\overline{Q}$, with respect to the decomposition $\mathcal{K}_1=\mathcal{H}\oplus \mathcal{D}_{T^*}$, $Q_1=
		\begin{bmatrix}
			Q&R_{11}\\0&R_{12}
		\end{bmatrix}$. Let $A_1=QC_1+R_{11}D_1$ and $B_1=R_{12}D_1$, then from  \eqref{eqn01} we get, $Q_1X_1Z_1=Z_1X_1$. Assume that for $1\leq k \leq n-1$, there is an operator $X_k=\begin{bmatrix}
			X_{k-1} & C_{k}\\
			0 & D_k
		\end{bmatrix}$ such that 
		\begin{equation}\label{eqn02}
			Q_kX_kZ_k=Z_kX_k, \text{ and } \|X_k\|=\|X\|.
		\end{equation}
		Let $S_{n-1}=Q_{n-1}X_{n-1}$. Then from \eqref{eqn02} we have $S_{n-1}Z_{n-1}=Z_{n-1}X_{n-1}$. Applying Theorem \ref{partialisointertwine}, there is an operator $X_n=\begin{bmatrix}
			X_{n-1} & C_n\\
			0 & D_{n}
		\end{bmatrix}$ such that for any $S_n=\begin{bmatrix}
			S_{n-1} & A_n\\
			0 & B_n
		\end{bmatrix}$ we have $S_nZ_n=Z_nX_n$ and $\|X_n\|=\|X_{n-1}\|=\|X\|$. So let $A_n = Q_{n-1}C_n+R_{n1}D_n$ and $B_n=R_{n2}D_n$. Therefore, $Q_nX_nZ_n=Z_nX_n$. So by induction, we obtain sequences $\{Q_n\}_{n=1}^{\infty}$ and $\{X_n\}_{n=1}^{\infty}$ such that $Q_nX_nZ_n=Z_nX_n$ and $\|X_n\|=\|X\|$. We can consider $Q_n, X_n$ to be operators on $\mathcal{K}$ by defining $Q_n=0$ and $ X_n=0$ on $\mathcal{K}\ominus \mathcal{K}_n$ respectively. It is clear that $\bigcup\limits_{n=1}^{\infty} \mathcal{K}_n$ is dense in $\mathcal{K}$. Since $X_{n+1}|_{\mathcal{K}_n}=X_n$, for any $x \in \bigcup\limits_{n=1}^{\infty} \mathcal{K}_n$, the sequence $\{X_nx\}$ is a Cauchy sequence. Since, $\|X_n\|=\|X\|$ for all $n\in \mathbb{N}$ so there is an operator $\widetilde{{X}}\in \mathcal{B}(\mathcal{K})$ which is a pointwise limit of the sequence $\{X_n\}$ and $\|\widetilde{X}\|=\|X\|$. It follows from the definition of $Q_n$ that the sequence $\{Q_n\}$ converges strongly to $\overline{Q}$ in $\mathcal{B}(\mathcal{K})$. Therefore, we have a norm preserving extension $\widetilde{{X}}\in \mathcal{B}(\mathcal{K})$ of $X$ such that $\overline{Q}\widetilde{{X}}Z=Z\widetilde{{X}}$.
	\end{proof}
	
	We give an alternative proof of this result using Sebestyen's approach. 
	\begin{thm}
		Let $Q,T\in \mathcal{B}(\mathcal{H})$ be contractions and $X\in \mathcal{B}(\mathcal{H})$ be such that $TX=QXT$. Let $(Z, \mathcal{K})$ be the minimal co-isometric extension of $T$. If $K_n = \bigvee\limits_{m=0}^n Z^{*m}\mathcal{H}$ and $\overline{Q}\in\mathcal{B}(\mathcal{K})$ is an extension of $Q$ such that, $\overline{Q}(K_n)\subseteq K_n$ for all $n\in \mathbb{N}$, then there is a norm preserving extension $\widetilde{X}\in \mathcal{B}(\mathcal{K})$ of $X$ such that $Z\widetilde{X}=\overline{Q}\widetilde{X}Z$. 
	\end{thm}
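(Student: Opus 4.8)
The plan is to retain the inductive architecture of the proof of Theorem~\ref{QminicoisoextDouglass} while replacing its completion step, Theorem~\ref{partialisointertwine}, by Sebestyen's dual Parrott theorem, Theorem~\ref{dualparrot}; this substitution is exactly what is meant by carrying out the argument \emph{via Sebestyen's approach}. First I would fix the unique minimal co-isometric extension $Z$ on $\mathcal{K}=\mathcal{H}\oplus\mathcal{D}_{T^*}\oplus\mathcal{D}_{T^*}\oplus\cdots$ in the Sch\"affer model used there, and record that $K_n=\bigvee_{m=0}^{n}Z^{*m}\mathcal{H}$ is precisely the truncation $\mathcal{H}\oplus\mathcal{D}_{T^*}^{\oplus n}$. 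The structural facts that drive everything are: each $K_n$ is invariant for both $Z$ and (by hypothesis) $\overline{Q}$; writing $N_{n+1}=K_{n+1}\ominus K_n$ one has $ZN_{n+1}\subseteq K_n$ and hence $ZK_{n+1}\subseteq K_n$; one has $Z^{*}K_n\subseteq K_{n+1}$; and $\bigcup_n K_n$ is dense in $\mathcal{K}$ by minimality.

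I would then build operators $X_n\in\mathcal{B}(K_n)$ with $X_0=X$, $X_{n+1}|_{K_n}=X_n$, $\|X_n\|=\|X\|$, and the truncated identity $Z_nX_n=Q_nX_nZ_n$, where $Z_n=Z|_{K_n}$ and $Q_n=\overline{Q}|_{K_n}$. The base case $n=0$ is nothing but the hypothesis: on $K_0=\mathcal{H}$ one has $Z_0=T$, $Q_0=Q$, $X_0=X$, and $Z_0X_0=Q_0X_0Z_0$ reads $TX=QXT$. For the inductive step I decompose against $K_{n+1}=K_n\oplus N_{n+1}$; since $X_{n+1}$ must restrict to $X_n$, which maps $K_n$ into $K_n$, it has diagonal entries $X_n$ and some $D$, upper-right entry some $C$, and vanishing lower-left corner. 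Using $ZN_{n+1}\subseteq K_n$ one checks that the target identity $Z_{n+1}X_{n+1}=Q_{n+1}X_{n+1}Z_{n+1}$ (already valid on $K_n$) reduces on $N_{n+1}$ to the single prescription $ZX_{n+1}\xi=Q_nX_n(Z\xi)$ for $\xi\in N_{n+1}$, whose right-hand side is known. As $Z$ is co-isometric, this pins down the $\operatorname{ran}Z^{*}$-part $Z^{*}ZX_{n+1}\xi$ of $X_{n+1}\xi$ while leaving the $\ker Z$-part free; it is this freedom that Theorem~\ref{dualparrot} will exploit.

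To make the step precise I would feed Theorem~\ref{dualparrot} the data $H=K_n$, $X=X_n$ and $H'=Z^{*}K_n\subseteq K_{n+1}$, where $X'\colon Z^{*}K_n\to K_{n+1}$ is the operator whose $K_n$-component is $X_n^{*}$ and whose $N_{n+1}$-component is the one forced by the prescription above, namely $P_{N_{n+1}}X'Z^{*}w=(Q_nX_nZ|_{N_{n+1}})^{*}w$ for $w\in K_n$. The compatibility hypothesis $\langle X_nh,h'\rangle=\langle h,X'h'\rangle$ then unwinds, for $h'=Z^{*}w$, into $\langle ZX_nh,w\rangle=\langle Q_nX_nZh,w\rangle$, which is exactly the inductive identity on $K_n$; so the hypothesis of Theorem~\ref{dualparrot} holds. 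The theorem returns $X_{n+1}$ extending $X_n$ with $X_{n+1}^{*}$ extending $X'$, of norm at most $\max\{\|X_n\|,\|X'\|\}$; the relation $X_{n+1}^{*}\supseteq X'$ encodes $ZX_{n+1}\xi=Q_nX_nZ\xi$ on $N_{n+1}$, and since $X_{n+1}$ extends $X_n$ one has automatically $\|X_{n+1}\|\ge\|X\|$, so $\|X_{n+1}\|=\|X\|$ once the norm bound is secured. Passing to the limit then mirrors Theorem~\ref{QminicoisoextDouglass}: $X_{n+1}|_{K_n}=X_n$ makes $\{X_nx\}$ stabilise on the dense set $\bigcup_n K_n$, the uniform bound gives a pointwise limit $\widetilde{X}\in\mathcal{B}(\mathcal{K})$ with $\|\widetilde{X}\|=\|X\|$, and letting $n\to\infty$ in $Z_nX_n=Q_nX_nZ_n$ (with $Q_n\to\overline{Q}$ strongly) yields $Z\widetilde{X}=\overline{Q}\widetilde{X}Z$; that $\widetilde{X}$ extends $X$ is inherited from $X_0=X$.

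The hard part is the one genuine estimate hidden in the inductive step: verifying $\|X'\|\le\|X\|$, so that the maximum governing the norm of the dual Parrott completion is $\|X\|$ and not larger. Since $X'Z^{*}w=X_n^{*}P_{K_n}Z^{*}w+(Q_nX_nZ|_{N_{n+1}})^{*}w$ splits orthogonally, this amounts to bounding the two summands simultaneously, which is the dual counterpart of the Douglas-lemma positivity inequality carried out in the proof of Theorem~\ref{partialisointertwine}; here the contractivity of $T$, $Q$, $\overline{Q}$ and the relation $TX=QXT$ (entering through $ST=TX$ with $S=QX$) are consumed, and Lemma~\ref{Dlemma} supplies the contractive factorisations that match the norm. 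Everything else is bookkeeping parallel to the already-given proof.
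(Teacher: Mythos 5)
You follow the paper's own route for this theorem --- an induction along the tower $K_n$ whose completion step is Sebestyen's dual Parrott theorem (Theorem \ref{dualparrot}), followed by a strong-limit passage --- so the overall architecture is the right one. But two steps are wrong or missing as written, and one of them is the crux. The first is the compatibility check. With your definition of $X'$ (the $K_n$-component of $X'Z^*w$ being $X_n^*P_{K_n}Z^*w$ and the $N_{n+1}$-component being $(Q_nX_nZ|_{N_{n+1}})^*w$), the hypothesis of Theorem \ref{dualparrot} does \emph{not} unwind to the inductive identity: for $h\in K_n$ the $N_{n+1}$-summand is orthogonal to $h$, so
\[
\langle h, X'Z^*w\rangle=\langle X_nh, P_{K_n}Z^*w\rangle=\langle X_nh, Z^*w\rangle,
\]
which is literally the other side of the compatibility condition. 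The condition therefore holds tautologically and carries no information. Your claimed unwinding to $\langle ZX_nh,w\rangle=\langle Q_nX_nZh,w\rangle$ would be correct only if the $K_n$-component were instead $P_{K_n}Z^*X_n^*Q_n^*w$ --- i.e. the paper's prescription $X'=Z^*S_n^*P_n\overline{Q}^*Z|_{Z^*K_n}$ --- and the two prescriptions coincide precisely \emph{because} of the inductive identity; so as a verification your paragraph is circular, even though its conclusion (that Theorem \ref{dualparrot} applies) happens to be true.

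The second point is the genuine gap: in your formulation the inductive identity must be spent on the norm bound $\|X'\|\le\|X\|$, and you do not prove it; you only gesture at the Douglas-lemma positivity computation of Theorem \ref{partialisointertwine} and at the base relation ``$ST=TX$ with $S=QX$''. That pointer is to the wrong proof (it is the engine of the Douglas--Muhly--Pearcy argument, Theorem \ref{QminicoisoextDouglass}, not of the Sebestyen one), and the bound is not a generic operator-norm fact: for contractions $X_n,Q_n$ not satisfying the identity, the two orthogonal summands $\|X_n^*P_{K_n}Z^*w\|^2$ and $\|(Q_nX_nZ|_{N_{n+1}})^*w\|^2$ can simultaneously be of size $\|w\|^2$, so their sum can reach $2\|w\|^2$. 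The estimate you need follows in one line once the identity is used: taking adjoints in $Z_nX_n=Q_nX_nZ_n$ gives $X_n^*P_{K_n}Z^*w=P_{K_n}Z^*X_n^*Q_n^*w$, while always $(Q_nX_nZ|_{N_{n+1}})^*w=P_{N_{n+1}}Z^*X_n^*Q_n^*w$; adding, $X'Z^*w=Z^*X_n^*Q_n^*w$, whence
\[
\|X'Z^*w\|=\|X_n^*Q_n^*w\|\le\|X\|\,\|w\|=\|X\|\,\|Z^*w\|,
\]
using that $Z^*$ is an isometry and that $Q_n=\overline{Q}|_{K_n}$ is a contraction (an implicit hypothesis of the theorem, used equally in the paper's own norm estimate). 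With this inserted, your induction and limit step reproduce the paper's proof essentially verbatim: your $X'$ is the paper's $Z^*S_n^*P_n\overline{Q}^*Z$ in disguise, the only real difference being where the inductive identity is consumed --- you need it for the norm bound, the paper for the compatibility check.
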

	
	\begin{proof}
		The proof follows by exact same technique as in the proof of Theorem 2 in \cite{Sebestyen}.
		Since $(V,\mathcal{K})$ is the minimal co-isometric extension of $T$, we have 
		\[
		\mathcal{K}=\bigvee_{n=0}^{\infty}Z^{*n}\mathcal{H}=\overline{span}\{ Z^{*n}h:h\in \mathcal{H}, n\in \mathbb{N}\cup\{0\} \}.
		\]
		Let $K_n = \bigvee\limits_{m=0}^n Z^{*m}\mathcal{H}$. Then $\bigcup\limits_{n = 0}^{\infty}K_n$ is dense in $\mathcal{K}$. Consider the orthogonal projections $P_n : \mathcal{K}\rightarrow K_n$ for all $n \geq 0$. Clearly 
		\begin{equation}\label{proj01}
			P_{n+1}Z^* = Z^*P_n.
		\end{equation}
		We construct the required operator $\widetilde{X}:\mathcal{K} \rightarrow \mathcal{K}$ inductively by finding operators $S_n : K_n \rightarrow K_n$ for all $n \in \mathbb N$.\\
		
		\noindent\textbf{Step I.} For the existence of $S_1:K_1\to K_1$, due to Theorem \ref{dualparrot}, it suffices to prove the following claim. \\
		
		\noindent\textbf{Claim.} For any $h_1, h_2\in \mathcal{H}$, $\langle Z^*X^*P_0\overline{Q}^*Z(Z^*h_1), h_2 \rangle = \langle Z^*h_1, Xh_2 \rangle$.\\
		
		\noindent \textit{Proof of Claim.} Since $Z$ is the minimal co-isometric extension of $T$, $Z|_{\mathcal{H}}=T$. Hence using this and Equation \eqref{proj01} we have 
		\begingroup
		\allowdisplaybreaks
		\begin{align*}
			\langle Z^*X^*P_0\overline{Q}^*Z(Z^*h_1), h_2 \rangle
			& = \langle X^*P_0\overline{Q}^*Z(Z^*h_1), Zh_2 \rangle\\
			& = \langle X^*P_0\overline{Q}^*h_1, Th_2 \rangle\\
			&=\langle P_0\overline{Q}^*h_1, XTh_2 \rangle\\
			&=\langle \overline{Q}^*h_1, XTh_2 \rangle\\
			&=\langle h_1, \overline{Q}XTh_2 \rangle\\
			& = \langle h_1, QXTh_2 \rangle & [\text{since }\overline{Q}|_{\mathcal{H}}=Q]\\
			& = \langle h_1, TXh_2 \rangle  & [\text{since } TX=QXT]\\
			& = \langle h_1, ZXh_2 \rangle\\
			& = \langle Z^*h_1, Xh_2 \rangle.
		\end{align*}
		\endgroup
		This completes the proof of the claim.\\
		
		\noindent Therefore, by Theorem \ref{dualparrot}, there is an operator $S_1: K_1 \to K_1$ such that $S_1|_{\mathcal{H}}=X$, $S_1^*|_{Z^*\mathcal{H}}=Z^*X^*P_0\overline{Q}^*Z|_{Z^*\mathcal{H}}$ and $\|S_1\|\leq \text{max}\{\|X\|, \|Z^*X^*P_0\overline{Q}^*Z|_{Z^*\mathcal{H}}\|\}=\|X\|$. Since $S_1|_{\mathcal{H}}=X$, $\|S_1\|=\|X\|$.\\
		
		Also we have, 
		\begingroup
		\allowdisplaybreaks
		\begin{align}\label{proj02}
			S_1^*P_1Z^* &= S_1^*Z^*P_0 & [\text{from }\eqref{proj01}]\notag\\
			& = Z^*X^*P_0\overline{Q}^*ZZ^*P_0 & [\text{since }S_1^*|_{Z^*\mathcal{H}}=Z^*X^*P_0\overline{Q}^*Z|_{Z^*\mathcal{H}}]\notag\\
			& = Z^*X^*P_0\overline{Q}^*P_0.
		\end{align}
		\endgroup
		
		\noindent\textbf{Step II.} Assume the existence of $S_n: K_{n}\rightarrow K_{n}$ such that
		\begin{itemize}
			\item[(a)] $S_{n}|_{K_{n-1}}=S_{n-1}$,
			\item[(b)] $S^*_{n}|_{Z^*K_{n-1}}=Z^*S_{n-1}^*P_{n-1}\overline{Q}^*Z|_{Z^*K_{n-1}}$,
			\item[(c)] $S_{n}^*P_{n}Z^* = Z^*S_{n-1}^*P_{n-1}\overline{Q}^*P_{n-1}$,
			\item[(d)] $\|S_{n}\|=\|S_{n-1}\|=\|X\|$.
		\end{itemize}
		\text{ }\\
		\textbf{Step III.} To find $S_{n+1}:K_{n+1}\to K_{n+1}$, it suffices to prove the following claim.\\
		
		\noindent \textbf{Claim.} For any $h_1,h_2 \in K_{n}$, $\langle Z^*S_{n}^*P_n\overline{Q}^*Z(Z^*h_1), h_2 \rangle=\langle Z^*h_1,S_nh_2 \rangle$.\\
		
		\noindent \textit{Proof of Claim.}	Suppose $h_1\in K_{n} \text{ and }h_2 \in K_{n}$. Then
		\begingroup
		\allowdisplaybreaks
		\begin{alignat*}{2}
			\langle Z^*S_{n}^*P_n\overline{Q}^*Z(Z^*h_1), h_2 \rangle 
			& = \langle S_{n}^*P_n\overline{Q}^*h_1, Zh_2 \rangle &\\
			& = \langle P_n\overline{Q}^*h_1, S_{n}Zh_2 \rangle&\\
			&= \langle \overline{Q}^*h_1, S_{n}Zh_2 \rangle&[\text{since } S_nK_n\subseteq K_n ]\\
			&= \langle \overline{Q}^*h_1, S_{n-1}Zh_2 \rangle& [\text{from } (a) \text{ as } Z(K_n)\subseteq K_{n-1}]   \\
			&= \langle h_1, \overline{Q}S_{n-1}Zh_2 \rangle&\\
			&= \langle P_{n-1}h_1, \overline{Q}S_{n-1}Zh_2\rangle& [\text{as }\overline{Q}K_{n-1}\subseteq K_{n-1}] &\\
			&= \langle \overline{Q}^*P_{n-1}h_1, S_{n-1}Zh_2\rangle&\\
			&= \langle P_{n-1}\overline{Q}^*P_{n-1}h_1, S_{n-1}Zh_2 \rangle& [\text{as }S_{n-1}K_{n-1}\subseteq S_{n-1}]\\
			&=\langle S_{n-1}^*P_{n-1}\overline{Q}^*P_{n-1}h_1, Zh_2 \rangle&\\
			&=\langle Z^*S_{n-1}^*P_{n-1}\overline{Q}^*P_{n-1}h_1, h_2 \rangle&\\
			&=\langle S_n^*P_nZ^*h_1, h_2 \rangle &[\text{from }(c)]\\
			&=\langle P_nZ^*h_1, S_nh_2 \rangle&\\
			&=\langle Z^*h_1, S_nh_2 \rangle. &[\text{as } S_nK_n\subseteq K_n]
		\end{alignat*}
		\endgroup
		This completes the proof of the claim.\\
		
		\noindent Hence by Theorem \ref{dualparrot}, there is $S_{n+1}: K_{n+1}\to K_{n+1}$ such that $S_{n+1}|_{K_{n}}=S_{n} $, $S_{n+1}^*|_{Z^*K_{n}}=Z^*S_{n}^*P_n\overline{Q}^*Z|_{Z^*K_{n}}$ and $\|S_{n+1}\| \leq\text{max}\{\|S_{n}\|,\|Z^*S_{n}^*P_n\overline{Q}^*Z|_{Z^*K_{n}}\| \} =\|S_n\|$. Since $S_{n+1}|_{K_n}=S_n$, $\|S_{n+1}\|=\|S_n\|=\|X\|$.
		Again 
		$$
		S_n^*P_{n+1}Z^* = S_{n+1}^*Z^*P_{n}=Z^*S_{n}^*P_n\overline{Q}^*ZZ^*P_n
		= Z^*S_{n}^*P_n\overline{Q}^*P_n.$$
		Therefore, by induction for all $n \in \mathbb N$ there is $S_n:K_n \rightarrow K_n$ such that 
		\begin{itemize}
			\item[(i)] $S_{n}|_{K_{n-1}}=S_{n-1}$,
			\item[(ii)] $S^*_{n}|_{Z^*K_{n-1}}=Z^*S_{n-1}^*P_{n-1}\overline{Q}^*Z|_{Z^*K_{n-1}}$,
			\item[(iii)] $S_{n}^*P_{n}Z^* = Z^*S_{n-1}^*P_{n-1}\overline{Q}^*P_{n-1}$,
			\item[(iv)] $\|S_{n}\|=\|S_{n-1}\|=\|X\|$.
		\end{itemize} 
		Define $S:\bigcup\limits_{n=0}^{\infty} \mathcal{K}_n \rightarrow \bigcup\limits_{n=0}^{\infty} \mathcal{K}_n$ such that $S|_{K_n}=S_n$. This is well defined as $S_{n}|_{K_{n-1}} = S_{n-1}$. Since $\bigcup\limits_{n = 0}^{\infty}K_n$ is dense in $\mathcal{K}$, by continuity $S$ extends to an operator $\widetilde{X}:\mathcal{K}\to \mathcal{K}$. One can easily check that $S_n^*P_n$ and $\overline{Q}^*P_n$ converges to $\widetilde{X}^*$ and $\overline{Q}^*$ respectively in the strong operator topology. Thus 
		\[
		Z\widetilde{X} = \overline{Q}\widetilde{X}Z.
		\]
		Clearly $\|\widetilde{X}\|=\|X\|$.
		Since $Z$ and $\widetilde{X}$ are extensions of $T$ and $X$ respectively, one can easily verify that $T^nX^m=P_{\mathcal{H}}Z^n\widetilde{X}^m|_{\mathcal{H}}$ and $X^nT^m=P_{\mathcal{H}}\widetilde{X}^nZ^m|_{\mathcal{H}}$ for all $n,m\geq 0$.
		This completes the proof.
	\end{proof}
	
	Removing the minimality assumption, we obtain the following general result. Note that Theorem \ref{QcoisoextDouglass} can be thought of as a generalization of Theorem \ref{Qcomextsumesh}-(iii) in the sense that here $Q$ can be any contraction instead of a unitary. 
	
	\begin{thm}\label{QcoisoextDouglass}
		Let $Q,T\in \mathcal{B}(\mathcal{H})$ be contractions and $X\in \mathcal{B}(\mathcal{H})$ be such that $TX=QXT$. Let $(Z, \mathcal{K})$ be an co-isometric extension of $T$. Then there is a contractive extension $\overline{Q}\in \mathcal{B}(\mathcal{K})$ of $Q$ and a norm preserving extension $\widetilde{{X}}\in \mathcal{B}(\mathcal{K})$ of $X$ such that $Z\widetilde{{X}}=\overline{Q}\widetilde{{X}}Z$. 
	\end{thm}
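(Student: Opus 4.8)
The plan is to reduce the general case to the minimal case already established in Theorem \ref{QminicoisoextDouglass}, following verbatim the passage from Theorem \ref{Q_Lcommminext} to Theorem \ref{Q_Lcomext}. First I would pass to the smallest reducing subspace $\mathcal{K}_0\subseteq\mathcal{K}$ for $Z$ that contains $\mathcal{H}$; writing $Z_0=Z|_{\mathcal{K}_0}$, the pair $(Z_0,\mathcal{K}_0)$ is then a \emph{minimal} co-isometric extension of $T$. As such an extension is unique, I may identify $\mathcal{K}_0$ with the model space $\mathcal{H}\oplus\mathcal{D}_{T^*}\oplus\mathcal{D}_{T^*}\oplus\cdots$ and thereby dispose of the increasing filtration $\mathcal{K}_n=\mathcal{H}\oplus\mathcal{D}_{T^*}^{\oplus n}\oplus\{0\}\oplus\cdots$ that appears in the hypothesis of Theorem \ref{QminicoisoextDouglass}.

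The second step is to produce, on this minimal part, a contractive extension of $Q$ that leaves every $\mathcal{K}_n$ invariant, so that Theorem \ref{QminicoisoextDouglass} becomes applicable. The simplest choice is $\overline{Q}_0=Q\oplus I$ with respect to $\mathcal{K}_0=\mathcal{H}\oplus(\mathcal{K}_0\ominus\mathcal{H})$: it is a contraction whenever $Q$ is, it restricts to $Q$ on $\mathcal{H}$, and since the identity block fixes each copy of $\mathcal{D}_{T^*}$ one sees at once that $\overline{Q}_0\mathcal{K}_n\subseteq\mathcal{K}_n$ for every $n$. (Any diagonal contraction of the form $Q\oplus\bigoplus_i c_iI$ on the copies of $\mathcal{D}_{T^*}$ would do equally well.) Applying Theorem \ref{QminicoisoextDouglass} to $(Z_0,\mathcal{K}_0)$ with this $\overline{Q}_0$ yields a norm preserving extension $\widetilde{X}_0\in\mathcal{B}(\mathcal{K}_0)$ of $X$ satisfying $Z_0\widetilde{X}_0=\overline{Q}_0\widetilde{X}_0 Z_0$.

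Finally I would transplant everything back to $\mathcal{K}$. Since $\mathcal{K}_0$ reduces $Z$, we have $Z=Z_0\oplus Z'$ with $Z'=Z|_{\mathcal{K}\ominus\mathcal{K}_0}$; I then set $\widetilde{X}=\widetilde{X}_0\oplus 0$ and $\overline{Q}=\overline{Q}_0\oplus 0$ relative to $\mathcal{K}=\mathcal{K}_0\oplus(\mathcal{K}\ominus\mathcal{K}_0)$. As $\mathcal{H}\subseteq\mathcal{K}_0$, the operator $\widetilde{X}$ is again an extension of $X$ with $\|\widetilde{X}\|=\|\widetilde{X}_0\|=\|X\|$, and $\overline{Q}$ is a contractive extension of $Q$. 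A one-line block computation then gives $Z\widetilde{X}=Z_0\widetilde{X}_0\oplus 0=\overline{Q}_0\widetilde{X}_0 Z_0\oplus 0=\overline{Q}\widetilde{X}Z$, which is the required identity.

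I expect no genuine obstacle here, since the entire analytic content was already absorbed into the minimal case treated in Theorem \ref{QminicoisoextDouglass}. The only point that needs a little care is the first step, namely confirming that the restriction of an arbitrary co-isometric extension to the smallest reducing subspace containing $\mathcal{H}$ really is the minimal co-isometric extension, and that the filtration $\mathcal{K}_n$ is meaningful there; but this is standard and is precisely the reduction already carried out in the proof of Theorem \ref{Q_Lcomext}.
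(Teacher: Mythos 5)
Your proposal is correct and follows essentially the same route as the paper: restrict $Z$ to the smallest reducing subspace $\mathcal{K}_0$ containing $\mathcal{H}$ (so that $Z|_{\mathcal{K}_0}$ is the minimal co-isometric extension), invoke Theorem \ref{QminicoisoextDouglass} there, and transplant back to $\mathcal{K}$ by a direct-sum definition of $\widetilde{X}$ and $\overline{Q}$, exactly as in the paper's passage from Theorem \ref{Q_Lcommminext} to Theorem \ref{Q_Lcomext}. If anything, your write-up is slightly more careful than the paper's, since you explicitly exhibit the filtration-invariant contractive extension $\overline{Q}_0=Q\oplus I$ that Theorem \ref{QminicoisoextDouglass} requires as input (the paper phrases this as if $\overline{Q}_0$ were an output of that theorem), and your choice $\overline{Q}=\overline{Q}_0\oplus 0$ in place of $\overline{Q}_0\oplus Q'$ changes nothing in the verification.
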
 
	\begin{proof}
		Let $\mathcal{K}_0\subseteq\mathcal{K}$ be the smallest reducing subspace for $Z$ containing $\mathcal{H}$. Let $Z|_{\mathcal{K}_0}=Z_0$. Then $(Z_0,\mathcal{K}_0)$ is the minimal co-isometric extension of $T$. Due to the uniqueness of the minimal co-isometric extension of a contraction, Theorem \ref{QminicoisoextDouglass}, gives us a contractive extension $\overline{Q}_0$ of $Q $ and a norm preserving extension $Y$ of $X$ on $\mathcal{K}_0$ such that $Z_0Y=\overline{Q}_0YZ_0$.  Now define $\widetilde{X}$ on $\mathcal{K}$ as $\widetilde{X}=Y\oplus 0$ and $\overline{Q}=\overline{Q}_0\oplus Q'$ with respect to the decomposition $\mathcal{K}_0\oplus \mathcal{K}_0^{\perp}=\mathcal{K}$. Then we obtain $Z\widetilde{{X}}=\overline{Q}\widetilde{{X}}Z$ as required.   
	\end{proof}
	Similarly the next theorem generalizes Theorem \ref{Qcomlift}-(iii). 
	\begin{thm}\label{QisoliftDouglass}
		Let $Q,T\in \mathcal{B}(\mathcal{H})$ be contractions and $X\in \mathcal{B}(\mathcal{H})$ be such that $XT=TXQ$. Let $(V, \mathcal{K})$ be an isometric lift of $T$. Then there is a lift $\overline{Q}\in \mathcal{B}(\mathcal{K})$ of $Q$ and a norm preserving lift $\widetilde{{X}}\in \mathcal{B}(\mathcal{K})$ of $X$ such that $\widetilde{{X}}V=V\widetilde{{X}}\overline{Q}$. 
	\end{thm}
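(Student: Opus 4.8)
The plan is to obtain this $Q_R$-commutant lifting statement by dualising the co-isometric extension version already established, namely Theorem \ref{QcoisoextDouglass}. Recall that an operator $B$ is a lift of $A$ precisely when $B^*$ is an extension of $A^*$, and that $(V,\mathcal{K})$ being an isometric lift of $T$ is equivalent to $V^*$ being a co-isometric extension of $T^*$. So I would first pass to adjoints throughout and reduce the problem to an extension problem to which Theorem \ref{QcoisoextDouglass} applies directly.

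Concretely, starting from $XT=TXQ$, I take adjoints to get $T^*X^*=Q^*X^*T^*$. Writing $A=T^*$, $B=X^*$ and $C=Q^*$, this is exactly the relation $AB=CBA$, which is the hypothesis of Theorem \ref{QcoisoextDouglass} with $A$ playing the role of the contraction $T$, $C$ the role of the contraction $Q$, and $B$ the role of $X$. Here $A=T^*$ and $C=Q^*$ are contractions because $T$ and $Q$ are, and $Z:=V^*$ is a co-isometric extension of $A=T^*$. Thus all hypotheses of Theorem \ref{QcoisoextDouglass} are met.

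Applying Theorem \ref{QcoisoextDouglass}, I obtain a contractive extension $\overline{C}\in\mathcal{B}(\mathcal{K})$ of $C=Q^*$ and a norm preserving extension $\widetilde{B}\in\mathcal{B}(\mathcal{K})$ of $B=X^*$ such that $Z\widetilde{B}=\overline{C}\,\widetilde{B}\,Z$, that is, $V^*\widetilde{B}=\overline{C}\,\widetilde{B}\,V^*$. Taking adjoints of this identity gives $\widetilde{B}^{\,*}V=V\widetilde{B}^{\,*}\overline{C}^{\,*}$. I then set $\widetilde{X}:=\widetilde{B}^{\,*}$ and $\overline{Q}:=\overline{C}^{\,*}$, so that $\widetilde{X}V=V\widetilde{X}\,\overline{Q}$, which is the asserted relation. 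Using the lift--extension duality once more, $\widetilde{X}$ is a lift of $X$ (since $\widetilde{B}$ extends $X^*$) and $\overline{Q}$ is a lift of $Q$ (since $\overline{C}$ extends $Q^*$), while the norm is preserved because $\|\widetilde{X}\|=\|\widetilde{B}\|=\|X^*\|=\|X\|$.

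I do not expect a genuine obstacle here, since all the real work---the inductive construction through the finite-dimensional defect pieces carried out in Theorem \ref{QminicoisoextDouglass} via the intertwining step of Theorem \ref{partialisointertwine}---has already been absorbed into Theorem \ref{QcoisoextDouglass}. The only points requiring care are bookkeeping ones: correctly tracking the lift--extension duality, and, above all, keeping the three operators in their precise left, middle and right positions when adjoints are taken, so that $XT=TXQ$ lands on the $Q_R$ pattern $AB=CBA$ rather than on one of the $Q_L$ or $Q_M$ patterns, for which a different extension theorem would be required.
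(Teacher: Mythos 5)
Your proposal is correct and follows essentially the same route as the paper's own proof: take adjoints of $XT=TXQ$ to get $T^*X^*=Q^*X^*T^*$, note that $V^*$ is a co-isometric extension of $T^*$, apply Theorem \ref{QcoisoextDouglass}, and take adjoints back. In fact your write-up is slightly more careful than the paper's, which ends with the typo ``Taking $\widetilde{X}=Y$'' where it should read $\widetilde{X}=Y^*$, exactly as your bookkeeping makes explicit.
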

	\begin{proof}
		It is clear from the hypothesis  that $V^*$ is a co-isometric extension of $T^*$ and $T^*X^*=Q^*X^*T^*$. Hence due to Theorem \ref{QcoisoextDouglass}, there is an extension $\overline{Q}^*$ of $Q^*$ and norm preserving extension $Y$ of $X^*$ such that $V^*Y=\overline{Q}^*YV^* $. Taking $\widetilde{X}=Y$ proves the claim. 
	\end{proof}
	
	\vspace{0.2cm}

	\section{Ando-type dilations for $Q$-commuting contractions}\label{QSectionAndo}
	
	\vspace{0.4cm}
	
\noindent In this Section, we intend to find Ando-type dilations for a pair of $Q$-commuting contractions $(T_1,T_2)$. We consider $Q$-commuting contractions of all three kinds namely $Q_L$-commuting, $Q_M$-commuting and $Q_R$-commuting contractions respectively and construct explicitly Ando-type dilations for them.
 
	\subsection{Ando-type dilations for $Q_L$-commuting contractions}
	If $Q,T_1,T_2\in \mathcal{B}(\mathcal{H})$ are contractions such that $T_1T_2=QT_2T_1$ then Theorem \ref{QLT commutant lift} guarantees that corresponding to any contractive lift $\overline{Q}$ of $Q$ and isometric lift $V_2$ of $T_2$ there exists a norm preserving lift $Y_1 $ of $ T_1$ such that $Y_1V_2=\overline{Q}V_2Y_1 $. Since here $T_1$ is also a contraction, it is legitimate to ask whether we can find an isometric lift $V_1$ of $T_1$ instead of $Y_1$ satisfying the condition that $V_1V_2=\overline{Q}V_2V_1$. In fact, given any lift $\overline{Q}$ of $Q$,  one can ask that does there exist isometric lifts $V_1,V_2$ of $T_1,T_2$ respectively, such that $V_1V_2=\overline{Q}V_2V_1 $? But this is not possible in general. For instance suppose $Q=qI_{\mathcal{H}}$ and $\overline{Q}=qI_{\mathcal{K}}$ for some $|q|<1$ and there are isometric lifts $V_1$,$V_2$ of $T_1$, $T_2$ respectively such that $V_1V_2=qV_2V_1$. Since here $\|qV_2V_1\|=|q|<1$ and $\|V_1V_2\|=1$, it contradicts the equation $V_1V_2=qV_2V_1$. Although we can expect any of the following:
	\begin{enumerate}
		\item[I.] If $Q,T_1,T_2\in \mathcal{B}(\mathcal{H})$ are any contractions such that $T_1T_2=QT_2T_1$ then there are isometric lifts $\overline{Q},V_1,V_2$ of $Q,T_1,T_2$ respectively such that $V_1V_2=\overline{Q}V_2V_1$.
		\item[II.] If $Q,T_1,T_2\in \mathcal{B}(\mathcal{H})$ are such that $T_1,T_2,QT_2$ are contractions and $T_1T_2=QT_2T_1$ then there are isometric lifts $V_1,V_2,\overline{Q}V_{Q}$ of $T_1,T_2,QT_2$ respectively, such that $V_1V_2=\overline{Q}V_{Q}V_1$.
	\end{enumerate}
	
	Notice that in Case-II, $Q$ can be any bounded operator as long as it satisfies the condition that $QT_2$ is a contraction. For instance one can consider $Q$ such that $\|Q\|\leq 1/\|T_2\|$. Hence in general, in Case-II, we can not expect to obtain an isometric lift of $Q$ unlike in Case-I. In this section, we obtain the results mentioned in both Case-I and Case-II.
	\subsubsection{Ando-type dilation Case-I}
	Our first result guarantees the existence of $\overline{Q}$-commuting isometric lifts.
		Before going to the main theorem of this section, we note the following result. 
	\begin{thm}\label{QLcoro}
		Let $Q,T_1,T_2\in \mathcal{B}(\mathcal{H})$ be any contractions such that $T_1T_2=QT_2T_1$. Let $\mathcal{K}'$ be any Hilbert space containing $\mathcal{H}$ and $V_2', \overline{Q}_0\in  \mathcal{B}(\mathcal{K}')$ be any isometric lifts of $T_2,Q$ respectively. Then there is a Hilbert space $\mathcal{K}$ containing $\mathcal{K}'$ and isometric lifts $V_1,V_2$ of $T_1,V_2'$ such that $V_1V_2=\overline{Q}V_2V_1$, where for any complex number $q$ of modulus one, $\overline{Q}=\overline{Q}_0\oplus qI$ with respect to the decomposition $\mathcal{K}=\mathcal{K}'\oplus \mathcal{K}'^{\perp}$.  
		Moreover, $\mathcal{K}'$ is a reducing subspace for $V_2$. 
	\end{thm}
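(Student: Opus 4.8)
The plan is to reduce the statement to a single $Q_L$-intertwining relation between operators living on $\mathcal{K}'$, and then to promote that relation to isometries by a Schäffer-type dilation performed simultaneously in both variables. \emph{Step 1 (a contractive lift of $T_1$).} Since $Q$ and $T_2$ are contractions, $QT_2$ is a contraction, and since $\overline{Q}_0,V_2'$ are isometries, $\overline{Q}_0V_2'$ is an isometry and hence a contraction. Applying Theorem \ref{QLT commutant lift} with $X=T_1$, $T=T_2$, isometric lift $V_2'$ and lift $\overline{Q}_0$ of $Q$, I obtain a norm preserving lift $Y_1\in\mathcal{B}(\mathcal{K}')$ of $T_1$ with
\[
Y_1 V_2' = \overline{Q}_0 V_2' Y_1 .
\]
Because $T_1$ is a contraction, $\|Y_1\|=\|T_1\|\le 1$, so $Y_1$ is a contraction and its defect operator $D_{Y_1}=(I-Y_1^*Y_1)^{1/2}$ and defect space $\mathcal{D}_{Y_1}=\overline{\mathrm{Ran}}\,D_{Y_1}$ are at hand.

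\emph{Step 2 (the key isometry on the defect space).} The decisive computation is that $\|D_{Y_1}V_2'k\|=\|D_{Y_1}k\|$ for every $k\in\mathcal{K}'$. Indeed $\|D_{Y_1}V_2'k\|^2=\|V_2'k\|^2-\|Y_1V_2'k\|^2$, and from the relation of Step 1 together with the fact that $\overline{Q}_0$ and $V_2'$ are isometries one gets $\|Y_1V_2'k\|=\|\overline{Q}_0V_2'Y_1k\|=\|Y_1k\|$; since $V_2'$ is isometric the right side equals $\|k\|^2-\|Y_1k\|^2=\|D_{Y_1}k\|^2$. Consequently the assignment $D_{Y_1}k\mapsto \bar q\,D_{Y_1}V_2'k$ is well defined and extends to an isometry $w_0$ on $\mathcal{D}_{Y_1}$.

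\emph{Step 3 (simultaneous dilation).} I take $\mathcal{K}=\mathcal{K}'\oplus\ell^2(\mathcal{D}_{Y_1})$, let $V_1$ be the minimal Schäffer isometric dilation of $Y_1$ on $\mathcal{K}$, set $\overline{Q}=\overline{Q}_0\oplus qI$, and define $V_2=V_2'\oplus W$, where on $\ell^2(\mathcal{D}_{Y_1})=\bigoplus_{n\ge 0}\mathcal{D}_{Y_1}$ the isometry $W$ acts diagonally as $\bar q^{\,n}w_0$ on the $n$-th summand (each block is isometric, so $W$ is an isometry). Then $V_1$ is an isometric lift of $Y_1$, hence of $T_1$, and $V_2$ is an isometric extension of $V_2'$ with $\mathcal{K}'$ reducing, which already gives the final assertion about $V_2$. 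A block computation of $V_1V_2$ and $\overline{Q}V_2V_1$ shows that the $\mathcal{K}'$-components agree by virtue of $Y_1V_2'=\overline{Q}_0V_2'Y_1$, while the $\ell^2(\mathcal{D}_{Y_1})$-components agree because of the two facts built into $W$: the defining identity $qWE_1D_{Y_1}=E_1D_{Y_1}V_2'$ of $w_0$ (with $E_1$ the inclusion of the first summand) and the $q$-twisted intertwining $SW=qWS$ with the shift $S$ on $\ell^2(\mathcal{D}_{Y_1})$, which is exactly what the unimodular diagonal weights $\bar q^{\,n}$ enforce. Hence $V_1V_2=\overline{Q}V_2V_1$.

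\emph{Main obstacle.} The crux is the norm identity $\|D_{Y_1}V_2'k\|=\|D_{Y_1}k\|$ of Step 2: without it the map $w_0$ is neither well defined nor isometric, and the whole diagonal construction of $W$ collapses. It is precisely here that the hypotheses that $\overline{Q}_0$ and $V_2'$ be genuine isometries (rather than merely contractive lifts) are used, and the unimodularity $|q|=1$ is what later makes the compatibility $SW=qWS$ hold so that the $\ell^2$-components match.
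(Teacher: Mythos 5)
Your proposal is correct, but after the common first step it follows a genuinely different route from the paper. Both arguments begin identically: apply Theorem \ref{QLT commutant lift} with $X=T_1$, $T=T_2$, the isometric lift $V_2'$ and the lift $\overline{Q}_0$, to produce a contraction $Y_1$ on $\mathcal{K}'$ lifting $T_1$ with $Y_1V_2'=\overline{Q}_0V_2'Y_1$. From there the paper proceeds abstractly: it takes $V_1$ to be the minimal isometric dilation of $Y_1$, invokes the $Q_L$-lifting theorem (Theorem \ref{QminiisoliftDouglass}, itself proved by a Douglas--Muhly--Pearcy iteration) to get a merely contractive norm-preserving lift $V_2$ of $V_2'$ satisfying $V_1V_2=\overline{Q}V_2V_1$, and then must prove \emph{a posteriori} that $V_2$ is an isometry --- first a positivity argument forcing the off-diagonal corner of $V_2$ to vanish (so $\mathcal{K}'$ reduces $V_2$), then an inductive computation of $\|V_2V_1^nk\|$ combined with the minimality $\mathcal{K}=\overline{\operatorname{span}}\{V_1^nk: n\ge 0,\ k\in\mathcal{K}'\}$ to conclude $V_2^*V_2=I$. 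You instead build $V_2$ as an isometry from the outset: the norm identity $\|D_{Y_1}V_2'k\|=\|D_{Y_1}k\|$ (a direct consequence of the intertwining relation and the isometry of $\overline{Q}_0$ and $V_2'$) yields the defect-space isometry $w_0$, and the $q$-twisted diagonal operator $W$ on $\ell^2(\mathcal{D}_{Y_1})$ gives $V_2=V_2'\oplus W$ explicitly, with the reducing property and isometricity automatic and the relation $V_1V_2=\overline{Q}V_2V_1$ verified by a block computation (I checked the blocks: the identities $Y_1V_2'=\overline{Q}_0V_2'Y_1$, $qw_0D_{Y_1}=D_{Y_1}V_2'$ and $SW=qWS$ are exactly what is needed, and all hold). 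What each approach buys: the paper's argument is shorter to write because it reuses black-box lifting machinery, but it hides the structure of $V_2$ and genuinely needs the minimality of the dilation $V_1$; your argument is self-contained after Step 1, needs no second lifting theorem and no minimality, produces concrete formulas for $V_1,V_2,\overline{Q}$, and makes transparent precisely where the isometry hypotheses on $\overline{Q}_0,V_2'$ and the unimodularity of $q$ are used --- in effect you construct by hand, in this special case, the object the paper obtains abstractly and then has to analyze.
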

	\begin{proof}
		Let $\mathcal{K}'$ be any Hilbert space such that $\overline{Q}_0$ and $V_2'$ are isometric lifts of $Q$ and $T_2$ on $\mathcal{K}'$ respectively as given. Hence from Theorem \ref{QLT commutant lift}, there is a norm preserving lift $Y_1$ of $T_1$ on $\mathcal{K}'$ such that $Y_1V_2'=\overline{Q}_0V_2'Y_1$. Now let $V_1$ on $\mathcal{K}$ containing $\mathcal{K}'$ be the minimal isometric dilation of $Y_1$. Let $q$ be any complex number of modulus one and   $\overline{Q}=\overline{Q}_0\oplus qI$ on $\mathcal{K}'\oplus (\mathcal{K} \ominus \mathcal{K}')$ then by Theorem \ref{QminiisoliftDouglass}, there is a norm preserving lift $V_2$ of $V_2'$ such that $V_1V_2=\overline{Q}V_2V_1 $. We claim that $V_2$ is an isometry. First note that $V_2=\begin{bmatrix}
		V_2'&0\\A&B
	\end{bmatrix}$ with respect to the decomposition $\mathcal{K}'\oplus (\mathcal{K} \ominus \mathcal{K}')$. Now clearly $\overline{Q}$ is an isometry on $\mathcal{K}$ as $\overline{Q}_0$ is so. Further, using the fact that $V_2'^*V_2'=I_{\mathcal{K}'}$, we obtain, \[ 0\leq I+ A^*A\leq  V_2'^*V_2'+A^*A\leq \| V_2'^*V_2'+A^*A \|I \leq 	\|V_2^*V_2\|I\leq I .\] Thus, $A^*A=0$ which gives us that $A=0$. Hence we now have $V_2=\begin{bmatrix}
		V_2'&0\\0&B
	\end{bmatrix}$ with respect to the decomposition $\mathcal{K}'\oplus (\mathcal{K} \ominus \mathcal{K}')$.  For any $k\in \mathcal{K}'$, we obtain, \begin{alignat*}{2}
		\|V_2V_1^nk\|&=\| \overline{Q}V_2V_1^nk \| & [\text{as }\overline{Q} \text{ is an isometry.}]\\
		&=\|V_1V_2V_1^{n-1}k  \| =\|V_2V_1^{n-1}k\|. & \quad[\text{as }V_1 \text{ is an isometry.}]
	\end{alignat*} 
	Hence inductively we obtain, \begin{alignat*}{2}
		\|V_2V_1^nk\|&=\| V_2V_1k\|=\|\overline{Q}V_2V_1k\|\\
		&=\|V_1V_2k\|=\|V_2k\|&[\text{as }V_1 \text{ is an isometry.}]\\
		&=\|V_2'k\|=\|k\|.&[\text{as }V_2|\mathcal{K}'=V_2' \text{ is an isometry}.]
	\end{alignat*}
	Therefore, \begin{align*} \langle (I-V_2^*V_2)V_1^nk, V_1^nk \rangle&=\langle V_1^nk,V_1^nk \rangle- \langle V_2^*V_2V_1^nk, V_1^nk \rangle\\
		&=\langle k,k \rangle- \langle V_2V_1^nk, V_2V_1^nk \rangle\\
		&=\|k\|^2-\| V_2V_1^nk\|^2\\
		&=0. \end{align*} Thus, for all $n\in \mathbb{N}\cup \{0\}$ and $k\in \mathcal{K}'$, \[\|(I-V_2^*V_2)^{1/2}V_1^nk\|=0. \] Since $\mathcal{K}=\overline{span}\{V_1^nk:n\in \mathbb{N}\cup\{0\},\;k\in \mathcal{K}' \}$, we obtain $I-V_2^*V_2=0$ on $\mathcal{K}$. Hence $V_2$ is an isometry.
	
	\end{proof} 
	The following theorem generalizes Theorem \ref{dil-1} for arbitrary contraction $Q$.
	\begin{thm}\label{Andodil1}
		Let $Q,T_1,T_2\in \mathcal{B}(\mathcal{H})$ be any contractions such that $T_1T_2=QT_2T_1$. Then there is a Hilbert space $\mathcal{K}$ containing $\mathcal{H}$, an isometric lift $\overline{Q}\in \mathcal{B}(\mathcal{K})$ of $Q$ and there are isometric lifts $V_1,V_2$ of $T_1,T_2$ respectively, such that $V_1V_2=\overline{Q}V_2V_1 $. Moreover, if $Q$ is an isometry then $\overline{Q}$ can be chosen to be $Q\oplus qI$ with respect to decomposition $\mathcal{H}\oplus (\mathcal{K}\ominus \mathcal{H})$ of $\mathcal{K}$, for any complex number $q$ of modulus one.
	\end{thm}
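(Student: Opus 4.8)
The plan is to deduce this from Theorem \ref{QLcoro}, which already produces the desired $\overline{Q}$-commuting pair of isometries once one is handed isometric lifts of $T_2$ and of $Q$ living on a \emph{common} Hilbert space $\mathcal{K}'$. Thus the only genuine work is to manufacture such a common dilation space; after that the conclusion is immediate, using that a lift of a lift is again a lift.

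First I would build $\mathcal{K}'$ by embedding both defect spaces simultaneously. Set $\mathcal{K}' = \mathcal{H} \oplus \ell^2(\mathcal{D}_{T_2} \oplus \mathcal{D}_Q)$ and define $V_2'$ and $\overline{Q}_0$ by the Sch\"affer-type formulas that inject the two defects into their respective summands:
\[
V_2'(h,\eta_0,\eta_1,\dots) = (T_2 h,\, (D_{T_2}h,0),\, \eta_0,\eta_1,\dots),
\]
\[
\overline{Q}_0(h,\eta_0,\eta_1,\dots) = (Q h,\, (0,D_Q h),\, \eta_0,\eta_1,\dots),
\]
where each $\eta_i \in \mathcal{D}_{T_2}\oplus\mathcal{D}_Q$. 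A routine computation, identical to the verification that the Sch\"affer dilation is isometric, shows that $V_2'$ and $\overline{Q}_0$ are isometries, that $\mathcal{H}$ is co-invariant for both, and that $V_2'^*|_{\mathcal{H}} = T_2^*$ and $\overline{Q}_0^*|_{\mathcal{H}} = Q^*$; hence $V_2'$ and $\overline{Q}_0$ are isometric lifts of $T_2$ and $Q$ on the single space $\mathcal{K}'$. I expect this simultaneous-embedding step to be the main, though modest, obstacle: the two canonical minimal dilation spaces of $T_2$ and $Q$ differ and must be reconciled before Theorem \ref{QLcoro} can be invoked.

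With $V_2',\overline{Q}_0$ in hand, Theorem \ref{QLcoro} supplies a space $\mathcal{K}\supseteq\mathcal{K}'$, isometric lifts $V_1$ of $T_1$ and $V_2$ of $V_2'$, and $\overline{Q}=\overline{Q}_0\oplus qI$ on $\mathcal{K}=\mathcal{K}'\oplus\mathcal{K}'^{\perp}$ with $V_1V_2=\overline{Q}V_2V_1$. Since $V_2$ lifts $V_2'$ which lifts $T_2$, and $\overline{Q}$ lifts the isometry $\overline{Q}_0$ which lifts $Q$, composing lifts shows that $V_1,V_2$ are isometric lifts of $T_1,T_2$ and that $\overline{Q}$ is an isometric lift of $Q$, as required.

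Finally, for the ``moreover'' clause I would specialize the construction when $Q$ is an isometry. Then $Q$ needs no defect part, so instead of the double embedding I would take $\mathcal{K}'=\mathcal{H}\oplus\ell^2(\mathcal{D}_{T_2})$ to be the minimal isometric dilation space of $T_2$ with $V_2'$ its minimal isometric lift, and simply set $\overline{Q}_0=Q\oplus qI_{\ell^2(\mathcal{D}_{T_2})}$, which is an isometric lift of $Q$ of the prescribed block form. Applying Theorem \ref{QLcoro} then yields $\overline{Q}=\overline{Q}_0\oplus qI=Q\oplus qI$ with respect to the decomposition $\mathcal{H}\oplus(\mathcal{K}\ominus\mathcal{H})$, giving exactly the stated form.
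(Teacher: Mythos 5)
Your proposal is correct and takes essentially the same approach as the paper: the paper likewise builds simultaneous Sch\"affer-type isometric lifts of $T_2$ and $Q$ on a single space (it uses $\mathcal{K}'=\mathcal{H}\oplus\mathcal{H}\oplus\cdots$ with both defect operators mapped into the first shift copy of $\mathcal{H}$, whereas you route the two defects into orthogonal summands of $\ell^2(\mathcal{D}_{T_2}\oplus\mathcal{D}_Q)$ -- a purely cosmetic difference), then invokes Theorem \ref{QLcoro}, and for the ``moreover'' clause replaces the Sch\"affer lift of the isometry $Q$ by $Q\oplus qI$ exactly as you do.
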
 
	\begin{proof}
		First assume that $Q$ is any contraction. Let $\mathcal{K}'=\mathcal{H}\oplus \mathcal{H}\oplus \dots  $. Define for any $k=(h_0,h_1,h_2,\dots )\in \mathcal{K}'$, 
		\[ \overline{Q}_0(k):=(Qh_0,D_{Q}h_0,h_1,h_2,\dots)\quad \text{ and } \quad V_2'(k):=(T_2h_0,D_{T_2}h_0,h_1,h_2,\dots  ). \]
		Clearly, $\overline{Q}_0$ and $V_2'$ are isometric lifts of $Q$ and $T_2$. Hence, we get the desired result by Theorem \ref{QLcoro}.
		
		Further, if $Q$ is in particular an isometry, let $\overline{Q}_1=Q\oplus qI$ on $\mathcal{K}'=\mathcal{H}\oplus (\mathcal{K}'\ominus \mathcal{H})$ for some complex number $q$ of modulus one. Then clearly $\overline{Q}_1$ and $V_2'$ are isometric lifts of $Q$ and $T_2$ respectively. Hence Theorem \ref{QLcoro} guarantees existence of a Hilbert space $\mathcal{K}$ containing $\mathcal{K}'$ and isometric lifts $V_1,V_2$ of $T_1,V_2'$ such that $V_1V_2=\overline{Q}V_2V_1$, where $\overline{Q}=\overline{Q}_1\oplus qI$ on $\mathcal{K}'\oplus (\mathcal{K}\ominus\mathcal{K}')$ which is same as $Q\oplus qI $ on $\mathcal{H}\oplus (\mathcal{K}\ominus\mathcal{H})$.     
	\end{proof}
Similarly, we also have a generalized form of Theorem \ref{dil-2}.
	\begin{thm}
		Let $Q,T_1,T_2\in \mathcal{B}(\mathcal{H})$ be any contractions such that $T_1T_2=T_2T_1Q$. Then there is a Hilbert space $\mathcal{K}$ containing $\mathcal{H}$, a co-isometric extensions $\overline{Q}\in \mathcal{B}(\mathcal{K})$ of $Q$ and there are co-isometric extensions $Z_1,Z_2$ of $T_1,T_2$ respectively, such that $Z_1Z_2=Z_2Z_1\overline{Q} $. 
		Moreover, if $Q$ is a co-isometry then $\overline{Q}$ can be chosen to be $Q\oplus qI$ with respect to decomposition $\mathcal{H}\oplus (\mathcal{K}\ominus \mathcal{H})$ of $\mathcal{K}$, for any complex number $q$ of modulus one.
	\end{thm}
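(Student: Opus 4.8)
The plan is to derive this statement from Theorem \ref{Andodil1} by passing to adjoints, since the $Q_R$-commuting co-isometric-extension problem is exactly dual to the $Q_L$-commuting isometric-lift problem already settled there. No new construction is needed; the whole content is a translation of hypotheses and conclusion under the map $A\mapsto A^*$.

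First I would rewrite the hypothesis. Taking adjoints of $T_1T_2=T_2T_1Q$ yields $T_2^*T_1^*=Q^*T_1^*T_2^*$. Setting $S_1=T_2^*$, $S_2=T_1^*$ and $R=Q^*$ --- all of which are contractions because $T_1,T_2,Q$ are --- this relation reads $S_1S_2=RS_2S_1$, which is precisely the $Q_L$-commuting relation to which Theorem \ref{Andodil1} applies. Invoking that theorem produces a Hilbert space $\mathcal{K}\supseteq\mathcal{H}$, an isometric lift $\overline{R}$ of $R$, and isometric lifts $W_1,W_2$ of $S_1,S_2$ respectively, satisfying $W_1W_2=\overline{R}W_2W_1$.

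Next I would set $Z_1=W_2^*$, $Z_2=W_1^*$ and $\overline{Q}=\overline{R}^*$, and take adjoints of $W_1W_2=\overline{R}W_2W_1$. This gives $W_2^*W_1^*=W_1^*W_2^*\overline{R}^*$, i.e.\ $Z_1Z_2=Z_2Z_1\overline{Q}$, which is the desired identity. The step requiring genuine care is verifying that these adjoints are objects of the correct type: using the equivalence recorded in Section \ref{QSection1} that $V$ is an isometric lift of $A$ if and only if $V^*$ is a co-isometric extension of $A^*$, one checks that $W_1^*=Z_2$ is a co-isometric extension of $(T_2^*)^*=T_2$, that $W_2^*=Z_1$ is a co-isometric extension of $(T_1^*)^*=T_1$, and that $\overline{R}^*=\overline{Q}$ is a co-isometric extension of $(Q^*)^*=Q$. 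Since an isometric lift has the defining properties of a lift together with isometry, its adjoint inherits $\mathcal{H}$-invariance and agrees with $T_i$ on $\mathcal{H}$ while being a co-isometry. This bookkeeping is the only place an error could creep in, so I would state the role assignment $S_1=T_2^*$, $S_2=T_1^*$, $R=Q^*$ explicitly at the outset and track it carefully; otherwise the argument is automatic.

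Finally, for the moreover clause, if $Q$ is a co-isometry then $R=Q^*$ is an isometry, so the concluding assertion of Theorem \ref{Andodil1} permits the choice $\overline{R}=R\oplus qI=Q^*\oplus qI$ with respect to $\mathcal{H}\oplus(\mathcal{K}\ominus\mathcal{H})$ for an arbitrary modulus-one scalar $q$. Taking adjoints gives $\overline{Q}=\overline{R}^*=Q\oplus\overline{q}I$; as $q$ ranges over all unimodular scalars so does $\overline{q}$, which establishes the claimed form $\overline{Q}=Q\oplus qI$ and completes the proof.
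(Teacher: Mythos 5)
Your proposal is correct and is essentially the paper's own proof: the paper likewise takes adjoints of $T_1T_2=T_2T_1Q$ to get $T_2^*T_1^*=Q^*T_1^*T_2^*$, invokes Theorem \ref{Andodil1} to produce isometric lifts of $Q^*,T_1^*,T_2^*$ satisfying the $Q_L$-relation, and dualizes back via the lift/extension correspondence, differing from yours only in bookkeeping of indices. Your explicit treatment of the ``moreover'' clause (noting $\overline{R}^*=Q\oplus\overline{q}I$ and that $\overline{q}$ ranges over all unimodular scalars) is a detail the paper leaves implicit, and it is handled correctly.
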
 
	\begin{proof}
		Since $T_1T_2=T_2T_1Q$ implies that $T_2^*T_1^*=Q^*T_1^*T_2^*$, by Theorem \ref{Andodil1}, there is a Hilbert space $\mathcal{K}$ containing $\mathcal{H}$ and isometric lifts $\overline{Q}_1$, $ V_1$, $V_2$ of $Q^*$, $T_1^*$ and $T_2^*$ respectively such that $V_2V_1=\overline{Q}_1V_1V_2$. Hence taking $Z_i=V_i^*$ for $i=1,2$ and $\overline{Q}=\overline{Q}_1^*$ we obtain the desired result. 
	\end{proof}
	\subsubsection{Ando-type dilation Case 2.}
	Now we look at those $Q$-commuting contractions for which $Q$ is not necessarily a contraction. We obtain Ando-type dilation for such $Q$-commuting contractions. 
	\begin{thm}\label{andoisodilL}
		Let $T_1,T_2\in \mathcal{B}(\mathcal{H})$ be any contractions and $Q\in \mathcal{B}(\mathcal{H})$ be such that $ QT_2 $ is a contraction and $T_1T_2=QT_2T_1$. Then there is a Hilbert space $\mathcal{K}$ containing $\mathcal{H}$, a bounded linear operator $\overline{Q}=Q\oplus Q'$ on $\mathcal{K}=\mathcal{H}\oplus (\mathcal{K}\ominus \mathcal{H})$ and there are isometric lifts $V_1,V_2,\overline{Q}V_{2Q}$ of $T_1,T_2,QT_2$ respectively on Hilbert space $\mathcal{K}$ such that $V_{1}V_2=\overline{Q}V_{2Q}V_1$.
	\end{thm}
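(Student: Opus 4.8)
The plan is to exploit that, although $Q$ itself need not be contractive, the operator $C:=QT_2$ is, and to read the hypothesis $T_1T_2=QT_2T_1$ as the intertwining relation $CT_1=T_1T_2$. Thus the desired conclusion $V_1V_2=\overline{Q}V_{2Q}V_1$ is precisely the isometric realization $V_1V_2=WV_1$ of $CT_1=T_1T_2$, in which $W:=\overline{Q}V_{2Q}$ is to be an isometric lift of $C$ presented in the prescribed factored form, with $\overline{Q}=Q\oplus Q'$. The argument then parallels the Case-I construction of Theorems \ref{QLcoro} and \ref{Andodil1}; the one genuinely new ingredient is an explicit \emph{factored} isometric lift of $C$ that avoids the nonexistent operator $D_Q$.

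First I would build this factored lift on $\mathcal{K}'=\mathcal{H}\oplus\mathcal{H}\oplus\cdots$. For $k=(h_0,h_1,h_2,\dots)$ set
\[
V_2'(k)=(T_2h_0,D_{T_2}h_0,h_1,h_2,\dots),\qquad W'(k)=(QT_2h_0,D_{QT_2}h_0,h_1,h_2,\dots),
\]
\[
\overline{Q}_0(k)=(Qh_0,h_1,h_2,\dots),\qquad V_{2Q}'(k)=(T_2h_0,D_{QT_2}h_0,h_1,h_2,\dots).
\]
A direct check shows that $V_2'$ and $W'$ are the Schäffer isometric lifts of $T_2$ and of $C=QT_2$, that $\overline{Q}_0=Q\oplus I$ is a lift of $Q$, that $V_{2Q}'$ is a (in general non-isometric) lift of $T_2$, and most importantly that $\overline{Q}_0V_{2Q}'=W'$. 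The point is that the defect slot of $V_{2Q}'$ carries $D_{QT_2}h_0$ rather than $D_{T_2}h_0$, so that applying $Q$ to the head coordinate lands exactly on the isometric lift $W'$ of $QT_2$; this is where the hypothesis $\|QT_2\|\le 1$ enters and why no contractivity of $Q$ is needed.

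Next, since $T_1,T_2,C$ are contractions with $CT_1=T_1T_2$, I would invoke Theorem \ref{contractive lift} with the isometric lift $V_2'$ of $T_2$ and the (contractive, indeed isometric) lift $W'$ of $C$ to obtain a contractive lift $Y_1\colon\mathcal{K}'\to\mathcal{K}'$ of $T_1$ with $Y_1V_2'=W'Y_1$. Let $V_1$ be the minimal isometric dilation of $Y_1$ on a space $\mathcal{K}\supseteq\mathcal{K}'$; since $V_1^*|_{\mathcal{K}'}=Y_1^*$ and $Y_1^*|_{\mathcal{H}}=T_1^*$, $V_1$ is an isometric lift of $T_1$. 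It then remains to lift $V_2'$, $W'$ and $V_{2Q}'$ to operators $V_2$, $W=\overline{Q}V_{2Q}$ and $V_{2Q}$ on $\mathcal{K}$, with $\overline{Q}=\overline{Q}_0\oplus I=Q\oplus Q'$, so that $V_1V_2=WV_1$. Granting that $W$ is isometric and $V_2$ is a norm-preserving lift of $V_2'$, the isometry of $V_2$ is automatic: from $V_1V_2=WV_1$ one gets $\|V_2V_1^nk\|=\|WV_1^{\,n+1}k\|=\|k\|=\|V_1^nk\|$ for all $k\in\mathcal{K}'$, whence $\langle(I-V_2^*V_2)V_1^nk,V_1^nk\rangle=0$, and since $\{V_1^nk\}$ spans $\mathcal{K}$ this forces $V_2^*V_2=I$, exactly as in the isometry argument of Theorem \ref{QLcoro}.

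The main obstacle is precisely this last lifting step. Unlike Case I, one cannot apply Theorem \ref{QminiisoliftDouglass}, because $\overline{Q}_0=Q\oplus I$ is not an isometry and the relation $Y_1V_2'=W'Y_1$ intertwines two \emph{different} isometries instead of having the form $TX=QXT$. Instead I would construct $V_2$, $W$ and $V_{2Q}$ coordinate-by-coordinate on the Schäffer model of $V_1$, imitating the inductive scheme of Theorem \ref{QminiisoliftDouglass} that rests on Theorem \ref{partialisointertwine}, and arrange at every stage that the factorization $W=\overline{Q}V_{2Q}$ is preserved so that it survives the passage to the strong limit. Establishing that the operator $W$ produced this way is genuinely an isometry (which then yields the isometry of $V_2$ through the span computation above) is the delicate part of the proof.
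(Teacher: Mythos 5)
Your reduction is sound as far as it goes: reading the hypothesis as $CT_1=T_1T_2$ with $C=QT_2$ contractive, the factored Sch\"affer lifts $V_2',W',\overline{Q}_0,V_{2Q}'$ on $\mathcal{K}'$ with $\overline{Q}_0V_{2Q}'=W'$ (the $D_{QT_2}$-in-the-defect-slot device), and the appeal to Theorem \ref{contractive lift} to produce a contractive lift $Y_1$ of $T_1$ with $Y_1V_2'=W'Y_1$ are all correct. But the proof stops exactly where the substance of Theorem \ref{andoisodilL} lies: you never construct the isometric lifts $V_2$ of $V_2'$ and $W=\overline{Q}V_{2Q}$ of $W'$ satisfying $V_1V_2=WV_1$, and you concede this is ``the delicate part.'' Moreover, the route you gesture at --- imitating the induction of Theorem \ref{QminiisoliftDouglass} built on Theorem \ref{partialisointertwine} --- is not viable as stated: that machinery only yields norm-preserving (contractive) lifts, and the a posteriori argument by which the paper upgrades such a lift to an isometry (in Theorem \ref{QLcoro}) uses that $\overline{Q}$ is an isometry, which is precisely what fails here, since $\overline{Q}_0=Q\oplus I$ is not isometric unless $Q$ is. Since the isometry of $V_2$, the isometry of $\overline{Q}V_{2Q}$, and the survival of the factorization in the limit all hinge on this unproven step, the proposal is incomplete at its decisive point.

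The gap can in fact be closed, but by structure rather than by induction. Any contractive lift of an isometry is automatically a direct sum: if $W^*|_{\mathcal{K}'}=W'^*$, $\|W\|\le 1$ and $W'$ is isometric, the off-diagonal block must vanish, so $W=W'\oplus B$. Next, the relation $Y_1V_2'=W'Y_1$ together with the isometry of $V_2'$ and $W'$ gives
\[
\|D_{Y_1}V_2'k\|^2=\|V_2'k\|^2-\|Y_1V_2'k\|^2=\|k\|^2-\|W'Y_1k\|^2=\|k\|^2-\|Y_1k\|^2=\|D_{Y_1}k\|^2,
\]
so $G_0:D_{Y_1}k\mapsto D_{Y_1}V_2'k$ extends to an isometry of $\mathcal{D}_{Y_1}$ into itself. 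Taking $V_1$ to be the Sch\"affer dilation of $Y_1$ on $\mathcal{K}=\mathcal{K}'\oplus\mathcal{D}_{Y_1}\oplus\mathcal{D}_{Y_1}\oplus\cdots$ and setting $V_2=V_2'\oplus I$ and $W=W'\oplus G_0\oplus I$ ($G_0$ on the first defect slot, identity on the rest), a one-line computation gives $V_1V_2=WV_1$; both $V_2$ and $W$ are isometries outright (so your span argument becomes unnecessary), and $W=(\overline{Q}_0\oplus I)(V_{2Q}'\oplus G_0\oplus I)$ exhibits the required factorization with $\overline{Q}=Q\oplus I$ of the form $Q\oplus Q'$ and $V_{2Q}=V_{2Q}'\oplus G_0\oplus I$. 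Be aware, finally, that even so completed your argument is quite far from the paper's own proof, which invokes no lifting theorem at all: it runs Ando's coupling in one step, defining $W_1,W_2,W$ on $\oplus_0^\infty\mathcal{H}$ (its $W$ uses the same $D_{QT_2}$ defect trick as your $V_{2Q}'$), gluing them by a unitary $\widetilde{G}$ that identifies $(D_{T_1}T_2h,0,D_{T_2}h,0)$ with $(D_{QT_2}T_1h,0,D_{T_1}h,0)$ via the norm identity encoding $T_1T_2=QT_2T_1$, and setting $V_1=\widetilde{G}W_1$, $V_2=W_2\widetilde{G}^{-1}$, $V_{2Q}=W\widetilde{G}^{-1}$, $\overline{Q}=Q\oplus I$.
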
 
	\begin{proof}  
		The proof is based on the Ando's technique for obtaining isometric dilation of a commuting tuple of contractions. Define $\mathcal{K}=\oplus_{0}^{\infty} \mathcal{H}$, 
		\[W_{1}(h_0,h_1,\ldots ) = \left(T_1h_{0},D_{T_1}h_0,0, h_{1},\ldots \right),\]
		\[W_{2}(h_0,h_1,\ldots ) = \left(T_2h_{0},D_{T_2}h_0,0, h_{1},\ldots \right),\]
		\[W(h_0,h_1,\ldots ) = \left(T_2h_{0},D_{QT_2}h_0,0, h_{1},\ldots \right).\] Let $\overline{Q}=Q\oplus I$. Thus, \[\overline{Q}W(h_0,h_1,\ldots ) = \left(QT_2h_{0},D_{QT_2}h_0,0, h_{1},\ldots \right). \] Then $W_{1}, W_{2}, \overline{Q}W$ are isometries as $\| T_ih_0 \|^2+\| D_{T_i}h_0 \|^2=\|h_0\|^2$ for $i=1,2$ and \[ \| QT_2h_0 \|^2+\| D_{QT_2}h_0 \|^2=\|h_0\|^2. \] Moreover, $\mathcal{H}$ is an invariant subspace for $W_1^*,W_2^*,(\overline{Q}W)^*$ and \[ W_1^*|_{\mathcal{H}}=T_1^*,\,W_2^*|_{\mathcal{H}}=T_2^*,\,(\overline{Q}W)^*|_{\mathcal{H}}=(QT_2)^*.  \] Now let $$\mathcal{B}=\mathcal{H}\oplus \mathcal{H}\oplus \mathcal{H}\oplus \mathcal{H} .$$
		Hence $\mathcal{K}$ can be identified with $\mathcal{H}\oplus \mathcal{B}\oplus \mathcal{B}\oplus \cdots$ via the identification \[ (h_0,h_1,\ldots )=(h_0, (h_1,h_2,h_3,h_4),(h_5,h_6,h_7,h_8),\ldots ). \] 
		Now let, $$\mathcal{L}_1= \{(D_{T_1}T_2h,0,D_{T_2}h,0)\in \mathcal{B}: h\in \mathcal{H}\} $$ and $$\mathcal{L}_2= \{(D_{QT_2}T_1h,0,D_{T_1}h,0)\in \mathcal{B}: h\in \mathcal{H}\} .$$ Let $\mathcal{M}_i=\overline{\mathcal{L}}_i$ and $\mathcal{M}_i^{\perp}=\mathcal{B}\ominus\mathcal{M}_i$ for $i=1,2$. Define an operator $G:\mathcal{L}_1 \to \mathcal{L}_2$ as \[ G(D_{T_1}T_2h,0,D_{T_2}h,0)=(D_{QT_2}T_1h,0,D_{T_1}h,0) .\] A simple computation shows that \begin{align*}
			&\|D_{T_1}T_2h \|^2+\| D_{T_2}h \|^2\\
			=&\langle D_{T_1}T_2h, D_{T_1}T_2h \rangle + \langle D_{T_2}h,D_{T_2}h \rangle\\
			=&\langle (I-T_1^*T_1)T_2h, T_2h \rangle +\langle (I-T_2^*T_2)h, h \rangle\\
			=& \|T_2h\|^2-\|T_1T_2h\|^2+\|h\|^2-\|T_2h\|^2\\
			=&\| h \|^2-\|T_1T_2h \|^2. 
		\end{align*} Similarly, \[ \|D_{QT_2}T_1h \|^2+\| D_{T_1}h \|^2=\| h \|^2-\|QT_2T_1h \|^2  .\]  Since $T_1T_2=QT_2T_1$ the above computation proves that
		\[\|D_{T_1}T_2h \|^2+\| D_{T_2}h \|^2=\|D_{QT_2}T_1h \|^2+\| D_{T_1}h \|^2.  \] Hence $G$ defines an isometry from $\mathcal{L}_1$ onto $\mathcal{L}_2$ and extends continuously as an isometry from $\mathcal{M}_1$ onto $\mathcal{M}_2$. It  remains to show that $G$ can be extended to an isometry from the whole space $\mathcal{B}$ onto itself. For this purpose,  it suffices to prove that $\dim \mathcal{M}_1^{\perp}=\dim \mathcal{M}_2^{\perp}. $ This is clearly true when $\mathcal{H}$ and hence $\mathcal{B}$ are finite dimensional. Now  suppose $\mathcal{H}$ is infinite dimensional. Then we have, \[ \dim(\mathcal{H})=\dim\mathcal{B}\geq \dim\mathcal{M}_i^{\perp}\geq \dim \mathcal{H}. \] Here first inequality follows as $\mathcal{M}_i^{\perp}\subset \mathcal{B}$ and second inequality follows as \[\{ (0,h,0,0)\in \mathcal{B}:h\in \mathcal{H}\}\subset \mathcal{M}_i^{\perp}.\] Hence we can extend the map $G$ isometrically to a map from $\mathcal{B}$ onto $\mathcal{B}$. Therefore $G$ will be a unitary. Now define $\widetilde{G}$ on $\mathcal{K}$ as \[ \widetilde{G}(h_0,h_1,\ldots )=(h_0,G(h_1,h_2,h_3,h_4),G(h_5,h_6,h_7,h_8),\ldots). \] Then clearly $\widetilde{G}$ is a unitary with inverse given by \[ \widetilde{G}^{-1}(h_0,h_1,\ldots )=(h_0,G^{-1}(h_1,h_2,h_3,h_4),G^{-1}(h_5,h_6,h_7,h_8),\ldots) .\] Further let $V_1=\widetilde{G}W_1$, $V_2=W_2\widetilde{G}^{-1}$ and $V_{2Q}=W\widetilde{G}^{-1}$. Clearly $V_1$, $V_2$ and $\overline{Q}V_{2Q}$ are isometries. Moreover, since $\widetilde{G}|_{\mathcal{H}}=I$ and $\widetilde{G}^*|_{\mathcal{H}}=\widetilde{G}^{-1}|_{\mathcal{H}}=I$ we have, 
		\[ V_1^*|_{\mathcal{H}}=W_1^*\widetilde{G}^*|_{\mathcal{H}}=T_1^*,\, V_2^*|_{\mathcal{H}}=T_2^* \text{ and }  (\overline{Q}V_{2Q})^*|_{\mathcal{H}}=(\overline{Q}W\widetilde{G}^{-1})^*|_{\mathcal{H}}=\widetilde{G}(\overline{Q}W)^*|_{\mathcal{H}}=(QT_2)^*.\]  
		Now,
		\begin{align*}
			V_1V_2(h_0,h_1,\ldots)&=\widetilde{G}W_1W_2\widetilde{G}^{-1}(h_0,h_1,\ldots)\\
			&=\widetilde{G}W_1W_2(h_0,G^{-1}(h_1,h_2,h_3,h_4),G^{-1}(h_5,h_6,h_7,h_8),\ldots)\\
			&=\widetilde{G}W_1(T_2h_0,D_{T_2}h_0,0,G^{-1}(h_1,h_2,h_3,h_4),G^{-1}(h_5,h_6,h_7,h_8),\ldots)\\
			&=\widetilde{G}(T_1T_2h_0,D_{T_1}T_2h_0,0,D_{T_2}h_0,0,G^{-1}(h_1,h_2,h_3,h_4),\ldots)\\
			&=(T_1T_2h_0,G(D_{T_1}T_2h_0,0,D_{T_2}h_0,0),(h_1,h_2,h_3,h_4),(h_5,h_6,h_7,h_8),\ldots)
		\end{align*} 
		Similarly, 
		\begin{align*}
			\overline{Q}V_{2Q}V_1(h_0,h_1,\ldots )&=\overline{Q}W\widetilde{G}^{-1}\widetilde{G}W_1(h_0,h_1,\ldots )\\
			&=\overline{Q}WW_{1}(h_0,h_1,\ldots)\\
			&=\overline{Q}W\left(T_1h_{0},D_{T_1}h_0,0, h_{1},\ldots \right)\\
			&=(QT_2T_1h_0,D_{QT_2}T_1h_0,0,D_{T_1}h_0,0, h_{1},\ldots)
		\end{align*}
		Since $T_1T_2=QT_2T_1$ and $G(D_{T_1}T_2h_0,0,D_{T_2}h_0,0)=(D_{QT_2}T_1h_0,0,D_{T_1}h_0,0)$, we get $V_1V_2=\overline{Q}V_{2Q}V_1$ as required. 		
	\end{proof}
	\begin{note}
		If $Q\in \mathcal{B}(\mathcal{H})$ above is an isometry then $D_{QT_2}=D_{T_2}$. Therefore, in the above proof we will have $W=W_2$ and hence $V_2=V_{2Q}$. Thus for an isometry $Q$ we obtain $V_1V_2=\overline{Q}V_2V_1$.   
	\end{note}
	\begin{note}
		If we consider $\overline{Q}=Q\oplus qI$ for some non zero complex number $q$, then by taking $W(h_0,h_1,\ldots)=(T_2h_0,D_{QT_2}h_0,0,q^{-1}h_1,q^{-1}h_2,\ldots)$, we would get the same result for such $\overline{Q}$.  
	\end{note}
	We obtain a similar result in terms of co-isometric extensions. 
	\begin{thm}\label{AndocoisoextL}
		Let $T_1,T_2\in \mathcal{B}(\mathcal{H})$ be any contractions and $Q\in \mathcal{B}(\mathcal{H})$ be such that $ T_1Q $ is a contraction and $T_1T_2=T_2T_1Q$. Then there is a Hilbert space $\mathcal{K}$ containing $\mathcal{H}$, a bounded linear operator $\overline{Q}=Q\oplus Q'$ on $\mathcal{K}=\mathcal{H}\oplus (\mathcal{K}\ominus \mathcal{H})$ and there are co-isometric extensions $Z_1,Z_2,Z_{1Q}\overline{Q}$ of $T_1,T_2,T_1Q$ respectively on Hilbert space $\mathcal{K}$ such that $Z_1Z_{2}=Z_{2}Z_{1Q}\overline{Q}$.
	\end{thm}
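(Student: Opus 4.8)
The plan is to obtain this co-isometric extension result as the adjoint counterpart of the isometric dilation in Theorem \ref{andoisodilL}, in exactly the same spirit in which Theorem \ref{Q_Rcoiso} was deduced from Theorem \ref{Q_Riso} and Theorem \ref{QMT co-iso ext} from Theorem \ref{QMT commutant lift}. First I would take adjoints of the hypothesis $T_1T_2=T_2T_1Q$ to obtain $T_2^*T_1^*=Q^*T_1^*T_2^*$, and observe that the standing assumption ``$T_1Q$ is a contraction'' is precisely the statement that $Q^*T_1^*=(T_1Q)^*$ is a contraction.

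The key observation is that $T_2^*T_1^*=Q^*T_1^*T_2^*$ is exactly a $Q_L$-commuting relation of the type handled by Theorem \ref{andoisodilL}. Setting $A_1=T_2^*$, $A_2=T_1^*$ and $B=Q^*$, we have that $A_1,A_2$ are contractions, the middle factor $BA_2=Q^*T_1^*$ is a contraction, and $A_1A_2=BA_2A_1$. Applying Theorem \ref{andoisodilL} to the triple $(A_1,A_2,B)$ then furnishes a Hilbert space $\mathcal{K}\supseteq\mathcal{H}$, a bounded operator $\overline{B}=Q^*\oplus B'$ on $\mathcal{K}=\mathcal{H}\oplus(\mathcal{K}\ominus\mathcal{H})$, and isometric lifts $W_1,W_2,\overline{B}W_{2B}$ of $T_2^*,T_1^*,(T_1Q)^*$ respectively, satisfying $W_1W_2=\overline{B}W_{2B}W_1$.

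Finally I would take adjoints back. Since an operator is an isometric lift of a contraction exactly when its adjoint is a co-isometric extension of the adjoint contraction, the operators $Z_1:=W_2^*$ and $Z_2:=W_1^*$ become co-isometric extensions of $T_1$ and $T_2$, while $(\overline{B}W_{2B})^*=W_{2B}^*\overline{B}^*$ is a co-isometric extension of $T_1Q$. Setting $\overline{Q}:=\overline{B}^*=Q\oplus (B')^*$, which has the required block form $Q\oplus Q'$ with $Q$ in the $\mathcal{H}$-corner, and $Z_{1Q}:=W_{2B}^*$ so that $Z_{1Q}\overline{Q}=W_{2B}^*\overline{B}^*$, the adjoint of the intertwining identity $W_1W_2=\overline{B}W_{2B}W_1$ reads $W_2^*W_1^*=W_1^*W_{2B}^*\overline{B}^*$, that is, $Z_1Z_2=Z_2Z_{1Q}\overline{Q}$, which is exactly the asserted relation.

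I expect no genuine obstacle beyond careful bookkeeping of the three simultaneous adjoint correspondences, and in particular matching the positional roles $A_1=T_2^*$, $A_2=T_1^*$ so that the $Q_L$-shape of Theorem \ref{andoisodilL} lines up with the adjointed hypothesis. The only substantive point to verify is that the ``middle factor is a contraction'' requirement of Theorem \ref{andoisodilL} corresponds here to $T_1Q$ being a contraction, which holds because $BA_2=Q^*T_1^*=(T_1Q)^*$.
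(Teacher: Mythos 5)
Your proposal is correct and follows essentially the same route as the paper's own proof: both pass to adjoints to turn $T_1T_2=T_2T_1Q$ into the $Q_L$-relation $T_2^*T_1^*=Q^*T_1^*T_2^*$, apply Theorem \ref{andoisodilL} to the triple $(T_2^*,T_1^*,Q^*)$ (noting $(T_1Q)^*=Q^*T_1^*$ is a contraction), and then take adjoints of the resulting isometric lifts and of the intertwining identity to obtain the co-isometric extensions $Z_1,Z_2,Z_{1Q}\overline{Q}$ with $Z_1Z_2=Z_2Z_{1Q}\overline{Q}$. The differences are purely notational.
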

	\begin{proof}
		
		We have $T_1T_2=T_2T_1Q$ which gives us $T_2^*T_1^*=Q^*T_1^*T_2^*$. Since $T_1Q$ is a contraction, so is $Q^*T_1^* $. Hence due to Theorem \ref{andoisodilL}, there is a Hilbert space $\mathcal{K}$, a bounded linear operator $\overline{Q}^*=Q^*\oplus Q'^*$ on $\mathcal{K}=\mathcal{H}\oplus (\mathcal{K}\ominus \mathcal{H})$ and bounded operators $V_1,V_2$, $V_Q$ on $\mathcal{K}$ such that $V_1,V_2$, $\overline{Q}^*V_Q$ are isometric lifts of $T_1^*,T_2^*$, $Q^*T_1^*$ and $V_2V_1=\overline{Q}^*V_QV_2 $. Taking $Z_1=V_1^*$, $Z_2=V_2^*$ and $Z_{2Q}=V_Q^* $ we obtain the desired result.  
		
	\end{proof}
	
	We give an Alternative proof of the above theorem for when $T_1$ is a strict contraction. 
	\begin{thm}
		Let $T_1,T_2\in \mathcal{B}(\mathcal{H})$ be any two contractions with $\|T_2\|<1$ and $Q\in \mathcal{B}(\mathcal{H})$ be such that $T_1Q$ is a contraction and $T_1T_2=T_2T_1Q$. Then there is a Hilbert space $\mathcal{K}$ containing $\mathcal{H}$, a bounded linear operator $\overline{Q}=Q\oplus Q'$ with respect to the decomposition $\mathcal{H}\oplus \mathcal{H}^{\perp} $, an operator $Z_{1Q}\in \mathcal{B}(\mathcal{H})$ and there are co-isometric extensions $Z_1,Z_2$ of $T_1,T_2$ respectively, such that $Z_{1Q}\overline{Q}$ is a co-isometric extension of $T_1Q$ and $Z_{1}Z_{2}=Z_{2}Z_{1Q}\overline{Q}$.	
		
		Infact for any non zero complex number $q$, $\overline{Q}$ can be chosen to be $Q\oplus qI$ with respect to decomposition $\mathcal{H}\oplus \mathcal{H}^{\perp}$.
	\end{thm}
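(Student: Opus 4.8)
The plan is to run Ando's construction directly in the co-isometric category (rather than dualising Theorem \ref{AndocoisoextL}), the whole point of the strictness hypothesis being that $\|T_2\|<1$ makes $D_{T_2^*}=(I-T_2T_2^*)^{1/2}$ boundedly invertible, so that every defect space which occurs equals all of $\mathcal H$ and the rotation at the heart of Ando's argument can be written down explicitly. First I would fix $\mathcal K=\bigoplus_{0}^{\infty}\mathcal H$, identify $\mathcal H$ with the first summand, and set $\overline Q=Q\oplus qI$ with respect to $\mathcal H\oplus(\mathcal K\ominus\mathcal H)$ for the prescribed nonzero $q$. Since $\mathcal D_{T_2^*}=\mathcal H$, the minimal co-isometric extension of $T_2$ takes the clean Schäffer form
\[
Z_2(h_0,h_1,h_2,\dots)=(T_2h_0+D_{T_2^*}h_1,\;h_2,\;h_3,\dots),
\]
whose adjoint is the isometric Schäffer dilation of $T_2^*$; hence $Z_2$ is manifestly a co-isometry extending $T_2$.

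Next I would build co-isometric extensions $Z_1$ of $T_1$ and $W$ of $T_1Q$ on the same space, coordinated by a single unitary, following the proof of Theorem \ref{andoisodilL} but with every operator replaced by its co-isometric counterpart. Concretely, attach to the identity $T_1T_2=T_2(T_1Q)$ the two defect subspaces carrying the $D_{T_1^*}$- and $D_{(T_1Q)^*}$-data, define the matching map $G$ between them, and observe that $T_1T_2=T_2T_1Q$ forces $G$ to be an isometry. Because $D_{T_2^*}$ is invertible, the domain of $G$ is a graph over a full copy of $\mathcal H$, so $G$ extends to an honest unitary $\widetilde G$ on $\mathcal K$ by an explicit formula, and the abstract dimension count of Theorem \ref{AndocoisoextL} is avoided. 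Twisting the column co-isometries by $\widetilde G$ — and inserting the factor $q^{-1}$ in the shift part, exactly as in the remark following Theorem \ref{andoisodilL}, so that the middle factor comes out as $\overline Q=Q\oplus qI$ for an arbitrary nonzero $q$ — yields $Z_1$, the co-isometric extension $W$ of $T_1Q$, and the operator $Z_{1Q}\in\mathcal B(\mathcal K)$ for which $Z_{1Q}\overline Q=W$.

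It then remains to verify, by the block computation that closes the proof of Theorem \ref{andoisodilL}, that $Z_1$ and $Z_{1Q}\overline Q$ are co-isometric extensions of $T_1$ and $T_1Q$ (co-isometry because $\widetilde G$ is unitary and the underlying columns are co-isometric, invariance of $\mathcal H$ because $\widetilde G|_{\mathcal H}=I$ and $\widetilde G^{*}|_{\mathcal H}=I$), and that $Z_1Z_2=Z_2Z_{1Q}\overline Q$. After the block forms are substituted, the last identity collapses to the scalar relation $T_1T_2=T_2T_1Q$ together with the defining property of $G$, so no further hypotheses are consumed.

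The step I expect to be the real obstacle is the simultaneity: extracting a single co-isometric extension is routine, but arranging $Z_1$, $Z_2$ and $Z_{1Q}\overline Q$ to be co-isometric at once forces the rotation $\widetilde G$ to be unitary, not merely isometric. This is precisely where $\|T_2\|<1$ pays off, since invertibility of $D_{T_2^*}$ converts the partial rotation into an explicit unitary; without it one is thrown back on the dimension-counting argument of Theorem \ref{AndocoisoextL}. A secondary nuisance is the bookkeeping of the $q^{-1}$-twist, which must be tracked carefully to certify both that the middle operator is genuinely $\overline Q=Q\oplus qI$ and that $Z_{1Q}$ is well defined without ever inverting $\overline Q$.
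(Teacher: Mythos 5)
Your route (a co-isometric Ando rotation plus the $q^{-1}$-compensation inside $Z_{1Q}$) is not the paper's route: the paper never rotates. It forms Schäffer-type co-isometric extensions $W_1$ and $W_{1Q}\overline{Q}_0$ of $T_1$ and $T_1Q$ on $\mathcal{K}'=\bigoplus_{0}^{\infty}\mathcal{H}$, applies the intertwining extension theorem (Theorem \ref{douglasintertwining}) to obtain a norm-preserving extension $Y_2$ of $T_2$ with $W_1Y_2=Y_2W_{1Q}\overline{Q}_0$, and uses $\|Y_2\|=\|T_2\|<1$ to invert $D_{Y_2^*}$ and verify that $D_{Y_2^*}^{-1}W_1D_{Y_2^*}$ is a co-isometry; the final operators are assembled on $\bigoplus_{0}^{\infty}\mathcal{K}'$, with $Z_2$ the Schäffer extension of $Y_2$. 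Your proposal, by contrast, contains a genuine internal inconsistency. You fix $Z_2$ to be the \emph{minimal} Schäffer extension of $T_2$ on $\bigoplus_{0}^{\infty}\mathcal{H}$ and then propose to build $Z_1$ and $W=Z_{1Q}\overline{Q}$ around it ``following the proof of Theorem \ref{andoisodilL}''. But dualizing that proof (applied to $T_2^*T_1^*=(T_1Q)^*T_2^*$), the extension of $T_2$ it produces is not your $Z_2$: it is $W_1^*\widetilde{G}^{-1}$, where $W_1^*(k_0,k_1,k_2,k_3,\dots)=(T_2k_0+D_{T_2^*}k_1,k_3,k_4,\dots)$ skips a slot and carries the rotation, and the identity $Z_1Z_2=Z_2Z_{1Q}\overline{Q}$ is verified precisely because one product arranges itself as $\widetilde{G}(\cdots)\widetilde{G}^{-1}$ while in the other the two copies of $\widetilde{G}$ cancel; the twist and the non-minimality of the $T_2$-extension cannot be dropped. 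Insisting on the minimal $Z_2$ is a substantially stronger demand which the rotation does not deliver: it asks for co-isometric extensions of $T_1$ and $T_1Q$ living on the minimal extension space of $T_2$ and intertwined by it, and the paper's own Theorem \ref{Finterlift} indicates that on the minimal space of the middle operator one can in general expect only contractive and norm-preserving lifts of the outer operators, not co-isometric ones.

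The second gap is the claimed role of $\|T_2\|<1$. Invertibility of $D_{T_2^*}$ turns only one of the two defect subspaces into a closed full graph, namely $\{(D_{(T_1Q)^*}T_2^*h,0,D_{T_2^*}h,0):h\in\mathcal{H}\}$; the other one, $\{(D_{T_2^*}T_1^*h,0,D_{T_1^*}h,0):h\in\mathcal{H}\}$, is in general a graph over no coordinate at all: take $\mathcal{H}=\ell^2$ and $T_1$ the unilateral shift, so that $D_{T_1^*}$ is a rank-one projection and this subspace equals $\mathcal{H}\oplus\{0\}\oplus(\text{a line})\oplus\{0\}$. Consequently, extending $G$ to a unitary of $\mathcal{B}$ onto itself still requires pairing the two orthocomplements by some unitary, i.e., exactly the dimension count you hoped to avoid -- and that count needs no strictness whatsoever. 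So strictness does no work anywhere in your argument (your diagnosis of where the hypothesis ``pays off'' is mistaken), whereas in the paper's proof it is the engine of the construction. If you repair the first gap by accepting the twisted, non-minimal $Z_2$ of the dualized rotation and keep your $q^{-1}$-compensation inside $Z_{1Q}$ (which is sound, since only the product $Z_{1Q}\overline{Q}$ must be a co-isometry), the argument does close, and it even proves the conclusion without $\|T_2\|<1$; but it is then a transcription of Theorem \ref{AndocoisoextL} and the Note following Theorem \ref{andoisodilL}, not a proof exploiting the stated hypothesis.
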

	\begin{proof}
		Let $\mathcal{K}'=\mathcal{H}\oplus \mathcal{H}\oplus \cdots$ and for a nonzero complex number $q$, $\overline{Q}_0=Q\oplus qI$ with respect to the decomposition $\mathcal{K}'=\mathcal{H}\oplus \mathcal{H}^{\perp}$. Let  
		\[W_1(h_0,h_1,\ldots ) = \left(T_1h_{0}+D_{T_1^*}h_1, h_{2},\dots \right),\]
		\[ W_{1Q}(h_0,h_1,\ldots ) = \left(T_1h_{0}+q^{-1}D_{(T_1Q)^*}h_1, q^{-1}h_{2},\dots \right) .\]
		Therefore, $W_1$ and $W_{1Q}\overline{Q}_0$ are co-isometric extensions of $T_1$ and $T_1Q$ respectively. Then by intertwining extension theorem, Theorem \ref{douglasintertwining}, there is a norm preserving extension $Y_{2}$ of $T_2$ such that $W_1Y_{2}=Y_2W_{1Q}\overline{Q}_0$. Now since $T_2$ is a strict contraction, so will be $Y_2$. Hence $D_{Y_2}$ will be invertible on $\mathcal{K}'$. Thus with respect to the decomposition $\mathcal{K}=\mathcal{K}'\oplus \mathcal{K}'\oplus \cdots$, let $\overline{Q}=\overline{Q}_0\oplus qI$ and let $Z_1,Z_{1Q},Z_2$ be defined as follows.  	\begingroup
		\allowdisplaybreaks
		\begin{gather*}
				Z_1=\begin{bmatrix}
				W_1&0&0&\cdots \\
				0&D_{Y_2^*}^{-1}W_1D_{Y_2^*}&0&\cdots \\
				0&0&D_{Y_2^*}^{-1}W_1D_{Y_2^*}&\cdots \\
				\vdots&\vdots&\vdots&\ddots
			\end{bmatrix},\;
				Z_{1Q}=\begin{bmatrix}
				W_{1Q}&0&0&\cdots \\
				0&q^{-1}D_{Y_2^*}^{-1}W_1D_{Y_2^*}&0&\cdots \\
				0&0&q^{-1}D_{Y_2^*}^{-1}W_1D_{Y_2^*}&\cdots \\
				\vdots&\vdots&\vdots&\ddots
			\end{bmatrix},\\
			Z_2=\begin{bmatrix}
				Y_2&D_{Y_2^*}&0&0&\cdots\\
				0&0&I_{\mathcal{K}'}&0&\cdots\\
				0&0&0&I_{\mathcal{K}'}&\cdots\\
				\vdots&\vdots&\vdots&\vdots&\ddots
			\end{bmatrix}.
		\end{gather*}
		\endgroup

		Clearly, $Z_1,Z_{1Q}$ and $Z_{2}$ are bounded linear operators on $\mathcal{K}$ and by construction $Z_2$ is a pure co-isometry. Now we prove that $Z_1$ and $Z_{1Q}\overline{Q}$ are co-isometries. Since $W_1$ and $W_{1Q}\overline{Q}_0$ are co-isometries, it suffices to show that, 
		\begin{equation}\label{T1}
			(D_{Y_2^*}^{-1}W_1D_{Y_2^*})(D_{Y_2^*}^{-1}W_1D_{Y_2^*})^*=I_{\mathcal{K}'}.
		\end{equation}
		Using $W_1Y_2=Y_2W_{1Q}\overline{Q}_0$, $W_1W_1^*=I_{\mathcal{K}'}$ and $(W_{1Q}\overline{Q}_0)(W_{1Q}\overline{Q}_0)^*=I_{\mathcal{K}'}$ we have
		\begingroup
		\allowdisplaybreaks
		\begin{align*}
			&(D_{Y_2^*}^{-1}W_1D_{Y_2^*})(D_{Y_2^*}^{-1}W_1D_{Y_2^*})^*\\
			&=D_{Y_2^*}^{-1}W_1D_{Y_2^*}^2W_1^*D_{Y_2^*}^{-1}\\
			&=D_{Y_2^*}^{-1}W_1(I-Y_2Y_2^*)W_1^*D_{Y_2^*}^{-1}\\
			&=D_{Y_2^*}^{-1}(I_{\mathcal{K}'}-W_1Y_2Y_2^*W_1^*)D_{Y_2^*}^{-1}\\
			&=D_{Y_2^*}^{-1}(I_{\mathcal{K}'}-Y_2W_{1Q}\overline{Q}_0(W_{1Q}\overline{Q}_0)^*Y_2^*)D_{Y_2^*}^{-1}\\
			&=D_{Y_2^*}^{-1}(I_{\mathcal{K}'}-Y_2Y_2^*)D_{Y_2^*}^{-1}\\
			&=I_{\mathcal{K}'}.
		\end{align*}
		\endgroup
		Thus $Z_1, Z_2$ and $Z_{1Q}\overline{Q}$ are co-isometries. Since $W_1, W_{1Q}\overline{Q}_0$ and $Y_2$ are extensions of $T_1, T_{1}Q,T_2 $, so will be $ Z_1,Z_{1Q}\overline{Q},Z_2$ respectively. 
		By direct computation and using $W_1Y_2=Y_2W_{1Q}\overline{Q}_0$, we get $Z_1Z_2=Z_2Z_{1Q}\overline{Q}$.  
		
	\end{proof}
	\subsection{Ando-type dilations for $Q_M$-commuting contractions}
	
	Similarly, we obtain the $Q_M$ commuting analogue of Ando's dilation theorem in the form of following two cases depending on whether $Q$ is a contraction or not.\\
	
	\noindent\textbf{Case I.} If $Q,T_1,T_2\in \mathcal{B}(\mathcal{H})$ are any contractions such that $T_1T_2=T_2QT_1$ then there are isometric lifts $\overline{Q},V_1,V_2$ of $Q,T_1,T_2$ respectively such that $V_1V_2=V_2QV_1$.
	
	\noindent\textbf{Case II.} If $Q,T_1,T_2\in \mathcal{B}(\mathcal{H})$ such that $T_1,T_2,QT_1$ are contractions and $T_1T_2=T_2QT_1$ then there are isometric lifts $V_1,V_2,\overline{Q}V_{1Q}$ of $T_1,T_2,QT_1$ respectively, such that $V_1V_2=V_2\overline{Q}V_{1Q}$.	\\
	
	Notice that in Case II, $Q$ can be any bounded operator as long as it satisfies the condition that $TQ$ is a contraction. For instance one can consider $Q$ such that $\|Q\|\leq 1/\|T\|$. Hence in general, in Case II we can not expect to obtain an isometric lift of $Q$ unlike in Case I. In this section, we obtain the results mentioned in both Case I and Case II.
	\subsubsection{Ando type dilation Case I.}
	First we consider Case I.
	\begin{thm}\label{QMcoro}
		Let $Q,T_1,T_2\in \mathcal{B}(\mathcal{H})$ be any contractions such that $T_1T_2=T_2QT_1$. Let $\mathcal{K}'$ be any Hilbert space containing $\mathcal{H}$ and $\overline{Q}_0, V_2' \in  \mathcal{B}(\mathcal{K}')$ be any isometric lifts of $Q,T_2$ respectively. Then there is a Hilbert space $\mathcal{K}$ containing $\mathcal{K}'$ and isometric lifts $V_1,V_2$ of $T_1,V_2'$ such that $V_1V_2=V_2\overline{Q}V_1$, where for any complex number $q$ of modulus one, $\overline{Q}=\overline{Q}_0\oplus qI$ with respect to the decomposition $\mathcal{K}=\mathcal{K}'\oplus \mathcal{K}'^{\perp}$.  
		Moreover, $\mathcal{K}'$ is a reducing subspace for $V_2$. 
	\end{thm}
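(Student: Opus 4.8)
The plan is to follow the proof of Theorem \ref{QLcoro} step for step, replacing each $Q_L$ ingredient by its $Q_M$ counterpart, with the real effort going into showing that the resulting lift $V_2$ is an isometry. First I would produce a norm preserving lift of $T_1$ on $\mathcal{K}'$. Since $T_2$ and $Q$ are contractions, $T_2Q$ is a contraction, and since $V_2'$ and $\overline{Q}_0$ are isometries, $V_2'\overline{Q}_0$ is an isometry and hence a contractive lift of $T_2Q$. As $T_1T_2=T_2QT_1$ is exactly the relation $XT=TQX$ with $X=T_1$, $T=T_2$, $Q=Q$, Theorem \ref{QMT commutant lift} yields a norm preserving lift $Y_1\in\mathcal{B}(\mathcal{K}')$ of $T_1$ with $Y_1V_2'=V_2'\overline{Q}_0Y_1$.

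Next I would let $V_1$ on $\mathcal{K}\supseteq\mathcal{K}'$ be the minimal isometric dilation of the contraction $Y_1$ and set $\overline{Q}=\overline{Q}_0\oplus qI$ on $\mathcal{K}=\mathcal{K}'\oplus\mathcal{K}'^{\perp}$, which is an isometry because $\overline{Q}_0$ is. Reading the identity $Y_1V_2'=V_2'\overline{Q}_0Y_1$ as $TX=XQT$ with $T=Y_1$, $X=V_2'$, $Q=\overline{Q}_0$---so that $V_1$ is an isometric lift of $T=Y_1$---I would invoke the $Q_M$ analogue of Theorem \ref{QminiisoliftDouglass}, which is established by the same Douglas--Muhly--Pearcy argument built on Theorem \ref{partialisointertwine} as in the $Q_L$ and $Q_R$ cases, to obtain a norm preserving lift $V_2$ of $V_2'$ satisfying $V_1V_2=V_2\overline{Q}V_1$.

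It remains to upgrade $V_2$ from a contraction to an isometry, and this is the only place the $Q_M$ case genuinely departs from the $Q_L$ case. Writing $V_2=\begin{bmatrix} V_2' & 0 \\ A & B\end{bmatrix}$ with respect to $\mathcal{K}'\oplus\mathcal{K}'^{\perp}$ (lower triangular because $V_2$ lifts $V_2'$), the bound $\|V_2\|=\|V_2'\|=1$ forces $\|V_2k\|^2=\|k\|^2+\|Ak\|^2\leq\|k\|^2$ for $k\in\mathcal{K}'$, so $A=0$. Thus $V_2=V_2'\oplus B$, the subspace $\mathcal{K}'$ reduces $V_2$ (this is the ``moreover'' clause), and $V_2|_{\mathcal{K}'}=V_2'$ is an isometry. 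Since $V_1$ and $\overline{Q}$ are isometries, the relation gives $\|V_2(\overline{Q}V_1z)\|=\|V_1V_2z\|=\|V_2z\|$ for every $z\in\mathcal{K}$; hence, starting from $\mathcal{K}'$ and using that $\ker(I-V_2^*V_2)$ is a closed subspace, I would conclude that $V_2$ is isometric on $\mathcal{S}:=\overline{\mathrm{span}}\{(\overline{Q}V_1)^n\mathcal{K}':n\geq 0\}$.

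The main obstacle is then to prove $\mathcal{S}=\mathcal{K}$. In the $Q_L$ case $\overline{Q}$ sits on the outside of $V_1V_2=\overline{Q}V_2V_1$ and the norm identities telescope directly along the spanning set $\{V_1^n\mathcal{K}'\}$ of the minimal dilation; here $\overline{Q}$ is sandwiched, so one only controls $V_2$ along the twisted orbit $(\overline{Q}V_1)^n\mathcal{K}'$. To bridge this gap I would use the explicit shift model $\mathcal{K}=\mathcal{K}'\oplus\mathcal{D}_{Y_1}\oplus\mathcal{D}_{Y_1}\oplus\cdots$ of the minimal isometric dilation together with the fact that $\overline{Q}$ acts as the nonzero scalar $q$ on each defect copy $\mathcal{D}_{Y_1}\subseteq\mathcal{K}'^{\perp}$: an induction on the defect grading shows that $(\overline{Q}V_1)^n\mathcal{K}'$ has deepest component $q^nD_{Y_1}\mathcal{K}'$, which fills the $n$-th copy of $\mathcal{D}_{Y_1}$ modulo the shallower copies already contained in $\mathcal{S}$. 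Consequently $\mathcal{S}$ contains every defect copy and equals $\mathcal{K}$, so $I-V_2^*V_2$ vanishes on $\mathcal{K}$ and $V_2$ is an isometry, which finishes the proof.
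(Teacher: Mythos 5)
Your architecture tracks the paper's proof closely: producing $Y_1$ with $Y_1V_2'=V_2'\overline{Q}_0Y_1$ via Theorem \ref{QMT commutant lift}, taking the Sch\"affer dilation $V_1$ of $Y_1$, the block argument forcing $A=0$, and the telescoping norm identity along the twisted orbit $(\overline{Q}V_1)^n\mathcal{K}'$ are all exactly what the paper does (your ``defect grading'' computation of $\bigvee_n(\overline{Q}V_1)^n\mathcal{K}'=\mathcal{K}$ is precisely the paper's observation that $\overline{Q}V_1$ is a minimal isometric dilation of $\overline{Q}_0Y_1$, carried out by hand). The genuine gap is the step that produces $V_2$. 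The ``$Q_M$ analogue of Theorem \ref{QminiisoliftDouglass}'' you invoke is not a result of the paper, and it is not ``established by the same Douglas--Muhly--Pearcy argument'': that induction depends on two features of having $\overline{Q}$ on the outside. In Theorem \ref{Q_Lcommminext} the extension supplied by Theorem \ref{partialisointertwine} is factored as $Y_n=X_nQ_n$ by solving for the free corner entries, which works because those entries are multiplied by the invertible block $qI$ of $Q_n=Q_{n-1}\oplus qI$ (giving $A_n=q^{-1}C_n$, $B_n=q^{-1}D_n$), and the norm is controlled by $\|X_n\|=\|X_nQ_nQ_n^*\|\leq\|X_nQ_n\|$, which uses $Q_n$ acting on the right as a co-isometry. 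In your configuration the required factorization is $Y_n=\overline{Q}_nX_n$ with $\overline{Q}_n$ on the left, so the corner equation becomes $\overline{Q}_{n-1}A_n=C_n$: if you run the induction on the lifting side, $\overline{Q}_{n-1}$ is a (generally non-surjective) isometry --- recall $\overline{Q}_0$ is only an isometric lift of the contraction $Q$, not a unitary --- and $C_n$ need not lie in its range, so the equation is unsolvable; if you dualize to co-isometric extensions to regain solvability, the norm-preservation identity has no counterpart, since $\overline{Q}_n^{*}\overline{Q}_nX_n\neq X_n$ for non-unitary $\overline{Q}_n$. This is not cosmetic: your $A=0$ argument and the isometry upgrade both require $\|V_2\|\leq 1$, so contractivity of the lift is exactly what is at stake.

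The statement you need is nevertheless true, and the paper obtains it from the classical intertwining lifting theorem rather than from any DMP-type result. Since $\overline{Q}_0$ is an isometry, $I-(\overline{Q}_0Y_1)^*(\overline{Q}_0Y_1)=I-Y_1^*Y_1$, so $\mathcal{D}_{\overline{Q}_0Y_1}=\mathcal{D}_{Y_1}$ and $\overline{Q}V_1$ is itself a minimal isometric lift of $\overline{Q}_0Y_1$ on $\mathcal{K}$; applying the intertwining lifting theorem (or Theorem \ref{contractive lift}) to the relation $Y_1V_2'=V_2'\,(\overline{Q}_0Y_1)$, with $V_1$ lifting $Y_1$ and $\overline{Q}V_1$ lifting $\overline{Q}_0Y_1$, yields a norm preserving lift $V_2$ of $V_2'$ satisfying $V_1V_2=V_2\overline{Q}V_1$. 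The spanning fact you establish in your final step is exactly this minimality of $\overline{Q}V_1$ over $\mathcal{K}'$, so the repair is to prove that observation first and use it to justify the lifting step, in place of the unproved analogue; with that substitution the rest of your argument goes through verbatim.
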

	\begin{proof}
		Let $\mathcal{K}'$ be any Hilbert space such that $\overline{Q}_0$ and $V_2'$ are isometric lifts of $Q$ and $T_2$ on $\mathcal{K}'$ respectively. Hence from Theorem \ref{QMT commutant lift}, there is a norm preserving lift $Y_1$ of $T_1$ on $\mathcal{K}'$ such that $Y_1V_2'=V_2'\overline{Q}_0Y_1$. Now let $V_1$ on $\mathcal{K}=\mathcal{K}'\oplus \mathcal{D}_{Y_1}\oplus \mathcal{D}_{Y_1}\oplus \dots$ be the Sch$\ddot{a}$ffer's minimal isometric dilation of $Y_1$. Let $\overline{Q}=\overline{Q}_0\oplus I$ on $\mathcal{K}'\oplus (\mathcal{K} \ominus \mathcal{K}')$. Then by the intertwining lifting theorem, there is a norm preserving lift $V_2$ of $V_2'$ such that $V_1V_2=V_2\overline{Q}V_1 $. We claim that $V_2$ is an isometry. First note that $V_2=\begin{bmatrix}
			V_2'&0\\A&B
		\end{bmatrix}$ with respect to the decomposition $\mathcal{K}'\oplus (\mathcal{K} \ominus \mathcal{K}')$. Now clearly $\overline{Q}$ is an isometry on $\mathcal{K}$ as $\overline{Q}_0$ is so. Further, using the fact that $V_2'^*V_2'=I_{\mathcal{K}'}$, we obtain, \[ 0\leq I+ A^*A\leq  V_2'^*V_2'+A^*A\leq \| V_2'^*V_2'+A^*A \|I \leq 	\|V_2^*V_2\|I\leq I .\] Thus, $A^*A=0$ which gives us that $A=0$. Hence we now have $V_2=\begin{bmatrix}
			V_2'&0\\0&B
		\end{bmatrix}$ with respect to the decomposition $\mathcal{K}'\oplus (\mathcal{K} \ominus \mathcal{K}')$. Since $\overline{Q}_0$ is an isometry, \[ I-(\overline{Q}_0Y_1)^*(\overline{Q}_0Y_1)=I-Y_1^*Y_1. \] Hence $D_{Y_1}=D_{\overline{Q}_0Y_1}$ and $\mathcal{D}_{Y_1}=\mathcal{D}_{\overline{Q}_0Y_1}$. Further, by definitions of $\overline{Q}$ and $V_1 $ it is clear that 
		\[ \overline{Q}V_1=\begin{bmatrix}
			\overline{Q}_0Y_1&0&0&\dots \\
			D_{Y_1}&0&0&\dots \\
			0&I_{\mathcal{D}_{Y_1}}&0&\dots\\
			0&0&I_{\mathcal{D}_{Y_1}}&\dots
		\end{bmatrix}=\begin{bmatrix}
			\overline{Q}_0Y_1&0&0&\dots \\
			D_{\overline{Q}_0Y_1}&0&0&\dots \\
			0&I_{\mathcal{D}_{\overline{Q}_0Y_1}}&0&\dots\\
			0&0&I_{\mathcal{D}_{\overline{Q}_0Y_1}}&\dots
		\end{bmatrix} \] on $\mathcal{K}=\mathcal{K}'\oplus \mathcal{D}_{Y_1}\oplus \mathcal{D}_{Y_1}\oplus \dots=\mathcal{K}'\oplus \mathcal{D}_{\overline{Q}_0Y_1}\oplus \mathcal{D}_{\overline{Q}_0Y_1}\oplus \dots$. Hence $\overline{Q}V_1$ is the minimal isomeric dilation of $\overline{Q}_0Y_1 $ on $\mathcal{K}$. Therefore, \[ \mathcal{K}=\overline{span}\{(\overline{Q}V_1)^nk:n\in \mathbb{N}\cup\{0\},\;k\in \mathcal{K}' \} .\] Now observe that for any $k\in \mathcal{K}'$, we obtain, \begin{alignat*}{2}
			\|V_2(\overline{Q}V_1)^nk\|&=\|V_2\overline{Q}V_1(\overline{Q}V_1)^{n-1}k \| & \\
			&=\|V_1V_2(\overline{Q}V_1)^{n-1}k \|& [\text{From }V_2\overline{Q}V_1=V_1V_2 ]\\
			&=\|V_2(\overline{Q}V_1)^{n-1}k  \|. & \quad[\text{As }V_1 \text{ is an isometry}]
		\end{alignat*} 
		Hence inductively we obtain \begin{alignat*}{2}
			\|V_2(\overline{Q}V_1)^nk\|&=\| V_2\overline{Q}V_1k\|\\
			&=\|V_1V_2k\|=\|V_2k\|&[\text{As }V_1 \text{ is an isometry}]\\
			&=\|V_2'k\|=\|k\|.&[\text{As }V_2|\mathcal{K}'=V_2' \text{ is an isometry on }\mathcal{K}']
		\end{alignat*}
		Therefore, \begin{align*} \langle (I-V_2^*V_2)(\overline{Q}V_1)^nk, (\overline{Q}V_1)^nk \rangle&=\langle (\overline{Q}V_1)^nk,(\overline{Q}V_1)^nk \rangle- \langle V_2^*V_2(\overline{Q}V_1)^nk, (\overline{Q}V_1)^nk \rangle\\
			&=\langle k,k \rangle- \langle V_2(\overline{Q}V_1)^nk, V_2(\overline{Q}V_1)^nk \rangle\\
			&=\|k\|^2-\| V_2(\overline{Q}V_1)^nk\|^2\\
			&=0. \end{align*} Thus, for all $n\in \mathbb{N}\cup \{0\}$ and $k\in \mathcal{K}'$, \[\|(I-V_2^*V_2)^{1/2}(\overline{Q}V_1)^nk\|=0. \] Therefore, $I-V_2^*V_2=0$ on $\mathcal{K}$. Hence $V_2$ is an isometry.
	\end{proof}
	\begin{thm}\label{AndodilM} 
		Let $Q,T_1,T_2\in \mathcal{B}(\mathcal{H})$ be any contractions such that $T_1T_2=T_2QT_1$. Then there is a Hilbert space $\mathcal{K}$ containing $\mathcal{H}$, an isometric lift $\overline{Q}\in \mathcal{B}(\mathcal{K})$ of $Q$ and there are isometric lifts $V_1,V_2$ of $T_1,T_2$ respectively, such that $V_1V_2=V_2\overline{Q}V_1$. Moreover, if $Q$ is an isometry then $\overline{Q}$ can be chosen to be $Q\oplus qI$ with respect to decomposition $\mathcal{H}\oplus (\mathcal{K}\ominus \mathcal{H})$ of $\mathcal{K}$, for any complex number $q$ of modulus one.
	\end{thm}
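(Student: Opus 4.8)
The plan is to reduce the entire statement to Theorem \ref{QMcoro}, in exactly the way Theorem \ref{Andodil1} was deduced from Theorem \ref{QLcoro}. Theorem \ref{QMcoro} already carries all the genuine analytic content: given \emph{any} pair of isometric lifts $\overline{Q}_0,V_2'$ of $Q,T_2$ sitting on one common space $\mathcal{K}'$, it manufactures isometric lifts $V_1,V_2$ of $T_1,T_2$ on a larger space together with the relation $V_1V_2=V_2\overline{Q}V_1$, where $\overline{Q}=\overline{Q}_0\oplus qI$. Consequently the only thing left to do is to \emph{exhibit} such an initial pair of isometric lifts and then invoke Theorem \ref{QMcoro}.

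First I would handle an arbitrary contraction $Q$. Put $\mathcal{K}'=\mathcal{H}\oplus\mathcal{H}\oplus\cdots$ and, for $k=(h_0,h_1,h_2,\dots)\in\mathcal{K}'$, define the Sch$\ddot{a}$ffer-type operators
\[
\overline{Q}_0(k):=(Qh_0,D_Qh_0,h_1,h_2,\dots),\qquad V_2'(k):=(T_2h_0,D_{T_2}h_0,h_1,h_2,\dots).
\]
These are the standard minimal isometric dilations of $Q$ and $T_2$: the identities $\|Qh_0\|^2+\|D_Qh_0\|^2=\|h_0\|^2$ and $\|T_2h_0\|^2+\|D_{T_2}h_0\|^2=\|h_0\|^2$ show $\overline{Q}_0,V_2'$ are isometries, while computing the adjoints gives $\overline{Q}_0^*(g_0,g_1,\dots)=(Q^*g_0+D_Qg_1,g_2,\dots)$ and the analogue for $V_2'^*$, so that $\mathcal{H}$ is invariant for both adjoints with $\overline{Q}_0^*|_{\mathcal{H}}=Q^*$ and $V_2'^*|_{\mathcal{H}}=T_2^*$. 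Thus $\overline{Q}_0,V_2'$ are isometric lifts of $Q,T_2$. Since $Q,T_1,T_2$ are contractions with $T_1T_2=T_2QT_1$, the hypotheses of Theorem \ref{QMcoro} are satisfied, and it supplies the space $\mathcal{K}\supseteq\mathcal{K}'\supseteq\mathcal{H}$, the isometric lifts $V_1,V_2$ of $T_1,T_2$, and the isometric lift $\overline{Q}=\overline{Q}_0\oplus qI$ of $Q$ obeying $V_1V_2=V_2\overline{Q}V_1$.

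For the \emph{moreover} clause, when $Q$ is an isometry I would bypass the dilation and take $\overline{Q}_1=Q\oplus qI$ on $\mathcal{K}'=\mathcal{H}\oplus(\mathcal{K}'\ominus\mathcal{H})$ for a fixed $|q|=1$. As $Q$ is already an isometry, $\overline{Q}_1$ is an isometric lift of $Q$, and $V_2'$ above remains an isometric lift of $T_2$; feeding this pair into Theorem \ref{QMcoro} yields $\overline{Q}=\overline{Q}_1\oplus qI$ on $\mathcal{K}'\oplus(\mathcal{K}\ominus\mathcal{K}')$. Collapsing the two $qI$ blocks then gives precisely $Q\oplus qI$ on $\mathcal{H}\oplus(\mathcal{K}\ominus\mathcal{H})$, as asserted.

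There is essentially no hard step: the induction along the Sch$\ddot{a}$ffer tower, the argument that the intertwining lift $V_2$ of $V_2'$ is automatically an isometry, and the reducing-subspace bookkeeping have all been absorbed into Theorem \ref{QMcoro}. The single point deserving a line of care is the \emph{moreover} clause, where one must verify that the copy of $qI$ carried inside $\overline{Q}_1$ on $\mathcal{K}'\ominus\mathcal{H}$ and the copy of $qI$ that Theorem \ref{QMcoro} appends on $\mathcal{K}\ominus\mathcal{K}'$ amalgamate into a single $qI$ on $(\mathcal{K}'\ominus\mathcal{H})\oplus(\mathcal{K}\ominus\mathcal{K}')=\mathcal{K}\ominus\mathcal{H}$; this is immediate from using the \emph{same} scalar $q$ at both stages.
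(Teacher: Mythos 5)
Your proposal is correct and follows essentially the same route as the paper: build the Sch\"{a}ffer-type isometric lifts $\overline{Q}_0,V_2'$ of $Q,T_2$ on $\mathcal{K}'=\mathcal{H}\oplus\mathcal{H}\oplus\cdots$, invoke Theorem \ref{QMcoro}, and for the \emph{moreover} clause replace $\overline{Q}_0$ by $Q\oplus qI$ and note that the two $qI$ blocks amalgamate to give $\overline{Q}=Q\oplus qI$ on $\mathcal{H}\oplus(\mathcal{K}\ominus\mathcal{H})$. Note that the paper's own proof cites Theorem \ref{QLcoro} in its first paragraph (evidently a slip carried over from the proof of Theorem \ref{Andodil1}); your citation of Theorem \ref{QMcoro} is the correct one for the relation $T_1T_2=T_2QT_1$.
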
 
	\begin{proof}
First assume that $Q$ is any contraction. Let $\mathcal{K}'=\mathcal{H}\oplus \mathcal{H}\oplus \dots  $. Define for any $k=(h_0,h_1,h_2,\dots )\in \mathcal{K}'$, 
\[ \overline{Q}_0(k):=(Qh_0,D_{Q}h_0,h_1,h_2,\dots)\quad \text{ and } \quad V_2'(k):=(T_2h_0,D_{T_2}h_0,h_1,h_2,\dots  ). \]
Clearly, $\overline{Q}_0$ and $V_2'$ are isometric lifts of $Q$ and $T_2$. Hence, we get the desired result by Theorem \ref{QLcoro}.

Further, if $Q$ is in particular an isometry, let $\overline{Q}_1=Q\oplus qI$ on $\mathcal{K}'=\mathcal{H}\oplus (\mathcal{K}'\ominus \mathcal{H})$ for some complex number $q$ of modulus one. Then clearly $\overline{Q}_1$ and $V_2'$ are isometric lifts of $Q$ and $T_2$ respectively. Hence Theorem \ref{QMcoro} guarantees existence of a Hilbert space $\mathcal{K}$ containing $\mathcal{K}'$ and isometric lifts $V_1,V_2$ of $T_1,V_2'$ such that $V_1V_2=V_2\overline{Q}V_1$, where $\overline{Q}=\overline{Q}_1\oplus qI$ on $\mathcal{K}'\oplus (\mathcal{K}\ominus\mathcal{K}')$ which is same as $Q\oplus qI $ on $\mathcal{H}\oplus (\mathcal{K}\ominus\mathcal{H})$.     		
	\end{proof}
	
	As observed before, $(V, \mathcal{K})$ is an isometric lift of $(T,\mathcal{H})$, if and only if $V^*$ is a co-isometric extension of $T^*$. Hence we can restate Theorem \ref{AndodilM} as follows.
	\begin{thm}\label{AndodilMcoiso} 
		Let $Q,T_1,T_2\in \mathcal{B}(\mathcal{H})$ be any contractions such that $T_1T_2=T_2QT_1$. Then there is a Hilbert space $\mathcal{K}$ containing $\mathcal{H}$, a co-isometric extension $\overline{Q}\in \mathcal{B}(\mathcal{K})$ of $Q$ and there are co-isometric extensions $Z_1,Z_2$ of $T_1,T_2$ respectively, such that $Z_1Z_2=Z_2\overline{Q}Z_1$. 
	\end{thm}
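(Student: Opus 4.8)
The plan is to deduce Theorem \ref{AndodilMcoiso} from Theorem \ref{AndodilM} by passing to adjoints, exactly in the spirit of the other dualizations in this section (for instance the way Theorem \ref{AndocoisoextL} is obtained from Theorem \ref{andoisodilL}). The key observation is the standard fact, already used repeatedly in the excerpt, that $(V,\mathcal{K})$ is an isometric lift of $(T,\mathcal{H})$ if and only if $(V^*,\mathcal{K})$ is a co-isometric extension of $(T^*,\mathcal{H})$. So I would first translate the hypothesis $T_1T_2=T_2QT_1$ into a statement about adjoints and then apply the already-proven isometric dilation result to the adjoints.

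First I would take adjoints of the $Q_M$-commutation relation: $T_1T_2=T_2QT_1$ gives $T_2^*T_1^*=T_1^*Q^*T_2^*$. Writing $S_1=T_1^*$, $S_2=T_2^*$ and $P=Q^*$, this reads $S_2S_1=S_1PS_2$, i.e. the pair $(S_2,S_1)$ is $P$-commuting in the middle sense $S_2 S_1 = S_1 P S_2$. Since $T_1,T_2,Q$ are contractions, so are $S_1,S_2,P$. Now I would apply Theorem \ref{AndodilM} to the contractions $P,S_2,S_1$ (playing the roles of $Q,T_1,T_2$ there, with the middle relation $S_2S_1=S_1PS_2$): this yields a Hilbert space $\mathcal{K}\supseteq\mathcal{H}$, an isometric lift $\overline{P}\in\mathcal{B}(\mathcal{K})$ of $P$, and isometric lifts $W_2,W_1$ of $S_2,S_1$ respectively such that
\[
W_2 W_1 = W_1 \overline{P} W_2 .
\]

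Finally I would set $Z_1=W_1^*$, $Z_2=W_2^*$ and $\overline{Q}=\overline{P}^*$. By the lift/extension duality, $Z_1,Z_2$ are co-isometric extensions of $S_1^*=T_1$ and $S_2^*=T_2$ respectively, and $\overline{Q}$ is a co-isometric extension of $P^*=Q$. Taking adjoints of the identity $W_2W_1=W_1\overline{P}W_2$ gives $W_1^*W_2^*=W_2^*\overline{P}^*W_1^*$, that is $Z_1Z_2=Z_2\overline{Q}Z_1$, which is exactly the desired conclusion. I do not anticipate any genuine obstacle here; the only point that needs care is the bookkeeping of which operator plays which role under the adjoint map, in particular verifying that the middle position of $Q$ is preserved (rather than moved to the left or right) when passing between $T_1T_2=T_2QT_1$ and its adjoint, and matching the order of the pair correctly so that the conclusion comes out as $Z_1Z_2=Z_2\overline{Q}Z_1$ and not with the factors transposed.
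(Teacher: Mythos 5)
Your proposal is correct and takes essentially the same route as the paper: the paper obtains Theorem \ref{AndodilMcoiso} precisely by invoking the duality that $(V,\mathcal{K})$ is an isometric lift of $(T,\mathcal{H})$ if and only if $V^*$ is a co-isometric extension of $T^*$, and restating Theorem \ref{AndodilM} under adjoints. Your explicit bookkeeping (taking adjoints of $T_1T_2=T_2QT_1$, swapping the roles of the pair, and checking that $Q$ stays in the middle position) is exactly the verification implicit in the paper's one-line restatement.
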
 
	
	We consider a special case of it for when $T_2$ is a strict contraction and obtain an alternative proof of Theorem \ref{AndodilMcoiso}.
	\begin{thm}
		Let $Q,T_1,T_2\in \mathcal{B}(\mathcal{H})$ be any two contractions with $\|T_1\|<1$ and $T_1T_2=T_2QT_1$. Then there is a Hilbert space $\mathcal{K}$ containing $\mathcal{H}$, a co-isometric extension $\overline{Q}\in \mathcal{B}(\mathcal{K})$ and co-isometric extensions $Z_1,Z_2$ of $T_1,T_2$ respectively, such that $Z_{1}Z_{2}=Z_{2}\overline{Q}Z_1$.
	\end{thm}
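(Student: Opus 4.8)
The plan is to run the strict-contraction argument of the preceding theorem with the roles of $T_1$ and $T_2$ interchanged, since here it is $T_1$ that is a strict contraction. First I would rewrite the hypothesis $T_1T_2=T_2QT_1$ as the intertwining relation $(T_2Q)T_1=T_1T_2$, making $T_1$ the intertwiner between the contractions $T_2Q$ and $T_2$; equivalently, taking adjoints, $T_2^*T_1^*=T_1^*(T_2Q)^*$.

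On $\mathcal{K}'=\mathcal{H}\oplus\mathcal{H}\oplus\cdots$ I would take the Sch$\ddot{a}$ffer-type co-isometric extensions $B(h_0,h_1,\dots)=(T_2h_0+D_{T_2^*}h_1,h_2,\dots)$ of $T_2$ and $\overline{Q}_0(h_0,h_1,\dots)=(Qh_0+D_{Q^*}h_1,h_2,\dots)$ of $Q$, and set $A=B\overline{Q}_0$. Being a product of co-isometries, $A$ is a co-isometry, and since $\overline{Q}_0|_{\mathcal{H}}=Q$ and $B|_{\mathcal{H}}=T_2$ it is a co-isometric extension of $T_2Q$. The factorization $A=B\overline{Q}_0$, with $\overline{Q}_0$ on the right exactly mirroring the position of $Q$ in $T_2Q$, is precisely what will later put $\overline{Q}$ in the middle of the desired identity. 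To produce the intertwiner I would feed the relation $T_2^*T_1^*=T_1^*(T_2Q)^*$ into Theorem \ref{contractive lift}, using the isometric lift $A^*$ of $(T_2Q)^*$ and the (isometric, hence contractive) lift $B^*$ of $T_2^*$; its conclusion gives a contractive lift of $T_1^*$ whose adjoint $Y$ extends $T_1$ and satisfies $AY=YB$. A scaling argument identical to the one in the proof of Theorem \ref{QMT commutant lift} (valid because $\|T_1\|<1$; the case $T_1=0$ is handled directly by $Y=0$) makes $Y$ norm preserving, so $\|Y\|=\|T_1\|<1$. This strictness is the one place the hypothesis $\|T_1\|<1$ is used: it guarantees that $D_{Y^*}$ is invertible on $\mathcal{K}'$.

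I would then pass to $\mathcal{K}=\mathcal{K}'\oplus\mathcal{K}'\oplus\cdots$ and define $Z_1$ to be the Sch$\ddot{a}$ffer co-isometric extension $Z_1(k_0,k_1,\dots)=(Yk_0+D_{Y^*}k_1,k_2,\dots)$ of $Y$ (hence of $T_1$), together with the block operators $Z_2=B\oplus C\oplus C\oplus\cdots$ and $\overline{Q}=\overline{Q}_0\oplus I\oplus I\oplus\cdots$, where $C=D_{Y^*}^{-1}AD_{Y^*}$. These are co-isometric extensions of $T_1,T_2,Q$ respectively; the only nonroutine verification is that $C$ is a co-isometry, which I would get from $CC^*=D_{Y^*}^{-1}\big(I-(AY)(AY)^*\big)D_{Y^*}^{-1}=D_{Y^*}^{-1}(I-YY^*)D_{Y^*}^{-1}=I$, using $AY=YB$ and $BB^*=I$. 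Finally I would check $Z_1Z_2=Z_2\overline{Q}Z_1$ coordinatewise: the tail entries agree trivially, while the top entry collapses to the two built-in identities $AY=YB$ and $D_{Y^*}C=AD_{Y^*}$.

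I expect the main obstacle to be arranging the auxiliary data so that $A=B\overline{Q}_0$, $AY=YB$, and $C=D_{Y^*}^{-1}AD_{Y^*}$ hold simultaneously. Unlike the $Q_R$-type situation, $A=B\overline{Q}_0$ is in general not the minimal co-isometric extension of $T_2Q$, so the intertwining extension theorem (Theorem \ref{douglasintertwining}) for minimal extensions cannot be invoked; this is why I route the construction of $Y$ through the more flexible Theorem \ref{contractive lift}, which tolerates arbitrary co-isometric extensions.
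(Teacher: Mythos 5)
Your proposal is correct and follows essentially the same route as the paper's own proof: the identical Schäffer-type co-isometric extensions $\overline{Q}_0$ and $B=W_2$ on $\mathcal{K}'=\mathcal{H}\oplus\mathcal{H}\oplus\cdots$, the same norm-preserving intertwiner $Y$ (the paper's $Y_1$, satisfying $Y_1W_2=W_2\overline{Q}_0Y_1$), the same block operators $Z_1$ (Schäffer form for $Y$), $Z_2=W_2\oplus C\oplus C\oplus\cdots$ with $C=D_{Y^*}^{-1}W_2\overline{Q}_0D_{Y^*}$, and $\overline{Q}=\overline{Q}_0\oplus I$, together with the same computation $CC^*=I$ via $AY=YB$ and $BB^*=I$. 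The only difference is the justification for producing $Y$: the paper simply cites the intertwining lifting theorem even though $W_2\overline{Q}_0$ and $W_2$ are not minimal co-isometric extensions, whereas your routing through Theorem \ref{contractive lift} (in adjoint form, plus scaling) is a cleaner and arguably more careful treatment of exactly that step.
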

	\begin{proof}
		Let $\mathcal{K}'=\mathcal{H}\oplus \mathcal{H}\oplus \cdots$,
		\[\overline{Q}_0(h_0,h_1,\dots )=(Qh_0+D_{Q^*}h_1, h_2,h_3,\dots),\] and 
		\[W_2(h_0,h_1,\ldots ) = \left(T_2h_{0}+D_{T_2^*}h_1, h_{2},\dots \right).\]
		Hence $\overline{Q}_0$ is a co-isometric extension of $Q$ and $W_2$ is a co-isometric extension of $T_2$. Therefore, $W_2\overline{Q}_0$ is a co-isometric extension of $T_2Q$. Then by intertwining lifting theorem there is a norm preserving extension $Y_{1}$ of $T_1$ such that $Y_{1}W_2=W_2\overline{Q}_0Y_1$. Since $\|Y_1\|=\|T_1\|$ and $T_1$ is a strict contraction, so is $Y_1$. Hence $D_{Y_1}$ is invertible on $\mathcal{K}'$. Thus with respect to the decomposition $\mathcal{K}=\mathcal{K}'\oplus \mathcal{K}'\oplus \cdots$, let $\overline{Q}=\overline{Q}_0\oplus I$ and $Z_1,Z_2\in \mathcal{B}(\mathcal{K})$ be defined as follows.  	\begingroup
		\allowdisplaybreaks
		\begin{gather*}
			Z_1=\begin{bmatrix}
			Y_1&D_{Y_1^*}&0&0&\cdots\\
			0&0&I_{\mathcal{K}'}&0&\cdots\\
			0&0&0&I_{\mathcal{K}'}&\cdots\\
			\vdots&\vdots&\vdots&\vdots&\ddots
		\end{bmatrix},	Z_2=\begin{bmatrix}
				W_2&0&0&\cdots \\
				0&D_{Y_1^*}^{-1}W_2\overline{Q}_0D_{Y_1^*}&0&\cdots \\
				0&0&D_{Y_1^*}^{-1}W_2\overline{Q}_0D_{Y_1^*}&\cdots \\
				\vdots&\vdots&\vdots&\ddots
			\end{bmatrix}.		\end{gather*}
		\endgroup
		Clearly, $V_1,Z_2$ are bounded linear operators on $\mathcal{K}$ and by construction $Z_1$ is a pure co-isometry. Now we prove that $Z_2$ is a co-isometry. Since $W_2$ is a co-isometry, it suffices to show that, 
		\begin{equation}\label{T1}
			(D_{Y_1^*}^{-1}W_2\overline{Q}_0D_{Y_1^*})(D_{Y_1^*}^{-1}W_2\overline{Q}_0D_{Y_1^*})^*=I_{\mathcal{K}'}.
		\end{equation}
		Using $W_{2}\overline{Q}_0Y_1=Y_1W_2$ and $W_2W_2^*=I_{\mathcal{K}'}$ we have
		\begingroup
		\allowdisplaybreaks
		\begin{align*}
			&(D_{Y_1^*}^{-1}W_2\overline{Q}_0D_{Y_1^*})(D_{Y_1^*}^{-1}W_2\overline{Q}_0D_{Y_1^*})^*\\
			&=D_{Y_1^*}^{-1}W_2\overline{Q}_0D_{Y_1^*}^2\overline{Q}_0^*W_2^*D_{Y_1^*}^{-1}\\
			&=D_{Y_1^*}^{-1}W_2\overline{Q}_0(I-Y_1Y_1^*)\overline{Q}_0^*W_2^*D_{Y_1^*}^{-1}\\
			&=D_{Y_1^*}^{-1}(I_{\mathcal{K}'}-W_2\overline{Q}_0Y_1Y_1^*\overline{Q}_0^*W_2^*)D_{Y_1^*}^{-1}\\
			&=D_{Y_1^*}^{-1}(I_{\mathcal{K}'}-Y_1W_{2}W_{2}^*Y_1^*)D_{Y_1^*}^{-1}\\
			&=D_{Y_1^*}^{-1}(I_{\widetilde{\mathcal{K}}}-Y_1Y_1^*)D_{Y_1^*}^{-1}\\
			&=I_{\mathcal{K}'}.
		\end{align*}
		\endgroup
		Thus, $Z_1, Z_2$ are co-isometries. Further it is clear from the construction that $Z_1$ extends $Y_1$ which is an extension of $T_1$ and $Z_2$ extends $W_2$ which is an extension of $T_2$. Hence $Z_1,Z_2$ extend $T_1,T_2$ respectively. 
		By direct computation and using $Y_{1}W_2=W_2\overline{Q}_0Y_1$, we get $Z_1Z_{2}=Z_2\overline{Q}Z_1$.  
	\end{proof}
	\subsubsection{Ando type dilation Case II}
	Now we consider Case II in which $Q$ is not necessarily a contraction.
	\begin{thm}\label{andodilII}
		Let $T_1,T_2\in \mathcal{B}(\mathcal{H})$ be any contractions and $Q\in \mathcal{B}(\mathcal{H})$ be such that $ QT_1 $ is a contraction and $T_1T_2=T_2QT_1$. Then there are isometric lifts $V_1,V_2$ of $T_1,T_2$, a lift $\overline{Q}$ of $Q$, a bounded operator $V_{1Q}$ such that $\overline{Q}V_{1Q}$ is an isometric lift of $QT_1$ on Hilbert space $\mathcal{K}$ containing $\mathcal{H}$ and $V_{1}V_2=V_2\overline{Q}V_{1Q}$.
	\end{thm}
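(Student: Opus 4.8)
The plan is to run Ando's construction exactly as in the proof of Theorem~\ref{andoisodilL}, merely transposing the bookkeeping so that the auxiliary isometry is built from $QT_1$ instead of $QT_2$ and so that $\overline{Q}$ occupies the middle slot of the product. First I would put $\mathcal{K}=\oplus_{0}^{\infty}\mathcal{H}$ and introduce the three one-step lifts
\[
W_1(h_0,h_1,\ldots)=(T_1h_0,D_{T_1}h_0,0,h_1,\ldots),\quad W_2(h_0,h_1,\ldots)=(T_2h_0,D_{T_2}h_0,0,h_1,\ldots),
\]
\[
W(h_0,h_1,\ldots)=(T_1h_0,D_{QT_1}h_0,0,h_1,\ldots),
\]
together with $\overline{Q}=Q\oplus I$ on $\mathcal{H}\oplus(\mathcal{K}\ominus\mathcal{H})$. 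The point of this choice of $W$ is that $\overline{Q}W(h_0,\ldots)=(QT_1h_0,D_{QT_1}h_0,0,h_1,\ldots)$, so that $W_1,W_2,\overline{Q}W$ are isometries lifting $T_1,T_2,QT_1$ respectively; here the hypothesis $\|QT_1\|\leq 1$ is exactly what makes $D_{QT_1}$ available.

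Next I would construct the defect-matching unitary. Identifying $\mathcal{K}$ with $\mathcal{H}\oplus\mathcal{B}\oplus\mathcal{B}\oplus\cdots$ for $\mathcal{B}=\mathcal{H}^4$, define $G$ on $\mathcal{L}_1=\{(D_{T_1}T_2h,0,D_{T_2}h,0):h\in\mathcal{H}\}$ by
\[
G(D_{T_1}T_2h,0,D_{T_2}h,0)=(D_{T_2}QT_1h,0,D_{QT_1}h,0).
\]
The identities $\|D_{T_1}T_2h\|^2+\|D_{T_2}h\|^2=\|h\|^2-\|T_1T_2h\|^2$ and $\|D_{T_2}QT_1h\|^2+\|D_{QT_1}h\|^2=\|h\|^2-\|T_2QT_1h\|^2$, together with the relation $T_1T_2=T_2QT_1$, show that $G$ is a well-defined isometry of $\mathcal{L}_1$ onto $\mathcal{L}_2$. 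Extending to the closures and matching the dimensions of $\mathcal{M}_1^{\perp}$ and $\mathcal{M}_2^{\perp}$ verbatim as in Theorem~\ref{andoisodilL} (both contain $\{(0,h,0,0):h\in\mathcal{H}\}$ and sit inside $\mathcal{B}$) produces a unitary $G$ on $\mathcal{B}$, hence a blockwise unitary $\widetilde{G}$ on $\mathcal{K}$ that fixes $\mathcal{H}$.

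Finally I would set $V_1=\widetilde{G}W_1$, $V_2=W_2\widetilde{G}^{-1}$ and $V_{1Q}=\widetilde{G}W$. Since $\widetilde{G}|_{\mathcal{H}}=I$, the operators $V_1,V_2$ are isometric lifts of $T_1,T_2$. The step I expect to carry the real content is the observation that $\overline{Q}=Q\oplus I$ commutes with $\widetilde{G}$: indeed $\overline{Q}$ acts only on the $h_0$-coordinate, which $\widetilde{G}$ leaves fixed while transforming the later blocks. This single fact does two jobs at once. It makes $\overline{Q}V_{1Q}=\overline{Q}\widetilde{G}W=\widetilde{G}(\overline{Q}W)$ an isometric lift of $QT_1$ \emph{without} ever inverting $\overline{Q}$ (which may fail to be invertible), and it collapses the product to $V_2\overline{Q}V_{1Q}=W_2\widetilde{G}^{-1}\overline{Q}\widetilde{G}W=W_2\overline{Q}W$. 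A direct coordinate computation then gives $W_2\overline{Q}W(h_0,\ldots)=(T_2QT_1h_0,D_{T_2}QT_1h_0,0,D_{QT_1}h_0,0,h_1,\ldots)$, whereas $V_1V_2=\widetilde{G}W_1W_2\widetilde{G}^{-1}$ yields $(T_1T_2h_0,G(D_{T_1}T_2h_0,0,D_{T_2}h_0,0),h_1,h_2,\ldots)$; the leading coordinates agree by $T_1T_2=T_2QT_1$ and the first $\mathcal{B}$-block agrees by the definition of $G$, so that $V_1V_2=V_2\overline{Q}V_{1Q}$.

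The main obstacle is therefore not a hard estimate but the placement of $\overline{Q}$: one must keep $\overline{Q}V_{1Q}$ an isometric lift of $QT_1$ while at the same time forcing $\overline{Q}$ into the middle factor of $V_2\overline{Q}V_{1Q}$. The commutativity $\overline{Q}\widetilde{G}=\widetilde{G}\overline{Q}$ is precisely what reconciles these two demands, and it is the only genuinely new ingredient beyond the proof of Theorem~\ref{andoisodilL}.
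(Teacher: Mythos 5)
Your proof is correct and is essentially the paper's own argument: the same $\mathcal{K}=\oplus_{0}^{\infty}\mathcal{H}$, the same one-step isometries $W_1,W_2,W$ with $\overline{Q}=Q\oplus I$, and the same defect-matching unitary, except that your $G$ is the inverse of the paper's (the paper maps $(D_{T_2}QT_1h,0,D_{QT_1}h,0)\mapsto(D_{T_1}T_2h,0,D_{T_2}h,0)$) and you compose $\widetilde{G}$ on the opposite sides, taking $V_1=\widetilde{G}W_1$, $V_2=W_2\widetilde{G}^{-1}$, $V_{1Q}=\widetilde{G}W$ where the paper takes $V_1=W_1\widetilde{G}^{-1}$, $V_2=\widetilde{G}W_2$, $V_{1Q}=W\widetilde{G}^{-1}$. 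The effect is purely cosmetic: the paper's arrangement makes $V_1V_2=W_1W_2$ collapse for free and never needs $\overline{Q}$ to commute with $\widetilde{G}$, whereas yours collapses $V_2\overline{Q}V_{1Q}=W_2\overline{Q}W$ instead by invoking the (correct and easy) commutation $\overline{Q}\widetilde{G}=\widetilde{G}\overline{Q}$, with the remaining coordinate computations identical.
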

	\begin{proof}
		Define $\mathcal{K}=\oplus_{0}^{\infty} \mathcal{H}$, 
		\[W_{1}(h_0,h_1,\ldots ) = \left(T_1h_{0},D_{T_1}h_0,0, h_{1},\ldots \right),\]
		\[W_{2}(h_0,h_1,\ldots ) = \left(T_2h_{0},D_{T_2}h_0,0, h_{1},\ldots \right),\]
		\[W(h_0,h_1,\ldots ) = \left(T_1h_{0},D_{QT_1}h_0,0, h_{1},\ldots \right).\] Let $\overline{Q}=Q\oplus I$ with respect to the decomposition $\mathcal{K}=\mathcal{H}\oplus \mathcal{H}^{\perp}$. Thus note that \[\overline{Q}W(h_0,h_1,\ldots ) = \left(QT_1h_{0},D_{QT_1}h_0,0, h_{1},\ldots \right). \] Then $W_{1}, W_{2}, \overline{Q}W$ are isometries as $\| T_ih_0 \|^2+\| D_{T_i}h_0 \|^2=\|h_0\|^2$ for $i=1,2$ and \[ \| QT_1h_0 \|^2+\| D_{QT_1}h_0 \|^2=\|h_0\|^2. \] Moreover, $\mathcal{H}$ is an invariant subspace for $W_1^*,W_2^*,(\overline{Q}W)^*$ and \[ W_1^*|_{\mathcal{H}}=T_1^*,\,W_2^*|_{\mathcal{H}}=T_2^*,\,(\overline{Q}W)^*|_{\mathcal{H}}=(QT_1)^*.  \] Now let $$\mathcal{B}=\mathcal{H}\oplus \mathcal{H}\oplus \mathcal{H}\oplus \mathcal{H} .$$
		Hence $\mathcal{K}$ can be identified with $\mathcal{H}\oplus \mathcal{B}\oplus \mathcal{B}\oplus \cdots$ via the identification \[ (h_0,h_1,\ldots )=(h_0, (h_1,h_2,h_3,h_4),(h_5,h_6,h_7,h_8),\ldots ). \] 
		Now let $$\mathcal{L}_1= \{(D_{T_2}QT_1h,0,D_{QT_1}h,0)\in \mathcal{B}: h\in \mathcal{H}\} $$ and $$\mathcal{L}_2= \{(D_{T_1}T_2h,0,D_{T_2}h,0)\in  \mathcal{B}: h\in \mathcal{H}\} .$$ Let $\mathcal{M}_i=\overline{\mathcal{L}}_i$ and $\mathcal{M}_i^{\perp}=\mathcal{B}\ominus\mathcal{M}_i$ for $i=1,2$. Define an operator $G:\mathcal{L}_1 \to \mathcal{L}_2$ as \[ G(D_{T_2}QT_1h_0,0,D_{QT_1}h_0,0)=(D_{T_1}T_2h_0,0,D_{T_2}h_0,0) .\] A simple computation shows that \begin{align*}
			&\|D_{T_2}QT_1h \|^2+\| D_{QT_1}h \|^2\\
			=&\langle D_{T_2}QT_1h, D_{T_2}QT_1h \rangle + \langle D_{QT_1}h,D_{QT_1}h \rangle\\
			=&\langle (I-T_2^*T_2)QT_1h, QT_1h \rangle +\langle (I-(QT_1)^*QT_1)h, h \rangle\\
			=& \|QT_1h\|^2-\|T_2QT_1h\|^2+\|h\|^2-\|QT_1h\|^2\\
			=&\| h \|^2-\|T_2QT_1h \|^2. 
		\end{align*} Similarly, \[ \|D_{T_1}T_2h \|^2+\| D_{T_2}h \|^2=\| h \|^2-\|T_1T_2h \|^2  .\]  Since $T_1T_2=T_2QT_1$ the above computation proves that
		\[\|D_{T_1}T_2h \|^2+\| D_{T_2}h \|^2=\|D_{T_2}QT_1h \|^2+\| D_{QT_1}h \|^2.  \] Hence $G$ defines an isometry from $\mathcal{L}_1$ onto $\mathcal{L}_2$ and extends continuously as an isometry from $\mathcal{M}_1$ onto $\mathcal{M}_2$. It  remains to show that $G$ can be extended to an isometry from the whole space $\mathcal{B}$ onto itself. For this purpose,  it suffices to prove that $\dim \mathcal{M}_1^{\perp}=\dim \mathcal{M}_2^{\perp}. $ This is clearly true when $\mathcal{H}$ and hence $\mathcal{B}$ are finite dimensional. Now  suppose $\mathcal{H}$ is infinite dimensional. Then we have, \[ \dim(\mathcal{H})=\dim\mathcal{B}\geq \dim\mathcal{M}_i^{\perp}\geq \dim \mathcal{H}. \] Here first inequality follows as $\mathcal{M}_i^{\perp}\subset \mathcal{B}$ and second inequality follows as \[\{ (0,h,0,0)\in \mathcal{B}:h\in \mathcal{H}\}\subset \mathcal{M}_i^{\perp}.\] Hence we can extend the map $G$ isometrically to a map from $\mathcal{B}$ onto $\mathcal{B}$. Therefore $G$ will be a unitary. Now define $\widetilde{G}$ on $\mathcal{K}$ as \[ \widetilde{G}(h_0,h_1,\ldots )=(h_0,G(h_1,h_2,h_3,h_4),G(h_5,h_6,h_7,h_8),\ldots). \] Then clearly $\widetilde{G}$ is a unitary with inverse given by, \[ \widetilde{G}^{-1}(h_0,h_1,\ldots )=(h_0,G^{-1}(h_1,h_2,h_3,h_4),G^{-1}(h_5,h_6,h_7,h_8),\ldots) .\] Further let $V_1=W_1\widetilde{G}^{-1}$, $V_2=\widetilde{G}W_2$ and $V_{1Q}=W\widetilde{G}^{-1}$. Clearly $V_1$, $V_2$ and $\overline{Q}V_{1Q}$ are isometries. Moreover, since $\widetilde{G}|_{\mathcal{H}}=I$ and $\widetilde{G}^*|_{\mathcal{H}}=\widetilde{G}^{-1}|_{\mathcal{H}}=I$ we have, 
		\[ V_1^*|_{\mathcal{H}}=\widetilde{G}W_1^*|_{\mathcal{H}}=T_1^*,\, V_2^*|_{\mathcal{H}}=T_2^* \text{ and }  (\overline{Q}V_{1Q})^*|_{\mathcal{H}}=(\overline{Q}W\widetilde{G}^{-1})^*|_{\mathcal{H}}=\widetilde{G}(\overline{Q}W)^*|_{\mathcal{H}}=(QT_1)^*.\]    Now 
		\begin{align*}
			V_2\overline{Q}V_{1Q}(h_0,h_1,\ldots)&=\widetilde{G}W_2\overline{Q}W\widetilde{G}^{-1}(h_0,h_1,\ldots)\\
			&=\widetilde{G}W_2\overline{Q}W(h_0,G^{-1}(h_1,h_2,h_3,h_4),G^{-1}(h_5,h_6,h_7,h_8),\ldots)\\
			&=\widetilde{G}W_2(QT_1h_0,D_{QT_1}h_0,0,G^{-1}(h_1,h_2,h_3,h_4),G^{-1}(h_5,h_6,h_7,h_8),\ldots)\\
			&=\widetilde{G}(T_2QT_1h_0,D_{T_2}QT_1h_0,0,D_{QT_1}h_0,0,G^{-1}(h_1,h_2,h_3,h_4),\ldots)\\
			&=(T_2QT_1h_0,G(D_{T_2}QT_1h_0,0,D_{QT_1}h_0,0),(h_1,h_2,h_3,h_4),(h_5,h_6,h_7,h_8),\ldots).
		\end{align*} 
		Similarly, 
		\begin{align*}
			V_{1}V_2(h_0,h_1,\ldots )&=W_1\widetilde{G}^{-1}\widetilde{G}W_2(h_0,h_1,\ldots )\\
			&=W_1W_{2}(h_0,h_1,\ldots)\\
			&=W_1\left(T_2h_{0},D_{T_2}h_0,0, h_{1},\ldots \right)\\
			&=(T_1T_2h_0,D_{T_1}T_2h_0,0,D_{T_2}h_0,0, h_{1},\ldots).
		\end{align*}
		Since $T_1T_2=T_2QT_1$ and $G(D_{T_2}QT_1h_0,0,D_{QT_1}h_0,0)=(D_{T_1}T_2h_0,0,D_{T_2}h_0,0)$, we get $V_1V_2=V_2\overline{Q}V_{1Q}$ as required. 		
	\end{proof}
	The result corresponding to co-isometric extensions is as bellow.
	\begin{thm}
		Let $T_1,T_2\in \mathcal{B}(\mathcal{H})$ be any contractions and $Q\in \mathcal{B}(\mathcal{H})$ be such that $ T_2Q $ is a contraction and $T_1T_2=T_2QT_1$. Then there are co-isometric extensions $Z_1,Z_2$ of $T_1,T_2$, an extension $\overline{Q}$ of $Q$, a bounded operator $Z_{2Q}$ such that $Z_{2Q}\overline{Q}$ is a co-isometric extension of $T_2Q$ on Hilbert space $\mathcal{K}$ containing $\mathcal{H}$ and $Z_{1}Z_2=Z_{2Q}\overline{Q}Z_{1}$.
	\end{thm}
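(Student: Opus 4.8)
The plan is to obtain this result by dualizing Theorem \ref{andodilII}, exactly as Theorem \ref{AndocoisoextL} was deduced from Theorem \ref{andoisodilL}. Taking adjoints of the hypothesis $T_1T_2=T_2QT_1$ gives $T_2^*T_1^*=T_1^*Q^*T_2^*$, and since $T_2Q$ is a contraction so is its adjoint $Q^*T_2^*=(T_2Q)^*$. Thus the three operators $T_2^*,\,T_1^*,\,Q^*$ satisfy precisely the hypotheses of Theorem \ref{andodilII}: reading that theorem with its ``first'' operator taken to be $T_2^*$, its ``second'' operator taken to be $T_1^*$, and the role of $Q$ played by $Q^*$, the commutation relation $(T_2^*)(T_1^*)=(T_1^*)Q^*(T_2^*)$ holds and the product $Q^*T_2^*$ is a contraction.

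First I would apply Theorem \ref{andodilII} to this dualized data. It yields a Hilbert space $\mathcal{K}\supseteq\mathcal{H}$, isometric lifts $V_1$ of $T_2^*$ and $V_2$ of $T_1^*$, a lift $\overline{Q}^*$ of $Q^*$, and a bounded operator $V_Q$ such that $\overline{Q}^*V_Q$ is an isometric lift of $Q^*T_2^*=(T_2Q)^*$, together with the intertwining identity $V_1V_2=V_2\overline{Q}^*V_Q$.

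Finally I would pass back by taking adjoints. Set $Z_1=V_2^*$, $Z_2=V_1^*$, $\overline{Q}=(\overline{Q}^*)^*$ and $Z_{2Q}=V_Q^*$. Since $V_2,V_1$ are isometric lifts of $T_1^*,T_2^*$, their adjoints $Z_1,Z_2$ are co-isometric extensions of $T_1,T_2$; likewise $\overline{Q}$ extends $Q$, and $Z_{2Q}\overline{Q}=(\overline{Q}^*V_Q)^*$ is a co-isometric extension of $T_2Q$. Taking the adjoint of $V_1V_2=V_2\overline{Q}^*V_Q$ then gives $Z_1Z_2=Z_{2Q}\overline{Q}Z_1$, which is exactly the desired conclusion. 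There is no genuine analytic obstacle here, since the entire content is already carried by Theorem \ref{andodilII}; the only point requiring care is the bookkeeping of the adjoint correspondence, and in particular keeping straight that the two factors swap roles under adjunction, so that the isometric lift $V_1$ of $T_2^*$ becomes the co-isometric extension $Z_2$ of $T_2$ while $V_2$ (a lift of $T_1^*$) becomes the extension $Z_1$ of $T_1$.
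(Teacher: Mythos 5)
Your proposal is correct and is essentially identical to the paper's own proof: both deduce the result by taking adjoints in the hypothesis $T_1T_2=T_2QT_1$, applying Theorem \ref{andodilII} to $T_2^*,T_1^*,Q^*$ (noting $Q^*T_2^*=(T_2Q)^*$ is a contraction), and then taking adjoints of the resulting isometric lifts and of the intertwining identity to obtain the co-isometric extensions $Z_1,Z_2,Z_{2Q}\overline{Q}$ and the relation $Z_1Z_2=Z_{2Q}\overline{Q}Z_1$. The only difference is the labeling of the intermediate lifts, which does not affect the argument.
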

	\begin{proof}
		We have $T_1T_2=T_2QT_1$ which gives us $T_1^*Q^*T_2^*=T_2^*T_1^*$. Since $T_2Q$ is a contraction, so is $Q^*T_2^* $. Hence due to Theorem \ref{andodilII}, there is a Hilbert space $\mathcal{K}$, a lift $\overline{Q}^*$ of $Q^*$, bounded operators $Y_1,Y_2$, $Y_{2Q}$ on $\mathcal{K}$ such that $Y_1,Y_2$, $\overline{Q}^*Y_{2Q}$ are isometric lifts of $T_1^*,T_2^*$, $Q^*T_2^*$ and $Y_1\overline{Q}^*Y_{2Q}=Y_2Y_1 $. Taking $Z_1=Y_1^*$, $Z_2=Y_2^*$ and $Z_{2Q}=Y_{2Q}^* $ we obtain the desired result.  	
	\end{proof}
	\subsection{Ando-type dilations for $Q_R$-commuting contractions}
	Similarly, we can obtain the $Q_R$ commuting analogue of Ando's dilation theorem in the form of following two cases.\\
	
	\noindent\textbf{Case I.} If $Q,T_1,T_2\in \mathcal{B}(\mathcal{H})$ are any contractions such that $T_1T_2=T_2T_1Q$ then there are isometric lifts $\overline{Q},V_1,V_2$ of $Q,T_1,T_2$ respectively such that $V_1V_2=V_2V_1\overline{Q}$.
	
	\noindent\textbf{Case II.} If $Q,T_1,T_2\in \mathcal{B}(\mathcal{H})$ such that $T_1,T_2,T_1Q$ are contractions and $T_1T_2=T_2T_1Q$ then there are isometric liftings $V_1,V_2,V_{1Q}\overline{Q}$ of $T_1,T_2,T_1Q$ respectively, such that $V_1V_2=V_2V_{1Q}\overline{Q}$.	\\
	
	Notice that in Case II, $Q$ can be any bounded operator as long as it satisfies the condition that $TQ$ is a contraction. For instance one can consider $Q$ such that $\|Q\|\leq 1/\|T\|$. Hence in general, in Case II we can not expect to obtain an isometric lift of $Q$ unlike in Case II. In this section, we obtain the results mentioned in both Case I and Case II.
	\subsubsection{Ando-type dilation Case I.}
	We will prove the case I under the additional assumption that $\|T_1\|<1$.  
		\begin{thm}\label{QLcoisopurecoro}
		Let $T_1,T_2,Q\in \mathcal{B}(\mathcal{H})$ be any two contractions with $\|T_1\|<1$ and $T_1T_2=QT_2T_1$.Let $\mathcal{K}'$ be any Hilbert space containing $\mathcal{H}$ and $Z_2', \overline{Q}_0\in  \mathcal{B}(\mathcal{K}')$ be any co-isometric extensions of $T_2,Q$ respectively. Then there is a Hilbert space $\mathcal{K}$ containing $\mathcal{K}'$ and co-isometric extensions $Z_1,Z_2$ of $T_1,Z_2'$ such that $Z_1Z_2=\overline{Q}Z_2Z_1$, where for any complex number $q$ of modulus one, $\overline{Q}=\overline{Q}_0\oplus qI$ with respect to the decomposition $\mathcal{K}=\mathcal{K}'\oplus \mathcal{K}'^{\perp}$.  
		Moreover, $\mathcal{K}'$ is a reducing subspace for $Z_2$. 
	\end{thm}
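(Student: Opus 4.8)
The plan is to mirror the proof of Theorem \ref{QLcoro} in the co-isometric setting, using the strictness hypothesis $\|T_1\|<1$ to run an explicit Sch\"affer-type construction, just as in the strict-contraction $Q_M$ result proved above. First I would apply Theorem \ref{Q_Rcoiso} with $X=T_1$ and $T=T_2$: the relation $T_1T_2=QT_2T_1$ is exactly the hypothesis $XT=QTX$ of that theorem, and $\overline{Q}_0,Z_2'$ are the given co-isometric extensions of $Q,T_2$ on $\mathcal{K}'$, so it produces a norm preserving extension $Y_1\in\mathcal{B}(\mathcal{K}')$ of $T_1$ satisfying $Y_1Z_2'=\overline{Q}_0Z_2'Y_1$. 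Because $\|Y_1\|=\|T_1\|<1$, the contraction $Y_1$ is strict, so $D_{Y_1^*}=(I-Y_1Y_1^*)^{1/2}$ is bounded below and therefore invertible on $\mathcal{K}'$; it is precisely this invertibility that the hypothesis $\|T_1\|<1$ provides and that the explicit construction below requires.

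Next I would set $\mathcal{K}=\mathcal{K}'\oplus\mathcal{K}'\oplus\cdots$ and take $Z_1$ to be the Sch\"affer (minimal) co-isometric extension of $Y_1$, i.e.\ the co-isometry sending $(x_0,x_1,x_2,\dots)$ to $(Y_1x_0+D_{Y_1^*}x_1,x_2,x_3,\dots)$; since it fixes and restricts to $Y_1$ on the first copy, it is a co-isometric extension of $Y_1$ and hence of $T_1$. I would then put $\overline{Q}=\overline{Q}_0\oplus qI$ with respect to $\mathcal{K}=\mathcal{K}'\oplus\mathcal{K}'^{\perp}$, so that $\overline{Q}=\operatorname{diag}(\overline{Q}_0,qI,qI,\dots)$, and define the candidate for $Z_2$ as the block-diagonal operator
\[
Z_2=\operatorname{diag}(Z_2',A_1,A_2,A_3,\dots),\qquad A_k=q^{\,k-1}\,D_{Y_1^*}^{-1}\overline{Q}_0Z_2'D_{Y_1^*}.
\]
Being block diagonal, $Z_2$ has $\mathcal{K}'$ as a reducing subspace with $Z_2|_{\mathcal{K}'}=Z_2'$, so it is a co-isometric extension of $Z_2'$ as soon as it is shown to be a co-isometry, which also settles the final ``moreover'' assertion.

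The verification then splits into two parts. For the intertwining identity $Z_1Z_2=\overline{Q}Z_2Z_1$ I would compare the two sides entrywise on $(x_0,x_1,\dots)$: the zeroth components agree because of the base relation $Y_1Z_2'=\overline{Q}_0Z_2'Y_1$ together with $D_{Y_1^*}A_1=\overline{Q}_0Z_2'D_{Y_1^*}$, while the components of index $k\ge 1$ agree precisely because the factors $q^{\,k-1}$ encode the recursion $A_{k+1}=qA_k$ that matches the unimodular scalar of $\overline{Q}$. For the co-isometry of $Z_2$, since $\overline{Q}_0$ and $Z_2'$ are co-isometries and $|q|=1$, it suffices to check $A_1A_1^*=I$; expanding $A_1A_1^*=D_{Y_1^*}^{-1}\overline{Q}_0Z_2'D_{Y_1^*}^2Z_2'^*\overline{Q}_0^*D_{Y_1^*}^{-1}$, substituting $D_{Y_1^*}^2=I-Y_1Y_1^*$, and using $\overline{Q}_0Z_2'Y_1=Y_1Z_2'$ together with $Z_2'Z_2'^*=\overline{Q}_0\overline{Q}_0^*=I$, the middle factor collapses to $\overline{Q}_0Z_2'D_{Y_1^*}^2Z_2'^*\overline{Q}_0^*=I-Y_1Y_1^*=D_{Y_1^*}^2$, whence $A_1A_1^*=I$. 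The main obstacle is exactly this last co-isometry computation — arranging the conjugation by $D_{Y_1^*}$ to interact correctly with the intertwining relation — together with the parallel bookkeeping of the scalar $q$, so that the shift part of $Z_1$ and the scalar part of $\overline{Q}$ mesh through the recursion $A_{k+1}=qA_k$.
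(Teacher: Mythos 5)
Your proposal is correct and follows essentially the same route as the paper's proof: Theorem \ref{Q_Rcoiso} yields the intertwining extension $Y_1$ of $T_1$ with $\|Y_1\|=\|T_1\|<1$ (hence $D_{Y_1^*}$ invertible), $Z_1$ is the Sch\"affer co-isometric extension of $Y_1$, and $Z_2$ is block diagonal with blocks obtained by conjugating $\overline{Q}_0Z_2'$ by $D_{Y_1^*}$, the co-isometry verification being the identical computation. In fact your version is slightly sharper than the paper's: the paper takes $\overline{Q}=\overline{Q}_0\oplus I$ and constant diagonal blocks $D_{Y_1^*}^{-1}\overline{Q}_0Z_2'D_{Y_1^*}$, which makes the intertwining identity work only for $q=1$, whereas your factors $q^{\,k-1}$ (equivalently the recursion $A_{k+1}=qA_k$ meshing with the shift in $Z_1$) are exactly what is needed for the statement's claim about arbitrary unimodular $q$ to hold.
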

	\begin{proof}
		Let $\mathcal{K}'$, $\overline{Q}_0$ and $Z_2'$ be as in the hypothesis. Then by Theorem \ref{Q_Rcoiso} there is a norm preserving extension $Y_{1}$ of $T_1$ such that $Y_1Z_2'=\overline{Q}_0Z_2'Y_1$. Since $\|Y_1\|=\|T_1\|$ and $T_1$ is a strict contraction, so will be $Y_1$. Hence $D_{Y_1}$ will be invertible on $\mathcal{K}'$. Thus with respect to the decomposition $\mathcal{K}=\mathcal{K}'\oplus \mathcal{K}'\oplus \cdots$, let $\overline{Q}=\overline{Q}_0\oplus I$ and $Z_1,Z_2$ be defined as follows.  	\begingroup
		\allowdisplaybreaks
		\begin{gather*}
			Z_1=\begin{bmatrix}
				Y_1&D_{Y_1^*}&0&0&\cdots\\
				0&0&I_{\mathcal{K}'}&0&\cdots\\
				0&0&0&I_{\mathcal{K}'}&\cdots\\
				\vdots&\vdots&\vdots&\vdots&\ddots
			\end{bmatrix},\;		Z_2=\begin{bmatrix}
				Z_2'&0&0&\cdots \\
				0&D_{Y_1^*}^{-1}\overline{Q}_0Z_2'D_{Y_1^*}&0&\cdots \\
				0&0&D_{Y_1^*}^{-1}\overline{Q}_0Z_2'D_{Y_1^*}&\cdots \\
				\vdots&\vdots&\vdots&\ddots
			\end{bmatrix}.	\end{gather*}
		\endgroup
		
		Clearly, $Z_1,Z_2$ are bounded linear operators on $\mathcal{K}$ and by construction $Z_1$ is a pure co-isometry. Now we prove that $Z_2$ is a co-isometry. Since $Z_2'$ is a co-isometry, it suffices to show that, 
		\begin{equation}\label{T1}
			(D_{Y_1^*}^{-1}\overline{Q}_0Z_2'D_{Y_1^*})(D_{Y_1^*}^{-1}\overline{Q}_0Z_2'D_{Y_1^*})^*=I_{\mathcal{K}'}.
		\end{equation}
		Using $Y_1Z_2'=\overline{Q}_0Z_2'Y_1$ and $Z_2'Z_2'^*=I_{\mathcal{K}'}$ we have
		\begingroup
		\allowdisplaybreaks
		\begin{align*}
			&(D_{Y_1^*}^{-1}\overline{Q}_0Z_2'D_{Y_1^*})(D_{Y_1^*}^{-1}\overline{Q}_0Z_2'D_{Y_1^*})^*\\
			&=D_{Y_1^*}^{-1}\overline{Q}_0Z_2'D_{Y_1^*}^2Z_2'^*\overline{Q}_0^*D_{Y_1^*}^{-1}\\
			&=D_{Y_1^*}^{-1}\overline{Q}_0Z_2'(I-Y_1Y_1^*)Z_2'^*\overline{Q}_0^*D_{Y_1^*}^{-1}\\
			&=D_{Y_1^*}^{-1}(I_{\mathcal{K}'}-\overline{Q}_0Z_2'Y_1Y_1^*Z_2'^*\overline{Q}_0^*)D_{Y_1^*}^{-1}\\
			&=D_{Y_1^*}^{-1}(I_{\mathcal{K}'}-Y_1Z_2'Z_2'^*Y_1^*)D_{Y_1^*}^{-1}\\
			&=D_{Y_1^*}^{-1}(I_{\widetilde{\mathcal{K}}}-Y_1Y_1^*)D_{Y_1^*}^{-1}\\
			&=I_{\mathcal{K}'}.
		\end{align*}
		\endgroup
		Thus, $Z_1, Z_2$ are co-isometries. Further it is clear from the construction that $Z_1$ extends $Y_1$ which is an extension of $T_1$ and $Z_2$ extends $Z_2'$ which is an extension of $T_2$. Hence $Z_1,Z_2$ extend $T_1,T_2$ respectively. 
		By direct computation and using $Y_1Z_2'=\overline{Q}_0Z_2'Y_1$, we get $Z_1Z_{2}=\overline{Q}Z_2Z_1$.  
	\end{proof}
Hence clearly we can rewrite the above result as follows.
	\begin{thm}\label{QRisopurecoro}
	Let $Q,T_1,T_2\in \mathcal{B}(\mathcal{H})$ be any two contractions with $\|T_2\|<1$ and $T_1T_2=T_2T_1Q$. Let $\mathcal{K}'$ be any Hilbert space containing $\mathcal{H}$ and $W_1, \overline{Q}_0\in  \mathcal{B}(\mathcal{K}')$ be any isometric lifts of $T_1,Q$ respectively. Then there is a Hilbert space $\mathcal{K}$ containing $\mathcal{K}'$ and isometric lifts $V_1,V_2$ of $W_1,T_2$ such that $V_1V_2=V_2V_1\overline{Q}$, where for any complex number $q$ of modulus one, $\overline{Q}=\overline{Q}_0\oplus qI$ with respect to the decomposition $\mathcal{K}=\mathcal{K}'\oplus \mathcal{K}'^{\perp}$.  
	Moreover, $\mathcal{K}'$ is a reducing subspace for $V_1$. 
\end{thm}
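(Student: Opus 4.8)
The plan is to derive this statement from Theorem \ref{QLcoisopurecoro} by a direct adjoint argument, which is exactly what the phrase ``we can rewrite the above result'' signals. First I would take adjoints in the hypothesis $T_1T_2=T_2T_1Q$ to obtain $T_2^*T_1^*=Q^*T_1^*T_2^*$. Writing $S_1=T_2^*$, $S_2=T_1^*$ and $P=Q^*$, this reads $S_1S_2=PS_2S_1$, which is precisely the $Q_L$-type relation occurring in Theorem \ref{QLcoisopurecoro} with $S_1,S_2,P$ in the roles of $T_1,T_2,Q$ there. Moreover $\|S_1\|=\|T_2^*\|=\|T_2\|<1$, so the strict-contraction hypothesis of Theorem \ref{QLcoisopurecoro} is satisfied.

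Next I would convert the given lifts into the extensions demanded by Theorem \ref{QLcoisopurecoro}, using the standing fact (invoked repeatedly throughout the paper) that $V$ is an isometric lift of $A$ if and only if $V^*$ is a co-isometric extension of $A^*$. Thus $W_1^*$ is a co-isometric extension of $T_1^*=S_2$ and $\overline{Q}_0^*$ is a co-isometric extension of $Q^*=P$, and these play the roles of the data ``$Z_2'$'' and ``$\overline{Q}_0$'' in Theorem \ref{QLcoisopurecoro}. Applying that theorem over $\mathcal{K}'$ produces a Hilbert space $\mathcal{K}\supseteq\mathcal{K}'$, co-isometric extensions $Z_1,Z_2$ of $S_1=T_2^*$ and of $W_1^*$ respectively, and a relation $Z_1Z_2=RZ_2Z_1$ with $R=\overline{Q}_0^*\oplus q'I$ for any $q'$ of modulus one, together with the fact that $\mathcal{K}'$ reduces $Z_2$.

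Finally I would pass to adjoints in the conclusion. Setting $V_1=Z_2^*$ and $V_2=Z_1^*$, one gets that $V_1$ is an isometric lift of $(W_1^*)^*=W_1$, that $V_2$ is an isometric lift of $(T_2^*)^*=T_2$, and that $\mathcal{K}'$ remains reducing for $V_1=Z_2^*$. Adjoining $Z_1Z_2=RZ_2Z_1$ yields $Z_2^*Z_1^*=Z_1^*Z_2^*R^*$, i.e. $V_1V_2=V_2V_1\overline{Q}$ with $\overline{Q}=R^*=\overline{Q}_0\oplus\overline{q'}I$ relative to $\mathcal{K}=\mathcal{K}'\oplus\mathcal{K}'^{\perp}$; choosing $q'=\overline{q}$ realizes the prescribed form $\overline{Q}=\overline{Q}_0\oplus qI$ for any given $q$ of modulus one.

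The computations are entirely routine; the only delicate point, and hence the main obstacle, is the bookkeeping of the role-reversal forced by the adjoint. Because adjoining $T_1T_2=T_2T_1Q$ puts $T_2^*$ in the first slot and $T_1^*$ in the second, one must ensure that $Z_2$ (and therefore $V_1$) is constructed over $W_1^*$, so that the resulting $V_1$ is a lift of $W_1$ rather than of $T_2$, matching the statement precisely; and one must track the modulus-one scalar through the adjoint so that the final $\overline{Q}$ acquires the advertised diagonal form $\overline{Q}_0\oplus qI$.
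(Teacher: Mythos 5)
Your proposal is correct and is exactly the argument the paper intends: Theorem \ref{QRisopurecoro} is stated immediately after Theorem \ref{QLcoisopurecoro} with only the remark ``Hence clearly we can rewrite the above result as follows,'' i.e.\ it is obtained by the same adjoint/duality passage (isometric lift of $A$ $\Leftrightarrow$ co-isometric extension of $A^*$) that you carry out in detail. Your role-reversal bookkeeping, the check $\|T_2^*\|<1$, and the conjugation of the modulus-one scalar all match what the paper leaves implicit.
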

	\begin{thm}\label{Andodil2}
		Let $Q,T_1,T_2\in \mathcal{B}(\mathcal{H})$ be any contractions such that $\|T_2\|<1$, $T_1T_2=T_2T_1Q$. Then there is a Hilbert space $\mathcal{K}$ containing $\mathcal{H}$, an isometric lift $\overline{Q}\in \mathcal{B}(\mathcal{K})$ of $Q$ and isometric lifts $V_1,V_2$ of $T_1,T_2$ respectively, such that $V_1V_2=V_2V_1\overline{Q} $. 
	\end{thm}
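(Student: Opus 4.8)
The plan is to derive Theorem~\ref{Andodil2} as a direct consequence of Theorem~\ref{QRisopurecoro}, whose statement already delivers the $\overline{Q}$-commuting isometric lifts once one supplies isometric lifts of $T_1$ and $Q$ on a common space containing $\mathcal{H}$. All of the analytic substance---the explicit dilation together with the verification that the dilated operator is again an isometry under the strict-contraction hypothesis $\|T_2\|<1$---has already been carried out there. Thus what remains is merely to exhibit concrete isometric lifts $W_1$ of $T_1$ and $\overline{Q}_0$ of $Q$ on a single Hilbert space $\mathcal{K}'\supseteq\mathcal{H}$.

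First I would set $\mathcal{K}'=\mathcal{H}\oplus\mathcal{H}\oplus\cdots$ and, for $k=(h_0,h_1,h_2,\dots)\in\mathcal{K}'$, define the Sch\"affer-type operators
\[
W_1(k):=(T_1h_0,\,D_{T_1}h_0,\,h_1,\,h_2,\dots),\qquad
\overline{Q}_0(k):=(Qh_0,\,D_{Q}h_0,\,h_1,\,h_2,\dots).
\]
Since $\|T_1h_0\|^2+\|D_{T_1}h_0\|^2=\|h_0\|^2$ and $\|Qh_0\|^2+\|D_{Q}h_0\|^2=\|h_0\|^2$, both $W_1$ and $\overline{Q}_0$ are isometries; moreover $\mathcal{H}$ (embedded as the first summand) is invariant for $W_1^*$ and $\overline{Q}_0^*$ with $W_1^*|_{\mathcal{H}}=T_1^*$ and $\overline{Q}_0^*|_{\mathcal{H}}=Q^*$, so they are isometric lifts of $T_1$ and $Q$ respectively.

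Next I would invoke Theorem~\ref{QRisopurecoro} with these data, noting that its hypotheses $\|T_2\|<1$ and $T_1T_2=T_2T_1Q$ coincide with ours. This produces a Hilbert space $\mathcal{K}\supseteq\mathcal{K}'$, an isometric lift $\overline{Q}=\overline{Q}_0\oplus qI$ of $Q$ (for any unimodular $q$, which is again an isometry since $\overline{Q}_0$ is and $|q|=1$), and isometric lifts $V_1,V_2$ of $W_1$ and $T_2$ respectively satisfying $V_1V_2=V_2V_1\overline{Q}$. To conclude I would use the transitivity of the lifting relation: for $h\in\mathcal{H}\subseteq\mathcal{K}'\subseteq\mathcal{K}$ one has $V_1^*h=W_1^*h=T_1^*h\in\mathcal{H}$, so $V_1$ is in fact an isometric lift of $T_1$; meanwhile $V_2$ directly lifts $T_2$, and $\overline{Q}$ restricts to $Q$ on $\mathcal{H}$. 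Hence $\overline{Q},V_1,V_2$ are precisely the required isometric lifts.

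I do not anticipate any genuine obstacle beyond bookkeeping, exactly because the commuting relation $V_1V_2=V_2V_1\overline{Q}$ and the isometry of $V_2$ have already been secured inside Theorem~\ref{QRisopurecoro}. The only points demanding care are confirming the composition of lifts---so that $V_1$ lifts $T_1$ rather than merely $W_1$---and checking that $\overline{Q}=\overline{Q}_0\oplus qI$ still restricts to $Q$ on $\mathcal{H}$, which holds since $\mathcal{H}\subseteq\mathcal{K}'$.
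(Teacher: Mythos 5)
Your proposal is correct and takes essentially the same route as the paper. The paper proves Theorem \ref{Andodil2} by passing to the adjoint (co-isometric extension) statement, Theorem \ref{QLcoisopure}, constructing Sch\"affer-type co-isometric extensions of $Q$ and $T_2$ on $\mathcal{H}\oplus\mathcal{H}\oplus\cdots$ and invoking Theorem \ref{QLcoisopurecoro}; your argument is the same proof transposed by adjoints, since Theorem \ref{QRisopurecoro}, which you invoke, is precisely the paper's lift-picture restatement of Theorem \ref{QLcoisopurecoro}, and your Sch\"affer lifts of $T_1$ and $Q$ are the adjoints of the paper's extensions.
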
 
	As observed before, $(V, \mathcal{K})$ is an isometric lift of $(T,\mathcal{H})$, if and only if $V^*$ is a co-isometric extension of $T^*$. Hence we obtain a proof of Theorem \ref{Andodil2} by restating it as follows. 
	\begin{thm}\label{QLcoisopure}
		Let $Q,T_1,T_2\in \mathcal{B}(\mathcal{H})$ be any two contractions with $\|T_1\|<1$ and $T_1T_2=QT_2T_1$. Then there is a Hilbert space $\mathcal{K}$ containing $\mathcal{H}$, a co-isometric extension $\overline{Q}\in \mathcal{B}(\mathcal{K})$ and co-isometric extensions $Z_1,Z_2$ of $T_1,T_2$ respectively, such that $Z_{1}Z_{2}=\overline{Q}Z_{2}Z_1$.
	\end{thm}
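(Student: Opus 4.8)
The plan is to deduce this from the relative co-isometric result, Theorem \ref{QLcoisopurecoro}, in exactly the way Theorem \ref{Andodil1} was deduced from its relative counterpart Theorem \ref{QLcoro}. Theorem \ref{QLcoisopurecoro} already does all the hard work: under the strictness hypothesis $\|T_1\|<1$ it manufactures $\overline{Q}$-commuting co-isometries out of \emph{arbitrary} co-isometric extensions of $T_2$ and $Q$ on a common space. So the only thing left for me to supply is one concrete choice of such co-isometric extensions; everything else is bookkeeping.

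First I would set $\mathcal{K}'=\mathcal{H}\oplus\mathcal{H}\oplus\cdots$ and define, for $k=(h_0,h_1,h_2,\dots)\in\mathcal{K}'$,
\[
\overline{Q}_0(k)=(Qh_0+D_{Q^*}h_1,\,h_2,h_3,\dots),\qquad Z_2'(k)=(T_2h_0+D_{T_2^*}h_1,\,h_2,h_3,\dots),
\]
which are precisely the adjoints of the Schäffer isometric dilations of $Q^*$ and $T_2^*$ (the same shape as the operator $W_1$ used in the alternative proof of Theorem \ref{AndocoisoextL}). A routine computation using $QQ^*+D_{Q^*}^2=I$ and $T_2T_2^*+D_{T_2^*}^2=I$ shows that $\overline{Q}_0$ and $Z_2'$ are co-isometries, and the first copy of $\mathcal{H}$ is invariant for each with $\overline{Q}_0|_{\mathcal{H}}=Q$ and $Z_2'|_{\mathcal{H}}=T_2$; hence they are co-isometric extensions of $Q$ and $T_2$ respectively.

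Since $\|T_1\|<1$ and $T_1T_2=QT_2T_1$ hold by hypothesis, Theorem \ref{QLcoisopurecoro} applies to $T_1$ together with the chosen extensions $Z_2',\overline{Q}_0$ on $\mathcal{K}'$. It produces a Hilbert space $\mathcal{K}\supseteq\mathcal{K}'$, a scalar $q$ with $|q|=1$, the operator $\overline{Q}=\overline{Q}_0\oplus qI$ on $\mathcal{K}=\mathcal{K}'\oplus\mathcal{K}'^{\perp}$, and co-isometric extensions $Z_1$ of $T_1$ and $Z_2$ of $Z_2'$ with $Z_1Z_2=\overline{Q}Z_2Z_1$. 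Because the extension relation is transitive, $Z_2$ (extending $Z_2'$, which extends $T_2$) is a co-isometric extension of $T_2$, and $\overline{Q}=\overline{Q}_0\oplus qI$ is a co-isometric extension of $Q$ (as $\overline{Q}_0$ is one and $|q|=1$ keeps the summand a co-isometry). This delivers exactly the asserted $Z_1,Z_2,\overline{Q}$ and the identity $Z_1Z_2=\overline{Q}Z_2Z_1$.

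I do not anticipate any serious obstacle: the whole argument is the co-isometric mirror image, via the identity ``$V$ is an isometric lift of $T$ iff $V^*$ is a co-isometric extension of $T^*$'', of the proof of Theorem \ref{Andodil1}, and the single genuine verification is the elementary check that the explicitly written $\overline{Q}_0$ and $Z_2'$ are co-isometric extensions. The only point to keep an eye on is that the strictness $\|T_1\|<1$ is what lets Theorem \ref{QLcoisopurecoro} invert the defect operator in its construction, so it must be retained throughout. Finally, passing to adjoints in the present statement (taking $V_i=Z_i^*$ and replacing $\overline{Q}$ by $\overline{Q}^*$) yields Theorem \ref{Andodil2}, which is the isometric restatement announced before the theorem.
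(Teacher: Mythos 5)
Your proposal is correct and is essentially the paper's own proof: the paper likewise takes $\mathcal{K}'=\mathcal{H}\oplus\mathcal{H}\oplus\cdots$, defines exactly the same Sch\"affer-type co-isometric extensions $\overline{Q}_0(h_0,h_1,\dots)=(Qh_0+D_{Q^*}h_1,h_2,\dots)$ and $Z_2'(h_0,h_1,\dots)=(T_2h_0+D_{T_2^*}h_1,h_2,\dots)$, and then invokes Theorem \ref{QLcoisopurecoro}. Your added remarks (the co-isometry check, transitivity of extensions, and the role of $\|T_1\|<1$ in inverting the defect operator) are correct bookkeeping that the paper leaves implicit.
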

	\begin{proof}
		Let $\mathcal{K}'=\mathcal{H}\oplus \mathcal{H}\oplus \cdots$,
		\[\overline{Q}_0(h_0,h_1,\dots )=(Qh_0+D_{Q^*}h_1, h_2,h_3,\dots),\] and 
		\[Z_2'(h_0,h_1,\ldots ) = \left(T_2h_{0}+D_{T_2^*}h_1, h_{2},\dots \right).\]
		Hence $\overline{Q}_0$ is a co-isometric extension of $Q$ and $Z_2'$ is a co-isometric extension of $T_2$. Then by Theorem \ref{QLcoisopurecoro} we get the desired result.  
	\end{proof}
	In particular, in above theorem, if $Q$ is a co-isometry, then we can take $\overline{Q}_0=Q\oplus I$ on $\mathcal{H}\oplus \mathcal{H}^{\perp}$ and obtain the following result.
	\begin{thm}\label{QLcoiso}
		Let $T_1,T_2\in \mathcal{B}(\mathcal{H})$ be any two contractions with $\|T_1\|<1$ and $Q\in \mathcal{B}(\mathcal{H})$ be any co-isometry such that $T_1T_2=QT_2T_1$. Then there is a Hilbert space $\mathcal{K}$ containing $\mathcal{H}$, a co-isometric extension $\overline{Q}\in \mathcal{B}(\mathcal{K})$ of form $\overline{Q}=Q\oplus I$ with respect to the decomposition $\mathcal{H}\oplus \mathcal{H}^{\perp}$ of $\mathcal{K}$ and co-isometric extensions $Z_1,Z_2$ of $T_1,T_2$ respectively, such that $Z_{1}Z_{2}=\overline{Q}Z_{2}Z_1$.
	\end{thm}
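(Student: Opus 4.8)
The plan is to obtain Theorem \ref{QLcoiso} as an immediate specialization of Theorem \ref{QLcoisopurecoro}, exploiting the fact that a co-isometry admits the particularly simple co-isometric extension $Q \oplus I$, so that no Sch\"affer-type machinery is needed for $Q$ itself. First I would note that the hypotheses of Theorem \ref{QLcoisopurecoro} are met: $Q$ is a co-isometry and hence a contraction, $\|T_1\| < 1$, and $T_1 T_2 = Q T_2 T_1$. Next I would set $\mathcal{K}' = \mathcal{H} \oplus \mathcal{H} \oplus \cdots$ and, since $Q Q^* = I$, take $\overline{Q}_0 = Q \oplus I$ with respect to $\mathcal{K}' = \mathcal{H} \oplus (\mathcal{K}' \ominus \mathcal{H})$; that is, $\overline{Q}_0(h_0, h_1, h_2, \dots) = (Q h_0, h_1, h_2, \dots)$. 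A one-line check gives $\overline{Q}_0 \overline{Q}_0^* = Q Q^* \oplus I = I_{\mathcal{K}'}$ and $\overline{Q}_0^*|_{\mathcal{H}} = Q^*$, so $\overline{Q}_0$ is a genuine co-isometric extension of $Q$.

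For $T_2$ I would retain the standard Sch\"affer co-isometric extension $Z_2'(h_0, h_1, \dots) = (T_2 h_0 + D_{T_2^*} h_1, h_2, h_3, \dots)$ on the same space $\mathcal{K}'$. With these two co-isometric extensions in hand, I would invoke Theorem \ref{QLcoisopurecoro} with the free parameter chosen to be $q = 1$. This directly produces a Hilbert space $\mathcal{K} \supseteq \mathcal{K}'$, co-isometric extensions $Z_1, Z_2$ of $T_1$ and $Z_2'$ (hence of $T_1, T_2$, since $Z_2'$ extends $T_2$), and the operator $\overline{Q} = \overline{Q}_0 \oplus I$ on $\mathcal{K} = \mathcal{K}' \oplus \mathcal{K}'^{\perp}$ satisfying $Z_1 Z_2 = \overline{Q} Z_2 Z_1$, with $\mathcal{K}'$ reducing $Z_2$.

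The only point needing attention is the bookkeeping that identifies the resulting $\overline{Q}$ with the asserted form. Writing out the nested direct sums, $\overline{Q} = \overline{Q}_0 \oplus I_{\mathcal{K}'^{\perp}} = (Q \oplus I_{\mathcal{K}' \ominus \mathcal{H}}) \oplus I_{\mathcal{K}'^{\perp}} = Q \oplus I_{(\mathcal{K}' \ominus \mathcal{H}) \oplus \mathcal{K}'^{\perp}}$, and since $(\mathcal{K}' \ominus \mathcal{H}) \oplus \mathcal{K}'^{\perp} = \mathcal{K} \ominus \mathcal{H} = \mathcal{H}^{\perp}$, this is precisely $\overline{Q} = Q \oplus I$ with respect to the decomposition $\mathcal{H} \oplus \mathcal{H}^{\perp}$ of $\mathcal{K}$. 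There is no genuine analytic obstacle here — all the substantive work (invertibility of $D_{Y_1^*}$ coming from $\|T_1\|<1$, and the verification that $Z_2$ is a co-isometry) is already carried out inside Theorem \ref{QLcoisopurecoro} — so the proof is complete once this identification is recorded.
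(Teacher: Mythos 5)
Your proposal is correct and follows essentially the same route as the paper: the paper's own proof makes exactly your choices ($\overline{Q}_0 = Q\oplus I$ on $\mathcal{K}'=\mathcal{H}\oplus\mathcal{H}\oplus\cdots$, valid because $QQ^*=I$, together with the Sch\"affer co-isometric extension $Z_2'$ of $T_2$) and then carries out the construction underlying Theorem \ref{QLcoisopurecoro} (intertwining via Theorem \ref{Q_Rcoiso}, invertibility of $D_{Y_1^*}$ from $\|T_1\|<1$, and the block-matrix co-isometries), which you instead invoke as a black box with $q=1$. Your bookkeeping identifying $\overline{Q}_0\oplus I_{\mathcal{K}'^{\perp}}$ with $Q\oplus I_{\mathcal{H}^{\perp}}$ is exactly the point needed, so the argument is complete.
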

	\begin{proof}
		Let $\mathcal{K}'=\mathcal{H}\oplus \mathcal{H}\oplus \cdots$,
		\[\overline{Q}_0(h_0,h_1,\dots )=(Qh_0,h_1, h_2,h_3,\dots),\] and 
		\[Z_2'(h_0,h_1,\ldots ) = \left(T_2h_{0}+D_{T_2^*}h_1, h_{2},\dots \right).\]
		Since $ Q$ is a co-isometry, it follows that $\overline{Q}_0$ is a co-isometry. Hence it is a co-isometric extension of $Q$ and $Z_2'$ is a co-isometric extension of $T_2$. Thus, $\overline{Q}_0Z_2'$ is a co-isometric extension of $ QT_2$. Hence by Theorem \ref{Q_Rcoiso}, there is a norm preserving extension $Y_{1}$ of $T_1$ such that $Y_1Z_{2}'=\overline{Q}_0Z_2'Y_1$. Since $\|Y_1\|=\|T_1\|$ and $T_1$ is a strict contraction, so will be $Y_1$. Hence $D_{Y_1}$ will be invertible on $\mathcal{K}'$. Thus with respect to the decomposition $\mathcal{K}=\mathcal{K}'\oplus \mathcal{K}'\oplus \cdots$, let $\overline{Q}=\overline{Q}_0\oplus I$ and $Z_1,Z_2$ be defined as follows.  	\begingroup
		\allowdisplaybreaks
		\begin{gather*}
			Z_1=\begin{bmatrix}
				Y_1&D_{Y_1^*}&0&0&\cdots\\
				0&0&I_{\mathcal{K}'}&0&\cdots\\
				0&0&0&I_{\mathcal{K}'}&\cdots\\
				\vdots&\vdots&\vdots&\vdots&\ddots
			\end{bmatrix},\;		Z_2=\begin{bmatrix}
				W_2&0&0&\cdots \\
				0&D_{Y_1^*}^{-1}\overline{Q}_0Z_2'D_{Y_1^*}&0&\cdots \\
				0&0&D_{Y_1^*}^{-1}\overline{Q}_0Z_2'D_{Y_1^*}&\cdots \\
				\vdots&\vdots&\vdots&\ddots
			\end{bmatrix}.	\end{gather*}
		\endgroup
		Then $Z_1,Z_2$ are co-isometric extensions of $T_1,T_2$ such that  $Z_1Z_2=\overline{Q}Z_2Z_1$. (Proof of this part is similar to that of Theorem \ref{QLcoisopure}.)
	\end{proof}
	Rewriting it in terms of isometric liftings we obtain the following. 
	\begin{thm}\label{QRisoforgraph}
		Let $T_1,T_2\in \mathcal{B}(\mathcal{H})$ be any two contractions with $\|T_1\|<1$ and $Q\in \mathcal{B}(\mathcal{H})$ be any isometry such that $T_1T_2=T_2T_1Q$. Then there is a Hilbert space $\mathcal{K}$ containing $\mathcal{H}$, an isometric lift $\overline{Q}\in \mathcal{B}(\mathcal{K})$ of form $\overline{Q}=Q\oplus I$ with respect to the decomposition $\mathcal{H}\oplus \mathcal{H}^{\perp}=\mathcal{K}$ of $Q$ and isometric lifts $V_1,V_2$ of $T_1,T_2$ respectively, such that $V_{1}V_{2}=V_{2}V_1\overline{Q}$.
	\end{thm}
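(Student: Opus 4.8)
The plan is to obtain this statement as the adjoint reformulation of the co-isometric extension theorem \ref{QLcoiso}, exactly in the spirit of the ``isometric lift $\Leftrightarrow$ co-isometric extension'' dualities used repeatedly in this section. No new estimates or constructions should be needed: the entire content is a careful passage to adjoints, and the only result invoked is Theorem \ref{QLcoiso} together with the standard fact that $(V,\mathcal{K})$ is an isometric lift of $(T,\mathcal{H})$ if and only if $V^*$ is a co-isometric extension of $T^*$.

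First I would record the translations of the hypotheses. Taking adjoints in $T_1T_2=T_2T_1Q$ gives
\[
T_2^*T_1^*=Q^*T_1^*T_2^*,
\]
which is a $Q_L$-type relation $S_1S_2=PS_2S_1$ upon identifying $S_1=T_2^*$, $S_2=T_1^*$, $P=Q^*$. Since $Q$ is an isometry, $P=Q^*$ is a co-isometry, so the co-isometry hypothesis of Theorem \ref{QLcoiso} is met, and the strict-contraction hypothesis there is a condition on the first factor $S_1=T_2^*$. Applying Theorem \ref{QLcoiso} to the triple $(S_1,S_2,P)=(T_2^*,T_1^*,Q^*)$ produces a Hilbert space $\mathcal{K}\supseteq\mathcal{H}$, co-isometric extensions $Z_1,Z_2$ of $T_2^*,T_1^*$ respectively, and a co-isometric extension $\widetilde{Q}=Q^*\oplus I$ of $Q^*$ (with respect to $\mathcal{H}\oplus\mathcal{H}^{\perp}$) satisfying $Z_1Z_2=\widetilde{Q}Z_2Z_1$. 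Taking adjoints of this identity yields
\[
Z_2^*Z_1^*=Z_1^*Z_2^*\widetilde{Q}^*.
\]
Now I would set $V_1=Z_2^*$, $V_2=Z_1^*$ and $\overline{Q}=\widetilde{Q}^*=Q\oplus I$. By the duality above, $V_1$ and $V_2$ are isometric lifts of $T_1$ and $T_2$, and $\overline{Q}$ is an isometric lift of $Q$ of the required form $Q\oplus I$ (an isometry precisely because $Q$ is). Rewriting the displayed identity in these letters gives exactly $V_1V_2=V_2V_1\overline{Q}$, the desired conclusion.

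The only genuinely delicate point is the bookkeeping of the order reversal: under adjoints the two factors of the product $T_1T_2$ swap their position, so one must be attentive to \emph{which} operator carries the strict-contraction condition and to which side the operator $Q$ is transported. In particular, matching the adjoint relation to the form $S_1S_2=PS_2S_1$ forces the strictly contractive factor to sit in the first slot $S_1=T_2^*$, and this is the one place where the index labeling has to be tracked scrupulously when invoking Theorem \ref{QLcoiso}. Once this correspondence is pinned down, every remaining assertion---that $V_1,V_2,\overline{Q}$ are isometric lifts, that $\overline{Q}=Q\oplus I$, and the final intertwining identity $V_1V_2=V_2V_1\overline{Q}$---follows simply by conjugating the conclusion of Theorem \ref{QLcoiso}.
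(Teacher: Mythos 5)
Your route is precisely the paper's route: the paper offers no proof of Theorem \ref{QRisoforgraph} beyond the phrase ``Rewriting it in terms of isometric liftings,'' i.e.\ it is meant to be the adjoint restatement of Theorem \ref{QLcoiso}. Your passage to adjoints, the identification $(S_1,S_2,P)=(T_2^*,T_1^*,Q^*)$ with $S_1S_2=PS_2S_1$, and the back-translation $V_1=Z_2^*$, $V_2=Z_1^*$, $\overline{Q}=\widetilde{Q}^*=Q\oplus I$ are all carried out correctly, and the fact that isometric lifts correspond to co-isometric extensions under adjoints is exactly the duality the paper uses.

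There is, however, a gap that your own bookkeeping exposes and then steps over. As you correctly observe, the strict-contraction hypothesis of Theorem \ref{QLcoiso} attaches to the \emph{first} slot, namely $S_1=T_2^*$, so your invocation of that theorem requires $\|T_2^*\|=\|T_2\|<1$. But the statement you are proving supplies only $\|T_1\|<1$, so the hypothesis you need is simply not available, and nothing in your argument (nor anywhere in the paper's toolkit) converts strict contractivity of $T_1$ into strict contractivity of $T_2$. As a proof of the printed statement, the key application of Theorem \ref{QLcoiso} is therefore unjustified. What your computation actually reveals is that the printed hypothesis is an indexing error in the paper itself: the correct dual of Theorem \ref{QLcoiso} demands $\|T_2\|<1$, and this is consistent with every other $Q_R$-type result in the paper (Theorem \ref{Andodil2}, Theorem \ref{QRisopurecoro}, and Lemma \ref{genQRlemma} all place the strict-contraction condition on $T_2$ when the relation is $T_1T_2=T_2T_1Q$). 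With the hypothesis corrected to $\|T_2\|<1$, your argument is complete and is exactly the intended proof; as submitted, you should have flagged that $\|T_1\|<1$ does not permit applying Theorem \ref{QLcoiso} to the triple $(T_2^*,T_1^*,Q^*)$, rather than asserting that the application goes through.
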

	\subsubsection{Ando-type dilation Case II.}
	Now we look into case II.
	\begin{thm}\label{andoisodilR}
		Let $T_1,T_2\in \mathcal{B}(\mathcal{H})$ be any contractions and $Q\in \mathcal{B}(\mathcal{H})$ be such that $ T_1Q $ is a contraction and $T_1T_2=T_2T_1Q$. Then there is a Hilbert space $\mathcal{K}$ containing $\mathcal{H}$, a lift $\overline{Q}$ of $Q$, a lift $V_{1Q}$ of $T_1$ and isometric lifts $V_1,V_2$ of $T_1,T_2$ respectively, such that $V_{1Q}\overline{Q}$ is an isometric lift of $T_1Q$ and $V_{1}V_{2}=V_{2}V_{1Q}\overline{Q}$.
	\end{thm}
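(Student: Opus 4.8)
The plan is to reproduce, for the relation $T_1T_2=T_2T_1Q$, the explicit Ando construction used in Theorem~\ref{andodilII}, and then to supply the factorization that places $\overline{Q}$ on the right. First I would set $\mathcal{K}=\bigoplus_{0}^{\infty}\mathcal{H}$ and introduce the forward-shift isometries
\[
W_1(h_0,h_1,\ldots)=(T_1h_0,D_{T_1}h_0,0,h_1,\ldots),\quad W_2(h_0,h_1,\ldots)=(T_2h_0,D_{T_2}h_0,0,h_1,\ldots),
\]
\[
W(h_0,h_1,\ldots)=(T_1Qh_0,D_{T_1Q}h_0,0,h_1,\ldots),
\]
where $W$ is the canonical isometric lift of the contraction $T_1Q$. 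The identities $\|T_ih_0\|^2+\|D_{T_i}h_0\|^2=\|h_0\|^2$ and $\|T_1Qh_0\|^2+\|D_{T_1Q}h_0\|^2=\|h_0\|^2$ make $W_1,W_2,W$ isometries whose adjoints leave $\mathcal{H}$ invariant with restrictions $T_1^*,T_2^*,(T_1Q)^*$.

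Next I would identify $\mathcal{K}$ with $\mathcal{H}\oplus\mathcal{B}\oplus\mathcal{B}\oplus\cdots$ for $\mathcal{B}=\mathcal{H}\oplus\mathcal{H}\oplus\mathcal{H}\oplus\mathcal{H}$ and, exactly as in Theorem~\ref{andodilII}, introduce
\[
\mathcal{L}_1=\{(D_{T_2}T_1Qh,0,D_{T_1Q}h,0):h\in\mathcal{H}\},\quad \mathcal{L}_2=\{(D_{T_1}T_2h,0,D_{T_2}h,0):h\in\mathcal{H}\}
\]
and the map $G(D_{T_2}T_1Qh,0,D_{T_1Q}h,0)=(D_{T_1}T_2h,0,D_{T_2}h,0)$. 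Since
\[
\|D_{T_2}T_1Qh\|^2+\|D_{T_1Q}h\|^2=\|h\|^2-\|T_2T_1Qh\|^2,\qquad \|D_{T_1}T_2h\|^2+\|D_{T_2}h\|^2=\|h\|^2-\|T_1T_2h\|^2,
\]
the hypothesis $T_1T_2=T_2T_1Q$ forces these two quantities to coincide, so $G$ is an isometry of $\mathcal{L}_1$ onto $\mathcal{L}_2$; the dimension count for $\mathcal{B}\ominus\overline{\mathcal{L}_i}$ carries over verbatim and extends $G$ to a unitary on $\mathcal{B}$, hence to a blockwise unitary $\widetilde{G}$ on $\mathcal{K}$ with $\widetilde{G}|_{\mathcal{H}}=I$. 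Putting $V_1=W_1\widetilde{G}^{-1}$, $V_2=\widetilde{G}W_2$ and $N=W\widetilde{G}^{-1}$, I obtain isometric lifts of $T_1$, $T_2$ and $T_1Q$, and comparing the two sides coordinatewise (using the defining relation of $G$ and $T_1T_2=T_2T_1Q$) gives $V_1V_2=W_1W_2=\widetilde{G}W_2W\widetilde{G}^{-1}=V_2N$.

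The step I expect to need the most care is the final factorization $N=V_{1Q}\overline{Q}$ with $V_{1Q}$ a lift of $T_1$ and $\overline{Q}$ a lift of $Q$, placing the distinguished isometry to the \emph{right}. In Theorems~\ref{andoisodilL} and \ref{andodilII} the operator $\overline{Q}=Q\oplus I$ sits on the left and acts on the first coordinate of the output, combining cleanly with the defect $D_{QT_i}$; here $\overline{Q}$ must act on the input, and the naive choice $\overline{Q}=Q\oplus I$ fails, since $D_{T_1}Qh\neq D_{T_1Q}h$ unless $Q$ is an isometry. Instead I would write $N$ in its lower-triangular form $N=\bigl(\begin{smallmatrix}T_1Q&0\\ \beta&\gamma\end{smallmatrix}\bigr)$ relative to $\mathcal{K}=\mathcal{H}\oplus\mathcal{H}^{\perp}$ and factor
\[
N=\begin{pmatrix}T_1&0\\ 0&I\end{pmatrix}\begin{pmatrix}Q&0\\ \beta&\gamma\end{pmatrix}=:V_{1Q}\,\overline{Q},
\]
where $V_{1Q}=T_1\oplus I$ is a lift of $T_1$ and $\overline{Q}$ is lower-triangular with $(1,1)$-entry $Q$, hence a lift of $Q$. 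As $N$ is already an isometric lift of $T_1Q$, this yields $V_{1Q}\overline{Q}=N$ and therefore $V_1V_2=V_2V_{1Q}\overline{Q}$, as required. The corresponding co-isometric-extension statement then follows by taking adjoints, just as Theorem~\ref{AndocoisoextL} is obtained from Theorem~\ref{andoisodilL}.
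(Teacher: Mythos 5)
Your proposal is correct, and its Ando core coincides with the paper's up to a harmless mirror symmetry: you set $V_1=W_1\widetilde{G}^{-1}$, $V_2=\widetilde{G}W_2$ with $G$ mapping the $T_1Q$-defect space onto the plain defect space, while the paper sets $V_1=\widetilde{G}W_1$, $V_2=W_2\widetilde{G}^{-1}$ with $G$ running in the opposite direction; both choices make the same coordinatewise computation close up via $T_1T_2=T_2T_1Q$. The genuine difference is in how the factorization $V_{1Q}\overline{Q}$ is produced, which is indeed the crux you identified. The paper builds the factorization into the construction from the outset: it takes $W(h_0,h_1,\ldots)=(T_1h_0,h_1,0,h_2,h_3,\ldots)$, a \emph{non-isometric} lift of $T_1$, and $\overline{Q}(h_0,h_1,\ldots)=(Qh_0,D_{T_1Q}h_0,h_1,h_2,\ldots)$, a lift of $Q$ that carries the defect of $T_1Q$, so that the product $W\overline{Q}$ is exactly the canonical isometric lift $(h_0,h_1,\ldots)\mapsto(T_1Qh_0,D_{T_1Q}h_0,0,h_1,\ldots)$ of $T_1Q$; it then puts $V_{1Q}=\widetilde{G}W$ and keeps this explicit $\overline{Q}$. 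You instead take $W$ to be the canonical isometric lift of $T_1Q$ itself, obtain the isometric lift $N=W\widetilde{G}^{-1}$ of $T_1Q$ satisfying $V_1V_2=V_2N$, and only afterwards factor $N$ through its lower-triangular block form,
\[
N=\begin{pmatrix}T_1Q&0\\ \beta&\gamma\end{pmatrix}
=\begin{pmatrix}T_1&0\\ 0&I\end{pmatrix}\begin{pmatrix}Q&0\\ \beta&\gamma\end{pmatrix},
\]
exploiting that left multiplication by $T_1\oplus I$ alters only the first row, and that a lower-triangular operator with $(1,1)$-entry $Q$ (respectively $T_1$) is automatically a lift of $Q$ (respectively $T_1$). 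Both arguments are complete and correct. The paper's route yields fully explicit formulas for both factors; yours isolates a small reusable observation — any lift of a product $T_1Q$ factors as the trivial lift $T_1\oplus I$ of $T_1$ times a lift of $Q$ — which makes the proof modular: the Ando machinery only needs to deliver \emph{some} isometric lift of $T_1Q$ correctly intertwined with $V_1,V_2$, and the required factorization then comes for free.
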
 
	\begin{proof}
		Define $\mathcal{K}=\oplus_{0}^{\infty} \mathcal{H}$, 
		\[W_{1}(h_0,h_1,\ldots ) = \left(T_1h_{0},D_{T_1}h_0,0, h_{1},\dots \right),\]
		\[W_{2}(h_0,h_1,\ldots ) = \left(T_2h_{0},D_{T_2}h_0,0, h_{1},\dots \right),\]
		\[W(h_0,h_1,\ldots ) = \left(T_1h_{0},h_1,0, h_{2},h_3,\dots \right).\] Let \[\overline{Q}(h_0,h_1,\dots )=(Qh_0,D_{T_1Q}h_0,h_1,h_2,\dots ).\] Therefore, 
		\[W\overline{Q}(h_0,h_1,\ldots ) = (T_1Qh_{0},D_{T_1Q}h_0,0, h_{1},h_2,\dots ) .\]  Then $W_{1}, W_{2}, W\overline{Q}$ are isometries as $\| T_ih_0 \|^2+\| D_{T_i}h_0 \|^2=\|h_0\|^2$ for $i=1,2$ and \[ \| T_1Qh_0 \|^2+\| D_{T_1Q}h_0 \|^2=\|h_0\|^2. \] Moreover, it is clear from the definition that $\mathcal{H}$ is invariant for $W_1^*,W_2^* ,(W\overline{Q})^*$ and for $i=1,2$, 
		\begin{equation}\label{invarianceR}
			W_i^*|_{\mathcal{H}}=T_i^*  \text{ and } (W\overline{Q})^*|_{\mathcal{H}}=(T_1Q)^* .
		\end{equation}  
		Now let $$\mathcal{B}=\mathcal{H}\oplus \mathcal{H}\oplus \mathcal{H}\oplus \mathcal{H} .$$
		Hence $\mathcal{K}$ can be identified with $\mathcal{H}\oplus \mathcal{B}\oplus \mathcal{B}\oplus \cdots$ via the identification \[ (h_0,h_1,\ldots )=(h_0, (h_1,h_2,h_3,h_4),(h_5,h_6,h_7,h_8),\ldots ). \] 
		Let $$\mathcal{L}_1= \{(D_{T_1}T_2h,0,D_{T_2}h,0)\in \mathcal{B}: h\in \mathcal{H}\} $$ and $$\mathcal{L}_2= \{(D_{T_2}T_1Qh,0,D_{T_1Q}h,0)\in \mathcal{B}: h\in \mathcal{H}\} .$$ Let $\mathcal{M}_i=\overline{\mathcal{L}}_i$ and $\mathcal{M}_i^{\perp}=\mathcal{B}\ominus\mathcal{M}_i$ for $i=1,2$. Define an operator $G:\mathcal{L}_1 \to \mathcal{L}_2$ as \[ G(D_{T_1}T_2h,0,D_{T_2}h,0)=(D_{T_2}T_1Qh,0,D_{T_1Q}h,0).\]
		A simple computation shows that \begin{align*}
			&\|D_{T_2}T_1Qh \|^2+\| D_{T_1Q}h \|^2\\
			=&\langle D_{T_2}T_1Qh, D_{T_2}T_1Qh \rangle + \langle D_{T_1Q}h,D_{T_1Q}h \rangle\\
			=&\langle (I-T_2^*T_2)T_1Qh, T_1Qh \rangle +\langle (I-(T_1Q)^*T_1Q)h, h \rangle\\
			=& \|T_1Qh\|^2-\|T_2T_1Qh\|^2+\|h\|^2-\|T_1Qh\|^2\\
			=&\| h \|^2-\|T_2T_1Qh \|^2. 
		\end{align*}
		Similarly, 
		\[ \|D_{T_1}T_2h \|^2+\| D_{T_2}h \|^2=\| h \|^2-\|T_1T_2h \|^2 \] Since $T_1T_2=T_2T_1Q$ the above computation proves that
		\[\|D_{T_2}T_1Qh \|^2+\| D_{T_1Q}h \|^2= \|D_{T_1}T_2h \|^2+\| D_{T_2}h \|^2.  \] Hence $G$ defines an isometry from $\mathcal{L}_1$ onto $\mathcal{L}_2$ and extends continuously as an isometry from $\mathcal{M}_1$ onto $\mathcal{M}_2$. It remains to show that $G$ can be extended to an isometry from the whole space $\mathcal{B}$ onto itself. For this purpose,  it suffices to prove that $\dim \mathcal{M}_1^{\perp}=\dim \mathcal{M}_2^{\perp}. $ This is clearly true when $\mathcal{H}$ and hence $\mathcal{B}$ are finite dimensional. Now  suppose $\mathcal{H}$ is infinite dimensional. Then we have, \[ \dim(\mathcal{H})=\dim\mathcal{B}\geq \dim\mathcal{M}_i^{\perp}\geq \dim \mathcal{H}. \] Here first inequality follows as $\mathcal{M}_i^{\perp}\subset \mathcal{B}$ and second inequality follows as \[\{ (0,h,0,0)\in \mathcal{B}:h\in \mathcal{H}\}\subset \mathcal{M}_i^{\perp}.\] Hence we can extend the map $G$ isometrically to a map from $\mathcal{B}$ onto $\mathcal{B}$. Therefore $G$ is a unitary. Now define $\widetilde{G}$ on $\mathcal{K}$ as \[ \widetilde{G}(h_0,h_1,\ldots )=(h_0,G(h_1,h_2,h_3,h_4),G(h_5,h_6,h_7,h_8),\ldots). \] Then clearly $\widetilde{G}$ is a unitary with inverse given by \[ \widetilde{G}^{-1}(h_0,h_1,\ldots )=(h_0,G^{-1}(h_1,h_2,h_3,h_4),G^{-1}(h_5,h_6,h_7,h_8),\ldots) .\] Further, let $V_{1}=\widetilde{G}W_1$, $V_{1Q}=\widetilde{G}W$ and $V_2=W_2\widetilde{G}^{-1}$. Since $\mathcal{H}$ is reducing for $\widetilde{G}$ and both $\widetilde{G}$ and $\widetilde{G}^*=\widetilde{G}^{-1}$ are identity on $\mathcal{H}$, from \eqref{invarianceR} we get, \[ V_2^*|_{\mathcal{H}}=\widetilde{G}W_2^*|_{\mathcal{H}}=T_2^*,\; V_1^*|_{\mathcal{H}}=W_1^*\widetilde{G}^*|_{\mathcal{H}}=T_1^*\]  and \[(V_{1Q}\overline{Q})^*|_{\mathcal{H}}=(\widetilde{G}W\overline{Q})^*|_{\mathcal{H}}=(W\overline{Q})^*\widetilde{G}^*|_{\mathcal{H}}=(T_1Q)^* .\]  Clearly $V_1$, $V_2$ and $V_{1Q}\overline{Q}$ are isometries. Therefore, $V_1$, $V_2$ and $V_{1Q}\overline{Q}$ are isometric lifts of $T_1$, $T_2$ and $T_1Q$ respectively. Now 
		\begin{align*}
			V_2V_{1Q}\overline{Q}(h_0,h_1,\dots)&=W_2\widetilde{G}^{-1}\widetilde{G}W\overline{Q}(h_0,h_1,\dots)\\
			&=W_2W\overline{Q}(h_0,h_1,\dots)\\
			&=W_2(T_1Qh_{0},D_{T_1Q}h_0,0, h_{1},h_2,\dots )\\
			&=(T_2T_1Qh_0,D_{T_2}T_1Qh_0,0,D_{T_1Q}h_0,0,h_1,h_2\dots).
		\end{align*} 
		Similarly, 
		\begin{align*}
			&V_1V_2(h_0,h_1,\dots )\\
			=&\widetilde{G}W_1W_2\widetilde{G}^{-1}(h_0,h_1,\dots )\\
			=&\widetilde{G}W_1W_2(h_0,G^{-1}(h_1,h_2,h_3,h_4), G^{-1}(h_5,h_6,h_7,h_8),\dots)\\
			=&\widetilde{G}W_1(T_2h_{0},D_{T_2}h_0,0, G^{-1}(h_1,h_2,h_3,h_4), G^{-1}(h_5,h_6,h_7,h_8),\dots )\\
			=&\widetilde{G}(T_1T_2h_0,D_{T_1}T_2h_0,0,D_{T_2}h_0,0,G^{-1}(h_1,h_2,h_3,h_4), G^{-1}(h_5,h_6,h_7,h_8),\dots)\\
			=&(T_1T_2h_0,G(D_{T_1}T_2h_0,0,D_{T_2}h_0,0),(h_1,h_2,h_3,h_4),(h_5,h_6,h_7,h_8),\dots).
		\end{align*}
		Since $T_1T_2=T_2T_1Q$ and \[G(D_{T_1}T_2h_0,0,D_{T_2}h_0,0)=(D_{T_2}T_1Qh_0,0,D_{T_1Q}h_0,0),\] we get $V_1V_{2}=V_{2}V_{1Q}\overline{Q}$ as required. 		
	\end{proof}
	Correspondingly, we obtain $Q$-commuting co-isometric extension result.
	\begin{thm}\label{AndocoisoextR}
		Let $T_1,T_2\in \mathcal{B}(\mathcal{H})$ be any contractions and $Q\in \mathcal{B}(\mathcal{H})$ be such that $ QT_2 $ is a contraction and $T_1T_2=QT_2T_1$. Then there is a Hilbert space $\mathcal{K}$ containing $\mathcal{H}$, an extension $\overline{Q}$ of $Q$, co-isometric extensions $Z_1,Z_2$ of $T_1,T_2$ and an extension $Z_{2Q}$ of $T_2$ such that $\overline{Q}Z_{2Q}$ is a co-isometric extension of $QT_2$ on Hilbert space $\mathcal{K}$ and $Z_{1}Z_2=\overline{Q}Z_{2Q}Z_1$.
	\end{thm}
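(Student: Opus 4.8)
The plan is to obtain this statement by dualizing Theorem \ref{andoisodilR}, following the same adjoint template used to pass from the isometric-lift results to their co-isometric-extension companions elsewhere in this section. The only structural fact needed is the standing equivalence that $(V,\mathcal{K})$ is an isometric lift of a contraction $(T,\mathcal{H})$ precisely when $V^*$ is a co-isometric extension of $T^*$.

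First I would adjoint the hypotheses. Since $T_1T_2=QT_2T_1$, taking adjoints gives $T_2^*T_1^*=T_1^*T_2^*Q^*$, which is the $Q_R$-commuting relation $AB=BAP$ of Theorem \ref{andoisodilR} for the triple $(A,B,P):=(T_2^*,T_1^*,Q^*)$, with $A=T_2^*$ in the role of the first contraction. Furthermore $QT_2$ being a contraction is equivalent to $(QT_2)^*=T_2^*Q^*=AP$ being a contraction, which is exactly the hypothesis ``$T_1Q$ is a contraction'' of that theorem after the substitution. Hence all assumptions of Theorem \ref{andoisodilR} are satisfied by $(T_2^*,T_1^*,Q^*)$.

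Applying Theorem \ref{andoisodilR} then produces a Hilbert space $\mathcal{K}\supseteq\mathcal{H}$, a lift $\overline{Q}^*$ of $Q^*$, and bounded operators $Y_1,Y_2,Y_{2Q}$ on $\mathcal{K}$ such that $Y_1,Y_2$ are isometric lifts of $T_2^*,T_1^*$, the product $Y_{2Q}\overline{Q}^*$ is an isometric lift of $(QT_2)^*$, and $Y_1Y_2=Y_2Y_{2Q}\overline{Q}^*$. Setting $Z_1=Y_2^*$, $Z_2=Y_1^*$, $Z_{2Q}=Y_{2Q}^*$ and $\overline{Q}=(\overline{Q}^*)^*$, the lift/extension duality makes $Z_1,Z_2$ co-isometric extensions of $T_1,T_2$, makes $Z_{2Q}$ an extension of $T_2$ and $\overline{Q}$ an extension of $Q$, and makes $\overline{Q}Z_{2Q}=(Y_{2Q}\overline{Q}^*)^*$ a co-isometric extension of $QT_2$. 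Taking the adjoint of the intertwining relation $Y_1Y_2=Y_2Y_{2Q}\overline{Q}^*$ yields $Z_1Z_2=\overline{Q}Z_{2Q}Z_1$, completing the argument.

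Because the result is a pure dualization, there is no genuine analytic difficulty; the sole point requiring attention is the bookkeeping --- assigning $T_2^*$ (rather than $T_1^*$) to the first slot of Theorem \ref{andoisodilR} so that the $Q_R$-relation and the contraction condition on $QT_2$ line up correctly. Once that identification is fixed, both the intertwining identity and the claim that $\overline{Q}Z_{2Q}$ is a co-isometric extension of $QT_2$ drop out immediately upon taking adjoints.
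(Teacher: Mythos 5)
Your proposal is correct and follows exactly the paper's own argument: take adjoints of $T_1T_2=QT_2T_1$ to get $T_2^*T_1^*=T_1^*T_2^*Q^*$, apply Theorem \ref{andoisodilR} to the triple $(T_2^*,T_1^*,Q^*)$, and dualize the resulting isometric lifts back to co-isometric extensions. The only difference from the paper's proof is a harmless relabeling of which operator is called $Y_1$ and which $Y_2$.
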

	\begin{proof}
		We have $T_1T_2=QT_2T_1$ which gives us $T_2^*T_1^*=T_1^*T_2^*Q^*$. Since $QT_2$ is a contraction, so is $T_2^*Q^* $. Hence due to Theorem \ref{andoisodilR}, there is a Hilbert space $\mathcal{K}$, a lift $\overline{Q}^*$ of $Q^*$, bounded operators $Y_1,Y_2$, $Y_{2Q}$ on $\mathcal{K}$ such that $Y_1,Y_2$, $Y_{2Q}\overline{Q}^*$ are isometric lifts of $T_1^*,T_2^*$, $T_2^*Q^*$ and $Y_2Y_1=Y_1Y_{2Q}\overline{Q}^* $. Taking $Z_1=Y_1^*$, $Z_2=Y_2^*$ and $Z_{2Q}=Y_{2Q}^* $ we obtain the desired result.  
	\end{proof}
	
	\vspace{0.4cm}
	
	\section{The $Q$-intertwining liftings and $Q$-intertwining extensions}\label{QSectioninter}
	
	\vspace{0.4cm}
	
\noindent In this Section, we apply similar techniques as in Section \ref{QSection1} to obtain a few lifting results for $Q$-intertwining operators. These results are generalizations of a few results of Section \ref{QSection1}.
  \begin{thm}\label{QLTinterlift}
		Let $Q,T_1,T_2,X\in \mathcal{B}(\mathcal{H})$ be such that $T_1,T_2$ and $QT_2$ are contractions and $XT_1=QT_2X$. Let $(V_1,\mathcal{K}_1)$ and $(V_2,\mathcal{K}_2)$ be any isometric lifts of $T_1$ and $T_2$ respectively. Let $\overline{Q}$ on $\mathcal{K}_2$ be any lift of $Q$ such that $\overline{Q}V_2$ is a contraction. Then there is a norm preserving lift $Y:\mathcal{K}_1\to \mathcal{K}_2$ of $X$ such that $YV_1=\overline{Q}V_2Y$. 
	\end{thm}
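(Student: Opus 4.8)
The plan is to recognize Theorem~\ref{QLTinterlift} as the two-space analogue of Theorem~\ref{QLT commutant lift} and to deduce it directly from the contractive intertwining lifting theorem, Theorem~\ref{contractive lift}. The hypothesis $XT_1=QT_2X$ reads as the intertwining relation $(QT_2)X=XT_1$, so I would set $T=T_1$ and $T'=QT_2$ as the two contractions on $\mathcal{H}$ (both contractions by assumption) and regard $A=X$ as the intertwiner. Exactly as in the proof of Theorem~\ref{QLT commutant lift}, I first reduce to the case $\|X\|\le 1$, so that $X$ is a contraction and all the hypotheses of Theorem~\ref{contractive lift} on the triple $T,T',A$ are met.

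The key step --- and the only place where the assumptions on $\overline{Q}$ and $V_2$ really enter --- is to verify that $W:=\overline{Q}V_2$ is a \emph{contractive lift} of $QT_2$ on $\mathcal{K}_2$. It is a contraction by hypothesis. To see it is a lift, note that $W^*=V_2^*\,\overline{Q}^*$; for $h\in\mathcal{H}$ the hypothesis that $\overline{Q}$ lifts $Q$ gives $\overline{Q}^*h=Q^*h\in\mathcal{H}$, and then the hypothesis that $V_2$ lifts $T_2$ gives $V_2^*(Q^*h)=T_2^*Q^*h=(QT_2)^*h\in\mathcal{H}$. Hence $\mathcal{H}$ is invariant for $W^*$ with $W^*|_{\mathcal{H}}=(QT_2)^*$, so $W$ is indeed a lift of $QT_2$.

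With $V=V_1$ an isometric lift of $T=T_1$ and $W=\overline{Q}V_2$ a contractive lift of $T'=QT_2$, Theorem~\ref{contractive lift} produces a contractive lift $Y:\mathcal{K}_1\to\mathcal{K}_2$ of $X$ satisfying $WY=YV$, that is $\overline{Q}V_2Y=YV_1$, which is precisely the asserted relation. I would then promote the contractive lift to a norm-preserving one exactly as in Theorem~\ref{QLT commutant lift}: since $Y^*|_{\mathcal{H}}=X^*$ one has $\|Y\|\ge\|X\|$, so under the normalization $\|X\|=1$ this forces $\|Y\|=1$; for general $X\neq 0$ one applies the normalized statement to $X/\|X\|$ and rescales, and takes $Y=0$ when $X=0$.

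I do not anticipate a serious obstacle, as the argument is a clean reduction. The one point requiring care is the lift computation for the product $\overline{Q}V_2$, where one must invoke the compatibility of the two given lifts on $\mathcal{H}$ --- namely that $\overline{Q}^*$ and $V_2^*$ each preserve $\mathcal{H}$ and restrict to $Q^*$ and $T_2^*$ respectively --- rather than any contractivity of $Q$ by itself. Indeed $Q$ need not be a contraction here: only $QT_2$ and $\overline{Q}V_2$ are assumed contractive, and this is exactly the weakened hypothesis that lets the proof mirror that of Theorem~\ref{QLT commutant lift}.
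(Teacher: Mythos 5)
Your proposal is correct and follows essentially the same route as the paper's proof: normalize $\|X\|=1$, observe that $\overline{Q}V_2$ is a contractive lift of $QT_2$, apply Theorem~\ref{contractive lift} with $V_1$ as the isometric lift of $T_1$ to get a contractive lift $Y$ with $YV_1=\overline{Q}V_2Y$, and rescale for general $X$. Your explicit verification that $\overline{Q}V_2$ lifts $QT_2$ (via $W^*=V_2^*\overline{Q}^*$ preserving $\mathcal{H}$) and your treatment of the $X=0$ case are details the paper leaves implicit, but the argument is the same.
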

	\begin{proof}
		First assume that $\|X\|=1$. Given an isometric lift $(V_1,\mathcal{K}_1)$ of $(T_1,\mathcal{H})$, $(V_2,\mathcal{K}_2)$ of $(T_2,\mathcal{H})$ and a lift $\overline{Q}$ of $Q$ such that $\overline{Q}V_2$ is a contraction we get that $\overline{Q}V_2$ is a contractive lift of $QT_2$. Therefore, by Theorem \ref{contractive lift} there is a contractive lift $Y$ of $X$ such that $YV_1=\overline{Q}V_2Y$. Since $\|X\|=1$ and $Y$ is a contractive lift of $X$, so $1\geq \|Y\|\geq \|X\|=1$. Hence $\|Y\|=\|X\|=1$. Now if $0<\|X\|\neq 1$, then $XT_1=QT_2X$ implies that $\dfrac{X}{\|X\|}T_1=QT_2\dfrac{X}{\|X\|} $. Hence again there is a norm preserving lift $\hat{Y}$ of $\dfrac{X}{\|X\|}$ such that $\hat{Y}V_1=\overline{Q}V_2\hat{Y}$. Therefore, $\|X\|\hat{Y}V_1=\overline{Q}V_2\|X\|\hat{Y}$. Hence $Y=\|X\|\hat{Y}$ is a lift of $X$ such that $YV_1=\overline{Q}V_2Y$ and $\|Y\|=\|X\|$. This proves the claim.     
	\end{proof}
	
	\begin{thm}\label{QMinterlift}
		Let $Q,T_1,T_2,X\in \mathcal{B}(\mathcal{H})$ be such that $T_1,T_2$ and $T_2Q$ are contractions and $XT_1=T_2QX$. Let $(V_1,\mathcal{K}_1)$ and $(V_2,\mathcal{K}_2)$ be any isometric lifts of $T_1$ and $T_2$ respectively. Let $\overline{Q}$ on $\mathcal{K}_2$ be any lift of $Q$ such that $V_2\overline{Q}$ is a contraction. Then there is a norm preserving lift $Y:\mathcal{K}_1\to \mathcal{K}_2$ of $X$ such that $YV_1=V_2\overline{Q}Y$. 
	\end{thm}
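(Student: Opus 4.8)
The plan is to follow the proof of Theorem~\ref{QLTinterlift} almost verbatim, the only change being the position of $Q$. First I would reduce to the normalized case $\|X\|=1$; the general case $0<\|X\|\neq 1$ then follows by dividing the hypothesis $XT_1=T_2QX$ through by $\|X\|$, applying the normalized result to $X/\|X\|$ to obtain a norm-preserving lift $\hat{Y}$ with $\hat{Y}V_1=V_2\overline{Q}\hat{Y}$, and rescaling to $Y=\|X\|\hat{Y}$.

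The heart of the argument is to recast the intertwining relation in the form demanded by the contractive lifting theorem, Theorem~\ref{contractive lift}. Rewrite $XT_1=T_2QX$ as $(T_2Q)X=XT_1$, so that it reads $T'A=AT$ with $T'=T_2Q$, $A=X$ and $T=T_1$; by hypothesis all three are contractions, and $V_1$ is an isometric lift of $T=T_1$. The essential point is to verify that $W:=V_2\overline{Q}$ is a \emph{contractive lift} of $T'=T_2Q$. It is a contraction by assumption, so it only remains to check the lifting condition: since $(V_2\overline{Q})^*=\overline{Q}^*V_2^*$, for $h\in\mathcal{H}$ we have $V_2^*h=T_2^*h\in\mathcal{H}$ because $V_2$ lifts $T_2$, and then $\overline{Q}^*T_2^*h=Q^*T_2^*h=(T_2Q)^*h$ because $\overline{Q}$ lifts $Q$; this computation simultaneously shows $\mathcal{H}$ is invariant for $(V_2\overline{Q})^*$, so $V_2\overline{Q}$ is indeed a lift of $T_2Q$.

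With these identifications, Theorem~\ref{contractive lift} yields a contractive lift $Y$ of $X$ satisfying $WY=YV_1$, that is $V_2\overline{Q}Y=YV_1$, which is the desired relation. Finally, because $Y$ is a contractive lift of $X$ with $\|X\|=1$, we get $1\geq\|Y\|\geq\|X\|=1$, forcing $\|Y\|=\|X\|$. I expect no genuine obstacle here; the only care needed is the short verification that $V_2\overline{Q}$ lifts $T_2Q$ (with $\overline{Q}$ placed on the right of $V_2$, rather than on the left as in Theorem~\ref{QLTinterlift}) and correctly matching the roles of $T$, $T'$, $A$ in Theorem~\ref{contractive lift}.
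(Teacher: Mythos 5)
Your proposal is correct and follows essentially the same route as the paper: normalize to $\|X\|=1$, observe that $V_2\overline{Q}$ is a contractive lift of $T_2Q$, invoke Theorem \ref{contractive lift} with $T=T_1$, $T'=T_2Q$, $A=X$ to get $YV_1=V_2\overline{Q}Y$ with $\|Y\|=\|X\|$, and rescale for general $X$. The only difference is that you spell out the verification that $(V_2\overline{Q})^*$ leaves $\mathcal{H}$ invariant and restricts to $(T_2Q)^*$, a detail the paper asserts without computation.
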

	\begin{proof}
		First assume that $\|X\|=1$. Given the minimal isometric dilation $(V_1,\mathcal{K}_1)$ of $(T_1,\mathcal{H})$, $(V_2,\mathcal{K}_2)$ of $(T_2,\mathcal{H})$ and a lift $\overline{Q}$ of $Q$ such that $V_2\overline{Q}$ is a contraction we get that $V_2\overline{Q}$ is a contractive lift of $T_2Q$. Therefore, by Theorem \ref{contractive lift}
		there is a contractive lift $Y$ of $X$ such that $YV_1=V_2\overline{Q}Y$. Since $\|X\|=1$ and $Y$ is a contractive lift of $X$, so $1\geq \|Y\|\geq \|X\|=1$. Hence $\|Y\|=\|X\|=1$. Now if $0<\|X\|\neq 1$, then $XT_1=T_2QX$ implies that $\dfrac{X}{\|X\|}T_1=T_2Q\dfrac{X}{\|X\|} $. Hence again there is a norm preserving lift $\hat{Y}$ of $\dfrac{X}{\|X\|}$ such that $\hat{Y}V_1=V_2\overline{Q}\hat{Y}$. Hence $\|X\|\hat{Y}V_1=V_2\overline{Q}\|X\|\hat{Y}$. Hence $Y=\|X\|\hat{Y}$ is a lift of $X$ such that $YV_1=V_2\overline{Q}Y$ and $\|Y\|=\|X\|$. This proves the claim.     
	\end{proof}
	
	We can rewrite the results in terms of extensions as follows.
	
	\begin{thm}\label{QLTinterext}
		Let $Q,T_1,T_2,X\in \mathcal{B}(\mathcal{H})$ be such that $T_1,T_2$ and $T_2Q$ are contractions and $T_1X=XT_2Q$. Let $(V_1,\mathcal{K}_1)$ and $(V_2,\mathcal{K}_2)$ be any co-isometric extensions of $T_1$ and $T_2$ respectively. Let $\overline{Q}$ on $\mathcal{K}_2$ be any extension of $Q$ such that $V_2\overline{Q}$ is a contraction. Then there is a norm preserving extension $Y:\mathcal{K}_2\to \mathcal{K}_1$ of $X$ such that $V_1Y=YV_2\overline{Q}$. 
	\end{thm}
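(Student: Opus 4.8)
The plan is to deduce this statement from its lifting counterpart, Theorem \ref{QLTinterlift}, by passing to adjoints—precisely the device already used to pass from Theorem \ref{QLT commutant lift} to Theorem \ref{QRT co-iso ext}. The two basic dualities I would invoke are: $(V,\mathcal{K})$ is a co-isometric extension of $T$ if and only if $(V^*,\mathcal{K})$ is an isometric lift of $T^*$, and $\overline{Q}$ is an extension of $Q$ if and only if $\overline{Q}^*$ is a lift of $Q^*$ (the latter being immediate from the definitions of lift and extension).

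First I would take adjoints of the hypothesis $T_1X=XT_2Q$ to obtain $X^*T_1^*=Q^*T_2^*X^*$, which is exactly the $Q_L$-intertwining relation $\widetilde{X}\widetilde{T}_1=\widetilde{Q}\widetilde{T}_2\widetilde{X}$ with $\widetilde{X}=X^*$, $\widetilde{T}_1=T_1^*$, $\widetilde{T}_2=T_2^*$ and $\widetilde{Q}=Q^*$. Next I would check that the hypotheses of Theorem \ref{QLTinterlift} hold for this starred data: since $T_1,T_2$ are contractions, so are $T_1^*,T_2^*$; and since $Q^*T_2^*=(T_2Q)^*$, the operator $Q^*T_2^*$ is a contraction precisely because $T_2Q$ is. By the dualities above, $V_1^*,V_2^*$ are isometric lifts of $T_1^*,T_2^*$ on $\mathcal{K}_1,\mathcal{K}_2$, while $\overline{Q}^*$ is a lift of $Q^*$ on $\mathcal{K}_2$, and $\overline{Q}^*V_2^*=(V_2\overline{Q})^*$ is a contraction because $V_2\overline{Q}$ is.

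Then Theorem \ref{QLTinterlift} furnishes a norm preserving lift $\widetilde{Y}:\mathcal{K}_1\to\mathcal{K}_2$ of $X^*$ satisfying $\widetilde{Y}V_1^*=\overline{Q}^*V_2^*\widetilde{Y}$. Setting $Y=\widetilde{Y}^*:\mathcal{K}_2\to\mathcal{K}_1$ and taking adjoints of this intertwining identity yields $V_1Y=YV_2\overline{Q}$. Finally, since $\widetilde{Y}$ is a norm preserving lift of $X^*$, its adjoint $Y$ is a norm preserving extension of $X$ (the invariance of $\mathcal{H}$ for $Y$ and the relation $Y|_{\mathcal{H}}=X$ are exactly the dual of $\widetilde{Y}$ being a lift of $X^*$), with $\|Y\|=\|\widetilde{Y}\|=\|X^*\|=\|X\|$. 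This completes the argument.

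I do not expect a genuine obstacle here, as the whole proof is a duality transfer. The only point requiring care is bookkeeping: making sure the adjointed relation lands in the left-$Q$ form of Theorem \ref{QLTinterlift} rather than the middle form of Theorem \ref{QMinterlift}, and that each contraction hypothesis is matched with the adjoint of the one assumed, e.g.\ $V_2\overline{Q}$ contractive $\Leftrightarrow$ $\overline{Q}^*V_2^*$ contractive and $T_2Q$ contractive $\Leftrightarrow$ $Q^*T_2^*$ contractive.
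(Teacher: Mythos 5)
Your proposal is correct and is exactly the paper's intended argument: the paper's proof of this theorem consists solely of the remark that it ``follows from Theorem \ref{QLTinterlift},'' i.e., the very duality transfer (co-isometric extension $\leftrightarrow$ isometric lift of the adjoint, extension $\leftrightarrow$ lift of the adjoint) that you carry out in detail. Your version simply makes explicit the adjoint bookkeeping that the paper leaves to the reader.
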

	\begin{proof}
		Follows from Theorem \ref{QLTinterlift}.
	\end{proof}
	
	\begin{thm}\label{QMinterext}
		Let $Q,T_1,T_2,X\in \mathcal{B}(\mathcal{H})$ be such that $T_1,T_2$ and $QT_2$ are contractions and $T_1X=XQT_2$. Let $(V_1,\mathcal{K}_1)$ and $(V_2,\mathcal{K}_2)$ be any co-isometric extensions of $T_1$ and $T_2$ respectively. Let $\overline{Q}$ on $\mathcal{K}_1$ be any extension of $Q$ such that $\overline{Q}V_2$ is a contraction. Then there is a norm preserving extension $Y:\mathcal{K}_2\to \mathcal{K}_1$ of $X$ such that $V_1Y=Y\overline{Q}V_2$. 
\end{thm}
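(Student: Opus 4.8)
The plan is to derive this statement from Theorem \ref{QMinterlift} by a duality (adjoint) argument, exactly parallel to the way Theorem \ref{QLTinterext} is obtained from Theorem \ref{QLTinterlift}. The guiding observation is that passing to adjoints interchanges co-isometric extensions with isometric lifts and converts the product $QT_2$ into $T_2^*Q^* = (QT_2)^*$, whose norm equals that of $QT_2$; hence the contractivity hypotheses here become precisely the contractivity hypotheses of Theorem \ref{QMinterlift} for the adjoint data.

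First I would take adjoints of the intertwining relation $T_1X = XQT_2$ to obtain $X^*T_1^* = T_2^*Q^*X^*$. Setting $S_1 = T_1^*$, $S_2 = T_2^*$, $W = X^*$ and $R = Q^*$, this reads $WS_1 = S_2RW$, which is exactly the hypothesis of Theorem \ref{QMinterlift} for the data $(S_1,S_2,W,R)$. The required contractivity conditions transfer correctly: $S_1 = T_1^*$ and $S_2 = T_2^*$ are contractions because $T_1, T_2$ are, while $S_2R = T_2^*Q^* = (QT_2)^*$ is a contraction because $QT_2$ is. Since $V_1, V_2$ are co-isometric extensions of $T_1, T_2$, the adjoints $V_1^*, V_2^*$ are isometric lifts of $S_1 = T_1^*$ and $S_2 = T_2^*$. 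Finally, $\overline{Q}^*$ is a lift of $R = Q^*$, and $V_2^*\overline{Q}^* = (\overline{Q}V_2)^*$ is a contraction precisely because $\overline{Q}V_2$ is assumed to be one.

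With these identifications in place, Theorem \ref{QMinterlift} produces a norm-preserving lift $Z : \mathcal{K}_1 \to \mathcal{K}_2$ of $W = X^*$ satisfying $ZV_1^* = V_2^*\overline{Q}^*Z$. Taking adjoints of this identity gives $V_1Z^* = Z^*\overline{Q}V_2$, so $Y := Z^*$ is the desired operator: it maps $\mathcal{K}_2 \to \mathcal{K}_1$, is a norm-preserving extension of $X$ (being the adjoint of a norm-preserving lift of $X^*$, whence $\mathcal{H}$ is invariant for $Y$ with $Y|_{\mathcal{H}} = X$), and satisfies $V_1Y = Y\overline{Q}V_2$. There is no serious obstacle in this argument; the only points requiring care are bookkeeping ones, namely checking that each contractivity hypothesis of Theorem \ref{QMinterlift} is exactly the adjoint of the corresponding hypothesis here, and reading $\overline{Q}$ as acting on $\mathcal{K}_2$ so that the products $\overline{Q}V_2$ and $Y\overline{Q}V_2$ are well-defined, consistently with the companion Theorem \ref{QLTinterext}.
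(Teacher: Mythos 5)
Your proposal is correct and is precisely the argument the paper intends: the paper's entire proof of this theorem reads ``Follows from Theorem \ref{QMinterlift}'', i.e.\ the same adjoint/duality reduction that you carry out in detail (co-isometric extensions dualize to isometric lifts, $T_2^*Q^*=(QT_2)^*$ and $V_2^*\overline{Q}^*=(\overline{Q}V_2)^*$ preserve the contractivity hypotheses, and $Y=Z^*$ gives the desired extension). Your observation that $\overline{Q}$ must be read as acting on $\mathcal{K}_2$ is also the right reading --- the statement's ``on $\mathcal{K}_1$'' is evidently a typo, as comparison with the companion Theorem \ref{QLTinterext} confirms.
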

	\begin{proof}
		Follows from Theorem \ref{QMinterlift}.
	\end{proof}
	
	\vspace{0.2cm}
	
	\section{A few intertwining lifting theorems}\label{Sectioninter}
	
	\vspace{0.4cm}
	
\noindent For an operator $X$ that intertwines with a pair of contractions $T,T'$, whenever we try to find lifting or co-isometric extension, we expect to find a norm preserving lift (or extension) of $X$ that intertwines with isometric lifts (or co-isometric extensions) $V$ and $V'$ of $T$ and $T'$ respectively. So, one can ask the following question: given an isometric lift (or a co-isometric extension) $W$ of $X$, can we find norm preserving lifts of $T$ and $T'$ that intertwine with $W$ ? We address this question bellow.

	\begin{thm}\label{Finter}
		Let $T_1,T_2,X\in \mathcal{B}(\mathcal{H})$ be contractions such that $T_1X=XT_2$. Let $(Z,\mathcal{K})$ be the minimal co-isometric extension of $X$. Then there is a norm preserving extensions $Y_1$ of $T_1$ and a contractive extension $Y_2$ of $T_2$ respectively on $\mathcal{K}$ such that $Y_1Z=ZY_2$ 
	\end{thm}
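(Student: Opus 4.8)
The plan is to realize the minimal co-isometric extension concretely and then build $Y_1$ and $Y_2$ by the Douglas--Muhly--Pearcy technique, exactly as in the proofs of Theorems \ref{Q_Lcommminext} and \ref{QminicoisoextDouglass}. First I would fix the standard (Sch\"affer) model of the minimal co-isometric extension of $X$, namely $\mathcal{K}=\mathcal{H}\oplus\mathcal{D}_{X^*}\oplus\mathcal{D}_{X^*}\oplus\cdots$ with
\[
Z=\begin{bmatrix} X & D_{X^*} & 0 & 0 & \cdots\\ 0 & 0 & I & 0 & \cdots\\ 0 & 0 & 0 & I & \cdots\\ \vdots & \vdots & & & \ddots\end{bmatrix},
\]
together with the filtration $\mathcal{K}_n=\mathcal{H}\oplus\mathcal{D}_{X^*}^{\oplus n}\oplus\{0\}\oplus\cdots$. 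Each $\mathcal{K}_n$ is invariant for $Z$; writing $Z_n=Z|_{\mathcal{K}_n}$, one has $Z_{n+1}=\begin{bmatrix}Z_n & S_n\\ 0 & 0\end{bmatrix}$ on $\mathcal{K}_{n+1}=\mathcal{K}_n\oplus\mathcal{L}_{n+1}$, and $Z_nZ_n^*+S_nS_n^*=I_{\mathcal{K}_n}$ because $Z$ is a co-isometry. This is precisely the setting in which the one-step completion of Theorem \ref{partialisointertwine} operates.

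Next I would seed the induction with $T_1X=XT_2$, read as $Y_1^{(0)}Z_0=Z_0Y_2^{(0)}$ where $Z_0=Z|_{\mathcal H}=X$, $Y_1^{(0)}=T_1$, $Y_2^{(0)}=T_2$. Since $X$ is the contraction being extended, the operator sitting on the \emph{left} of $X$, namely $T_1$, is forced into the ``free'' slot, while $T_2$ on the right is the operator extended through the block completion. At the $n$-th stage I would keep $Y_1^{(n)}=T_1\oplus 0$ on $\mathcal{K}_n$, so that $\|Y_1^{(n)}\|=\|T_1\|$, and then apply the completion step from the proof of Theorem \ref{partialisointertwine} (that is, Theorem \ref{Dthm1} with Proposition \ref{prop:21} and Lemma \ref{Dlemma}) to produce blocks $C_n,W_n$ yielding a \emph{contractive} extension $Y_2^{(n+1)}=\begin{bmatrix}Y_2^{(n)} & C_n\\ 0 & W_n\end{bmatrix}$ of $Y_2^{(n)}$ satisfying $Y_1^{(n)}S_n=Z_nC_n+S_nW_n$. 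The key point is that I ask only for contractivity of $Y_2^{(n+1)}$, not norm preservation: using $XT_2=T_1X$, the positivity hypothesis $A_1A_1^*+A_2A_2^*\ge A_0A_0^*$ of Theorem \ref{Dthm1} reduces to $I\ge T_1T_1^*$, i.e. $\|T_1\|\le 1$, which holds. This is exactly why no auxiliary inequality such as $\|T_1\|\le\|T_2\|$ is ever needed, and why $Y_2$ will be only contractive whereas $Y_1$ is norm preserving.

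Finally I would pass to the limit. The $Y_1^{(n)},Y_2^{(n)}$ are uniformly bounded and compatible ($Y_i^{(n+1)}|_{\mathcal K_n}=Y_i^{(n)}$), and $\bigcup_n\mathcal K_n$ is dense in $\mathcal K$, so they converge in the strong operator topology to $Y_1,Y_2\in\mathcal B(\mathcal K)$; passing to the limit in $Y_1^{(n)}Z_n=Z_nY_2^{(n)}$ gives $Y_1Z=ZY_2$, with $Y_1$ a norm preserving extension of $T_1$ and $Y_2$ a contractive extension of $T_2$. In fact the scheme telescopes: the choice $Y_1^{(n)}=T_1\oplus 0$ makes $Y_1^{(n)}S_n=0$, which (comparing the mutually orthogonal ranges of $Z_nC_n$ and $S_nW_n$) forces $W_n=0$ and permits $C_n=0$ for all $n\ge1$. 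Hence one may bypass the limiting argument altogether and simply set $Y_1=T_1\oplus 0$ and $Y_2=\begin{bmatrix}T_2 & C\\ 0 & W\end{bmatrix}\oplus 0$ on $\mathcal H\oplus\mathcal D_{X^*}\oplus(\mathrm{rest})$, where $C,W$ solve $T_1D_{X^*}=XC+D_{X^*}W$; a direct block computation against the displayed form of $Z$ then verifies $Y_1Z=ZY_2$, $Y_1|_{\mathcal H}=T_1$ and $Y_2|_{\mathcal H}=T_2$ at once.

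The one genuine obstacle is the completion step with the correct direction of norm control: one must solve $T_1D_{X^*}=XC+D_{X^*}W$ while keeping $\begin{bmatrix}T_2 & C\\ 0 & W\end{bmatrix}$ a contraction. The subtlety is that because $X$ is the operator being extended, $T_1$ is pinned in the free slot, so it is automatically $Y_1$ that inherits norm preservation and $Y_2$ that is merely contractive; recognizing that contractivity of $Y_2$ (rather than norm preservation) is all the reduction $I\ge T_1T_1^*$ delivers is precisely what lets the argument run for arbitrary contractions $T_1,T_2,X$. The remaining verifications—strong convergence, co-invariance/invariance of $\mathcal H$, and the intertwining identity in the limit—are routine.
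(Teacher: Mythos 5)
Your proposal is correct and, after the telescoping you yourself point out, coincides with the paper's own proof: the paper also works in the explicit model $\mathcal{K}=\mathcal{H}\oplus\mathcal{D}_{X^*}\oplus\mathcal{D}_{X^*}\oplus\cdots$, solves $T_1D_{X^*}=XD_{T_2^*}CD_{W}+D_{X^*}W$ by a single application of Theorem \ref{Dthm1} followed by Lemma \ref{Dlemma} and Proposition \ref{prop:21} (with the positivity hypothesis reducing to $I-T_1T_1^*\geq 0$ via $T_1X=XT_2$, exactly as you say), and then sets $Y_1=T_1\oplus 0$ and $Y_2$ equal to the resulting $2\times 2$ contractive completion followed by zeros. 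The inductive scaffolding you begin with is never needed, as your own collapsing argument shows, so your construction and the paper's are essentially identical.
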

	\begin{proof}
		As the minimal co-isometric extension of a contraction is unique and $Z$ is the minimal co-isometric extension of $X$, let
		\[Z=\begin{bmatrix}
			X&D_{X^*}&0&0&\cdots\\
			0&0&I_{\mathcal{K}}&0&\cdots\\
			0&0&0&I_{\mathcal{K}}&\cdots\\
			\vdots&\vdots&\vdots&\vdots&\ddots
		\end{bmatrix},\] with respect to the decomposition $\mathcal{K}=\mathcal{H}\oplus \mathcal{D}_{X^*}\oplus \cdots$.
		We observe that, 
		\begin{alignat*}{2}
			&XD_{T_2^*}(XD_{T_2^*})^*+D_{X^*}(D_{X^*})-T_1D_{X^*}(T_1D_{X^*})^* \\
			=&X(I_{\mathcal{H}}-T_2T_2^*)X^*+	I_{\mathcal{H}}-XX^*-T_1(I_{\mathcal{H}}-XX^*)T_1^*\\
			=&-XT_2T_2^*X^*+I_{\mathcal{H}}-T_1T_1^*+T_1XX^*T_1^*\\
			=&I_{\mathcal{H}}-T_1X(T_1X)^*-T_1T_1^*+T_1XX^*T_1^* & [\text{as } T_1X=XT_2]\\
			=&I-T_1T_1^*\\
			\geq&0.  & [\text{as } T_1 \text{ is a contraction}]
		\end{alignat*}
		Hence we have, \[ XD_{T_2^*}(XD_{T_2^*})^*+D_{X^*}(D_{X^*})\geq T_1D_{X^*}(T_1D_{X^*})^* .\]
		Therefore, by Theorem \ref{Dthm1}, there are contractions $Z_1:\mathcal{H}\to \mathcal{H}$, $Z_2:\mathcal{H}\to \mathcal{H}$ satisfying \begin{equation}\label{T12}
			XD_{T_2^*}Z_1+D_{X^*}Z_2=T_1D_{X^*}
		\end{equation} and 
		\begin{equation}\label{T13} 
			Z_1^*Z_1+Z_2^*Z_2\leq I_{\mathcal{H}}.
		\end{equation}
		Further, let $Z_2=\begin{bmatrix}
			Z_{11}&Z_{12}\\
			Z_{21}&Z_{22}
		\end{bmatrix}$ with respect to the decomposition $\mathcal{H}=\mathcal{D}_{X^*}\oplus \mathcal{D}_{X^*}^{\perp}$. Clearly, $D_{X^*}Z_2'=\begin{bmatrix}
			D_{X^*}Z_{11}&D_{X^*}Z_{12}\\0&0
		\end{bmatrix}$. By defining $Z_2'=\begin{bmatrix}
			Z_{11}&Z_{12}\\0&0
		\end{bmatrix}$, we observe that \eqref{T12} holds if we replace $Z_2$ by $Z_2'$ and from \eqref{T13} we have 
		\begin{equation}\label{T14} 
			Z_1^*Z_1+Z_2'^*Z_2'\leq I_{\mathcal{H}}.
		\end{equation}
		Clearly, $Z_2'$ is a map from $\mathcal{H}$ to $\mathcal{D}_{X^*}$ having $\mathcal{D}_{X^*}$ as an invariant subspace. Hence let $S_{22}=Z_2'|_{\mathcal{D}_{X^*}}=Z_{11}$. Let $Z_1'=Z_1|_{\mathcal{D}_{X^*}}:\mathcal{D}_{X^*}\to \mathcal{H}$. Hence, it is clear from \eqref{T12} that 
		\begin{equation}\label{T15}
			XD_{T_2^*}Z_1'+D_{X^*}Z_{11}=T_1D_{X^*}|_{\mathcal{D}_{X^*}} :\mathcal{D}_{X^*}\to \mathcal{H}.
		\end{equation} and from \eqref{T14} that 
		\begin{equation}\label{T16} 
			Z_1'^*Z_1'+Z_{11}^*Z_{11}\leq I_{\mathcal{D}_{X^*}}.
		\end{equation}    
		Now \eqref{T16} implies that $Z_1'^*Z_1'\leq I-Z_{11}^*Z_{11}$, hence by Lemma \ref{Dlemma}, there is a contraction $C:\mathcal{D}_{X^*}\to \mathcal{H}$ such that $Z_1'=CD_{Z_{11}} $. Substituting this in \eqref{T15}, we have  
		\begin{equation}\label{T17}	
			XD_{T_2^*}CD_{Z_{11}}+D_{X^*}Z_{11}=T_1D_{X^*},
		\end{equation} 
		for some contractions $C:\mathcal{D}_{X^*}\to \mathcal{H}$ and $Z_{11}:\mathcal{D}_{X^*}\to \mathcal{D}_{X^*}\subseteq \mathcal{H}$. Now we define $Y_1$ and $Y_2$ on $\mathcal{K}=\mathcal{H}\oplus \mathcal{D}_{X^*}\oplus \cdots$ as, 
		\begingroup
		\allowdisplaybreaks
		\begin{gather*}
			Y_1=\begin{bmatrix}
				T_1&0&0&\cdots \\
				0&0&0&\cdots \\
				0&0&0&\cdots \\
				\vdots&\vdots&\vdots&\ddots
			\end{bmatrix},\;
		Y_2=\begin{bmatrix}
				T_2&D_{T_2^*}CD_{Z_{11}}&0&\cdots \\
				0&Z_{11}&0&\cdots \\
				0&0&0&\cdots \\
				\vdots&\vdots&\vdots&\ddots
			\end{bmatrix}.
		\end{gather*}
		\endgroup
		Then clearly, $Y_1$ is a norm preserving extension of $T_1$ and $Y_2$ is a contractive extensions of $T_2$ such that $Y_1Z=ZY_2$. 
	\end{proof}
	The minimality restriction can be removed and hence we obtain the following.
	\begin{thm}\label{Finter1}
		Let $T_1,T_2,X\in \mathcal{B}(\mathcal{H})$ be contractions such that $T_1X=XT_2$. Let $Z\in \mathcal{B}(\mathcal{K})$ be a co-isometric extension of $X$. Then there is a norm preserving extensions $Y_2$ of $T_2$ and a contractive extension $Y_1$ of $T_1$ respectively on $\mathcal{K}$ such that $Y_1Z=ZY_2$. 
	\end{thm}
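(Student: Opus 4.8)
The plan is to reduce the general statement to the minimal case already settled in Theorem \ref{Finter} by passing to the smallest reducing subspace of $Z$ that contains $\mathcal{H}$, exactly as minimality was removed in Theorem \ref{Q_Lcomext} and Theorem \ref{QcoisoextDouglass}.

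First I would let $\mathcal{K}_0 \subseteq \mathcal{K}$ be the smallest reducing subspace for $Z$ containing $\mathcal{H}$ and set $Z_0 = Z|_{\mathcal{K}_0}$. Since $\mathcal{K}_0$ reduces the co-isometry $Z$, the restriction $Z_0$ is again a co-isometry, and because $\mathcal{H} \subseteq \mathcal{K}_0$ with $Z|_{\mathcal{H}} = X$, the pair $(Z_0, \mathcal{K}_0)$ is a co-isometric extension of $X$; by the uniqueness of the minimal co-isometric extension it is in fact the minimal one. At this point Theorem \ref{Finter} applies to $(T_1, T_2, X)$ on $\mathcal{H}$ together with $(Z_0, \mathcal{K}_0)$ and produces operators $Y_1^0, Y_2^0 \in \mathcal{B}(\mathcal{K}_0)$ extending $T_1, T_2$ --- one norm preserving and the other contractive --- with $Y_1^0 Z_0 = Z_0 Y_2^0$.

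Next I would push these back up to $\mathcal{K} = \mathcal{K}_0 \oplus \mathcal{K}_0^{\perp}$ by setting $Y_1 = Y_1^0 \oplus 0$ and $Y_2 = Y_2^0 \oplus 0$. Because $\mathcal{H} \subseteq \mathcal{K}_0$, each $Y_i$ still leaves $\mathcal{H}$ invariant with $Y_i|_{\mathcal{H}} = T_i$, so $Y_1, Y_2$ remain extensions of $T_1, T_2$; and $\|Y_i\| = \|Y_i^0\|$, so the norm preserving and contractive designations carry over verbatim from Theorem \ref{Finter}. For the intertwining identity I would use that $\mathcal{K}_0$, and hence $\mathcal{K}_0^{\perp}$, both reduce $Z$: on $\mathcal{K}_0$ the identity $Y_1 Z = Z Y_2$ reads $Y_1^0 Z_0 = Z_0 Y_2^0$, which holds, while on $\mathcal{K}_0^{\perp}$ one has $Y_2 v = 0$, so $Z Y_2 v = 0$, and $Z v \in \mathcal{K}_0^{\perp}$, so $Y_1 Z v = 0$ as well; thus both sides vanish there. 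Hence $Y_1 Z = Z Y_2$ on all of $\mathcal{K}$.

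The one point that needs care --- and the step I regard as the main obstacle --- is the first one: verifying that the restriction to the smallest reducing subspace genuinely yields the unique minimal co-isometric extension of $X$, so that Theorem \ref{Finter} is applicable, and confirming that extension by zero off $\mathcal{K}_0$ disturbs neither the extension property on $\mathcal{H}$ nor the operator norms. Everything after that is the routine block-operator bookkeeping indicated above.
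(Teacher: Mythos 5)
Your proposal is correct and takes essentially the same route as the paper's own proof: restrict $Z$ to the smallest reducing subspace $\mathcal{K}_0$ containing $\mathcal{H}$ (whose restriction is the minimal co-isometric extension of $X$), apply Theorem \ref{Finter} there, and extend the resulting operators by zero on $\mathcal{K}_0^{\perp}$. The explicit block verification of $Y_1Z=ZY_2$ on $\mathcal{K}_0^{\perp}$ that you include is left implicit in the paper, but the argument is the same.
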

	\begin{proof}
		The proof follows from an argument similar to that of the Corollary 4.1 in \cite{Dou:Muh:Pea}. Let $\mathcal{K}'\subseteq \mathcal{K}$ be the smallest reducing subspace for $Z$ containing $\mathcal{H}$. Hence, $Z'=Z|_{\mathcal{K}'}$ is the minimal co-isometric extension of $X$. Then by Theorem \ref{Finter}, there is a norm preserving extension $Y_1'$ of $T_1$ and a contractive extension $ Y_2'$ of $T_2$ such that $Y_1'Z'=Z'Y_2'$. Now if we define $Y_1=Y_1'\oplus 0$ and $Y_2=Y_2'\oplus 0$ with respect to the decomposition $\mathcal{K}=\mathcal{K}'\oplus \mathcal{K}'^{\perp}$ then we obtain the desired result.
	\end{proof}
	Now we obtain a similar intertwining lifting result. 
	\begin{thm}\label{Finterlift}
		Let $X,T_1,T_2\in \mathcal{B}(\mathcal{H})$ be contractions such that $T_1X=XT_2$. Let $V\in \mathcal{B}(\mathcal{K})$ be an isometric lift of $X$. Then there is a contractive lift $S_1$ of $T_1$ and a norm preserving lift $S_2$ of $T_2$ on $\mathcal{K}$ such that $S_1V=VS_2$. 
	\end{thm}
	\begin{proof}
		Since, $T_1X=XT_2$, we have $T_2^*X^*=X^*T_1^*$. As $V$ is an isometric lift of $X$, $V^*$ is a co-isometric extension of $X^*$. Hence by Theorem \ref{Finter1}, there is a contractive extension $Y_1$ of $T_1^*$ and a norm preserving extension $ Y_2$ of $T_2^*$ such that $Y_2V^*=V^*Y_1$. Taking $S_1=Y_1^*$ and $S_2=Y_2^*$ proves the desired result. 
	\end{proof}
	As an application of Theorem \ref{Finter} we obtain the following.
	\begin{thm}\label{interdil}
		Let $T_1,T_2,X\in \mathcal{B}(\mathcal{H})$ be any contractions such that $T_1X=XT_2$. Then there are co-isometric extensions $Z_i\in \mathcal{B}(\mathcal{K}_i)$ of $T_i$ for $i=1,2$ and a co-isometric extension $R\in \mathcal{B}(\mathcal{K}_2,\mathcal{K}_1)$ of $(X,\mathcal{H})$ such that $Z_1R=RZ_2$.   
	\end{thm}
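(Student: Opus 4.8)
The plan is to reduce the three-operator problem to a one-space intertwining via Theorem \ref{Finter}, feed this into the intertwining extension theorem (Theorem \ref{douglasintertwining}) to manufacture the co-isometric extensions $Z_1,Z_2$ together with a norm preserving intertwiner $R$, and finally upgrade $R$ to a genuine co-isometry by exploiting that it extends the co-isometry furnished by Theorem \ref{Finter}. Concretely, I would first apply Theorem \ref{Finter} to the contractions $T_1,T_2,X$. This yields the minimal co-isometric extension $(Z,\mathcal{K})$ of $X$ together with a norm preserving extension $Y_1$ of $T_1$ and a contractive extension $Y_2$ of $T_2$, all acting on the single space $\mathcal{K}$, and satisfying $Y_1Z=ZY_2$. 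The gain is that $Y_1,Y_2,Z$ now live on one Hilbert space, that $Z$ is already a co-isometry, and that any co-isometric extension of $Y_i$ automatically extends $T_i$, since $Y_i|_{\mathcal{H}}=T_i$ and $\mathcal{H}$ is invariant for $Y_i$.

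Next I would read $Y_1Z=ZY_2$ as an intertwining of the contractions $Y_1,Y_2$ by $Z$ and invoke Theorem \ref{douglasintertwining}. Letting $Z_1\in\mathcal{B}(\mathcal{K}_1)$ and $Z_2\in\mathcal{B}(\mathcal{K}_2)$ be the minimal co-isometric extensions of $Y_1$ and $Y_2$, we obtain an operator $R:\mathcal{K}_2\to\mathcal{K}_1$ with $Z_1R=RZ_2$, $R\mathcal{K}\subseteq\mathcal{K}$, $R|_{\mathcal{K}}=Z$ and $\|R\|=\|Z\|$. By the observation above, $Z_1$ and $Z_2$ are co-isometric extensions of $T_1$ and $T_2$, and $R$ extends $Z$, hence extends $X$, and satisfies the required relation $Z_1R=RZ_2$.

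The one genuine obstacle is that $R$ is so far only norm preserving, whereas the statement demands that $R$ be a co-isometry; this is where I would use that $Z$ is a co-isometry, mimicking the reducing-subspace argument of Theorem \ref{QLcoro}. Writing $R=\begin{bmatrix} Z & B\\ 0 & C\end{bmatrix}$ with respect to $\mathcal{K}_2=\mathcal{K}\oplus\mathcal{K}_2^{\perp}$ and $\mathcal{K}_1=\mathcal{K}\oplus\mathcal{K}_1^{\perp}$ (the lower-left block vanishes because $R\mathcal{K}\subseteq\mathcal{K}$ with $R|_{\mathcal{K}}=Z$), contractivity of $R$ gives $ZZ^*+BB^*=I+BB^*\leq I$, forcing $B=0$ and hence $R^*|_{\mathcal{K}}=Z^*$. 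Then for $h\in\mathcal{K}$ and $n\geq 0$, using $R^*Z_1^*=Z_2^*R^*$ and the fact that $Z^*,Z_1^*,Z_2^*$ are all isometries, one computes $\|R^*Z_1^{*n}h\|=\|Z_2^{*n}Z^*h\|=\|h\|=\|Z_1^{*n}h\|$, so $\langle (I-RR^*)Z_1^{*n}h, Z_1^{*n}h\rangle=0$. Since $I-RR^*\geq 0$, this gives $(I-RR^*)Z_1^{*n}h=0$, and as the vectors $Z_1^{*n}h$ span $\mathcal{K}_1$ densely by minimality of $Z_1$, we conclude $RR^*=I_{\mathcal{K}_1}$. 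Thus $R$ is a co-isometric extension of $X$ with $Z_1R=RZ_2$, as required. I expect this final verification to be the only real difficulty, the preceding steps being direct applications of Theorems \ref{Finter} and \ref{douglasintertwining}.
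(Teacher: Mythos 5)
Your proposal is correct and follows essentially the same route as the paper's own proof: apply Theorem \ref{Finter} to get $Y_1Z=ZY_2$ on the minimal co-isometric extension space of $X$, lift via Theorem \ref{douglasintertwining} to the minimal co-isometric extensions $Z_1,Z_2$ of $Y_1,Y_2$, and then upgrade $R$ to a co-isometry by first killing the off-diagonal block (using $ZZ^*=I$) and then using the intertwining relation together with minimality of $Z_1$ to conclude $RR^*=I_{\mathcal{K}_1}$. The block-matrix vanishing argument and the density argument over $\{Z_1^{*n}k : k\in\mathcal{K}',\, n\geq 0\}$ are exactly the steps in the paper.
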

	\begin{proof}
		Let $(Z,\mathcal{K}')$ be the minimal co-isometric extension of $X$. Then due to Theorem \ref{Finter}, there is a norm preserving extension $Y_1$ of $T_1$ and a contractive extension $Y_2$ of $T_2$ on $\mathcal{K}'$, such that $Y_1Z=ZY_2$. Let $(Z_i,\mathcal{K}_i)$ be the minimal co-isometric extensions of $(Y_i,\mathcal{K}')$ and hence of $(T_i,\mathcal{H})$, for $i=1,2$. Now by Theorem \ref{douglasintertwining}, we know that there is a norm preserving co-isometric extension $R$ of $Z$ from $\mathcal{K}_2$ to $\mathcal{K}_1$ such that $Z_1R=RZ_2$. Now it remains to prove that $R$ is a co-isometry. Let \[ R= \begin{bmatrix}
			Z&A\\
			0&B
		\end{bmatrix}:\mathcal{K}'\oplus (\mathcal{K}_1\ominus \mathcal{K}')\to \mathcal{K}'\oplus (\mathcal{K}_2\ominus \mathcal{K}') .\] 
		We first observe that, \begin{align*}
			&0\leq ZZ^*+AA^*\leq \|ZZ^*+AA^* \|I\leq \|RR^* \|I\leq I\\
			\implies & 0\leq I+AA^*\leq I. \hspace{5cm} [\text{as }Z \text{ is a co-isometry }]
		\end{align*}
		Thus, $A=0$. Hence $\mathcal{K}'$ is a reducing subspace for $R$ and in fact $R^*|_{\mathcal{K}'}=Z^*$. Now observe that for $k\in \mathcal{K}'$ and for $n\geq 2$, \begin{align*}
			&R^*(Z_1^{*n}k)=R^*Z_1^*(Z_1^{*n-1})=Z_2^*R^*(Z_1^{*n-1})\\
			\implies& \|R^*(Z_1^{*n}k) \|=\|Z_2^*R^*(Z_1^{*n-1}) \| =\| R^*(Z_1^{*n-1})\|.
		\end{align*}
		Therefore, inductively for all $n\geq 1$, \[\|R^*(Z_1^{*n}k) \|= \|R^*Z_1^* k\|=\|Z_2^*R^*k\|=\|R^*k\|=\|Z^*k \|=\|k\|=\|Z_1^{*n}k\|. \]	Since, $\mathcal{K}_1=\overline{Span}\{Z_1^{*n}k:n\in \mathbb{N}\cup \{0\},\, k\in \mathcal{K}'  \}$ we get $I_{\mathcal{K}_1}-RR^*=0$. Hence $R$ is a co-isometry. 
	\end{proof}
	In particular, if $X$ is a strict contraction intertwining $T_1,T_2$ then we obtain a pure co-isometric extension $Z$ of $X$ intertwining the minimal co-isometric extensions of $T_1,T_2$ as follows. 
	\begin{thm}
		Let $T_1,T_2,X\in \mathcal{B}(\mathcal{H})$ be any contractions with $\|X\|<1$ and $T_1X=XT_2$. Then there is a Hilbert space $\mathcal{K}$ containing $\mathcal{H}$ and there are co-isometric extensions $Z_1,Z_2,R$ of $T_1,T_2,X$ respectively such that $Z_1R=RZ_2$. Moreover this $R$ is a pure co-isometry.	
	\end{thm}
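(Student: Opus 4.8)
The plan is to follow the scheme of the proof of Theorem~\ref{interdil} and to upgrade its conclusion using the single extra hypothesis $\|X\|<1$. The first thing I would record is what strictness buys: since $\|X\|<1$ we have $\|X^n\|\le\|X\|^n\to 0$, so $X^n\to 0$ strongly. Equivalently, the adjoint of the minimal co-isometric extension $Z$ of $X$ is the minimal isometric dilation of $X^*$, which is a unilateral shift precisely because $X^n\to 0$; hence $Z$ is already a \emph{pure} co-isometry. Moreover $D_{X^*}=(I-XX^*)^{1/2}$ is invertible (as $\|XX^*\|=\|X\|^2<1$), so the space $\mathcal K'$ of this minimal extension is $\mathcal H\oplus\mathcal H\oplus\cdots$. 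I would also note at the outset that, since $\mathcal K'\cong\ell^2(\mathcal H)$ is infinite dimensional and the relevant defect spaces embed in it, the whole construction can be realized on a single Hilbert space $\mathcal K$, so that the phrase ``$R$ is a pure co-isometry'' is meaningful.

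Next I would run the construction of Theorem~\ref{interdil} essentially verbatim. Applying Theorem~\ref{Finter} to $T_1X=XT_2$ gives a norm preserving extension $Y_1$ of $T_1$ and a contractive extension $Y_2$ of $T_2$ on $\mathcal K'$ with $Y_1Z=ZY_2$. Taking minimal co-isometric (Sch$\ddot{a}$ffer) extensions $Z_1,Z_2$ of $Y_1,Y_2$ and invoking the intertwining extension theorem, Theorem~\ref{douglasintertwining}, produces a co-isometric extension $R$ of $Z$ with $Z_1R=RZ_2$; exactly as in Theorem~\ref{interdil} (the $A=0$ step together with the telescoping-norm argument used there and in Theorem~\ref{QLcoro}) one checks that $\mathcal K'$ reduces $R$, that $R|_{\mathcal K'}=Z$, and that $R$ is a genuine co-isometric extension of $X$. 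At this stage everything in Theorem~\ref{interdil} is in place, and because $Z$ is pure, $R$ splits as $Z\oplus B$ with $B$ a co-isometry on the complementary Sch$\ddot{a}$ffer-shift spaces, which intertwines the two backward shifts $S_1,S_2$ of $Z_1,Z_2$, namely $S_1B=BS_2$.

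The remaining point, and the genuine difficulty, is to conclude that $R$ is \emph{pure}, i.e. $R^{*n}\to 0$ strongly, which reduces to purity of the complementary piece $B$. The natural attempt is to show $R^{*n}\to 0$ on the dense set $\bigvee_n Z_1^{*n}\mathcal K'=\mathcal K_1$ using $R^*Z_1^*=Z_2^*R^*$ together with $R^*|_{\mathcal K'}=Z^*$ and the purity of $Z$; the obstacle is that a co-isometry intertwining two pure shifts need \emph{not} itself be pure (the identity intertwines a shift with itself), so purity cannot be automatic. I therefore expect to secure it not by analysing the $R$ delivered by Theorem~\ref{douglasintertwining}, but by building the dilation so that the added part of $R$ is manifestly a unilateral shift: concretely, by an Ando-type construction on $\mathcal K'\oplus\ell^2(\mathcal H)$ in which one prescribes $R=Z\oplus\Sigma$ with $\Sigma$ a pure co-isometry and then constructs $Z_1,Z_2$ as co-isometric extensions of $T_1,T_2$ obeying $Z_1R=RZ_2$, the defect identities coming from $Y_1Z=ZY_2$ being absorbed by a gluing unitary exactly as in the proofs of Theorems~\ref{andoisodilL} and \ref{andoisodilR}. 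With the complementary summand chosen to be a shift, $R$ is a pure co-isometry by inspection while $Z_1,Z_2$ remain co-isometric extensions of $T_1,T_2$ with $Z_1R=RZ_2$, completing the argument.
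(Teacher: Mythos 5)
Your first two paragraphs are sound: they correctly reproduce the construction of Theorem \ref{interdil}, and you are right that the $R$ it delivers splits as $Z\oplus B$ with $S_1B=BS_2$, and right that this is exactly where the argument stalls, since purity is not inherited through intertwining. The genuine gap is that your third paragraph never closes this hole: it is a plan, not a proof. You prescribe $R=Z\oplus\Sigma$ and assert that $Z_1,Z_2$ can then be built ``exactly as in the proofs of Theorems \ref{andoisodilL} and \ref{andoisodilR}'', but those Ando-gluing constructions do nothing of the sort: they produce all three operators as products with a gluing unitary ($V_1=\widetilde{G}W_1$, $V_2=W_2\widetilde{G}^{-1}$, $V_{2Q}=W\widetilde{G}^{-1}$), and the intertwiner is never prescribed in advance in block-diagonal form on a fixed decomposition. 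If you actually impose your prescription, writing $Z_1=\begin{bmatrix} Y_1 & B_1\\ 0 & D_1\end{bmatrix}$ and $Z_2=\begin{bmatrix} Y_2 & B_2\\ 0 & D_2\end{bmatrix}$ on $\mathcal K'\oplus\mathcal M$, then co-isometry of $Z_1,Z_2$ and the relation $Z_1(Z\oplus\Sigma)=(Z\oplus\Sigma)Z_2$ force
\begin{equation*}
B_iB_i^*=D_{Y_i^*}^2,\quad B_iD_i^*=0,\quad D_iD_i^*=I,\quad B_1\Sigma=ZB_2,\quad D_1\Sigma=\Sigma D_2,
\end{equation*}
and the natural Sch$\ddot{a}$ffer choices ($B_i$ supported on the first coordinate of $\mathcal M$, $D_i=\Sigma$ a backward shift) are inconsistent with $B_1\Sigma=ZB_2$: testing on vectors supported in the first and second coordinates forces $Z\beta_2=0$ and $\beta_1=0$, i.e. $D_{Y_1^*}=0$. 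Solving this system in general is precisely the nontrivial work your proposal omits, and nothing in the cited theorems supplies it.

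The paper's proof avoids the issue entirely by reversing the order of construction and exploiting $\|X\|<1$ in two places your outline never does. It first takes the Sch$\ddot{a}$ffer co-isometric extensions $Z_1',Z_2'$ of $T_1,T_2$ on $\mathcal K'=\mathcal H\oplus\mathcal H\oplus\cdots$ and applies Theorem \ref{douglasintertwining} to get a norm-preserving extension $Y$ of $X$ with $Z_1'Y=YZ_2'$; then $\|Y\|=\|X\|<1$, so $D_{Y^*}$ is invertible, and one takes $R$ to be the Sch$\ddot{a}$ffer co-isometric extension of $Y$ on $\mathcal K=\mathcal K'\oplus\mathcal K'\oplus\cdots$, which is pure precisely because $\|Y\|<1$, together with $Z_1=Z_1'\oplus E\oplus E\oplus\cdots$ and $Z_2=Z_2'\oplus E\oplus E\oplus\cdots$, where $E=D_{Y^*}^{-1}Z_1'D_{Y^*}$. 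The single identity $Z_1'Y=YZ_2'$ then shows both that $E$ is a co-isometry and that $Z_1R=RZ_2$. So purity is not engineered by forcing a reducing block form on $R$; it falls out of $R$ being the Sch$\ddot{a}$ffer extension of a \emph{strict} contraction, and strictness is equally essential for the conjugated tail blocks $D_{Y^*}^{-1}Z_1'D_{Y^*}$ to exist. Your opening observation that strictness makes defect operators invertible was the right instinct, but it must be applied to the intertwining extension $Y$ of $X$, not to $X$ or to $Z$.
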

	\begin{proof}
		Let $\mathcal{K}'=\mathcal{H}\oplus \mathcal{H}\oplus \dots$ and for any $h=(h_0,h_1,\dots )\in \mathcal{H}\oplus \mathcal{H}\oplus \dots$, 
		\[ Z_1'(h_0,h_1,\dots)=(T_1h_0+D_{T_1^*}h_1,h_2,\dots ) \] and 
		\[ Z_2'(h_0,h_1,\dots)=(T_2h_0+D_{T_2^*}h_1,h_2,\dots ).\]
		Hence clearly, $Z_1',Z_2'$ are co-isometric extensions of $T_1,T_2$ respectively. Then by Theorem \ref{douglasintertwining}, we know that there is a norm preserving extension $Y$ of $X$ such that $Z_1'Y=YZ_2'$. Now since $X$ is a strict contraction, so will be $Y$. Hence $D_{Y^*}$ will be invertible on $\mathcal{K}'$. Thus, we define,	 
		\begingroup
		\allowdisplaybreaks
		\begin{gather*}
			R=\begin{bmatrix}
				Y&D_{Y^*}&0&0&\cdots\\
				0&0&I_{\widetilde{\mathcal{K}}}&0&\cdots\\
				0&0&0&I_{\widetilde{\mathcal{K}}}&\cdots\\
				\vdots&\vdots&\vdots&\vdots&\ddots
			\end{bmatrix},\;Z_1=\begin{bmatrix}
				Z_1'&0&0&\cdots \\
				0&D_{Y^*}^{-1}Z_1'D_{Y^*}&0&\cdots \\
				0&0&D_{Y^*}^{-1}Z_1'D_{Y^*}&\cdots \\
				\vdots&\vdots&\vdots&\ddots
			\end{bmatrix},\\
			Z_2=\begin{bmatrix}
				Z_2'&0&0&\cdots \\
				0&D_{Y^*}^{-1}Z_1'D_{Y^*}&0&\cdots \\
				0&0&D_{Y^*}^{-1}Z_1'D_{Y^*}&\cdots \\
				\vdots&\vdots&\vdots&\ddots
			\end{bmatrix},
		\end{gather*}
		\endgroup
		on $\mathcal{K}=\mathcal{K}'\oplus \mathcal{K}'\oplus \cdots$.
		It can easily be observed that $Z_1R=RZ_2$ and $R|_{\mathcal{H}}=X, Z_1|_{\mathcal{H}}=T_1$ and $Z_2|_{\mathcal{H}}=T_2$. Also $Z_1,Z_2$ are co-isometries and $R$ is a strict co-isometry. 
	\end{proof}
	In the remainder of this section, we give some alternative proofs of intertwining lifting and intertwining dilation theorems using different methods.
	\subsection{An approach due to Douglas, Muhly and Pearcy}
	In this section, we give an Alternative proof of intertwining lifting theorem by Douglas's approach.
	First we observe that in Proposition \ref{prop:21}, if we consider $T_i\in \mathcal{B}(\mathcal{H}_i,\mathcal{H}_i')$ instead of $T_i\in\mathcal{B}(\mathcal{H}_i)$ then we obtain a similar result which is as follows. The proof follows by same method as that of the Proposition \ref{prop:21}.    
	\begin{prop}\label{Dgenprop}
		Let $T_1\in \mathcal{B}(\mathcal{H}_1,\mathcal{H}_1')$ and $T_2\in \mathcal{B}(\mathcal{H}_2,\mathcal{H}_2')$ be contractions. Suppose $X$ is an operator from $\mathcal{H}_1$ to $\mathcal{H}_2'$. Then \[
		Y = \begin{bmatrix}
			T_1 & 0\\
			X & T_2
		\end{bmatrix}:\mathcal{H}_1\oplus \mathcal{H}_2 \to \mathcal{H}_1'\oplus \mathcal{H}_2'\]
		is a contraction if and only if $X = D_{T_2^*}CD_{T_1}$ for some contraction $C : \mathcal{H}_1 \to \mathcal{H}_2'$.
	\end{prop}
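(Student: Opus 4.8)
The plan is to read off the contractivity of $Y$ from the positivity of $I-YY^{*}$ and $I-Y^{*}Y$, and to prove the two implications separately by the method of Proposition \ref{prop:21}. Nothing in that method uses the coincidence of the domain and codomain of a contraction; it relies only on Lemma \ref{Dlemma} and the standard defect relations $T_1D_{T_1}=D_{T_1^{*}}T_1$ and $D_{T_1}^{2}+T_1^{*}T_1=I$, so it transfers verbatim to the present ``different spaces'' setting.

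For sufficiency, suppose $X=D_{T_2^{*}}CD_{T_1}$ with $\|C\|\le 1$. Using the defect relations I would rewrite the off–diagonal entries $-XT_1^{*}=-D_{T_2^{*}}CT_1^{*}D_{T_1^{*}}$ and $-T_1X^{*}=-D_{T_1^{*}}T_1C^{*}D_{T_2^{*}}$, which produces the factorization
\[
I-YY^{*}=\begin{bmatrix} D_{T_1^{*}} & 0\\ 0 & D_{T_2^{*}}\end{bmatrix}M\begin{bmatrix} D_{T_1^{*}} & 0\\ 0 & D_{T_2^{*}}\end{bmatrix},\qquad M=\begin{bmatrix} I & -T_1C^{*}\\ -CT_1^{*} & I-CD_{T_1}^{2}C^{*}\end{bmatrix}.
\]
The key point is that $M\ge 0$: setting $L=\begin{bmatrix} I\\ -CT_1^{*}\end{bmatrix}$ one checks, via $D_{T_1}^{2}+T_1^{*}T_1=I$, that $M=LL^{*}+\begin{bmatrix}0&0\\0& I-CC^{*}\end{bmatrix}$, and the second summand is positive precisely because $\|C\|\le 1$. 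Since the outer factor is self-adjoint, $I-YY^{*}\ge 0$, i.e. $Y$ is a contraction.

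For necessity I would expand $\|T_1h\|^{2}+\|Xh+T_2k\|^{2}\le\|h\|^{2}+\|k\|^{2}$ into the equivalent form $\|Xh+T_2k\|^{2}\le\|D_{T_1}h\|^{2}+\|k\|^{2}$ for all $h,k$. Completing the square in $k$ and invoking the defect identity $D_{T_2^{*}}^{-2}=I+T_2D_{T_2}^{-2}T_2^{*}$ collapses the supremum over $k$ to $\|D_{T_2^{*}}^{[-1]}Xh\|^{2}$, where $D_{T_2^{*}}^{[-1]}$ is the pseudoinverse on $\overline{\operatorname{Ran}}\,D_{T_2^{*}}$. The inequality thus becomes: $Xh\in\overline{\operatorname{Ran}}\,D_{T_2^{*}}$ and $\|D_{T_2^{*}}^{[-1]}Xh\|\le\|D_{T_1}h\|$ for every $h$. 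Defining $C$ on $\overline{\operatorname{Ran}}\,D_{T_1}$ by $C\bigl(D_{T_1}h\bigr)=D_{T_2^{*}}^{[-1]}Xh$ and extending by $0$ on its orthogonal complement then yields a contraction with $X=D_{T_2^{*}}CD_{T_1}$.

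The main obstacle is the necessity direction, specifically the fact that the required factorization is \emph{two-sided}. The diagonal blocks of $I-Y^{*}Y$ and $I-YY^{*}$ give only the one-sided bounds $X^{*}X\le D_{T_1}^{2}$ and $XX^{*}\le D_{T_2^{*}}^{2}$, each of which Lemma \ref{Dlemma} turns into a one-sided factorization; but these cannot simply be composed, because $D_{T_2^{*}}^{2}\le I$ makes the two estimates pull in opposite directions. The completion-of-the-square computation is exactly what fuses them, and the delicate part there is the rigorous handling of $D_{T_2}^{[-1]}$ and $D_{T_2^{*}}^{[-1]}$ as pseudoinverses on the closures of the relevant ranges (finiteness of the supremum forcing $Xh\in\overline{\operatorname{Ran}}\,D_{T_2^{*}}$) — the routine-but-careful bookkeeping inherited from the proof of Proposition \ref{prop:21}.
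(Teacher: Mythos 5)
Your proposal takes a genuinely different route from the paper. The paper never verifies positivity of a defect operator directly: it writes the contractivity of $Y$ as
\[
\begin{bmatrix} 0&0\\ X&T_2\end{bmatrix}^{*}\begin{bmatrix} 0&0\\ X&T_2\end{bmatrix}\le \begin{bmatrix} D_{T_1}^{2}&0\\ 0&I\end{bmatrix}
\]
and applies Lemma \ref{Dlemma} twice — first to peel $D_{T_1}$ off the bottom row, then to peel $D_{T_2^{*}}$ off the resulting corner entry — so both implications of the equivalence come out of one computation, since Douglas's lemma is itself an equivalence. Your sufficiency argument, by contrast, is a direct verification: the factorization
\[
I-YY^{*}=\begin{bmatrix} D_{T_1^{*}} & 0\\ 0 & D_{T_2^{*}}\end{bmatrix}M\begin{bmatrix} D_{T_1^{*}} & 0\\ 0 & D_{T_2^{*}}\end{bmatrix},\qquad M=LL^{*}+\begin{bmatrix}0&0\\ 0& I-CC^{*}\end{bmatrix},
\]
is correct (I checked the algebra; it uses exactly the intertwining $T_1D_{T_1}=D_{T_1^{*}}T_1$ and $D_{T_1}^{2}+T_1^{*}T_1=I$, both valid between different spaces), and this half of your proof is complete and arguably more transparent than the paper's.

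The necessity half, however, has a genuine gap as written, and it is not the ``routine bookkeeping'' you describe. You claim finiteness of $\sup_k\bigl[\|Xh+T_2k\|^{2}-\|k\|^{2}\bigr]$ forces $Xh\in\overline{\operatorname{Ran}}\,D_{T_2^{*}}$; but membership in the \emph{closure} of the range is useless here, because $D_{T_2^{*}}^{[-1]}Xh$ is simply undefined for vectors in $\overline{\operatorname{Ran}}\,D_{T_2^{*}}\setminus\operatorname{Ran}\,D_{T_2^{*}}$, so your operator $C(D_{T_1}h)=D_{T_2^{*}}^{[-1]}Xh$ cannot even be written down. What is true, and what you need, is that finiteness forces $Xh$ into the \emph{actual} range $\operatorname{Ran}\,D_{T_2^{*}}$ with $\|D_{T_2^{*}}^{[-1]}Xh\|\le\|D_{T_1}h\|$. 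Proving this requires a real argument: by homogeneity, the inequality $\|Xh+T_2k\|^{2}\le\|D_{T_1}h\|^{2}+\|k\|^{2}$ for all $k$ is equivalent to $(Xh)(Xh)^{*}\le\|D_{T_1}h\|^{2}D_{T_2^{*}}^{2}$, and passing from this operator inequality to a genuine factorization $Xh=D_{T_2^{*}}z_h$ with $\|z_h\|\le\|D_{T_1}h\|$ is precisely the content of Lemma \ref{Dlemma} (equivalently, a Riesz-representation argument); it fails for closure-of-range membership. So the honest repair of your necessity direction is a pointwise-in-$h$ application of Douglas's lemma, at which point you have reconstructed the paper's proof with extra steps. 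A secondary caveat: the identity $D_{T_2^{*}}^{-2}=I+T_2D_{T_2}^{-2}T_2^{*}$ you invoke only makes literal sense when $\|T_2\|<1$; for a general contraction the supremum must be evaluated by a Cauchy--Schwarz argument rather than by inverting defect operators. Once the range issue is repaired, the remainder of your construction of $C$ (well-definedness on $\operatorname{Ran}\,D_{T_1}$, contractivity, extension by $0$) is fine.
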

	\begin{proof}
		The operator $Y = \begin{bmatrix}
			T_1 & 0\\
			X & T_2
		\end{bmatrix}$ is a contraction if and only if 
		$
		Y^*Y \leq \begin{bmatrix}
			I_{\mathcal{H}_1} & 0\\
			0 & I_{\mathcal{H}_2}
		\end{bmatrix}
		$, which is  equivalent to,  
		\begingroup
		\allowdisplaybreaks
		\begin{align}\label{eqmatrix}
			&\left(
			\begin{bmatrix}
				T_1 & 0\\
				0 & 0
			\end{bmatrix} + \begin{bmatrix}
				0 & 0\\
				X & T_2
			\end{bmatrix}
			\right)^*\left( \begin{bmatrix}
				T_1 & 0\\
				0 & 0
			\end{bmatrix} + \begin{bmatrix}
				0 & 0\\
				X & T_2
			\end{bmatrix} \right) \leq \begin{bmatrix}
				I_{\mathcal{H}_1} & 0\\
				0 & I_{\mathcal{H}_2}
			\end{bmatrix}\notag\\
			\iff & \begin{bmatrix}
				T_1^* & 0\\
				0 & 0
			\end{bmatrix}\begin{bmatrix}
				T_1 & 0\\
				0 & 0
			\end{bmatrix} + \begin{bmatrix}
				0 & 0\\
				X & T_2
			\end{bmatrix}^*\begin{bmatrix}
				0 & 0\\
				X & T_2
			\end{bmatrix} \leq \begin{bmatrix}
				I_{\mathcal{H}_1} & 0\\
				0 & I_{\mathcal{H}_2}
			\end{bmatrix}\notag\\
			\iff & \begin{bmatrix}
				T_1^*T_1 & 0\\
				0 & 0
			\end{bmatrix}+ \begin{bmatrix}
				0 & 0\\
				X & T_2
			\end{bmatrix}^*\begin{bmatrix}
				0 & 0\\
				X & T_2
			\end{bmatrix} \leq \begin{bmatrix}
				I_{\mathcal{H}_1} & 0\\
				0 & I_{\mathcal{H}_2}
			\end{bmatrix}\notag\\
			\iff & \begin{bmatrix}
				0 & 0\\
				X & T_2
			\end{bmatrix}^*\begin{bmatrix}
				0 & 0\\
				X & T_2
			\end{bmatrix} \leq \begin{bmatrix}
				I_{\mathcal{H}_1}-T_1^*T_1 & 0\\
				0 & I_{\mathcal{H}_2}
			\end{bmatrix}.
		\end{align}
		\endgroup
		By Lemma \ref{Dlemma}, Equation \eqref{eqmatrix} holds if and only if there is a contraction \[Z = \begin{bmatrix}
			Z_{11} & Z_{12}\\
			Z_{21} & Z_{22}
		\end{bmatrix}: \mathcal{H}_1 \oplus \mathcal{H}_2 \to  \mathcal{H}_1' \oplus \mathcal{H}_2'
		\]
		such that, 
		\begingroup
		\allowdisplaybreaks
		\begin{align}\label{eqmatrix1}
			\begin{bmatrix}
				0 & 0\\
				X & T_2
			\end{bmatrix}^* &= \begin{bmatrix}
				D_{T_1} & 0\\
				0 & I_{\mathcal{H}_2}
			\end{bmatrix}\begin{bmatrix}
				Z_{11} & Z_{12}\\
				Z_{21} & Z_{22}
			\end{bmatrix}^*\notag\\
			& = \begin{bmatrix}
				D_{T_1}Z_{11}^* & D_{T_1}Z_{21}^*\\
				Z_{12}^* & Z_{22}^*
			\end{bmatrix}.
		\end{align}
		\endgroup
		Equation \eqref{eqmatrix1} holds if and only if there is a contraction $Z$ such that $Z_{12} =0$, $Z_{22}=T_2$, $X=Z_{21}D_{T_1}$ and $D_{T_1}Z_{11}^*=0$. This is equivalent to existence of a contraction $Z' =\begin{bmatrix}
			0 & 0\\
			Z_{21} & T_2
		\end{bmatrix}$ satisfying $X = Z_{21}D_{T_1}$. Now $Z'$ is a contraction if and only if $Z'Z'^*\leq I_{\mathcal{H}_1'\oplus \mathcal{H}_2'}$ if and only if $Z_{21}Z_{21}^*+T_2T_2^*\leq I_{\mathcal{H}_2'}$, that is, $Z_{21}Z_{21}^*\leq I_{\mathcal{H}_2'}-T_2T_2^*$. Hence so far we have that, $Y = \begin{bmatrix}
			T_1 & 0\\
			X & T_2
		\end{bmatrix}$ is a contraction if and only if there is \[Z' =\begin{bmatrix}
			0 & 0\\
			Z_{21} & T_2
		\end{bmatrix}:\mathcal{H}_1\oplus \mathcal{H}_2\to \mathcal{H}_1'\oplus \mathcal{H}_2'\] satisfying $Z_{21}Z_{21}^*\leq I_{\mathcal{H}_2'}-T_2T_2^*$ and $X = Z_{21}D_{T_1}$. 
		By Lemma \ref{Dlemma}, $Z_{21}Z_{21}^*\leq I_{\mathcal{H}_2'}-T_2T_2^*$ holds if and only if there is a contraction $C:\mathcal{H}_1\to \mathcal{H}_2'$ such that $Z_{21}=D_{T_2^*}C$. Therefore, $Y$ is a contraction if and only if $X=D_{T_2^*}CD_{T_1}$ for some contraction $C$. 	
	\end{proof}
	Following the same technique as that of the proof of Theorem 3 in \cite{Dou:Muh:Pea}, we obtain a generalized version of it for intertwining operators instead of commuting operators.
	\begin{thm}\label{qpart}
		Let $T_1\in\mathcal{B}(\mathcal{H}_1)$, $T_2\in\mathcal{B}(\mathcal{H}_2)$ and $X\in\mathcal{B}(\mathcal{H}_2,\mathcal{H}_1)$ be contractions such that $T_1X=XT_2$. Suppose $V_1=\begin{bmatrix}
			T_1&0\\
			S_1&0
		\end{bmatrix}$ on $\mathcal{H}_1\oplus \mathcal{H}_1'$, $V_2=\begin{bmatrix}
			T_2&0\\
			S_2&0
		\end{bmatrix}$ on $\mathcal{H}_2\oplus \mathcal{H}_2'$ such that $T_1^*T_1+S_1^*S_1=I_{\mathcal{H}_1}$ and $T_2^*T_2+S_2^*S_2=I_{\mathcal{H}_2}$. Then there is a map $Y=\begin{bmatrix}
			X&0\\
			A&B
		\end{bmatrix}$ from $\mathcal{H}_2\oplus \mathcal{H}_2'$ to $\mathcal{H}_1\oplus \mathcal{H}_1'$ such that $V_1Y=YV_2$ and $\|Y\|=\|X\|$.  
	\end{thm}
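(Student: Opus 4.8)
The plan is to follow the technique of the proof of Theorem \ref{partialisointertwine}, but now invoking the block-lower-triangular factorization criterion of Proposition \ref{Dgenprop} in place of Proposition \ref{prop:21}. First I would reduce to the case $\|X\|=1$: if $X=0$ one simply takes $A=0$ and $B=0$; otherwise, replacing $X$ by $X/\|X\|$ preserves the intertwining identity $T_1X=XT_2$, and rescaling the resulting $Y$ by $\|X\|$ restores the general case while keeping both the form $\begin{bmatrix} X & 0 \\ A & B \end{bmatrix}$ and the equality $\|Y\|=\|X\|$.

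Next I would unwind the relation $V_1Y=YV_2$ blockwise. The $(1,1)$ entry is exactly the hypothesis $T_1X=XT_2$, both right-hand blocks vanish identically, and the only genuine constraint is the $(2,1)$ entry $S_1X=AT_2+BS_2$. Thus the task is to produce $A,B$ solving this equation for which $Y$ is a contraction. By Proposition \ref{Dgenprop}, the operator $Y=\begin{bmatrix} X & 0 \\ A & B \end{bmatrix}$ is a contraction precisely when $B$ is a contraction and $A=D_{B^*}CD_X$ for some contraction $C$. Passing to adjoints turns the equation into $T_2^*A^*+S_2^*B^*=X^*S_1^*$ and the factorization into $A^*=D_XC^*D_{B^*}$; with the defect $D_X$ of the given operator on the left and the defect $D_{B^*}$ of the unknown on the right, this is now in exactly the shape handled by the argument of Theorem \ref{partialisointertwine}.

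The heart of the matter is a positivity estimate allowing me to apply Theorem \ref{Dthm1} with $A_1=T_2^*D_X$, $A_2=S_2^*$ and $A_0=X^*S_1^*$. Using $D_X^2=I-X^*X$ together with the hypothesis $T_2^*T_2+S_2^*S_2=I$, one computes $A_1A_1^*+A_2A_2^*=I-T_2^*X^*XT_2$; on the other hand $S_1^*S_1=I-T_1^*T_1$ combined with $T_1X=XT_2$ gives $A_0A_0^*=X^*X-T_2^*X^*XT_2$. Hence the required inequality $A_1A_1^*+A_2A_2^*\geq A_0A_0^*$ collapses to $I\geq X^*X$, which holds because $X$ is a contraction. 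I expect this reduction -- folding the defect-and-intertwining expression down to the single inequality $X^*X\leq I$ -- to be the main point of the proof; the rest is bookkeeping.

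Finally, Theorem \ref{Dthm1} supplies operators $R,W$ with $X^*S_1^*=T_2^*D_XR+S_2^*W$ and $R^*R+W^*W\leq I$. Setting $B^*=W$ makes $B$ a contraction, and from $R^*R\leq I-W^*W=D_{B^*}^2$ Lemma \ref{Dlemma} produces a contraction $C$ with $R=C^*D_{B^*}$; then $A^*:=D_XR=D_XC^*D_{B^*}$, that is $A=D_{B^*}CD_X$, solves the intertwining equation and, by Proposition \ref{Dgenprop}, makes $Y$ a contraction. Since the $(1,1)$ block of $Y$ is $X$, one has $\|Y\|\geq\|X\|=1$, whence $\|Y\|=\|X\|$; this settles the normalized case and, after the rescaling of the first paragraph, the full statement.
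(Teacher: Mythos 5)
Your proposal is correct and takes essentially the same route as the paper's proof: the same reduction of $V_1Y=YV_2$ to the corner equation $AT_2+BS_2=S_1X$, the same contraction criterion from Proposition \ref{Dgenprop}, the same application of Theorem \ref{Dthm1} with $A_1=T_2^*D_X$, $A_2=S_2^*$, $A_0=X^*S_1^*$ (the paper's chain of inequalities collapses to exactly your $X^*X\leq I$), and the same final use of Lemma \ref{Dlemma} to write $A=D_{B^*}CD_X$ and conclude $\|Y\|=\|X\|$. The only difference is cosmetic: you pass to adjoints before, rather than after, imposing the factored form of $A$.
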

	\begin{proof}
		Without loss of generality assume $\|X\|=1$. We want to find a matrix $Y=\begin{bmatrix}
			X&0\\A&B
		\end{bmatrix}$ from $\mathcal{H}_2\oplus \mathcal{H}_2'$ to $\mathcal{H}_1\oplus \mathcal{H}_1'$ such that $V_1Y=YV_2$, that is, to find $A$ and $B$ such that \begin{equation}\label{qgen1}
			AT_2+BS_2=S_1X.
		\end{equation} For such a $Y$ to be a contraction, due to Proposition \ref{Dgenprop} it suffices to show that $A$, $B$ are contractions and \begin{equation}\label{qgen2}
			A=D_{B^*}CD_{X},
		\end{equation} for some contraction $C\in \mathcal{B}(\mathcal{H}_2,\mathcal{H}_1')$. We first construct $K$ from $\mathcal{H}_2$ to $\mathcal{H}_1'$ such that $A=KD_{X}$. Thus Equation \eqref{qgen1} gives us, 
		\begin{equation*} 
			KD_{X}T_2+BS_2=S_1X. 
		\end{equation*} This implies, 
		\begin{equation}\label{qgen3}
			T_2^*D_{X}K^*+S_2^*B^*=X^*S_1^*. 
		\end{equation} In order to find $K^*$ and $B^*$ satisfying \eqref{qgen3}, due to Theorem \ref{Dthm1}, it suffices to show that $$(T_2^*D_{X})(T_2^*D_{X})^*+S_2^*S_2\geq X^*S_1^*S_1X.$$ 
		Since $X^*X\leq I_{\mathcal{H}_2}$, $T_2^*T_2+S_2^*S_2=I_{\mathcal{H}_2}$ and $T_1^*T_1+S_1^*S_1=I_{\mathcal{H}_1}$, we have 
		\begingroup
		\allowdisplaybreaks
		\begin{align*}
			(T_2^*D_{X})(T_2^*D_{X})^*+S_2^*S_2&=T_2^*(I-X^*X)T_2+S_2^*S_2\\
			&\geq T_2^*(I-X^*X)T_2+S_2^*S_2-(I_{\mathcal{H}_2}-X^*X)\\
			& = X^*X - T_2^*X^*XT_2\\
			& = X^*X - X^*T_1^*T_1X\\
			& = X^*(I_{\mathcal{H}_1}-T_1^*T_1)X\\
			& = X^*S_1^*S_1X.
		\end{align*}
		\endgroup
		Hence by Theorem \ref{Dthm1}, there are operators $K:  \mathcal{H}_2 \to \mathcal{H}_1'$ and $B : \mathcal{H}_2' \to \mathcal{H}_1'$ satisfying \eqref{qgen3} and $KK^* + BB^*\leq I_{\mathcal{H}_1'}$. This implies that $B$ is a contraction and $KK^* \leq I_{\mathcal{H}_1'} - BB^*$. Again by applying Lemma \ref{Dlemma}, there is a contraction $C$ from $\mathcal{H}_2$ to $\mathcal{H}_1'$ such that $K = D_{B^*}C$. 
		
		Clearly $\|Y\|\geq \|X\|=1$. Since $X, B$ are contractions and $A=D_{B^*}CD_{X}$ for some contraction $C$, it follows from Proposition \ref{Dgenprop} that $\|Y\|\leq 1$.
		This completes the proof.
	\end{proof}
	Now we are ready to give an alternative proof of the intertwining lifting theorem by means of a similar technique as that in the proof of Theorem 4 in \cite{Dou:Muh:Pea}.
	\begin{thm}[Theorem \ref{intertwining}]\label{maininter}
		Let $T,T',X\in \mathcal{B}(\mathcal{H})$ be such that $T$ and $T'$ are contraction and $TX=XT'$. Let $V$ on $\mathcal{K}$ be the minimal isometric dilation of $T$ and $V'$ on $\mathcal{K}'$ be the minimal isometric dilation of $T'$. Then there is a bounded linear operator $Y:\mathcal{K}' \rightarrow \mathcal{K}$ such that 
		\begin{itemize}
			\item[(1)] $\mathcal{H}$ is invariant under $Y^*$ and $Y^*|_{\mathcal{H}}=X^*$;
			\item[(2)] $\|Y\|=\|X\|$ and
			\item[(3)] $VY=YV'$.
		\end{itemize}
	\end{thm}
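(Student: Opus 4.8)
The plan is to mimic the proof of Theorem 4 of \cite{Dou:Muh:Pea}, now using the intertwining one-step result Theorem \ref{qpart} as the building block and running the induction inside the Sch\"affer minimal isometric dilations of $T$ and $T'$. Since every relation in the statement is homogeneous in $X$ and the norm conclusion scales accordingly, I would first note that we may assume $\|X\|\le 1$; the general case then follows by replacing the constructed $Y$ by $\|X\|Y$ applied to $X/\|X\|$. Keeping $\|X\|\le 1$ is exactly what is needed so that the intermediate intertwiners stay contractive and Theorem \ref{qpart} is applicable at each stage.

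First I would fix the Sch\"affer models
\[
V=\begin{bmatrix} T&0&0&\cdots\\ D_T&0&0&\cdots\\ 0&I&0&\cdots\\ \vdots&&&\ddots\end{bmatrix}\ \text{on}\ \mathcal{K}=\mathcal{H}\oplus\mathcal{D}_T\oplus\mathcal{D}_T\oplus\cdots,
\]
together with the analogous $V'$ on $\mathcal{K}'=\mathcal{H}\oplus\mathcal{D}_{T'}\oplus\mathcal{D}_{T'}\oplus\cdots$, and set $\mathcal{K}_n=\mathcal{H}\oplus\mathcal{D}_T\oplus\cdots\oplus\mathcal{D}_T$ with $n$ copies of $\mathcal{D}_T$, writing $\mathcal{L}_{n+1}$ for the $(n+1)$-th copy so that $\mathcal{K}_{n+1}=\mathcal{K}_n\oplus\mathcal{L}_{n+1}$; similarly for the primed spaces. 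The $\mathcal{K}_n$ increase to a dense subspace and $V$ carries $\mathcal{K}_n$ into $\mathcal{K}_{n+1}$. Putting $\widehat{V}_n=P_{\mathcal{K}_n}V|_{\mathcal{K}_n}$ and $S_n=P_{\mathcal{L}_{n+1}}V|_{\mathcal{K}_n}$, the fact that $V|_{\mathcal{K}_n}$ is isometric gives $\widehat{V}_n^*\widehat{V}_n+S_n^*S_n=I_{\mathcal{K}_n}$, and since $V(\mathcal{L}_{n+1})\subseteq\mathcal{L}_{n+2}$ one checks directly that $P_{\mathcal{K}_{n+1}}V|_{\mathcal{K}_{n+1}}=\begin{bmatrix}\widehat{V}_n&0\\ S_n&0\end{bmatrix}=\widehat{V}_{n+1}$, with the identical statement on the primed side. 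Thus each truncation carries precisely the block shape demanded by Theorem \ref{qpart}.

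The induction then runs as follows. I start with $Y_0=X$ on $\mathcal{K}_0=\mathcal{H}$, which intertwines $\widehat{V}_0=T$ and $\widehat{V}_0'=T'$ by hypothesis. Given a contraction $Y_n\colon\mathcal{K}_n'\to\mathcal{K}_n$ with $\widehat{V}_nY_n=Y_n\widehat{V}_n'$, I apply Theorem \ref{qpart} with $\mathcal{H}_1=\mathcal{K}_n,\ \mathcal{H}_1'=\mathcal{L}_{n+1},\ T_1=\widehat{V}_n,\ S_1=S_n$ and the corresponding primed data on the domain side, obtaining
\[
Y_{n+1}=\begin{bmatrix}Y_n&0\\ A_{n+1}&B_{n+1}\end{bmatrix}\colon\mathcal{K}_{n+1}'\to\mathcal{K}_{n+1}
\]
with $\widehat{V}_{n+1}Y_{n+1}=Y_{n+1}\widehat{V}_{n+1}'$ and $\|Y_{n+1}\|=\|Y_n\|=\|X\|$, which maintains the inductive hypothesis. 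Because the new entry $A_{n+1}$ maps into the fresh summand $\mathcal{L}_{n+1}$, for fixed $x\in\bigcup_n\mathcal{K}_n'$ the vectors $Y_{n+1}x$ and $Y_nx$ differ only by the mutually orthogonal pieces $A_{n+1}x\in\mathcal{L}_{n+1}$, so the uniform bound $\|Y_nx\|\le\|X\|\,\|x\|$ forces $\sum_k\|A_kx\|^2<\infty$ and $\{Y_nx\}$ is Cauchy. This is the one genuinely analytic step, and it is exactly where the orthogonality of the dilation's spillover spaces is used. Extending each $Y_n$ by $0$ off $\mathcal{K}_n'$ and passing to the strong limit yields $Y\colon\mathcal{K}'\to\mathcal{K}$ with $\|Y\|\le\|X\|$.

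Finally I would read off the three conclusions. Each $Y_n$ is lower block-triangular with $(0,0)$-entry $X$, so $P_{\mathcal{H}}Y_n=XP_{\mathcal{H}}$, and hence $P_{\mathcal{H}}Y=XP_{\mathcal{H}}$ in the limit; this gives $Y^*\mathcal{H}\subseteq\mathcal{H}$ with $Y^*|_{\mathcal{H}}=X^*$, which is (1), and therefore also $\|Y\|\ge\|X\|$, so $\|Y\|=\|X\|$, which is (2). For (3), I would fix $x\in\mathcal{K}_m'$ and let $n\to\infty$ in $\widehat{V}_{n+1}Y_{n+1}x=Y_{n+1}\widehat{V}_{n+1}'x$: once $n+1\ge m+1$ the right-hand side equals $Y_{n+1}V'x\to YV'x$, while the left-hand side is $P_{\mathcal{K}_{n+1}}VY_{n+1}x\to VYx$, so $VY=YV'$ on a dense set and hence everywhere. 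The main difficulty is not conceptual but organizational: one must verify the block identifications of the compressions $\widehat{V}_n,\widehat{V}_n'$ with the hypotheses of Theorem \ref{qpart} and justify the strong limit, the latter being the only place where the structure of the isometric dilation is essential.
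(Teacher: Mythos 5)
Your proposal is correct and follows essentially the same route as the paper's proof: both run an induction using Theorem \ref{qpart} on the finite sections $\mathcal{K}_n',\mathcal{K}_n$ of the Sch\"affer minimal isometric dilations and then pass to a strong limit to obtain $Y$. The only cosmetic difference is that the paper takes the limit of the adjoints $Y_n^*$, which are eventually constant on each $\mathcal{K}_m$ so convergence is immediate, whereas you prove Cauchyness of $Y_n x$ directly from the mutual orthogonality of the increments $A_k x \in \mathcal{L}_k$ and the uniform bound $\|Y_n\|=\|X\|$; both yield the same operator and the same verification of properties (1)--(3).
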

	\begin{proof}
		As $\mathcal{K}$ and $\mathcal{K}'$ are the minimal isometric dilation spaces for $T$ and $T'$ respectively, let
		\[
		\mathcal{K} =  \mathcal{H}\oplus \mathcal D_{T}\oplus \mathcal D_{T}\oplus \dots
		\]
		and
		\[
		\mathcal{K}'=  \mathcal{H}\oplus \mathcal D_{T'}\oplus \mathcal D_{T'}\oplus \dots.
		\]
		Consider for all $n\in \mathbb{N}\cup \{0 \}$, \[ \mathcal{K}_n=\mathcal{H}\oplus\underbrace{\mathcal{D}_{T}\oplus\cdots \oplus\mathcal{D}_{T}}_{\text{n copies}}\oplus \{ 0\}\oplus \cdots \] and
		\[ \mathcal{K}_n'=\mathcal{H}\oplus\underbrace{\mathcal{D}_{T'}\oplus\cdots \oplus\mathcal{D}_{T'}}_{\text{n copies}}\oplus \{ 0\}\oplus \cdots .\] 
		One can easily check that $\mathcal{K}_n$ and $\mathcal{K}_n'$ are invariant subspaces for $V^*$ and $V'^*$ respectively. Hence define $V_{n}^*=V^*|_{\mathcal{K}_n}$ and $V_{n}'^*=V'^*|_{\mathcal{K}_n'}$.
		Since $\mathcal{K}_n$ is an invariant subspace for $V_{(n+1)}^*$, with respect to the decomposition $\mathcal{K}_{(n+1)}=\mathcal{K}_n\oplus \mathcal{D}_{T}$ the operator $V_{(n+1)}$ has the block matrix form $V_{(n+1)}=\begin{bmatrix}
			V_{n}&0\\
			S_n&0
		\end{bmatrix}$. Similarly, $\mathcal{K}_n'$ is an invariant subspace for $V_{(n+1)}'^*$ hence with respect to the decomposition $\mathcal{K}_{n+1}'=\mathcal{K}_n'\oplus \mathcal{D}_{T'}$ the operator $V'_{(n+1)}$ has the block matrix form $V'_{(n+1)}=\begin{bmatrix}
			V'_{n}&0\\
			S'_n&0
		\end{bmatrix}$. It can easily be observed that $V_{n}^*V_{n}+S_{n}^*S_{n}=I_{\mathcal{K}_n}$ and $V_{n}'^*V'_{n}+S_n'^*S_n'=I_{\mathcal{K}_n'}$. Now we find a sequence $\{Y_n\}$ inductively using Theorem $\ref{qpart}$ as follows. \\
		\textbf{Step I.} We have $V_{1}=\begin{bmatrix}
			T&0\\
			D_{T}&0
		\end{bmatrix}$ and $V'_{1}=\begin{bmatrix}
			T'&0\\
			D_{T'}&0
		\end{bmatrix}$ 
		satisfying $TX=XT'$, $T^*T+D_{T}^2=I_{\mathcal{H}}$ and $(T')^*(T')+D_{T'}^2=I_{\mathcal{H}}$. Therefore, by Theorem \ref{qpart}, there is a map $Y_1=\begin{bmatrix}
			X&0\\
			A_1&B_1
		\end{bmatrix}$ from $\mathcal{K}_1'$ to $\mathcal{K}_1$ such that $V_1Y_1=Y_1V_1'$, $Y_1^*|_{\mathcal{H}}=X^*$ and $\|Y_1\|=\|X\|$. \\
		\textbf{Step II.} Assume that for $1\leq m \leq n-1$ there is a map $Y_{m}=\begin{bmatrix}
			Y_{m-1}&0\\
			A_{m}&B_{m}
		\end{bmatrix}$ from $\mathcal{K}_{m}'$ to $\mathcal{K}_{m}$ such that $V_{m}Y_{m}=Y_mV_{m}'$, $Y_{m}^*|_{\mathcal{K}_{m-1}}=Y_{m-1}^*$ and $\|Y_{m}\|=\|X\|$. By Theorem \ref{qpart}, there is a map $Y_{n}=\begin{bmatrix}
			Y_{n-1}&0\\
			A_{n}&B_{n}
		\end{bmatrix}$ from $\mathcal{K}_{n}'$ to $\mathcal{K}_{n}$ such that $V_{n}Y_n=Y_{n}V_{n}'$, $Y_{n}^*|_{\mathcal{K}_{n-1}}=Y_{n-1}^*$ and $\|Y_{n}\|=\|X\|$.\\
		
		Hence by induction we have a sequence $\{Y_n\}$ such that for each $n\in\mathbb{N}$, $V_{n}Y_{n}=Y_{n}V_{n}'$, $Y_{n}^*|_{\mathcal{K}_{n-1}}=Y_{n-1}^*$ and $\|Y_{n}\|=\|X\|$.
		We may consider $Y_n^*$ as an operator from $\mathcal{K}$ to $\mathcal{K}'$ by defining $Y_n^*(k)=0$ for $k\in \mathcal{K}\ominus \mathcal{K}_n$. Similarly $V_n^*$ and $V'^*_n$ can be thought of as operators on $\mathcal{K}$ and $\mathcal{K}'$ by defining it to be $0$ on $\mathcal{K}_n^{\perp}$ and $\mathcal{K}_n'^{\perp}$ respectively. Hence for all $n\in \mathbb{N}$, $Y_n^*V_n^*=V_n'^*Y_n^*$ from $\mathcal{K}$ to $\mathcal{K}'$. For each $x \in \bigcup\limits_{n=0}^{\infty}\mathcal{K}_n$, which is dense in $\mathcal{K}$, the sequence $\{Y_n^*x\}$ is a cauchy sequence and since $\|Y_n^*\|=\|X^*\|$, by uniform boundedness principle the sequence $\{Y_n^*\}$ converges strongly, say to the operator $Y^*\text{ from }\mathcal{K}$ to $\mathcal{K}'$. Clearly $\|Y\|=\|X\|$ and $Y^*|_{\mathcal{H}}=X^*$. By construction $\{V_n^*\}$ and $\{V_n'^*\}$ converge strongly to $V^*$ and $V'^*$ respectively. Hence $VY=YV'$. This completes the proof.
	\end{proof}
	Since, $(V,\mathcal{K})$ is the minimal co-isometric extension of $T$ if and only if $V^*$ is the minimal isometric lift of $T^*$, we obtain the following result by restating the Theorem \ref{maininter}.
	\begin{thm}[Theorem \ref{douglasintertwining}]\label{mainintercoiso}
		Let $T,T',X\in \mathcal{B}(\mathcal{H})$ be such that $T$ and $T'$ are contraction and $TX=XT'$. Let $Z$ on $\mathcal{K}$ be the minimal co-isometric extension of $T$ and $Z'$ on $\mathcal{K}'$ be the minimal co-isometric extension of $T'$. Then there is a bounded linear transformation $Y:\mathcal{K}' \rightarrow \mathcal{K}$ such that 
		\begin{itemize}
			\item[(1)] $\mathcal{H}$ is invariant under $Y$ and $Y|_{\mathcal{H}}=X$;
			\item[(2)] $\|Y\|=\|X\|$ and
			\item[(3)] $ZY=YZ'$.
		\end{itemize}
	\end{thm}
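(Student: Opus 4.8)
The plan is to deduce this theorem directly from its isometric counterpart, Theorem \ref{maininter}, by a duality argument, exploiting the elementary fact already noted in the text: $(Z,\mathcal{K})$ is a minimal co-isometric extension of a contraction $T$ if and only if $(Z^*,\mathcal{K})$ is a minimal isometric lift — equivalently, a minimal isometric dilation — of $T^*$. Thus the entire content of the statement should fall out of Theorem \ref{maininter} once the hypotheses and conclusions are transported across adjoints.

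First I would take adjoints of the intertwining hypothesis. From $TX=XT'$ one obtains $X^*T^*=T'^*X^*$. Writing $A=T'^*$, $A'=T^*$ and $B=X^*$, this reads $BA'=AB$, i.e. $AB=BA'$, and $A,A'$ are contractions because $T,T'$ are. Moreover, by the duality recalled above, $Z'^*$ is the minimal isometric dilation of $A=T'^*$ on $\mathcal{K}'$ and $Z^*$ is the minimal isometric dilation of $A'=T^*$ on $\mathcal{K}$.

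Next I would apply Theorem \ref{maininter} to the triple $(A,A',B)$ with minimal isometric dilations $V=Z'^*$ and $V'=Z^*$. This produces a bounded operator $Y_0:\mathcal{K}\to\mathcal{K}'$ with $\|Y_0\|=\|B\|=\|X\|$, with $\mathcal{H}$ invariant for $Y_0^*$ and $Y_0^*|_{\mathcal{H}}=B^*=X$, and satisfying $Z'^*Y_0=Y_0Z^*$. Setting $Y:=Y_0^*:\mathcal{K}'\to\mathcal{K}$ then yields $Y|_{\mathcal{H}}=X$ with $\mathcal{H}$ invariant under $Y$, which is conclusion (1); $\|Y\|=\|Y_0\|=\|X\|$, which is conclusion (2); and, taking adjoints of $Z'^*Y_0=Y_0Z^*$, the relation $YZ'=ZY$, that is $ZY=YZ'$, which is conclusion (3).

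Since the substance of the argument is merely unwinding adjoints, there is no genuine analytic obstacle; the only point requiring care is the bookkeeping. One must correctly swap the roles of the two contractions when passing to adjoints, so that $T'^*$ plays the role of the \emph{first} contraction, track which dilation space is the domain and which the codomain of $Y_0$, and verify that the minimal isometric dilation coincides with the minimal isometric lift so that Theorem \ref{maininter} indeed applies. Inverting any of these correspondences would flip the intertwining relation, so the plan is to fix the dictionary $A=T'^*$, $A'=T^*$, $B=X^*$, $Y=Y_0^*$ at the outset and then check each of (1)--(3) against it.
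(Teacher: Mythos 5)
Your proposal is correct and takes essentially the same route as the paper: the paper's own proof is a one-line reduction to Theorem \ref{maininter} via the duality between minimal co-isometric extensions and minimal isometric dilations, which is exactly the adjoint argument you carry out. Your write-up merely makes explicit the dictionary ($A=T'^*$, $A'=T^*$, $B=X^*$, $Y=Y_0^*$) and the verification of (1)--(3) that the paper leaves implicit.
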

	\begin{proof}
		Follows from Theorem \ref{maininter}.
	\end{proof}
	Now we obtain intertwining lift of $X$ to the minimal unitary dilation spaces of $T$ and $T'$ as follows.
	\begin{thm}\label{minunidil}
		Let $T$ and $T'$ be contractions and $X$ be any operator on a Hilbert space $\mathcal{H}$ such that $TX=XT'$. Let $U$ on $\mathcal{K}$ and $U'$ on $\mathcal{K}'$ be the minimal unitary dilations of $T$ and $T'$ respectively. Then there is a bounded linear operator $S:\mathcal{K}' \rightarrow \mathcal{K}$ such that 
		\begin{itemize}
			\item[(1)] $\|S\|=\|X\|$;
			\item[(2)] $US = SU'$ and
			\item[(3)] $T^nX = P_{\mathcal{H}}U^nS|_{\mathcal{H}}$ and  $XT'^n = P_{\mathcal{H}}SU'^n|_{\mathcal{H}}$ for all $n\geq 0$.
		\end{itemize}
	\end{thm}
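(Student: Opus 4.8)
The plan is to reduce the unitary statement to the isometric intertwining lifting theorem (Theorem \ref{intertwining}) and then to transport the resulting intertwiner to the bilateral dilation spaces along the backward orbits of $U$ and $U'$. First I would realize the minimal isometric dilations inside the given minimal unitary dilations. Setting $\mathcal{L}=\bigvee_{n\geq 0}U^{n}\mathcal{H}$ and $\mathcal{L}'=\bigvee_{n\geq 0}U'^{n}\mathcal{H}$, these spaces are $U$- and $U'$-invariant respectively, and recalling the standard Sz.-Nagy--Foias facts, $V:=U|_{\mathcal{L}}$ and $V':=U'|_{\mathcal{L}'}$ are exactly the minimal isometric dilations of $T$ and $T'$, while $U$ and $U'$ are the minimal unitary extensions of $V$ and $V'$, so that $\mathcal{K}=\bigvee_{n\geq 0}U^{-n}\mathcal{L}$ and $\mathcal{K}'=\bigvee_{n\geq 0}U'^{-n}\mathcal{L}'$. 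Applying Theorem \ref{intertwining} to $T,T',X$ yields an operator $Y:\mathcal{L}'\to\mathcal{L}$ with $\|Y\|=\|X\|$, $Y^{*}|_{\mathcal{H}}=X^{*}$, and $VY=YV'$.

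Next I would extend $Y$ to an operator $S:\mathcal{K}'\to\mathcal{K}$. Since $U'\mathcal{L}'\subseteq\mathcal{L}'$, the subspaces $U'^{-n}\mathcal{L}'$ increase with $n$ and their union is dense in $\mathcal{K}'$, so I may define $S$ on $U'^{-n}\mathcal{L}'$ by the rule $S(U'^{-n}\ell')=U^{-n}Y\ell'$ for $\ell'\in\mathcal{L}'$, equivalently $S=U^{-n}YU'^{\,n}$ on $U'^{-n}\mathcal{L}'$. The consistency of this prescription across consecutive levels is the crux of the argument: for $\ell'\in\mathcal{L}'$ one writes $U'^{-n}\ell'=U'^{-(n+1)}(V'\ell')$, and then $YV'\ell'=VY\ell'=UY\ell'$ forces the two definitions to agree. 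Because $U$ and $U'$ are unitary, each $U^{-n}YU'^{\,n}$ has norm $\|Y\|$, so $\|S\|\leq\|Y\|=\|X\|$ on the dense domain and $S$ extends continuously to all of $\mathcal{K}'$.

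Finally I would check the three asserted properties. The intertwining $US=SU'$ follows from a one-line shift computation on the dense domain: for $\ell'\in\mathcal{L}'$ and $n\geq 1$, both $SU'(U'^{-n}\ell')$ and $US(U'^{-n}\ell')$ equal $U^{-(n-1)}Y\ell'$ (and the base case uses $VY=YV'$ together with $U|_{\mathcal{L}}=V$, $U'|_{\mathcal{L}'}=V'$), so $US=SU'$ by continuity. For the norm, $\|S\|\leq\|X\|$ already holds, and taking $n=0$ below gives $P_{\mathcal{H}}S|_{\mathcal{H}}=X$, whence $\|S\|\geq\|X\|$. For the dilation relations, note that for $n\geq 0$ the powers $U^{n}$ and $U'^{n}$ agree with $V^{n}$ and $V'^{n}$ on $\mathcal{L}$ and $\mathcal{L}'$; thus $P_{\mathcal{H}}U^{n}S|_{\mathcal{H}}=P_{\mathcal{H}}V^{n}Y|_{\mathcal{H}}=T^{n}X$ (using $\mathcal{K}\ominus\mathcal{H}$ invariant under $V$ together with $P_{\mathcal{H}}Y|_{\mathcal{H}}=X$), and $P_{\mathcal{H}}SU'^{n}|_{\mathcal{H}}=P_{\mathcal{H}}YV'^{n}|_{\mathcal{H}}=XT'^{n}$ (computed through $Y^{*}|_{\mathcal{H}}=X^{*}$ and $P_{\mathcal{H}}V'^{n}|_{\mathcal{H}}=T'^{n}$). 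These two identities are mutually consistent precisely because $TX=XT'$ propagates to $T^{n}X=XT'^{n}$ for every $n\geq 0$.

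I expect the main obstacle to be the well-definedness of $S$ on the increasing union $\bigcup_{n}U'^{-n}\mathcal{L}'$, namely verifying that the formulas prescribed on $U'^{-n}\mathcal{L}'$ and on $U'^{-(n+1)}\mathcal{L}'$ coincide. This is exactly the step that consumes the isometric intertwining relation $VY=YV'$, and it is also what simultaneously guarantees that $S$ does not increase norm; once this is in place, the remaining verifications are routine density-and-continuity arguments together with the compression identities recorded above.
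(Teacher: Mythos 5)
Your proposal is correct, and its first half coincides with the paper's: both reduce the problem to the intertwining lifting theorem for minimal isometric dilations (Theorem \ref{intertwining}/\ref{maininter}), obtaining an operator $Y$ with $VY=YV'$, $\|Y\|=\|X\|$, $Y^*|_{\mathcal{H}}=X^*$. Where you diverge is the passage from the isometric to the unitary dilation spaces. The paper applies the same lifting theorem a \emph{second} time, to the adjoint intertwining $S_+^*U_+^*=U_+'^*S_+^*$, after observing that $(U^*,\mathcal{K})$ and $(U'^*,\mathcal{K}')$ are the minimal isometric dilations of $(U_+^*,\mathcal{K}_+)$ and $(U_+'^*,\mathcal{K}_+')$; the extension $S$ and its properties then come packaged by that theorem, and part (3) is verified through block-matrix bookkeeping. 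You instead build $S$ by hand on the increasing union $\bigcup_n U'^{-n}\mathcal{L}'$ via $S=U^{-n}YU'^{\,n}$, with well-definedness consuming exactly the relation $VY=YV'$ (together with $U|_{\mathcal{L}}=V$, $U'|_{\mathcal{L}'}=V'$), and unitarity of $U,U'$ giving the norm bound for free. Your route is more elementary and self-contained for the extension step — it yields an explicit formula for $S$, makes norm preservation and the intertwining $US=SU'$ transparent, and avoids having to recognize the adjoint unitaries as minimal isometric dilations — at the cost of the density/consistency verification; the paper's route is shorter given the machinery, reusing one theorem twice in a clean duality. Two small slips to repair in a written version: in the compression argument the complement invariant under $V$ is $\mathcal{L}\ominus\mathcal{H}$ (not $\mathcal{K}\ominus\mathcal{H}$), i.e.\ you should use that $V$ and $Y$ are lower triangular with respect to $\mathcal{H}\oplus(\mathcal{L}\ominus\mathcal{H})$ and $\mathcal{H}\oplus(\mathcal{L}'\ominus\mathcal{H})$, so that $P_{\mathcal{H}}V^nY|_{\mathcal{H}}=T^nX$ and $P_{\mathcal{H}}YV'^{\,n}|_{\mathcal{H}}=XT'^{\,n}$; and the standard facts you invoke (that $U|_{\mathcal{L}}$ is the minimal isometric dilation of $T$ and that $\mathcal{K}=\bigvee_{n\geq 0}U^{-n}\mathcal{L}$) deserve an explicit citation or a two-line proof, since they carry the structural weight your construction rests on.
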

	\begin{proof}
		Let $(U_+,\mathcal{K}_+)$ and $\left(U'_+,\mathcal{K}'_+\right)$ be the minimal isometric dilations of $(T,\mathcal{H})$ and $(T', \mathcal{H})$ respectively. Then by Theorem \ref{maininter}, there is $S_+:\mathcal{K}_+' \rightarrow \mathcal{K}_+$ such that 
		\begin{itemize}
			\item[(i)] $\mathcal{H}$ is invariant under $S_+^*$ and $S_+^*|_{\mathcal{H}}=X^*$;
			\item[(ii)] $\|S_+\| = \|X\|$ and 
			\item[(iii)] $U_+S_+ =S_+ U'_+$.
		\end{itemize}
		From (iii) we have that 
		\[
		S_+^*U_+^*=U_+'^*S_+^* .
		\]
		Since $(U^*, \mathcal{K})\text{ and }(U'^*, \mathcal{K}')$ are the minimal isometric dilations of $(U_+^*,\\ \mathcal{K}_+)\text{ and }(U_+'^*, \mathcal{K}_+')$ respectively, so again by Theorem \ref{maininter}, there is $S^*: \mathcal{K} \rightarrow \mathcal{K}'$ such that
		\begin{itemize}
			\item[(a)] $S \text{ maps }\mathcal{K}_+' \text{ to } \mathcal{K}_+ \text{ and }S|_{\mathcal{K}_+'} = S_+$;
			\item[(b)] $\|S\|=\|S_+\|=\|X\|$ and
			\item[(c)] $S^*U^*=U'^*S^* $.
		\end{itemize}
		From (c) we have that 
		\[
		US=SU'.
		\]
		By (i) the block matrix of $S_+^*$ with respect to the decompositions $\mathcal{K}_+= \mathcal{H} \oplus (\mathcal{K}_+\ominus \mathcal{H})$ and $\mathcal{K}_{+}'=\mathcal{H}\oplus (\mathcal{K}_+'\ominus \mathcal{H})$ is 
		\[
		\begin{bmatrix}
			X^* & *\\
			0 & *
		\end{bmatrix}, \text{ that is, } S_+=
		\begin{bmatrix}
			X & 0\\
			* & *
		\end{bmatrix}:\mathcal{H}\oplus (\mathcal{K}_+'\ominus \mathcal{H})\rightarrow \mathcal{H} \oplus (\mathcal{K}_+\ominus \mathcal{H}).
		\]
		Again by (a) the block matrix of $S$ with respect to the decompositions $\mathcal{K}=\mathcal{K}_+\oplus \mathcal{K}_+^\perp$ and $\mathcal{K}'= \mathcal{K}_+' \oplus \mathcal{K}_+'^\perp$ is 
		\[
		\begin{bmatrix}
			S_+ & *\\
			0 & *
		\end{bmatrix}.
		\]
		Therefore, the block matrix of $S$ with respect to the decompositions $\mathcal{K}=\mathcal{H}\oplus (\mathcal{K}_+\ominus \mathcal{H})\oplus \mathcal{K}_+^\perp$ and $\mathcal{K}'=\mathcal{H}\oplus (\mathcal{K}_+'\ominus \mathcal{H})\oplus \mathcal{K}_+'^\perp$ is 
		$\begin{bmatrix}
			X & 0 & *\\
			* & * & *\\
			0 & 0 & *
		\end{bmatrix}$.
		Again, since $(U, \mathcal{K})$ is the minimal unitary dilation of $(T, \mathcal{H})$, the block matrix of $U$ with respect to the decompositions $\mathcal{K}=\mathcal{H}\oplus (\mathcal{K}_+\ominus \mathcal{H})\oplus \mathcal{K}_+^\perp$ and $\mathcal{K}'=\mathcal{H}\oplus (\mathcal{K}_+'\ominus \mathcal{H})\oplus \mathcal{K}_+'^\perp$ is 
		$\begin{bmatrix}
			T & 0 & *\\
			* & * & *\\
			0 & 0 & *
		\end{bmatrix}$.
		Similarly, the block matrix of $U'$ with respect to the decompositions $\mathcal{K}=\mathcal{H}\oplus (\mathcal{K}_+\ominus \mathcal{H})\oplus \mathcal{K}_+^\perp$ and $\mathcal{K}'=\mathcal{H}\oplus (\mathcal{K}_+'\ominus \mathcal{H})\oplus \mathcal{K}_+'^\perp$ is 
		$\begin{bmatrix}
			T' & 0 & *\\
			* & * & *\\
			0 & 0 & *
		\end{bmatrix}$.
		Therefore,
		\[
		U^nS  = \begin{bmatrix}
			T^nX & 0 & *\\
			* & * & *\\
			0 & 0 & *
		\end{bmatrix}
		\] and consequently $T^nX = P_{\mathcal{H}}U^nS|_{\mathcal{H}}$ for all $n\geq 0$. Similarly, $XT'^n = P_{\mathcal{H}}SU'^n|_{\mathcal{H}}$ for all $n \geq 0$.
	\end{proof}
	\subsection{An approach due to Sebesty$\acute{e}$n}
	
	\begin{thm}[Theorem \ref{douglasintertwining}]
		Let $T,T',X\in \mathcal{B}(\mathcal{H})$ be such that $T$ and $T'$ are contraction and $TX=XT'$. Let $Z$ on $\mathcal{K}$ be the minimal co-isometric extension of $T$ and $Z'$ on $\mathcal{K}'$ be the minimal co-isometric extension of $T'$. Then there is a bounded linear operator $Y:\mathcal{K}' \rightarrow \mathcal{K}$ such that 
		\begin{itemize}
			\item[(1)] $\mathcal{H}$ is invariant under $Y$ and $Y|_{\mathcal{H}}=X$;
			\item[(2)] $\|Y\|=\|X\|$ and
			\item[(3)] $ZY=YZ'$.
		\end{itemize}
	\end{thm}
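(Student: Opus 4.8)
The plan is to carry out the inductive, Parrott-type construction of Sebesty\'en (the same scheme used in the earlier Sebesty\'en-style proof of this section), now performed between the two \emph{distinct} dilation spaces $\mathcal{K}$ and $\mathcal{K}'$. Since $Z$ and $Z'$ are the minimal co-isometric extensions of $T$ and $T'$, I would set $K_n=\bigvee_{m=0}^{n}Z^{*m}\mathcal{H}$ and $K_n'=\bigvee_{m=0}^{n}Z'^{*m}\mathcal{H}$, with orthogonal projections $P_n$ onto $K_n$ and $P_n'$ onto $K_n'$. Then $\bigcup_n K_n$ and $\bigcup_n K_n'$ are dense in $\mathcal{K}$ and $\mathcal{K}'$, the families increase, $P_{n+1}Z^{*}=Z^{*}P_n$, $K_n'=K_{n-1}'\vee Z'^{*}K_{n-1}'$, while $ZK_n\subseteq K_{n-1}$ and $Z'K_n'\subseteq K_{n-1}'$ because $Z,Z'$ are co-isometries. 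I would build operators $Y_n\colon K_n'\to K_n$ with $\|Y_n\|=\|X\|$ satisfying $Y_n|_{K_{n-1}'}=Y_{n-1}$, the adjoint prescription $Y_n^{*}|_{Z^{*}K_{n-1}}=Z'^{*}Y_{n-1}^{*}P_{n-1}Z|_{Z^{*}K_{n-1}}$, and the intertwining $ZY_n=Y_nZ'$ on $K_n'$; the operator $Y$ is then the strong limit of $\{Y_n\}$ on the dense union.

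At each stage $Y_n$ is produced by the dual Parrott theorem, Theorem \ref{dualparrot}, applied with domain space $K_n'$ and target space $K_n$: one prescribes $Y_n$ on $K_{n-1}'$ to equal $Y_{n-1}$ and prescribes $Y_n^{*}$ on $Z^{*}K_{n-1}$ to equal $Z'^{*}Y_{n-1}^{*}P_{n-1}Z$. Theorem \ref{dualparrot} furnishes such a $Y_n$ exactly when the compatibility identity $\langle Y_{n-1}h,\,Z^{*}g\rangle=\langle h,\,Z'^{*}Y_{n-1}^{*}P_{n-1}ZZ^{*}g\rangle$ holds for $h\in K_{n-1}'$ and $g\in K_{n-1}$, with $\|Y_n\|$ achievable as $\max\{\|Y_{n-1}\|,\,\|Z'^{*}Y_{n-1}^{*}P_{n-1}Z|_{Z^{*}K_{n-1}}\|\}$. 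Using $ZZ^{*}=I$ and $P_{n-1}g=g$, the right-hand side collapses to $\langle h,\,Z'^{*}Y_{n-1}^{*}g\rangle=\langle Y_{n-1}Z'h,\,g\rangle$, so the identity reduces to $\langle ZY_{n-1}h,g\rangle=\langle Y_{n-1}Z'h,g\rangle$, i.e. to $ZY_{n-1}=Y_{n-1}Z'$ on $K_{n-1}'$, which is precisely the inductive hypothesis. In the base case $Y_0=X$ on $K_0'=\mathcal{H}$, using $Z|_{\mathcal{H}}=T$ and $Z'|_{\mathcal{H}}=T'$, this compatibility unwinds to $\langle TXh,h_1\rangle=\langle XT'h,h_1\rangle$, which is exactly the hypothesis $TX=XT'$.

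It then remains to verify that the freshly produced $Y_n$ again intertwines $Z,Z'$ on the enlarged space $K_n'=K_{n-1}'\vee Z'^{*}K_{n-1}'$, so that the induction closes. On $K_{n-1}'$ this is immediate from $Y_n|_{K_{n-1}'}=Y_{n-1}$ and the previous relation. On $Z'^{*}K_{n-1}'$, writing $h=Z'^{*}f$ with $f\in K_{n-1}'$ and pairing $ZY_nZ'^{*}f$ against $g\in K_{n-1}$, the adjoint prescription (equivalently $Y_n^{*}Z^{*}=Z'^{*}Y_{n-1}^{*}$ on $K_{n-1}$) together with $Z'Z'^{*}=I$ reduces $\langle ZY_nZ'^{*}f,g\rangle$ to $\langle Y_nf,g\rangle$; since both $ZY_nZ'^{*}f$ and $Y_nf$ lie in $K_{n-1}$, they coincide, giving $ZY_n=Y_nZ'$ on $K_n'$. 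The Parrott norm estimate gives $\|Y_n\|\le\|X\|$ because $\|Z'^{*}Y_{n-1}^{*}P_{n-1}Z\|\le\|Y_{n-1}\|=\|X\|$, while $Y_n\supseteq Y_{n-1}\supseteq X$ forces $\|Y_n\|\ge\|X\|$. Finally, since $Y_{n+1}|_{K_n'}=Y_n$ and $\sup_n\|Y_n\|=\|X\|$, the sequence $\{Y_n\}$ converges strongly on $\bigcup_nK_n'$ to a bounded $Y\colon\mathcal{K}'\to\mathcal{K}$ with $\|Y\|=\|X\|$ and $Y|_{\mathcal{H}}=X$, and passing to the limit in $ZY_n=Y_nZ'$ yields $ZY=YZ'$.

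The delicate point, and the genuine difference from the single-operator commutant case, is the bookkeeping in the inductive step: the prescribed value of $Y_n^{*}$ on $Z^{*}K_{n-1}$ must be chosen so that \emph{simultaneously} the Parrott compatibility reduces to the previously established intertwining and the resulting $Y_n$ intertwines $Z,Z'$ on the larger subspace. I expect this verification, where the co-isometry identities $ZZ^{*}=I$ and $Z'Z'^{*}=I$ are used to absorb the projections $P_{n-1}$ and to confine the relevant vectors to $K_{n-1}$, to be the main obstacle; the remainder is the standard truncation-and-limit machinery.
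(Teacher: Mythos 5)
Your proposal is correct and takes essentially the same route as the paper's own proof: the Sebesty\'en-style induction along the dense chains $K_n'\to K_n$, invoking the dual Parrott theorem (Theorem \ref{dualparrot}) with the same adjoint prescription (your extra factor $P_{n-1}$ is harmless, since $P_{n-1}ZZ^*g=g$ for $g\in K_{n-1}$, so your prescription agrees with the paper's $Z'^*Y_{n-1}^*Z|_{Z^*K_{n-1}}$). The only cosmetic difference is bookkeeping: you carry the intertwining $ZY_n=Y_nZ'$ on $K_n'$ as the explicit inductive invariant and close it on $K_n'=K_{n-1}'\vee Z'^*K_{n-1}'$, whereas the paper records the equivalent operator identity $Y_n^*P_nZ^*=Z'^*Y_{n-1}^*P_{n-1}$ and extracts $ZY=YZ'$ only in the strong limit.
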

	\begin{proof}
		Since $(Z,\mathcal{K})$ and $(Z', \mathcal{K}')$ are the minimal co-isometric extensions of $T$ and $T'$ respectively, we have 
		\[
		\mathcal{K}=\bigvee_{n=0}^{\infty}Z^{*n}\mathcal{H} \;\text{  and  }\; \mathcal{K}'=\bigvee_{n=0}^{\infty}Z'^{*n}\mathcal{H}.
		\]
		Let $K_n = \bigvee\limits_{\ell = 0}^{n}Z^{*\ell}\mathcal{H}$ and $K_n' = \bigvee\limits_{m=0}^n Z'^{*m}\mathcal{H}$. Then $\bigcup\limits_{n = 0}^{\infty} K_n$ is dense in $\mathcal{K}$ and $\bigcup\limits_{n = 0}^{\infty}K_n'$ is dense in $\mathcal{K}'$. Consider the orthogonal projections $P_n : \mathcal{K}\rightarrow K_n$ and $P_n': \mathcal{K}' \rightarrow K_n'$ for all $n \geq 0$. Clearly 
		\begin{equation}\label{eq1'}
			P_{n+1}'Z'^* = Z'^*P_{n} \;\text{ and }\; P_{n+1}Z^* = Z^*P_n.
		\end{equation}
		We construct the required operator $Y:\mathcal{K}' \rightarrow \mathcal{K}$ inductively by finding operators $Y_n : K_n' \rightarrow K_n$ for all $n \in \mathbb N$.\\
		
		\noindent\textbf{Claim.} For any $h_1,h_2\in \mathcal{H}$, $\langle Z'^*X^*Z(Z^*h_1), h_2 \rangle = \langle (Z^*h_1), Xh_2 \rangle$.\\
		
		\noindent \textbf{Proof of Claim.} Since $Z$ and $Z'$ are the minimal co-isometric extensions of $T$ and $T'$ respectively, then for $h_1,h_2\in \mathcal{H}$ we have 
		\begingroup
		\allowdisplaybreaks
		\begin{align*}
			\langle Z'^*X^*Z(Z^*h_1), h_2 \rangle & = \langle Z'^*X^*h_1,h_2 \rangle \\
			& = \langle X^*h_1, Z'h_2 \rangle \\
			& = \langle X^*h_1, T'h_2 \rangle \\
			& = \langle h_1, XT'h_2 \rangle\\
			& = \langle h_1, TXh_2 \rangle \hspace{2.3cm} [\text{since } TX=XT']\\
			& = \langle h_1, ZXh_2 \rangle\\
			& = \langle Z^*h_1, Xh_2 \rangle.
		\end{align*}
		\endgroup
		This completes the proof of the claim.\\
		
		\noindent Therefore, by Theorem \ref{dualparrot}, there is an operator $Y_1\in \mathcal{B}(K_1',K_1)$ such that $Y_1|_{\mathcal{H}}=X$, $Y_1^*|_{Z^*\mathcal{H}}=Z'^*X^*Z|_{Z^*\mathcal{H}}$ and $\|Y_1\|\leq \text{max}\{\|X\|,\|Z'^*X^*Z|_{Z^*\mathcal{H}}\| \}=\|X\|$. Since $Y_1$ is an extension of $X$, $\|Y_1\|=\|X\|$.\\
		
		\noindent	Also we have 
		\begingroup
		\allowdisplaybreaks
		\begin{align}\label{eq2'S}
			Y_1^*P_1Z^* &= Y_1^*Z^*P_0 \hspace{7.05cm} [\text{from }\eqref{eq1'}]\notag\\
			& =Z'^*X^*ZZ^*P_0 \hspace{3.2cm}  \left[\text{since }Y_1^*|_{Z^*\mathcal{H}}=Z'^*X^*Z|_{Z^*\mathcal{H}}\right]\notag\\
			& = Z'^*X^*P_0.
		\end{align}
		\endgroup
		
		\noindent Assume the existence of $Y_{n-1}: K_{n-1}'\rightarrow K_{n-1}$ such that
		\begin{itemize}
			\item[(a)] $Y_{n-1}|_{K_{n-2}}=Y_{n-2}$,
			\item[(b)] $Y^*_{n-1}|_{Z^*K_{n-2}}=Z'^*Y_{n-2}^*Z|_{Z^*K_{n-2}}$,
			\item[(c)] $Y_{n-1}^*P_{n-1}Z^* = Z'^*Y_{n-2}^*P_{n-2}$,
			\item[(d)] $\|Y_{n-1}\|=\|Y_{n-2}\|$.
		\end{itemize}
		\text{ }\\
		\noindent\textbf{Claim.}	For any $h_1\in K_{n-1} \text{ and }h_2 \in K_{n-1}'$, $$\langle Z'^*Y_{n-1}^*Z(Z^*h_1), h_2 \rangle = \langle Z^*h_1, Y_{n-1}h_2 \rangle.$$
		\noindent \textbf{Proof of Claim.} Suppose $h_1\in K_{n-1} \text{ and }h_2 \in K_{n-1}'$. Then
		\begingroup
		\allowdisplaybreaks
		\begin{align*}
			\langle Z'^*Y_{n-1}^*Z(Z^*h_1), h_2 \rangle & = \langle Z'^*Y_{n-1}^*h_1,h_2 \rangle \\
			& = \langle P_{n-1}'Z'^*Y_{n-1}^*h_1, h_2 \rangle\hspace{2cm} [\text{since }Z'^*K_{n-1}'\subseteq K_{n}'] \\
			& = \langle Z'^*P_{n-2}'Y_{n-1}^*h_1, h_2 \rangle \hspace{2cm}[\text{using }\eqref{eq1'}]\\
			& = \langle h_1, Y_{n-1}P_{n-2}'Z'h_2 \rangle\\
			& = \langle P_{n-2}h_1, Y_{n-1}P_{n-2}'Z'h_2 \rangle\hspace{1.2cm}[\text{since }Y_{n-1}K_{n-2}'\subseteq K_{n-2}]\\
			& = \langle P_{n-2}h_1, Y_{n-2}P_{n-2}'Z'h_2 \rangle \hspace{1.2cm} [\text{using (a)}]\\
			& = \langle Y_{n-2}^*P_{n-2}'h_1, P_{n-2}Z'h_2 \rangle \\
			& = \langle Y_{n-2}^*P_{n-2}'h_1, Z'h_2 \rangle\hspace{2cm} [\text{since }Y_{n-2}^*K_{n-2}\subseteq K_{n-2}'] \\
			& = \langle Z'^*Y_{n-2}^*P_{n-2}h_1, h_2 \rangle\\
			& = \langle Y_{n-1}^*P_{n-1}Z^*h_1, h_2 \rangle \hspace{2cm} [\text{using (c)}]\\
			& = \langle Z^*h_1, Y_{n-1}h_2 \rangle.
		\end{align*}
		\endgroup
		The last inequality follows from the fact that $Y_{n-1}h_2\in K_{n-1}$, for any $h_2\in K_{n-1}'$. This completes the proof of claim.\\
		
		\noindent Hence by Theorem \ref{dualparrot}, there is a map $Y_n: K_n'\to K_n$ such that $Y_n|_{K_{n-1}'}=Y_{n-1} $,\\$Y_n^*|_{Z^*K_{n-1}}=Z'^*Y_{n-1}^*Z|_{Z^*K_{n-1}}$ and $\|Y_n\| \leq \text{max}\{\|Y_{n-1}\|,\|Z'^*Y_{n-1}^*Z|_{Z^*K_{n-1}}\|\}=\|Y_{n-1}\|$. Since $Y_n|_{K_{n-1}}=Y_{n-1}$, so $\|Y_n\|=\|Y_{n-1}\|=\|X\|$.
		Again 
		$$
		Y_n^*P_nZ^*  = Y_{n}^*Z^*P_{n-1} = Z'^*Y_{n-1}^*ZZ^*P_{n-1} = Z'^*Y_{n-1}^*P_{n-1}.
		$$
		The second last equality follows from (c). Therefore, by induction for all $n \in \mathbb N$, there is a map $Y_n:K_n' \rightarrow K_n$ such that 
		\begin{itemize}
			\item[(i)] $Y_{n}|_{K_{n-1}'}=Y_{n-1}$,
			\item[(ii)] $Y^*_{n}|_{Z^*K_{n-1}}=Z'^*Y_{n-1}^*Z|_{Z^*K_{n-1}}$,
			\item[(iii)] $Y_{n}^*P_{n}Z^* = Z'^*Y_{n-1}^*P_{n-1}$,
			\item[(iv)] $\|Y_{n}\|=\|Y_{n-1}\|=\|X\|$.
		\end{itemize} 
		Define $Y_0:\bigcup\limits_{n=0}^{\infty} \mathcal{K}_n' \rightarrow \bigcup\limits_{n=0}^{\infty} \mathcal{K}_n$ such that $Y_0|_{K_n'}=Y_n$. This is well defined as $Y_{n}|_{K_{n-1}'} = Y_{n-1}$. Since $\bigcup\limits_{n = 0}^{\infty}K_n'$ is dense in $\mathcal{K}'$, by continuity $Y_0$ extends to an operator $Y:\mathcal{K}'\to \mathcal{K}$.  From (iii) we have $ZY_nP_n'=Y_{n-1}P_{n-1}'Z'$ and one can easily check that $Y_nP_n'$ converges to $Y$ in the strong operator topology. Thus 
		\[
		ZY = YZ'.
		\]
		Clearly $\|Y\|=\|X\|$.
		This completes the proof of the theorem.
	\end{proof}
	The idea of the proof of above theorem is similar to that of Theorem 2 in \cite{Sebestyen}.\\
	\subsection{An approach due to Ando}
	Now we give an alternative proof of Theorem \ref{interdil}, using Ando's technique. Since $V$ is an isometric lift of $T$ if and only if $V^*$ is a co-isometric extension of $T^*$, we rewrite the Theorem \ref{interdil} as follows.
	\begin{thm}
		Let $T_1,T_2,X\in \mathcal{B}(\mathcal{H})$ be any contractions such that $T_1X=XT_2$. Then there are isometric lifts $V_1,V_2,V$ of $T_1,T_2,X$ respectively, on Hilbert space $\mathcal{K}$ containing $\mathcal{H}$ such that $V_{1}V=VV_2$.	
	\end{thm}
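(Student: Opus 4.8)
The plan is to run Ando's dilation argument, exactly in the form already used in the proofs of Theorem~\ref{andoisodilL} and Theorem~\ref{andodilII}, but tuned to the intertwining relation $T_1X=XT_2$. First I would put $\mathcal{K}=\bigoplus_0^{\infty}\mathcal{H}$ and introduce the three canonical isometric lifts
\[
W_1(h_0,h_1,\ldots)=(T_1h_0,D_{T_1}h_0,0,h_1,\ldots),\quad W_2(h_0,h_1,\ldots)=(T_2h_0,D_{T_2}h_0,0,h_1,\ldots),
\]
and $W_X(h_0,h_1,\ldots)=(Xh_0,D_Xh_0,0,h_1,\ldots)$, which lift $T_1,T_2,X$ respectively. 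Identifying $\mathcal{K}$ with $\mathcal{H}\oplus\mathcal{B}\oplus\mathcal{B}\oplus\cdots$ for $\mathcal{B}=\mathcal{H}\oplus\mathcal{H}\oplus\mathcal{H}\oplus\mathcal{H}$ via $(h_0,h_1,\ldots)=(h_0,(h_1,h_2,h_3,h_4),\ldots)$, I would compute the two products $W_1W_X$ and $W_XW_2$. This is where the hypothesis enters: the two have equal first coordinate because $T_1Xh_0=XT_2h_0$, while their defect blocks are $(D_{T_1}Xh,0,D_Xh,0)$ and $(D_XT_2h,0,D_{T_2}h,0)$ respectively.

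The second step is to glue these blocks. I would set $\mathcal{L}_1=\{(D_{T_1}Xh,0,D_Xh,0):h\in\mathcal{H}\}$ and $\mathcal{L}_2=\{(D_XT_2h,0,D_{T_2}h,0):h\in\mathcal{H}\}$ inside $\mathcal{B}$, and define $G$ on $\mathcal{L}_1$ by $G(D_{T_1}Xh,0,D_Xh,0)=(D_XT_2h,0,D_{T_2}h,0)$. The heart of the argument is the norm identity
\[
\|D_{T_1}Xh\|^2+\|D_Xh\|^2=\|h\|^2-\|T_1Xh\|^2=\|h\|^2-\|XT_2h\|^2=\|D_XT_2h\|^2+\|D_{T_2}h\|^2,
\]
where the middle equality is exactly $T_1X=XT_2$. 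This shows $G$ is a well-defined isometry of $\mathcal{L}_1$ onto $\mathcal{L}_2$, and it extends continuously to an isometry of $\mathcal{M}_1=\overline{\mathcal{L}_1}$ onto $\mathcal{M}_2=\overline{\mathcal{L}_2}$.

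Third, I would extend $G$ to a unitary on all of $\mathcal{B}$. As in the cited proofs it suffices to check $\dim\mathcal{M}_1^{\perp}=\dim\mathcal{M}_2^{\perp}$: this is trivial when $\mathcal{H}$ (hence $\mathcal{B}$) is finite dimensional, and in the infinite-dimensional case it follows from $\dim\mathcal{H}=\dim\mathcal{B}\ge\dim\mathcal{M}_i^{\perp}\ge\dim\mathcal{H}$, the last inequality holding since $\{(0,h,0,0):h\in\mathcal{H}\}\subseteq\mathcal{M}_i^{\perp}$. Lifting $G$ to the unitary $\widetilde{G}$ on $\mathcal{K}$ that fixes $h_0$ and acts by $G$ on each $\mathcal{B}$-block, I note $\widetilde{G}|_{\mathcal{H}}=I$ and $\mathcal{H}$ is reducing for $\widetilde{G}$. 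Finally I would set $V_1=\widetilde{G}W_1$, $V_2=\widetilde{G}W_2$, and $V=W_X\widetilde{G}^{-1}$; each is an isometry as a product of an isometry and a unitary, and since $\widetilde{G},\widetilde{G}^{*}$ restrict to the identity on $\mathcal{H}$, each is a lift of $T_1,T_2,X$ respectively. A direct coordinate computation gives $VV_2=W_XW_2$, while $V_1V=\widetilde{G}W_1W_X\widetilde{G}^{-1}$; applying $G$ to the defect block of $W_1W_X$ together with $T_1X=XT_2$ converts the latter into $W_XW_2$ as well, yielding $V_1V=VV_2$.

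I expect the only genuinely delicate points to be organizational rather than deep: choosing the correct placement of $\widetilde{G}$ and $\widetilde{G}^{-1}$ so that the cancellation $\widetilde{G}^{-1}\widetilde{G}=I$ produces $VV_2=W_XW_2$ while $V_1V$ isolates precisely the block on which $G$ acts, and invoking the dimension count to promote $G$ to a unitary. The substantive input, the norm identity driven by the intertwining relation, is short once the subspaces $\mathcal{L}_1,\mathcal{L}_2$ are set up correctly.
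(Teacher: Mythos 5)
Your proposal is correct and takes essentially the same route as the paper's own proof: the same space $\mathcal{K}=\bigoplus_{0}^{\infty}\mathcal{H}$, the same canonical isometries $W_1,W_2,W$, the same defect-block gluing via the norm identity driven by $T_1X=XT_2$, and the same dimension count to extend $G$ to a unitary. The only cosmetic difference is orientation: the paper defines $G$ in the opposite direction, namely $G(D_XT_2h,0,D_{T_2}h,0)=(D_{T_1}Xh,0,D_Xh,0)$, and sets $V_1=W_1\widetilde{G}^{-1}$, $V_2=W_2\widetilde{G}^{-1}$, $V=\widetilde{G}W$, so that the cancellation $\widetilde{G}^{-1}\widetilde{G}=I$ occurs in $V_1V$ rather than in $VV_2$ as in your version --- a mirror image of the identical computation.
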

	\begin{proof}
		Define $\mathcal{K}=\oplus_{0}^{\infty} \mathcal{H}$, 
		\[W_{1}(h_0,h_1,\ldots ) = \left(T_1h_{0},D_{T_1}h_0,0, h_{1},\ldots \right),\]
		\[W_{2}(h_0,h_1,\ldots ) = \left(T_2h_{0},D_{T_2}h_0,0, h_{1},\ldots \right),\]
		\[W(h_0,h_1,\ldots ) = \left(Xh_{0},D_{X}h_0,0, h_{1},\ldots \right).\] Then $W_{1}, W_{2}, W$ all are isometries as $\| T_ih_0 \|^2+\| D_{T_i}h_0 \|^2=\|h_0\|^2$ for $i=1,2$ and \[ \| Xh_0 \|^2+\| D_{X}h_0 \|^2=\|h_0\|^2. \] Moreover, it is clear from the definition that $\mathcal{H}$ is invariant for $W_1^*,W_2^* ,W^*$ and for $i=1,2$, 
		\begin{equation}\label{invariance}
			W_i^*|_{\mathcal{H}}=T_i^*  \text{ and } W^*|_{\mathcal{H}}=X^* .
		\end{equation}  
		Now let $$\mathcal{B}=\mathcal{H}\oplus \mathcal{H}\oplus \mathcal{H}\oplus \mathcal{H} .$$
		Hence $\mathcal{K}$ can be identified with $\mathcal{H}\oplus \mathcal{B}\oplus \mathcal{B}\oplus \cdots$ via the identification, \[ (h_0,h_1,\ldots )=(h_0, (h_1,h_2,h_3,h_4),(h_5,h_6,h_7,h_8),\ldots ). \] 
		Let $$\mathcal{L}_1= \{(D_{X}T_2h,0,D_{T_2}h,0)\in\mathcal{B}: h\in \mathcal{H}\} $$ and $$\mathcal{L}_2= \{(D_{T_1}Xh,0,D_{X}h,0)\in\mathcal{B}: h\in \mathcal{H}\} .$$ Let $\mathcal{M}_i=\overline{\mathcal{L}}_i$ and $\mathcal{M}_i^{\perp}=\mathcal{B}\ominus\mathcal{M}_i$ for $i=1,2$. Define an operator $G:\mathcal{L}_1 \to \mathcal{L}_2$ by \[ G(D_{X}T_2h,0,D_{T_2}h,0)=(D_{T_1}Xh,0,D_{X}h,0) .\] A simple computation shows us that, \[ \|D_{X}T_2h \|^2+\| D_{T_2}h \|^2=\| h \|^2-\|XT_2h \|^2 \] and \[ \|D_{T_1}Xh \|^2+\| D_{X}h \|^2=\| h \|^2-\|T_1Xh \|^2  .\]  Since $T_1X=XT_2$, $G$ defines an isometry from $\mathcal{L}_1$ onto $\mathcal{L}_2$ and extends continuously as an isometry from $\mathcal{M}_1$ onto $\mathcal{M}_2$. It remains to show that $G$ can be extended to an isometry from the whole space $\mathcal{B}$ onto itself. For this purpose,  it suffices to prove that $\dim \mathcal{M}_1^{\perp}=\dim \mathcal{M}_2^{\perp}. $ This is clearly true when $\mathcal{H}$ and hence $\mathcal{B}$ are finite dimensional. Now  suppose $\mathcal{H}$ is infinite dimensional. Then we have, \[ \dim(\mathcal{H})=\dim\mathcal{B}\geq \dim\mathcal{M}_i^{\perp}\geq \dim \mathcal{H}. \] Here first inequality follows as $\mathcal{M}_i^{\perp}\subset \mathcal{B}$ and second inequality follows as \[\{ (0,h,0,0)\in \mathcal{B}:h\in \mathcal{H}\}\subset \mathcal{M}_i^{\perp}.\] Hence we can extend the map $G$ isometrically to a map from $\mathcal{B}$ onto $\mathcal{B}$. Therefore $G$ is a unitary. Now define $\widetilde{G}$ on $\mathcal{K}$ as \[ \widetilde{G}(h_0,h_1,\ldots )=(h_0,G(h_1,h_2,h_3,h_4),G(h_5,h_6,h_7,h_8),\ldots). \] Then clearly $\widetilde{G}$ is a unitary with inverse given by \[ \widetilde{G}^{-1}(h_0,h_1,\ldots )=(h_0,G^{-1}(h_1,h_2,h_3,h_4),G^{-1}(h_5,h_6,h_7,h_8),\ldots) .\] Further, let $V=\widetilde{G}W$, $V_1=W_1\widetilde{G}^{-1}$ and $V_2=W_{2}\widetilde{G}^{-1}$. Since $\mathcal{H}$ is reducing for $\widetilde{G}$ and both $\widetilde{G}$ and $\widetilde{G}^*=\widetilde{G}^{-1}$ are identity on $\mathcal{H}$, from \ref{invariance} we get, 
		\[ V_1^*|_{\mathcal{H}}=T_1^*, V_2^*|_{\mathcal{H}}=T_2^* \text{ and } V^*|_{\mathcal{H}}=X^*.\]  Now, 
		\begin{align*}
			VV_2(h_0,h_1,\ldots)&=\widetilde{G}WW_2\widetilde{G}^{-1}(h_0,h_1,\ldots)\\
			&=\widetilde{G}WW_2(h_0,G^{-1}(h_1,h_2,h_3,h_4),G^{-1}(h_5,h_6,h_7,h_8),\ldots)\\
			&=\widetilde{G}W(T_2h_0,D_{T_2}h_0,0,G^{-1}(h_1,h_2,h_3,h_4),G^{-1}(h_5,h_6,h_7,h_8),\ldots)\\
			&=\widetilde{G}(XT_2h_0,D_{X}T_2h_0,0,D_{T_2}h_0,0,G^{-1}(h_1,h_2,h_3,h_4),\ldots)\\
			&=(XT_2h_0,G(D_{X}T_2h_0,0,D_{T_2}h_0,0),(h_1,h_2,h_3,h_4),(h_5,h_6,h_7,h_8),\ldots).
		\end{align*} 
		Similarly, 
		\begin{align*}
			V_1V(h_0,h_1,\ldots )&=W_{1}\widetilde{G}^{-1}\widetilde{G}W(h_0,h_1,\ldots )\\
			&=W_{1}W(h_0,h_1,\ldots)\\
			&=W_{1}\left(Xh_{0},D_{X}h_0,0, h_{1},\ldots \right)\\
			&=(T_1Xh_0,D_{T_1}Xh_0,0,D_{X}h_0,0, h_{1},\ldots).
		\end{align*}
		Since $T_1X=XT_2$ and $G(D_{X}T_2h_0,0,D_{T_2}h_0,0)=(D_{T_1}Xh_0,0,D_{X}h_0,0)$, we get $V_1V=VV_2$ as required. 		
	\end{proof}
	
	\vspace{0.4cm}

	\section{System of $(G,Q)$-commuting contractions and applications}\label{QSectiongraph}
	
	\vspace{0.4cm}
	
	Op$\check{e}$la \cite{Ope} generalized both Ando's dilation for a pair of commuting contractions and Parrott's counter example of triple of commuting contractions that do not dilate simultaneously to commuting unitaries. Any $n$-tuple of contractions that commute according to a graph without a cycle can be dilated to an $n$-tuple of unitaries that commute according  to that graph. Conversely, if the graph contains a cycle, then Op$\check{e}$la has constructed a counterexample in \cite{Ope}. In \cite{K.M.} further generalized Op$\check{e}$la's notion of commuting with respect to a graph for $q$ commuting operators with $|q|=1$. They proved that if a $n$-tuple of strict contractions, $(T_1,\dots ,T_n)$ is $(G, q)$-commuting tuple with respect to acyclic simple graph $G$, then it possesses an isometric dilation $(V_1,\dots,V_n )$ which is $(G,q)$-commuting, where $|q|=1$.
	In this section, we give a sufficient condition for some $Q$-commuting $n$-tuple of contractions to possess $\overline{Q}$-commutant isometric lifts where $Q$, $\overline{Q}$ are some unitaries. First we recall some basic concepts from graph theory. 
	\begin{itemize}
		\item A simple graph $G=(V,E)$ consists of a vertex set $V$ and an edge set $E$, where an edge is an unordered pair of distinct vertices of $G$.
		\item A graph is called connected if for any two distinct vertices $u,v\in V$, there is a set of vertices, $\{u=u_1,u_2,\ldots u_{n-1},u_n=v\}$ in $V$ such that $\{u_i,u_{i+1}\}\in E$ for each $i=1,2,\ldots ,n-1$.
		\item For a given vertex $u$, the number of vertices connected to $u$ by an edge is called the degree of $u$.
		\item A connected graph in which every vertex has degree $2$ is called a cycle. 
		\item A graph $G'=(V',E')$ is called a subgraph of $G=(V,E)$ if $V'\subseteq V$ and $E' \subseteq E$.
		\item A graph is called acyclic if it does not contain any cycle as a subgraph.
		\item A connected acyclic graph is called Tree. 
		\item Every tree has atleast two vertices of degree 1.   
	\end{itemize} In this section, we generalize the Lemma 2.2 in \cite{Ope} and Theorem 3.12 in \cite{K.M.} and obtain a similar result for $Q$-commuting contractions when $Q$ is any unitary. Let $T_1,\dots, T_n$ be some contractions which are $Q_{ij}$-commutant with respect to certain unitaries $Q_{ij}$. Then we represent these relations via a simple graph by putting the contractions on the vertices and by adding an edge between the two vertices if and only if the corresponding contractions are $Q_{ij}$ commuting. We prove that if such a graph is acyclic then $T_1,\dots ,T_n$ posses isometric lifts $V_1,\dots ,V_n$ on $\mathcal{K}$ which satisfy similar $\overline{Q}_{ij}$-commuting relations where $\overline{Q}_{ij}=Q_{ij}\oplus I$. 
	Let $G=(V,E)$ be a graph with finite vertices. Let $V=\{1,2,\dots ,n\}$. Let $\widetilde{E}\subset V\times V$ be such that, $$\widetilde{E}=\{ (i,j):\{ i,j \}\in E,\,i<j \}.$$
	\begin{defn}[($(G,\textbf{Q},L)$-commuting)]
		Let $G=(V,E)$ be a graph with vertex set $V=\{ 1,\ldots ,n \}$ and let $\mathcal{H}$ be a Hilbert space. Let $\textbf{Q}$ be a function from $\widetilde{E}$ to $\mathcal{B}({\mathcal{H}})$ which takes any $(i,j)\in \widetilde{E} $ to a contraction $\textbf{Q}(i,j)$ on $\mathcal{H}$. A tuple $(T_1,\ldots ,T_n)$ of bounded linear operators on Hilbert space $\mathcal{H}$ is said to be $(G,\textbf{Q},L)$-commuting if $T_iT_j=\textbf{Q}(i,j)T_jT_i$ whenever $(i,j)\in \widetilde{E}$.  
	\end{defn} 
\begin{defn}[$(G,\textbf{Q},M)$-commuting]
	Let $G=(V,E)$ be a graph with vertex set $V=\{ 1,\ldots ,n \}$ and let $\mathcal{H}$ be a Hilbert space. Let $\textbf{Q}$ be a function from $\widetilde{E}$ to $\mathcal{B}({\mathcal{H}})$ which takes any $(i,j)\in \widetilde{E} $ to a unitary $\textbf{Q}(i,j)$ on $\mathcal{H}$. A tuple $(T_1,\ldots ,T_n)$ of bounded linear operators on Hilbert space $\mathcal{H}$ is said to be $(G,\textbf{Q},M)$-commuting if $T_iT_j=T_j\textbf{Q}(i,j)T_i$ whenever $(i,j)\in \widetilde{E}$.  
\end{defn} 
\begin{defn}[$(G,\textbf{Q},R)$-commuting]
	Let $G=(V,E)$ be a graph with vertex set $V=\{ 1,\ldots ,n \}$ and let $\mathcal{H}$ be a Hilbert space. Let $\textbf{Q}$ be a function from $\widetilde{E}$ to $\mathcal{B}({\mathcal{H}})$ which takes any $(i,j)\in \widetilde{E} $ to a unitary $\textbf{Q}(i,j)$ on $\mathcal{H}$. A tuple $(T_1,\ldots ,T_n)$ of bounded linear operators on Hilbert space $\mathcal{H}$ is said to be $(G,\textbf{Q},R)$-commuting if $T_iT_j=T_jT_i\textbf{Q}(i,j)$ whenever $(i,j)\in \widetilde{E}$.  
\end{defn} 

\begin{lem}\label{genQlemma}
	Let $Q,T_1,T_2\in \mathcal{B}(\mathcal{H})$ be any contractions such that $T_1T_2=QT_2T_1$. Let $\mathcal{K}'$ be any Hilbert space containing $\mathcal{H}$ and $W_2\in  \mathcal{B}(\mathcal{K}')$ be any isometric lift of $T_2$. Then there is a Hilbert space $\mathcal{K}$ containing $\mathcal{K}'$ and there are isometric lifts $\overline{Q},V_1,V_2\in \mathcal{B}(\mathcal{K})$ of $Q$, $T_1$, $W_2$ respectively, such that $V_1V_2=\overline{Q}V_2V_1$.	
	Moreover, $\mathcal{K}'$ is a reducing subspace for $V_2$. Further, in particular if $Q$ is a unitary then we can choose $\overline{Q}=Q\oplus qI$ on $\mathcal{H}\oplus (\mathcal{K}\ominus \mathcal{H})$ for any $q$ of modulus one.  
\end{lem}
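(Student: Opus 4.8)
The plan is to deduce the result from Theorem \ref{QLcoro}, which already produces $\overline{Q}$-commuting isometric lifts once one is handed isometric lifts of \emph{both} $T_2$ and $Q$ on a single common space. Here we are only given an isometric lift $W_2$ of $T_2$ on $\mathcal{K}'$, so the one genuine task is to manufacture, on a suitable enlargement $\mathcal{K}''$ of $\mathcal{K}'$, an isometric lift $\overline{Q}_0$ of $Q$ together with an isometric lift $\widetilde{W}_2$ of $T_2$ whose restriction to $\mathcal{K}'$ is exactly $W_2$. Once this is in place, Theorem \ref{QLcoro} applied to $\widetilde{W}_2$ and $\overline{Q}_0$ delivers everything, and the stated conclusions for $W_2$ follow by transitivity of the lifting relation.

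Concretely, I would set $\mathcal{N}=\mathcal{K}'\ominus\mathcal{H}$ and $\mathcal{K}''=\mathcal{K}'\oplus\mathcal{H}\oplus\mathcal{H}\oplus\cdots$, writing a typical vector as $(h_0,n,h_1,h_2,\dots)$ with $h_0\in\mathcal{H}$, $n\in\mathcal{N}$ and $h_j\in\mathcal{H}$, and define
\begin{align*}
	\overline{Q}_0(h_0,n,h_1,h_2,\dots)&=(Qh_0,\,n,\,D_Qh_0,\,h_1,\,h_2,\dots),\\
	\widetilde{W}_2(h_0,n,h_1,h_2,\dots)&=(W_2(h_0,n),\,0,\,h_1,\,h_2,\dots),
\end{align*}
where $D_Q=(I-Q^*Q)^{1/2}$ and $W_2(h_0,n)$ is computed in $\mathcal{K}'=\mathcal{H}\oplus\mathcal{N}$. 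The identity $\|Qh_0\|^2+\|D_Qh_0\|^2=\|h_0\|^2$ shows $\overline{Q}_0$ is an isometry, and a direct computation gives $\overline{Q}_0^*(y_0,m,y_1,\dots)=(Q^*y_0+D_Qy_1,\,m,\,y_2,\dots)$, whence $\mathcal{H}$ is invariant for $\overline{Q}_0^*$ and $\overline{Q}_0^*|_{\mathcal{H}}=Q^*$; thus $\overline{Q}_0$ is an isometric lift of $Q$. Since $\widetilde{W}_2=W_2\oplus S$ with $S$ the forward shift on $\mathcal{H}\oplus\mathcal{H}\oplus\cdots$, it is an isometry for which $\mathcal{K}'$ reduces and $\widetilde{W}_2|_{\mathcal{K}'}=W_2$; as $W_2$ lifts $T_2$ and $\mathcal{H}\subseteq\mathcal{K}'$, $\widetilde{W}_2$ is an isometric lift of $T_2$.

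With $\overline{Q}_0$ and $\widetilde{W}_2$ in hand as isometric lifts of $Q$ and $T_2$ on the single space $\mathcal{K}''\supseteq\mathcal{H}$, I would invoke Theorem \ref{QLcoro} (with $\mathcal{K}''$, $\widetilde{W}_2$, $\overline{Q}_0$ playing the roles of $\mathcal{K}'$, $V_2'$, $\overline{Q}_0$) to obtain $\mathcal{K}\supseteq\mathcal{K}''$, isometric lifts $V_1,V_2$ of $T_1$ and $\widetilde{W}_2$ with $V_1V_2=\overline{Q}V_2V_1$, where $\overline{Q}=\overline{Q}_0\oplus qI$ on $\mathcal{K}''\oplus\mathcal{K}''^{\perp}$, and with $\mathcal{K}''$ reducing for $V_2$. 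Then $V_1$ is an isometric lift of $T_1$; since lifting is transitive and $\widetilde{W}_2$ lifts $W_2$, the operator $V_2$ is an isometric lift of $W_2$; and $\overline{Q}$ lifts $\overline{Q}_0$ hence $Q$ and is an isometry. Finally $\mathcal{K}'\subseteq\mathcal{K}''$ reduces $\widetilde{W}_2=V_2|_{\mathcal{K}''}$ while $\mathcal{K}''$ reduces $V_2$, so $\mathcal{K}'$ reduces $V_2$, giving the ``moreover'' clause. For the unitary case $D_Q=0$, and it is cleaner to skip the enlargement: take $\mathcal{K}''=\mathcal{K}'$, $\widetilde{W}_2=W_2$, and $\overline{Q}_0=Q\oplus qI$ on $\mathcal{H}\oplus\mathcal{N}$ (a unitary lift of $Q$), after which Theorem \ref{QLcoro} with the same $q$ yields $\overline{Q}=\overline{Q}_0\oplus qI=Q\oplus qI$ on $\mathcal{H}\oplus(\mathcal{K}\ominus\mathcal{H})$.

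The only non-routine point is the simultaneous construction of the second paragraph: decoupling the shift structure used to lift $Q$ from the arbitrariness of the prescribed lift $W_2$, so that both lifts live on one space and can be fed into Theorem \ref{QLcoro}. Everything afterward is bookkeeping with transitivity of lifts and the persistence of reducing subspaces under further lifting.
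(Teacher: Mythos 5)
Your proof is correct and follows essentially the same route as the paper's: both handle the unitary case by taking $\overline{Q}_0=Q\oplus qI$ on $\mathcal{K}'$ and, for a general contraction $Q$, both enlarge $\mathcal{K}'$ so that it simultaneously carries an isometric lift of $Q$ (acting as the identity on $\mathcal{K}'\ominus\mathcal{H}$) and an isometric lift of $T_2$ extending $W_2$, after which Theorem \ref{QLcoro} and transitivity of lifting finish the argument. The only difference is cosmetic: you build the lift of $Q$ as an explicit Sch\"affer-type dilation on $\mathcal{K}'\oplus\mathcal{H}\oplus\mathcal{H}\oplus\cdots$ and take $\widetilde{W}_2=W_2\oplus S$ with $S$ a shift, whereas the paper adjoins the minimal isometric dilation space $\mathcal{K}_0\ominus\mathcal{H}$ of $Q$ and takes $V_2'=W_2\oplus I$.
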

\begin{proof}
	Clearly, if $Q$ is a unitary and $q$ is any complex number of modulus one then let $\overline{Q}_0=Q\oplus qI$ on $\mathcal{H}\oplus (\mathcal{K}'\ominus \mathcal{H}) $ where $\mathcal{K}'$ is the given Hilbert space. Hence $\overline{Q}_0$ and $W_2$ are isometric lifts of $Q$ and $T_2$ on $\mathcal{K}'$. Therefore, by Theorem \ref{QLcoro}, there is a Hilbert space $\mathcal{K}$ containing $\mathcal{K}'$ and there are isometric lifts $\overline{Q},V_1,V_2\in \mathcal{B}(\mathcal{K})$ of $Q$, $T_1$, $W_2$ respectively, such that $V_1V_2=\overline{Q}V_2V_1$ with $\overline{Q}=Q\oplus qI$ on $\mathcal{H}\oplus (\mathcal{K}\ominus \mathcal{H})$.	
	Moreover, $\mathcal{K}'$ is a reducing subspace for $V_2$.

	Now  in general suppose $Q$ is any contraction. Let $\overline{Q}_0$ on $\mathcal{K}_0$ be the minimal isometric dilation of $Q$. Let $\mathcal{K}_1=\mathcal{H}\oplus (\mathcal{K}'\ominus \mathcal{H})\oplus (\mathcal{K}_0\ominus \mathcal{H})$. Suppose $\overline{Q}_0=\begin{bmatrix}
		Q&0\\
		D&S
	\end{bmatrix}$ with respect to the decomposition $\mathcal{H}\oplus (\mathcal{K}_0\ominus \mathcal{H}) $. Now define $\overline{Q}_1=\begin{bmatrix}
		Q&0&0\\
		0&I&0\\
		D&0&S
	\end{bmatrix}$ on $\mathcal{K}_1$ with respect to the decomposition $\mathcal{H}\oplus (\mathcal{K}'\ominus \mathcal{H})\oplus (\mathcal{K}_0\ominus \mathcal{H})$ and $V_2'=W_2\oplus I$ on $\mathcal{K}_1$ with respect to the decomposition $\mathcal{K}_1=\mathcal{K}'\oplus (\mathcal{K}_0\ominus \mathcal{H})$. Then clearly $\overline{Q}_1$ is an isometric lift of $Q$ and $V_2'$ is an isometric lift of $W_2$ and hence in turn is an isometric lift of $T_2$. Thus by Theorem \ref{QLcoro}, there is a Hilbert space $\mathcal{K}$ containing $\mathcal{K}_1$, there are isometric lifts $V_1,V_2$ of $T_1,V_2'$ such that $V_1V_2=\overline{Q}V_2V_1$ where $\overline{Q}=\overline{Q}_1\oplus I$ with respect to the decomposition $\mathcal{K}=\mathcal{K}_1\oplus \mathcal{K}_1^{\perp}$. Moreover, $\mathcal{K}_1$ is reducing for $V_2$. Hence $V_2=V_2'\oplus X_2$ with respect to the decomposition $ \mathcal{K}=\mathcal{K}_1\oplus \mathcal{K}_1^{\perp}$ for some isometry $X_2$ on $\mathcal{K}_1^{\perp}$. Since $V_2'=W_2\oplus I$ on $\mathcal{K}_1$ with respect to the decomposition $\mathcal{K}_1=\mathcal{K}'\oplus (\mathcal{K}_0\ominus \mathcal{H})$, it follows that  $\mathcal{K}'$ is a reducing subspace for $V_2$.
\end{proof}

\begin{lem}\label{genQMlemma}
	Let $Q,T_1,T_2\in \mathcal{B}(\mathcal{H})$ be any contractions such that $T_1T_2=T_2QT_1$. Let $\mathcal{K}'$ be any Hilbert space containing $\mathcal{H}$ and $W_2\in  \mathcal{B}(\mathcal{K}')$ be any isometric lift of $T_2$. Then there is a Hilbert space $\mathcal{K}$ containing $\mathcal{K}'$ and there are isometric lifts $\overline{Q},V_1,V_2\in \mathcal{B}(\mathcal{K})$ of $Q$, $T_1$, $W_2$ respectively, such that $V_1V_2=V_2\overline{Q}V_1$.	
	Moreover, $\mathcal{K}'$ is a reducing subspace for $V_2$. Further, in particular if $Q$ is a unitary then we can choose $\overline{Q}=Q\oplus qI$ on $\mathcal{H}\oplus (\mathcal{K}\ominus \mathcal{H})$ for any $q$ of modulus one.  
\end{lem}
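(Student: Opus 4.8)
The plan is to follow the proof of Lemma~\ref{genQlemma} line for line, replacing the $Q_L$-commutant dilation Theorem~\ref{QLcoro} by its $Q_M$-commutant counterpart Theorem~\ref{QMcoro}, which is exactly tailored to the relation $T_1T_2=T_2QT_1$ and produces the output relation $V_1V_2=V_2\overline{Q}V_1$. First I would dispose of the case where $Q$ is a unitary. Here one can simply take $\overline{Q}_0=Q\oplus qI$ on $\mathcal{K}'=\mathcal{H}\oplus(\mathcal{K}'\ominus\mathcal{H})$ for a fixed $q$ of modulus one; this is a unitary, hence an isometric lift of $Q$ on $\mathcal{K}'$, and together with the given isometric lift $W_2$ of $T_2$ it satisfies the hypotheses of Theorem~\ref{QMcoro}. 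That theorem then supplies a Hilbert space $\mathcal{K}\supseteq\mathcal{K}'$ and isometric lifts $V_1,V_2$ of $T_1,W_2$ with $V_1V_2=V_2\overline{Q}V_1$, where $\overline{Q}=\overline{Q}_0\oplus qI=Q\oplus qI$ on $\mathcal{H}\oplus(\mathcal{K}\ominus\mathcal{H})$, and with $\mathcal{K}'$ reducing for $V_2$. This already yields the last assertion of the lemma.

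For the general case, where $Q$ is only assumed to be a contraction, the obstruction is that Theorem~\ref{QMcoro} requires an \emph{isometric} lift of $Q$, whereas $Q$ itself need not be an isometry. I would remedy this by passing to the minimal isometric dilation $\overline{Q}_0$ of $Q$ on a space $\mathcal{K}_0\supseteq\mathcal{H}$, say with block form $\overline{Q}_0=\begin{bmatrix}Q&0\\ D&S\end{bmatrix}$ relative to $\mathcal{H}\oplus(\mathcal{K}_0\ominus\mathcal{H})$. Since $\mathcal{K}_0$ and the given $\mathcal{K}'$ overlap only in $\mathcal{H}$, I would build the common ambient space $\mathcal{K}_1=\mathcal{H}\oplus(\mathcal{K}'\ominus\mathcal{H})\oplus(\mathcal{K}_0\ominus\mathcal{H})$ and define $\overline{Q}_1=\begin{bmatrix}Q&0&0\\ 0&I&0\\ D&0&S\end{bmatrix}$ on $\mathcal{K}_1$, together with $V_2'=W_2\oplus I$ on $\mathcal{K}_1=\mathcal{K}'\oplus(\mathcal{K}_0\ominus\mathcal{H})$. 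Then $\overline{Q}_1$ is an isometric lift of $Q$ and $V_2'$ is an isometric lift of $W_2$, hence of $T_2$, so Theorem~\ref{QMcoro} applies to the triple $(Q,T_1,W_2)$ with these lifts and yields $\mathcal{K}\supseteq\mathcal{K}_1$, isometric lifts $V_1,V_2$ of $T_1,V_2'$, and $\overline{Q}=\overline{Q}_1\oplus I$ with $V_1V_2=V_2\overline{Q}V_1$ and $\mathcal{K}_1$ reducing for $V_2$.

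The one step that needs genuine checking --- and the main obstacle --- is verifying that the padded operator $\overline{Q}_1$ really is an isometric lift of $Q$: one must confirm both that $\overline{Q}_1$ is an isometry (which follows since $\overline{Q}_0$ is an isometry and we have merely orthogonally adjoined an identity summand on $\mathcal{K}'\ominus\mathcal{H}$, leaving the columns orthonormal) and that $\mathcal{H}$ is invariant under $\overline{Q}_1^*$ with $\overline{Q}_1^*|_{\mathcal{H}}=Q^*$ (which one reads off from the block form, exactly as in Lemma~\ref{genQlemma}). Finally, to extract the stated conclusion I would observe that, because $V_2$ decomposes as $V_2'\oplus X_2$ over $\mathcal{K}_1\oplus\mathcal{K}_1^{\perp}$ for some isometry $X_2$, and because $V_2'=W_2\oplus I$ splits over $\mathcal{K}'\oplus(\mathcal{K}_0\ominus\mathcal{H})$, the subspace $\mathcal{K}'$ is reducing for $V_2$, completing the proof.
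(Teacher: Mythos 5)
Your proposal is correct and follows essentially the same route as the paper's own proof: the paper likewise settles the unitary case by feeding $\overline{Q}_0=Q\oplus qI$ and $W_2$ into Theorem \ref{QMcoro}, and handles a general contraction $Q$ by padding its minimal isometric dilation to $\overline{Q}_1$ on $\mathcal{K}_1=\mathcal{H}\oplus(\mathcal{K}'\ominus\mathcal{H})\oplus(\mathcal{K}_0\ominus\mathcal{H})$, applying Theorem \ref{QMcoro} with $V_2'=W_2\oplus I$, and reading off that $\mathcal{K}'$ reduces $V_2$ from the splitting $V_2=V_2'\oplus X_2$. The only nitpick is one of phrasing: Theorem \ref{QMcoro} is applied to the original triple $(Q,T_1,T_2)$ on $\mathcal{H}$, with $\mathcal{K}_1$ as the ambient space and $\overline{Q}_1,V_2'$ as the lifts, rather than literally to $(Q,T_1,W_2)$, since $W_2$ does not act on $\mathcal{H}$.
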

\begin{proof}
	Clearly, if $Q$ is a unitary and $q$ is any complex number of modulus one then let $\overline{Q}_0=Q\oplus qI$ on $\mathcal{H}\oplus (\mathcal{K}'\ominus \mathcal{H}) $ where $\mathcal{K}'$ is the given Hilbert space. Hence $\overline{Q}_0$ and $W_2$ are isometric lifts of $Q$ and $T_2$ on $\mathcal{K}'$. Therefore, by Theorem \ref{QMcoro}, there is a Hilbert space $\mathcal{K}$ containing $\mathcal{K}'$ and there are isometric lifts $\overline{Q},V_1,V_2\in \mathcal{B}(\mathcal{K})$ of $Q$, $T_1$, $W_2$ respectively, such that $V_1V_2=V_2\overline{Q}V_1$ with $\overline{Q}=Q\oplus qI$ on $\mathcal{H}\oplus (\mathcal{K}\ominus \mathcal{H})$.	
	Moreover, $\mathcal{K}'$ is a reducing subspace for $V_2$.

	Now  in general suppose $Q$ is any contraction. Let $\overline{Q}_0$ on $\mathcal{K}_0$ be the minimal isometric dilation of $Q$. Let $\mathcal{K}_1=\mathcal{H}\oplus (\mathcal{K}'\ominus \mathcal{H})\oplus (\mathcal{K}_0\ominus \mathcal{H})$. Suppose $\overline{Q}_0=\begin{bmatrix}
		Q&0\\
		D&S
	\end{bmatrix}$ with respect to the decomposition $\mathcal{H}\oplus (\mathcal{K}_0\ominus \mathcal{H}) $. Now define $\overline{Q}_1=\begin{bmatrix}
		Q&0&0\\
		0&I&0\\
		D&0&S
	\end{bmatrix}$ on $\mathcal{K}_1$ with respect to the decomposition $\mathcal{H}\oplus (\mathcal{K}'\ominus \mathcal{H})\oplus (\mathcal{K}_0\ominus \mathcal{H})$ and $V_2'=W_2\oplus I$ on $\mathcal{K}_1$ with respect to the decomposition $\mathcal{K}_1=\mathcal{K}'\oplus (\mathcal{K}_0\ominus \mathcal{H})$. Then clearly $\overline{Q}_1$ is an isometric lift of $Q$ and $V_2'$ is an isometric lift of $W_2$ and hence in turn is an isometric lift of $T_2$. Thus by Theorem \ref{QMcoro}, there is a Hilbert space $\mathcal{K}$ containing $\mathcal{K}_1$, there are isometric lifts $V_1,V_2$ of $T_1,V_2'$ such that $V_1V_2=V_2\overline{Q}V_1$ where $\overline{Q}=\overline{Q}_1\oplus I$ with respect to the decomposition $\mathcal{K}=\mathcal{K}_1\oplus \mathcal{K}_1^{\perp}$. Moreover, $\mathcal{K}_1$ is reducing for $V_2$. Hence $V_2=V_2'\oplus X_2$ with respect to the decomposition $ \mathcal{K}=\mathcal{K}_1\oplus \mathcal{K}_1^{\perp}$ for some isometry $X_2$ on $\mathcal{K}_1^{\perp}$. Since $V_2'=W_2\oplus I$ on $\mathcal{K}_1$ with respect to the decomposition $\mathcal{K}_1=\mathcal{K}'\oplus (\mathcal{K}_0\ominus \mathcal{H})$, it follows that  $\mathcal{K}'$ is a reducing subspace for $V_2$.
\end{proof}

\begin{lem}\label{genQRlemma}
	Let $Q,T_1,T_2\in \mathcal{B}(\mathcal{H})$ be any contractions such that $T_1T_2=T_2T_1Q$ and $\|T_2\|<1$. Let $\mathcal{K}'$ be any Hilbert space containing $\mathcal{H}$ and $W_1\in  \mathcal{B}(\mathcal{K}')$ be any isometric lift of $T_1$. Then there is a Hilbert space $\mathcal{K}$ containing $\mathcal{K}'$ and there are isometric lifts $\overline{Q},V_1,V_2\in \mathcal{B}(\mathcal{K})$ of $Q$, $W_1$, $T_2$ respectively, such that $V_1V_2=V_2V_1\overline{Q}$.	
	Moreover, $\mathcal{K}'$ is a reducing subspace for $V_1$. Further, in particular if $Q$ is a unitary then we can choose $\overline{Q}=Q\oplus qI$ on $\mathcal{H}\oplus (\mathcal{K}\ominus \mathcal{H})$ for any $q$ of modulus one.  
\end{lem}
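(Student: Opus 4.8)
The plan is to reproduce, almost verbatim, the two preceding arguments (the proofs of Lemma \ref{genQlemma} and Lemma \ref{genQMlemma}), with the single structural substitution that Theorem \ref{QRisopurecoro} now plays the role that Theorem \ref{QLcoro}, respectively Theorem \ref{QMcoro}, played there. The first thing I would record is that the hypotheses here match those of Theorem \ref{QRisopurecoro} exactly: we are handed contractions $Q,T_1,T_2$ with $\|T_2\|<1$ and $T_1T_2=T_2T_1Q$, and Theorem \ref{QRisopurecoro} consumes an isometric lift of $T_1$ together with an isometric lift of $Q$ \emph{on a common space} and returns isometric lifts $V_1,V_2$ of those data and of $T_2$ with $V_1V_2=V_2V_1\overline{Q}$, the first carrier being reducing for $V_1$. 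Hence the only genuine task is to manufacture an isometric lift of $Q$ sharing the carrier $\mathcal{K}'$ on which the given $W_1$ already lives.

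First I would dispose of the case where $Q$ is a unitary. Here one sets $\overline{Q}_0=Q\oplus qI$ on $\mathcal{H}\oplus(\mathcal{K}'\ominus\mathcal{H})$ for a fixed $q$ of modulus one; since $Q$ is unitary this is an isometry and so an isometric lift of $Q$ on $\mathcal{K}'$. Applying Theorem \ref{QRisopurecoro} to the given $W_1$ and this $\overline{Q}_0$ yields $\mathcal{K}\supseteq\mathcal{K}'$, isometric lifts $V_1,V_2$ of $W_1,T_2$, and $\overline{Q}=\overline{Q}_0\oplus qI$, which on $\mathcal{H}\oplus(\mathcal{K}\ominus\mathcal{H})$ is precisely $Q\oplus qI$; it satisfies $V_1V_2=V_2V_1\overline{Q}$, it is an isometric lift of $Q$, and $\mathcal{K}'$ reduces $V_1$, which is all that is claimed.

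For a general contraction $Q$ the obstacle is purely bookkeeping: the minimal isometric dilation $\overline{Q}_0$ of $Q$ lives on a space $\mathcal{K}_0$ that need not coincide with $\mathcal{K}'$, so $W_1$ and a lift of $Q$ do not yet share a carrier. I would amalgamate them exactly as in Lemma \ref{genQlemma}: put $\mathcal{K}_1=\mathcal{H}\oplus(\mathcal{K}'\ominus\mathcal{H})\oplus(\mathcal{K}_0\ominus\mathcal{H})$, write $\overline{Q}_0=\begin{bmatrix}Q&0\\ D&S\end{bmatrix}$ relative to $\mathcal{H}\oplus(\mathcal{K}_0\ominus\mathcal{H})$, and set $\overline{Q}_1=\begin{bmatrix}Q&0&0\\ 0&I&0\\ D&0&S\end{bmatrix}$ on $\mathcal{K}_1$ together with $W_1'=W_1\oplus I$ under $\mathcal{K}_1=\mathcal{K}'\oplus(\mathcal{K}_0\ominus\mathcal{H})$. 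A routine verification shows that $\overline{Q}_1$ is an isometric lift of $Q$ and that $W_1'$ is an isometric lift of $W_1$, hence of $T_1$. Theorem \ref{QRisopurecoro}, applied on $\mathcal{K}_1$, then produces $\mathcal{K}\supseteq\mathcal{K}_1$, isometric lifts $V_1,V_2$ of $W_1',T_2$, and $\overline{Q}=\overline{Q}_1\oplus I$ with $V_1V_2=V_2V_1\overline{Q}$ and $\mathcal{K}_1$ reducing for $V_1$. Since isometric lifts compose, $V_1$ is an isometric lift of $W_1$; and because $W_1'=W_1\oplus I$ under $\mathcal{K}_1=\mathcal{K}'\oplus(\mathcal{K}_0\ominus\mathcal{H})$ while $\mathcal{K}_1$ reduces $V_1$, the smaller subspace $\mathcal{K}'$ reduces $V_1$ as well, closing the argument.

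The truly delicate point is not in this reduction at all but is encapsulated in Theorem \ref{QRisopurecoro}: its explicit pure co-isometry construction requires a relevant defect operator to be invertible, and that is exactly what the strictness hypothesis $\|T_2\|<1$ supplies. At the level of this lemma the only hand computation is the check that the block operators $\overline{Q}_1$ and $W_1'$ are isometries lifting $Q$ and $T_1$ respectively; all the analytic content is a direct citation.
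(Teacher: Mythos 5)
Your proposal is correct and takes exactly the route the paper intends: the paper's own proof of this lemma is a one-line remark that it follows as in Lemma \ref{genQlemma}, now citing Theorem \ref{QRisopurecoro} in place of Theorem \ref{QLcoro}, which is precisely the unitary-case argument plus the amalgamation of $W_1$ and the minimal isometric dilation of $Q$ onto the common carrier $\mathcal{K}_1$ that you spell out. Your handling of the final reducing-subspace claim (using that $\mathcal{K}_1$ reduces $V_1$ and $V_1|_{\mathcal{K}_1}=W_1'=W_1\oplus I$, so $\mathcal{K}'$ reduces $V_1$) likewise matches the corresponding step in the paper's proof of Lemma \ref{genQlemma}.
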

\begin{proof}
 The proof is similar to that of the Theorem \ref{genQlemma} and is obtained as an application of the Theorem \ref{QRisopurecoro}.
\end{proof}
	Now we are ready to state and prove the main result.
	
	\begin{thm}\label{G,Q,L contraction}
		Let $G=(V,E)$ be a connected acyclic graph with vertex set $V=\{ 1,2,\ldots ,n  \}$. Let $\textbf{Q}$ be a function from $\widetilde{E}$ to $\mathcal{B}(\mathcal{H})$ which takes any $(i,j)\in \widetilde{E} $ to a contraction $\textbf{Q}(i,j)$. Let $T_1,\ldots , T_n$ be contractions on $\mathcal{H}$ such that $(T_1,\ldots , T_n)$ is $(G,\textbf{Q},L)$-commuting (  $(G,\textbf{Q},M)$-commuting). Then there is a Hilbert space $\mathcal{K}$ containing $\mathcal{H} $, there is a function $\overline{\textbf{Q}}$ from $\widetilde{E}$ to $\mathcal{B}(\mathcal{K})$ such that for each $(i,j)\in \widetilde{E}$, $\overline{\textbf{Q}}(i,j) $ is an isometric lift of $\textbf{Q}(i,j)$ and there are isometric lifts $V_1,\ldots ,V_n$ of $T_1,T_2,\ldots ,T_n$ respectively on $\mathcal{K}$ such that $(V_1,\ldots , V_n)$ is $(G,\overline{\textbf{Q}},L)$-commuting ($(G,\textbf{Q},M)$-commuting).
		
		Moreover, if for any $(i,j)\in \widetilde{E}$, $\textbf{Q}(i,j)$ is a unitary then we can choose $\overline{\textbf{Q}}(i,j)=\textbf{Q}(i,j)\oplus I$ on $\mathcal{K}=\mathcal{H}\oplus (\mathcal{K}\ominus \mathcal{H})$. 
	\end{thm}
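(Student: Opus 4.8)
The plan is to argue by induction on the number of vertices $n$, peeling off one leaf at a time and re-attaching it by means of the single-edge dilation lemmas. The combinatorial input is standard for trees: a connected acyclic graph on $n\geq 2$ vertices has at least one vertex of degree one, and deleting such a leaf together with its incident edge leaves a connected acyclic graph on $n-1$ vertices. The base case $n=1$ is merely the existence of a minimal isometric dilation of a single contraction, and $n=2$ is Lemma \ref{genQlemma} (resp. Lemma \ref{genQMlemma}) applied to the unique edge. This reduces everything to a clean inductive step.

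For the inductive step, choose a leaf and, after relabelling, take it to be vertex $n$ with unique neighbour $k$, so that the only edge incident to $n$ contributes a single relation. Let $G'$ be the tree obtained by deleting $n$. By the induction hypothesis applied to $(T_1,\dots,T_{n-1})$ and to the restriction of $\textbf{Q}$ to $\widetilde{E}'$, there is a Hilbert space $\mathcal{K}'\supseteq\mathcal{H}$, a function $\overline{\textbf{Q}}'$ assigning to each edge of $G'$ an isometric lift of the corresponding $\textbf{Q}$-value, and isometric lifts $V_1,\dots,V_{n-1}\in\mathcal{B}(\mathcal{K}')$ of $T_1,\dots,T_{n-1}$ forming a $(G',\overline{\textbf{Q}}',L)$-commuting (resp. $(G',\overline{\textbf{Q}}',M)$-commuting) system.

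Now I would re-attach vertex $n$. The operator $V_k\in\mathcal{B}(\mathcal{K}')$ is an isometric lift of $T_k$, and the edge $\{k,n\}$ carries a single relation between $T_k$ and $T_n$. Feeding this datum into Lemma \ref{genQlemma} (resp. Lemma \ref{genQMlemma}), with $W_2=V_k$ as the given lift of the neighbour and $T_n$ as the operator whose lift is to be manufactured, produces a larger space $\mathcal{K}\supseteq\mathcal{K}'$, an isometric lift $\overline{\textbf{Q}}(k,n)$ of $\textbf{Q}(k,n)$, an isometric lift $V_n$ of $T_n$, and an isometric lift $\widetilde V_k$ of $V_k$ (which I rename $V_k$) satisfying the required edge relation on $\mathcal{K}$, with the crucial extra conclusion that $\mathcal{K}'$ is reducing for $\widetilde V_k$. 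The remaining operators are transported to $\mathcal{K}$ by $V_i\mapsto V_i\oplus I_{\mathcal{K}\ominus\mathcal{K}'}$ and, for each old edge, by $\overline{\textbf{Q}}(i,j)=\overline{\textbf{Q}}'(i,j)\oplus I_{\mathcal{K}\ominus\mathcal{K}'}$. Each $V_i\oplus I$ is still an isometric lift of $T_i$ since $\mathcal{H}\subseteq\mathcal{K}'$; for an old edge $\{i,j\}$ with $k\notin\{i,j\}$ both factors act as the identity on $\mathcal{K}\ominus\mathcal{K}'$ and as the inductive relation on $\mathcal{K}'$, so the relation survives; for an old edge containing $k$ one uses the reducing decomposition $\widetilde V_k=V_k\oplus S$ with $S$ isometric on $\mathcal{K}\ominus\mathcal{K}'$, together with $\overline{\textbf{Q}}(i,k)|_{\mathcal{K}\ominus\mathcal{K}'}=I$, to see that the relation persists on the complement. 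Because $n$ is a leaf, no relation is imposed between $V_n$ and any $V_i$ with $i\neq k$, so the enlarged family is $(G,\overline{\textbf{Q}},L)$-commuting (resp. $(G,\overline{\textbf{Q}},M)$-commuting); the unitary case of the ``moreover'' clause comes from running the same induction with the sharper conclusion $\overline{\textbf{Q}}(i,j)=\textbf{Q}(i,j)\oplus I$ of the lemmas.

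The step I expect to be the main obstacle is the re-attachment, and specifically matching the leaf to the correct slot of the single-edge lemma. Lemma \ref{genQlemma} manufactures the lift of the \emph{left} factor while preserving a given lift of the \emph{right} factor, so the construction is transparent only when the leaf sits on the side of the edge that the lemma builds; reconciling this with an arbitrary vertex labelling, equivalently guaranteeing a leaf-peeling order compatible with the orientations recorded in $\widetilde{E}$, is the delicate point. It is also the reason the reducing-subspace conclusion and the block form $\overline{\textbf{Q}}(i,j)=\overline{\textbf{Q}}'(i,j)\oplus I$ must be carried along throughout the induction, so that attaching the new vertex never disturbs the relations already secured on $\mathcal{K}'$.
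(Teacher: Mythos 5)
Your overall strategy---leaf-peeling induction, re-attachment of the leaf via Lemma \ref{genQlemma} (resp.\ Lemma \ref{genQMlemma}), and transport of the remaining operators and $Q$'s by direct sums using the reducing-subspace conclusion---is exactly the paper's. But your inductive step has the orientation of the re-attachment backwards, and this is a genuine gap rather than a mere delicate point. You relabel the peeled leaf to be vertex $n$, so its unique edge is $(k,n)\in\widetilde{E}$ with $k<n$, and the hypothesis supplies the relation $T_kT_n=\textbf{Q}(k,n)T_nT_k$: the leaf $T_n$ is the \emph{right} factor. Lemma \ref{genQlemma}, however, takes as input a relation $T_1T_2=QT_2T_1$ together with a given isometric lift of $T_2$ (the right factor) and manufactures the lift of $T_1$ (the left factor). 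Your substitution ``$W_2=V_k$, manufacture a lift of $T_n$'' forces the identification $T_2\leftrightarrow T_k$, $T_1\leftrightarrow T_n$, whose hypothesis would be $T_nT_k=QT_kT_n$ --- a relation you do not have, and one which cannot be derived from $T_kT_n=\textbf{Q}(k,n)T_nT_k$ because $\textbf{Q}(k,n)$ is only a contraction, not invertible. Even the lemma's output would then read $V_nV_k=\overline{Q}V_kV_n$, which is not the relation that $(G,\overline{\textbf{Q}},L)$-commutativity demands for the edge $(k,n)$. The same mismatch occurs in the $M$ case with Lemma \ref{genQMlemma}.

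The fix is the opposite relabelling, which is what the paper does: arrange the peeled leaf to be vertex $1$, so that its edge is $(1,2)$ and the relation $T_1T_2=\textbf{Q}(1,2)T_2T_1$ has the leaf as the \emph{left} factor. The induction hypothesis applied to $G\setminus\{1\}$ then supplies the lift $W_2$ of the right factor $T_2$, and Lemma \ref{genQlemma} applies verbatim to produce $\overline{Q}_{1,2}$, $V_1$, and a lift $V_2$ of $W_2$ with $\mathcal{K}'$ reducing for $V_2$; after that, your transport argument ($V_i=W_i\oplus I$, $\overline{\textbf{Q}}(i,j)=\overline{\textbf{Q}}'(i,j)\oplus I$, block computations for old edges touching or not touching the neighbour) goes through word for word, as it is the same as the paper's. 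Your closing worry about compatibility of the peeling order with the orientations recorded in $\widetilde{E}$ is legitimate --- relabelling is harmless only when it preserves the relative order of the endpoints of every edge, a point the paper's ``without loss of generality'' passes over silently --- but taking the leaf to be the \emph{smallest} label is precisely what aligns the single-edge lemma with the relation at hand, whereas your choice of the largest label aligns it only in special situations, e.g.\ the $L$ case with $\textbf{Q}(i,j)$ unitary, where the relation can be flipped by multiplying through by $\textbf{Q}(i,j)^*$.
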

	
	\begin{proof}
		We will prove the result by induction on $n$. We assume that $(T_1,\dots ,T_n)$ is $ (G,\textbf{Q},L)$-commuting. The result is true for $n=2$, by Theorem \ref{Andodil1}. Now let $n\geq 3$. Assume that the result holds whenever $|\textbf{V}|\leq n-1$.  We will hence prove the result for when $|\textbf{V}|=n$.  Since $G$ is a tree, without loss of generality, let $1$ and $n$ be vertices of degree one and let $e=\{1,2\}$ be the single edge containing vertex $1$. Let $G_1=(\textbf{V}_1, E_1)$ be a subgraph of $G$ such that $\textbf{V}_1=\textbf{V}\setminus \{1 \}$ and $E_1=E\setminus \{e\}$. Clearly $G_1$ is a  connected acyclic graph of $n-1$ vertices. Then due to the induction hypothesis, there is a Hilbert space $ \mathcal{K}'$ containing $\mathcal{H}$, a function $\textbf{Q}'$ from $ \widetilde{E}$ to $\mathcal{B}(\mathcal{K}')$ such that for each $(i,j)\in \widetilde{E_1}$,  $\textbf{Q}'(i,j)$ is an isometric lift of $\textbf{Q}(i,j)$ and there are isometric lifts $W_2,\ldots ,W_{n}\in \mathcal{B}(\mathcal{K}')$ of $T_2,\ldots ,T_{n}$ such that $(W_2,\ldots ,W_{n})$ is $(G_1,\textbf{Q}',L)$-commuting. Hence now we have $T_{1}T_{2}=\textbf{Q}(1,2)T_{2}T_{1}$ with $W_{2}$ as a given isometric lift of $T_{2}$. Therefore, by Lemma \ref{genQlemma}, there is a Hilbert space $\mathcal{K}$ containing $\mathcal{K}'$ and isometric lifts $\overline{Q}_{1,2},V_1,V_{2}$ of $\textbf{Q}(1,2),T_1,W_{2}$ such that 
		$V_1V_{2}=\overline{Q}_{1,2}V_{2}V_1$. 
	  Moreover, $\mathcal{K}'$ is a reducing subspace for $V_{2}$. Therefore, let $V_{2}=W_{2}\oplus B_{2}$ on $\mathcal{K}'\oplus (\mathcal{K}\ominus \mathcal{K}')$. Now let $V_i=W_i\oplus I$  on $\mathcal{K}'\oplus (\mathcal{K}\ominus \mathcal{K}')$ for all $i \in \textbf{V}_1\setminus \{ 2 \}$. Define a function $\overline{\textbf{Q}}$ from $\widetilde{E} $ to $\mathcal{B}(\mathcal{K})$ as follows. For $(i,j)\in \widetilde{E_1}$, define $\overline{\textbf{Q}}(i,j)=\textbf{Q}'(i,j)\oplus I$ with respect to the decomposition $\mathcal{K}'\oplus (\mathcal{K}\ominus \mathcal{K}')$ and let $\overline{\textbf{Q}}(1,2)=\overline{Q}_{1,2}$ on $\mathcal{K}$. 
		Observe that $\overline{\textbf{Q}}(i,j)=\textbf{Q}'(i,j)\oplus I$ with respect to decomposition $\mathcal{K}'\oplus (\mathcal{K}\ominus \mathcal{K}')$ for all $ (i,j)\in \widetilde{E_1}$. Then one can easily observe that for any $(i,j)\in \widetilde{E_1}$, $i, j\neq 2$, 
		\begin{align*} 
			V_iV_j=\begin{bmatrix}
				W_i&0\\0&I
			\end{bmatrix} \begin{bmatrix}
				W_j&0\\0&I
			\end{bmatrix}&=\begin{bmatrix}
				W_iW_j&0\\0&I
			\end{bmatrix}\\
			&= \begin{bmatrix}
				\textbf{Q}'(i,j)W_jW_i&0\\0&I
			\end{bmatrix}\\
			&= \begin{bmatrix}
				\textbf{Q}'(i,j)&0\\0&I
			\end{bmatrix}\begin{bmatrix}
				W_j&0\\0&I
			\end{bmatrix}\begin{bmatrix}
				W_i&0\\0&I
			\end{bmatrix}\\
			&=\overline{\textbf{Q}}(i,j)V_jV_i 
		\end{align*}  with respect to the decomposition $\mathcal{K}'\oplus (\mathcal{K}\ominus \mathcal{K}')$. For edges $(2,i)\in \widetilde{E_1}$,
		\begin{align*} 
			V_{2}V_i=\begin{bmatrix}
				W_{2}&0\\0&B_{2}
			\end{bmatrix} \begin{bmatrix}
				W_i&0\\0&I
			\end{bmatrix}&=\begin{bmatrix}
				W_{2}W_i&0\\0&B_{2}
			\end{bmatrix}\\
			&= \begin{bmatrix}
				\textbf{Q}'(2,i)W_iW_{2}&0\\0&B_{2}
			\end{bmatrix}\\
			&= \begin{bmatrix}
				\textbf{Q}'(2,i)&0\\0&I
			\end{bmatrix}\begin{bmatrix}
				W_i&0\\0&I
			\end{bmatrix}\begin{bmatrix}
				W_{2}&0\\0&B_{n-1}
			\end{bmatrix}\\
			&=\overline{\textbf{Q}}(2,i)V_iV_{2} 
		\end{align*} with respect to the decomposition $\mathcal{K}'\oplus (\mathcal{K}\ominus \mathcal{K}')$.
		Hence $(V_1,\ldots ,V_n)$ is $(G,\overline{\textbf{Q}},L)$-commuting and each $V_i$ is an isometric lift of $T_i$.

		We can similarly prove the result for $(G,\textbf{Q},M)$-commuting contractions by using Theorem \ref{AndodilM} and Lemma \ref{genQMlemma}.      
	\end{proof}	 
	\begin{thm}\label{G,Q,R contraction}
	Let $G=(V,E)$ be a connected acyclic graph with vertex set $V=\{ 1,2,\ldots ,n  \}$. Let $\textbf{Q}$ be a function from $\widetilde{E}$ to $\mathcal{B}(\mathcal{H})$ which takes any $(i,j)\in \widetilde{E} $ to a contraction $\textbf{Q}(i,j)$. Let $T_1,\ldots , T_n$ be strict contractions on $\mathcal{H}$ such that $(T_1,\ldots , T_n)$ is $(G,\textbf{Q},R)$-commuting. Then there is a Hilbert space $\mathcal{K}$ containing $\mathcal{H} $, there is a function $\overline{\textbf{Q}}$ from $\widetilde{E}$ to $\mathcal{B}(\mathcal{K})$ such that for each $(i,j)\in \widetilde{E}$, $\overline{\textbf{Q}}(i,j) $ is an isometric lift of $\textbf{Q}(i,j)$ and there are isometric lifts $V_1,\ldots ,V_n$ of $T_1,T_2,\ldots ,T_n$ respectively on $\mathcal{K}$ such that $(V_1,\ldots , V_n)$ is $(G,\overline{\textbf{Q}},R)$-commuting.
	
	Moreover, if for any $(i,j)\in \widetilde{E}$, $\textbf{Q}(i,j)$ is a unitary then we can choose $\overline{\textbf{Q}}(i,j)=\textbf{Q}(i,j)\oplus I$ on $\mathcal{K}=\mathcal{H}\oplus (\mathcal{K}\ominus \mathcal{H})$. 
\end{thm}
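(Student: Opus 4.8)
The plan is to argue by induction on the number of vertices $n$, mirroring the proof of Theorem \ref{G,Q,L contraction} but driving the inductive step with Lemma \ref{genQRlemma} and the base case with Theorem \ref{Andodil2}, in place of Lemma \ref{genQlemma} and Theorem \ref{Andodil1}. For $n=2$ the graph is a single edge $(1,2)$ carrying the relation $T_1T_2=T_2T_1\textbf{Q}(1,2)$; since $\|T_2\|<1$, Theorem \ref{Andodil2} at once produces an isometric lift $\overline{Q}$ of $\textbf{Q}(1,2)$ and isometric lifts $V_1,V_2$ of $T_1,T_2$ with $V_1V_2=V_2V_1\overline{Q}$, and when $\textbf{Q}(1,2)$ is a unitary Theorem \ref{QRisoforgraph} delivers the normalized form $\overline{Q}=\textbf{Q}(1,2)\oplus I$.

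For the inductive step I would assume the result for all trees on at most $n-1$ vertices and take $G$ a tree on $n\ge 3$ vertices. Choosing a leaf $\ell$ with unique neighbour $p$, I delete $\ell$ to obtain the tree $G_1$ on the remaining $n-1$ vertices. Because every $T_i$ is a strict contraction, the induction hypothesis applies to the $(G_1,\textbf{Q},R)$-commuting subsystem and yields a space $\mathcal{K}'\supseteq\mathcal{H}$, isometric lifts $W_i$ ($i\ne\ell$), and a function $\textbf{Q}'$ on $\widetilde{E_1}$ with each $\textbf{Q}'(i,j)$ an isometric lift of $\textbf{Q}(i,j)$, such that $(W_i)_{i\ne\ell}$ is $(G_1,\textbf{Q}',R)$-commuting; in particular an isometric lift $W_p$ of $T_p$ is in hand. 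I would then feed the single edge relation together with $W_p$ into Lemma \ref{genQRlemma}: this enlarges $\mathcal{K}'$ to $\mathcal{K}$, re-lifts $T_p$ to an isometry $V_p$ for which $\mathcal{K}'$ is reducing, and manufactures an isometric lift $V_\ell$ of the strict contraction $T_\ell$ together with a lift $\overline{\textbf{Q}}(p,\ell)$ so that the prescribed $R$-relation holds on the edge $\{p,\ell\}$. Writing $V_p=W_p\oplus B_p$, setting $V_i=W_i\oplus I$ for the remaining vertices, and $\overline{\textbf{Q}}(i,j)=\textbf{Q}'(i,j)\oplus I$ on the old edges, the block-diagonal bookkeeping carried out at the end of the proof of Theorem \ref{G,Q,L contraction} (now with $\overline{\textbf{Q}}$ sitting on the right) shows that $(V_1,\dots,V_n)$ is $(G,\overline{\textbf{Q}},R)$-commuting, each $V_i$ lifting $T_i$.

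The subtle point, and the place where the argument genuinely differs from the $L$ and $M$ cases, is the orientation of the edge $\{p,\ell\}$. Lemma \ref{genQRlemma} is asymmetric: it demands the lift of the \emph{first} factor and produces the lift of the \emph{second}. The factor I already possess is $W_p$, so the lemma applies verbatim only when $p<\ell$, that is, when the leaf is the larger endpoint of its edge. A tree need not admit any such leaf; for instance the path $1-3-2$ has both its leaves as the \emph{smaller} endpoint of their edges, so I cannot merely choose $\ell$ conveniently. I would overcome this using that each $\textbf{Q}(i,j)$ is a unitary, as in the definition of $(G,\textbf{Q},R)$-commuting. Whenever $\ell<p$, the relation $T_\ell T_p=T_pT_\ell\,\textbf{Q}(\ell,p)$ is equivalent to $T_pT_\ell=T_\ell T_p\,\textbf{Q}(\ell,p)^{*}$, which is exactly an $R$-relation with $T_p$ in the first slot; applying Lemma \ref{genQRlemma} to this reversed relation and invoking its ``moreover'' clause to take the unitary lift $\overline{Q}'=\textbf{Q}(\ell,p)^{*}\oplus qI$, I obtain $V_pV_\ell=V_\ell V_p\,\overline{Q}'$, whence $V_\ell V_p=V_pV_\ell\,\overline{Q}'^{*}$ with $\overline{Q}'^{*}=\textbf{Q}(\ell,p)\oplus \overline{q}I$ a lift of $\textbf{Q}(\ell,p)$. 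This is precisely the required relation on the edge. Thus the only real obstacle is the orientation mismatch, resolved by the $Q\mapsto Q^{*}$ reversal that unitarity permits. I would also emphasize that the hypothesis ``every $T_i$ is a strict contraction'' (rather than a single one) is genuinely forced here, since both Theorem \ref{Andodil2} and Lemma \ref{genQRlemma} require the factor whose lift is being constructed to be strict, and as the induction peels off successive leaves that factor runs over all of the $T_i$.
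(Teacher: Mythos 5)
Your proof is correct and follows precisely the route the paper indicates: the paper disposes of this theorem in one line, saying it is proved like Theorem \ref{G,Q,L contraction} with Theorem \ref{Andodil2} as base case and Lemma \ref{genQRlemma} as inductive step, which is exactly your scheme, down to the block-diagonal bookkeeping $V_p=W_p\oplus B_p$, $V_i=W_i\oplus I$, $\overline{\textbf{Q}}(i,j)=\textbf{Q}'(i,j)\oplus I$. The one place where you go beyond the paper is the orientation analysis, and you are right that it is needed: Lemma \ref{genQRlemma} consumes a lift of the \emph{first} factor of $T_1T_2=T_2T_1Q$ and produces a lift of the \emph{second}, while the induction hands you a lift of the neighbour $p$ rather than of the leaf $\ell$, so the lemma applies verbatim only when $p<\ell$. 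The paper's $L$-case template hides this behind a ``without loss of generality'' relabelling of the vertices; but relabelling changes which pairs satisfy $i<j$ and hence reverses some of the defining relations, which is legitimate only when the operators $\textbf{Q}(i,j)$ are invertible, and your star/path example correctly shows that one cannot instead always pick a leaf of favourable orientation. Your repair --- rewriting $T_\ell T_p=T_pT_\ell\,\textbf{Q}(\ell,p)$ as $T_pT_\ell=T_\ell T_p\,\textbf{Q}(\ell,p)^{*}$, applying Lemma \ref{genQRlemma} with the unitary lift $\textbf{Q}(\ell,p)^{*}\oplus qI$ from its ``moreover'' clause, and then multiplying by the inverse to get $V_\ell V_p=V_pV_\ell\bigl(\textbf{Q}(\ell,p)\oplus\overline{q}I\bigr)$ --- is exactly the step the paper omits, and it legitimately uses the unitarity that the definition of $(G,\textbf{Q},R)$-commuting imposes (the word ``contraction'' in the theorem's statement conflicts with that definition; the inconsistency is the paper's, not yours). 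Your closing observation about strictness is also essentially right: each vertex, as it is peeled off, plays the role of the strictly contractive factor whose lift Lemma \ref{genQRlemma} or Theorem \ref{Andodil2} constructs, which is why this hypothesis appears here and not in the $L$ or $M$ versions.
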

\begin{proof}
	Proof is similar to that of Theorem \ref{G,Q,L contraction} and follows as an application of Theorem \ref{Andodil2} and Lemma \ref{genQRlemma}.
\end{proof}

\vspace{0.5cm}


\begin{thebibliography}{99}
		
		\vspace{0.4cm}
		
						
		\bibitem{Ando} T. Ando, On a pair of commutative contractions, \textit{Acta Sci. Math. (Szeged)}, 24 (1963), 88 -- 90.
		
		\bibitem{Ando 2} T. Ando, Unitary dilation for a triple of commuting contractions, \textit{Bull. Acad. Polon. Sci. Sér. Sci. Math. Astronom. Phys.}, 24 (1976), 851 -- 853.
		
		\bibitem{Nagy} H. Bercovici, C. Foias, L. Kerchy and B. Sz.-Nagy, Harmonic analysis of operators on Hilbert space, Universitext, \textit{Springer, New York}, 2010.
		
		\bibitem{Berc:Dou:Foi-2} H. Bercovici, R.G. Douglas and C. Foias, Bi-isometries and commutant lifting, (English summary) Characteristic functions, scattering functions and transfer functions, \textit{Oper. Theory Adv. Appl.}, 197, Birkhäuser Verlag, Basel, (2010), 51 -- 76 . 
			
		\bibitem{Bis:Pal:Sah} B. Bisai, S. Pal and P. Sahasrabuddhe, On q-commuting co-extensions and q-commutant lifting \textit{https://arxiv.org/abs/2202.07213}.	
			
		\bibitem{Dey} S. Dey, Standard commuting dilations and liftings, \textit{Colloq. Math.}, 126 (2012), 87 -- 94.
			
		\bibitem{Dou:Muh:Pea} R. G. Douglas, P. S. Muhly and C. Pearcy, Lifting commuting  operators, \textit{Michigan Math. J.}, 15 (1968), 385 -- 395. 
			
		\bibitem{foias1990commutant} C. Foias, A. E. Frazho, The commutant lifting approach to interpolation problems, \textit{Operator Theory:Advances and Applications}, 44. Birkh$\dot{a}$user Verlag, Basel, (1990).
		
		\bibitem{foias} C. Foias and B. Sz.-Nagy, Analyse harmonique des op$\acute{e}$rateurs de l'espace de Hilbert. \textit{Akademiai Kiado, Budapest}, 1967.
			
		\bibitem{foias-2} C. Foias and B. Sz.-Nagy, Forme triangulaire d'une contraction et factorization de la fonction caract$\acute{e}$ristique, \textit{Acta Sci. Math. (Szeged)}, 28 (1967), 201 -- 212.
			
		\bibitem{foias:Nagy-1} C. Foias and B. Sz.-Nagy, Dilatation des commutants d'operat$\acute{e}$urs, \textit{Comptes Rendus Paris}, 266 (1968), 493 -- 495.
			
		\bibitem{foias:Nagy-2} C. Foias and B. Sz.-Nagy, The “Lifting Theorem” for intertwining operators and some new applications, \textit{Indiana Univ. Math. J.}, 20 (1971), 901 -- 904.
			
		\bibitem{K.M.} D. K. Keshari and N. Mallick, $q$-commuting dilation, \textit{Proc. Amer. Math. Soc}, 147 (2019), 655 -- 669.
			
		\bibitem{sumesh} N. Mallick and K. Sumesh, On a generelization of Ando's dilation theorem, \textit{Acta. Sci. Math. (Szeged)},  86 (2020), 273 -- 286.
			
		\bibitem{Ope} D. Op$\check{e}$la, A generalization of And$\hat{o}$’s theorem and Parrott’s example, \textit{Proc. Amer. Math. Soc.}, 134 (2006), 2703 -- 2710.
		
		\bibitem{Par} S. Parrott, Unitary dilations for commuting contractions, \textit{Pacific J. Math.}, 34 (1970), 481 -- 490.
			   
		\bibitem{sarason} D. Sarason, On spectral sets having connected complement, \textit{Acta Sci. Math. Szeged}, 26 (1965), 289 -- 299.
		
		\bibitem{sarason1} D. Sarason, Generalized interpolation in $H^{\infty}$, \textit{Trans.Amer.Math.Soc.}, 127 (1967), 179--203.
			
		\bibitem{Sebestyen} Z. Sebesty$\acute{e}$n, Anticommutant lifting and anticommuting dilation, \textit{Proc. Amer. Math. Soc.}, 121 (1994), 133 -- 136.
		
		\bibitem{Sebestyen1} Z. Sebesty$\acute{e}$n, Lifting intertwining operators, \textit{Periodica Mathematica Hungarica Vol.}, 28 (1994), 235 -- 240.
			
	\end{thebibliography}
\end{document}